\DeclarePairedDelimiter\floor{\lfloor}{\rfloor}
\author{Tuomas Orponen}
\address{Department of Mathematics and Statistics\\ University of Jyv\"askyl\"a,
P.O. Box 35 (MaD)\\
FI-40014 University of Jyv\"askyl\"a\\
Finland}
\email{tuomas.t.orponen@jyu.fi}
\author{Pablo Shmerkin}
\address{Department of Mathematics \\
The University of British Columbia \\
1984 Mathematics Road\\
Vancouver, BC, V6T 1Z2\\
Canada}
\email{pshmerkin@math.ubc.ca}
\title[Furstenberg sets and projections]{On the Hausdorff dimension of Furstenberg sets and orthogonal projections in the plane}
\date{\today}
\subjclass[2010]{28A80 (Primary) 28A75, 28A78 (Secondary)}
\keywords{Furstenberg sets, projections, Hausdorff dimension, induction on scales}
\thanks{T.O. is supported by the Academy of Finland via the projects \emph{Quantitative rectifiability in Euclidean and non-Euclidean spaces} and \emph{Incidences on Fractals}, grant Nos. 309365, 314172, 321896. P.S. was supported by an NSERC discovery grant}
\newcommand{\R}{\mathbb{R}}
\newcommand{\N}{\mathbb{N}}
\newcommand{\Z}{\mathbb{Z}}
\newcommand{\calT}{\mathcal{T}}
\newcommand{\calD}{\mathcal{D}}
\newcommand{\calH}{\mathcal{H}}
\newcommand{\calQ}{\mathcal{Q}}
\newcommand{\spt}{\operatorname{spt}}
\newcommand{\Hd}{\dim_{\mathrm{H}}}
\newcommand{\spa}{\operatorname{span}}
\newcommand{\diam}{\operatorname{diam}}
\newcommand{\dist}{\operatorname{dist}}
\newcommand{\e}{\epsilon}
\newcommand{\wt}{\widetilde}
\def\Barint_#1{\mathchoice
          {\mathop{\vrule width 6pt height 3 pt depth -2.5pt
                  \kern -8pt \intop}\nolimits_{#1}}%
          {\mathop{\vrule width 5pt height 3 pt depth -2.6pt
                  \kern -6pt \intop}\nolimits_{#1}}%
          {\mathop{\vrule width 5pt height 3 pt depth -2.6pt
                  \kern -6pt \intop}\nolimits_{#1}}%
          {\mathop{\vrule width 5pt height 3 pt depth -2.6pt
                  \kern -6pt \intop}\nolimits_{#1}}}
\theoremstyle{plain}
\newtheorem{thm}{Theorem}
\newtheorem*{"thm"}{"Theorem"}
\newtheorem{lemma}[thm]{Lemma}
\newtheorem{cor}[thm]{Corollary}
\newtheorem{proposition}[thm]{Proposition}
\theoremstyle{definition}
\newtheorem{definition}[thm]{Definition}
\theoremstyle{remark}
\newtheorem{remark}[thm]{Remark}
\numberwithin{equation}{section}
\numberwithin{thm}{section}
\newcommand{\nref}[1]{(\hyperref[#1]{#1})}
\DeclareMathSymbol{\intop}  {\mathop}{mathx}{"B3}
\begin{document}

\begin{abstract} Let $0 \leq s \leq 1$ and $0 \leq t \leq 2$. An \emph{$(s,t)$-Furstenberg set} is a set $K \subset \R^{2}$ with the following property: there exists a line set $\mathcal{L}$ of Hausdorff dimension $\Hd \mathcal{L} \geq t$ such that $\Hd (K \cap \ell) \geq s$ for all $\ell \in \mathcal{L}$. We prove that for $s\in (0,1)$, and $t \in (s,2]$, the Hausdorff dimension of $(s,t)$-Furstenberg sets in $\R^{2}$ is no smaller than $2s + \epsilon$, where $\epsilon > 0$ depends only on $s$ and $t$. For $s>1/2$ and $t = 1$, this is an $\epsilon$-improvement over a result of Wolff from 1999.

The same method also yields an $\epsilon$-improvement to Kaufman's projection theorem from 1968. We show that if $s \in (0,1)$, $t \in (s,2]$ and $K \subset \R^{2}$ is an analytic set with $\Hd K = t$, then
\begin{displaymath} \Hd \{e \in S^{1} : \Hd \pi_{e}(K) \leq s\} \leq s - \epsilon, \end{displaymath}
where $\epsilon > 0$ only depends on $s$ and $t$. Here $\pi_{e}$ is the orthogonal projection to the line in direction $e$. \end{abstract}

\maketitle

\tableofcontents

\section{Introduction}

\subsection{Main results}

The purpose of this paper is to prove the following two closely related theorems:

\begin{thm}\label{mainFurstenberg} For every $s \in (0,1)$ and $t\in (s,2]$, there exists $\epsilon = \epsilon(s,t) > 0$ such that the following holds. Let $K \subset \R^{2}$, let $\mathcal{L}$ be a family of lines with $\Hd \mathcal{L} \geq t$, and assume that $\Hd (K \cap \ell) \geq s$ for all $\ell \in \mathcal{L}$. Then $\Hd K \geq 2s + \epsilon$.
\end{thm}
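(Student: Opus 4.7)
The plan is to derive a contradiction from the hypothesis $\Hd K < 2s + \epsilon$ by reducing to a single-scale incidence problem at a fine resolution $\delta$ and then exploiting point-line duality to reduce to the companion Kaufman-type statement also announced in the abstract.

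\textbf{Discretization.} I would fix a small dyadic scale $\delta > 0$ and, via Frostman's lemma applied to both $K$ and $\calL$, extract a $\delta$-separated set $P \subset K$ with $|P| \lesssim \delta^{-(2s+\epsilon)}$ (from the counter-assumption) together with a family $\calT$ of $\delta$-tubes with $|\calT| \gtrsim \delta^{-t}$ and $|T \cap P| \gtrsim \delta^{-s}$ for every $T \in \calT$. After further pigeonholing (choosing a subsystem, regularising between scales, passing to an ``admissible'' branching scale structure), arrange that both $P$ and the direction set of $\calT$ are Katz--Tao $(\delta,2s)$- and $(\delta,t)$-sets respectively, uniformly at all intermediate scales $\delta \leq r \leq 1$. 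This yields a discretized incidence count $|\{(p,T)\in P\times \calT:p\in T\}| \gtrsim \delta^{-(s+t)}$ up to polylogarithmic factors.

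\textbf{Duality with projections.} Point-line duality in $\R^{2}$ converts the incidence bound above into a projection statement: for a $(\delta, t)$-set of directions $E \subset S^{1}$ (the directions of $\calT$), the image $\pi_{e}(P)$ has $\delta$-covering number $\lesssim \delta^{-s}$ for every $e \in E$. I would then invoke the $\epsilon$-improvement of Kaufman's projection theorem --- the companion main theorem of the abstract --- which says that the set of directions compressing an analytic set of dimension $2s$ to dimension $\leq s$ can be at most $(s - \epsilon')$-dimensional. Since $t > s$, an $(s - \epsilon')$-dimensional bound on the exceptional direction set is incompatible with a $(\delta,t)$-set of directions at scale $\delta$. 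This contradiction forces $\Hd K \geq 2s + \epsilon$.

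\textbf{Main obstacle.} The hardest step will be the single-scale $\epsilon$-improvement for discretized projections that underlies both theorems; this is the technical heart of the paper. I would expect to need an inverse theorem characterising the extremal configurations: roughly, if $\pi_{e}(P)$ is anomalously small for a $t$-dimensional family of $e$, then at some selected scale $P$ must look essentially like a Cartesian product (or contain large affine copies of one), which is incompatible with the Katz--Tao regularity imposed above. This in turn plausibly requires multi-scale branching/entropy estimates in the spirit of Shmerkin's work on $L^{q}$-dimensions of convolutions. A secondary difficulty will be ``closing the loop'': propagating the single-scale discrete gain back to a statement about Hausdorff dimension, with $\epsilon$ depending only on $s$ and $t$. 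This will require an induction on scales, choosing $\delta$ so that $P$ is Frostman-regular at many scales simultaneously, and will have to be organised so that the gain does not decay with iteration.
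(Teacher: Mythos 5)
Your reduction of Theorem \ref{mainFurstenberg} to the projection theorem is the decisive gap. The step labelled ``Duality with projections'' asserts that, because every tube $T\in\calT$ contains $\gtrsim\delta^{-s}$ points of $P$ while $|P|\lesssim\delta^{-2s-\epsilon}$, the projection of $P$ along each direction occurring in $\calT$ has covering number $\lesssim\delta^{-s}$. This is a non sequitur. For a fixed direction $e$, the Furstenberg hypothesis supplies only the tubes of $\calT$ pointing in direction $e$ — in the classical setting a single tube — and the projection whose fibres are lines in direction $e$ collapses the $\delta^{-s}$ points inside such a tube to an interval of length $\sim\delta$; it says nothing about where the remaining $\approx\delta^{-2s}$ points of $P$ land. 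Nothing prevents $\pi_{e}(P)$ from having covering number $\delta^{-1}$ for every $e$, so the exceptional-set projection theorem yields no contradiction. The duality you invoke actually runs the other way: \emph{small} projections in many directions produce, after pigeonholing on fibres, a rich family of lines meeting the set in large pieces — this is precisely how the paper deduces Theorem \ref{mainKaufman} \emph{from} the discretised incidence statement (Theorem \ref{t:mainTechnical}), not how it deduces Theorem \ref{mainFurstenberg}. In the paper both theorems are siblings descending from Theorem \ref{t:mainTechnical} (via its dual form, Theorem \ref{t:mainDual}, and the discretisation lemma of H\'era--Shmerkin--Yavicoli for the Furstenberg side); neither is derived from the other, and no implication ``projection theorem $\Rightarrow$ Furstenberg bound'' of the kind you need is established.

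A secondary problem is your insistence that ``the direction set of $\calT$'' be a $(\delta,t)$-set. For a $t$-dimensional family of lines with $t\le 1$ the direction set can be a single point (e.g.\ all lines horizontal, parametrised by a $t$-dimensional set of intercepts), so the non-concentration hypothesis must be placed on the line family in the dual slope--intercept plane, as in Theorem \ref{t:mainDual}, not on the slopes. Your closing paragraph — multiscale regularisation, an inverse/product-structure step, a sum-product input, and an induction on scales to propagate the gain — is a fair description of the technical core of the paper, but that core proves the common discretised incidence estimate; without a valid reduction of the Furstenberg statement to it, your argument does not close.
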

The notation "$\Hd$" stands for Hausdorff dimension. The Hausdorff dimension of line families is defined via point-line duality, see Section \ref{s:prelim} for details. No measurability is assumed on either $\mathcal{L}$ or $K$.

In the next theorem, $\pi_{e} \colon \R^{2} \to \R$, $e \in S^{1}$, stands for the orthogonal projection to the line $\spa(e)$, identified with $\R$. In other words, $\pi_{e}(x) := x \cdot e$ for $x \in \R^{2}$.
\begin{thm}\label{mainKaufman} For every $s \in (0,1)$ and $t \in (s,2]$, there exists a constant $\epsilon = \epsilon(s,t) > 0$ such that the following holds. If $K \subset \R^{2}$ be an analytic set with $\Hd K = t$, then
\begin{displaymath} \Hd \{e \in S^{1} : \Hd \pi_{e}(K) \leq s\} \leq s - \epsilon. \end{displaymath} \end{thm}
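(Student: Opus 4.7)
The plan is to derive Theorem~\ref{mainKaufman} from Theorem~\ref{mainFurstenberg} via point-line duality, following the classical tradition of Marstrand, Kaufman, and Wolff which translates exceptional sets for projections into Furstenberg-type configurations in a dual plane.

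After covering $S^{1}$ by finitely many arcs and passing to affine charts, I would replace $\pi_{e}$ by the map $T_{a}(x, y) = y - ax$, where $a \in \R$ is the slope associated with $e$; this identifies $E$ restricted to an arc with a set $E' \subset \R$ of the same Hausdorff dimension. Assuming for contradiction that $\Hd E > s - \epsilon$ for some small $\epsilon > 0$ to be chosen, introduce the dual configuration
\[
L := \bigcup_{a \in E'} \{a\} \times T_{a}(K) \subset \R^{2}.
\]
The key observation is that for each $p = (p_{1}, p_{2}) \in K$, the affine line $\ell_{p} := \{(a, p_{2} - p_{1} a) : a \in \R\}$ meets $L$ in $\{(a, T_{a}(p)) : a \in E'\}$, an affine image of $E'$; hence $\Hd(L \cap \ell_{p}) \geq \Hd E$ for every $p \in K$. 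Since $p \mapsto \ell_{p}$ is affine in slope-intercept coordinates, the family $\calL := \{\ell_{p} : p \in K\}$ satisfies $\Hd \calL = \Hd K = t$, so $L$ qualifies as an $(s^{*}, t)$-Furstenberg set for every $s^{*} < \Hd E$.

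The conclusion is then extracted from two competing inequalities on $\Hd L$. Theorem~\ref{mainFurstenberg} applied to $L$ yields $\Hd L \geq 2 s^{*} + \epsilon_{F}(s^{*}, t)$, where $\epsilon_{F}(s^{*}, t) > 0$ is the Furstenberg improvement. Conversely, the first-coordinate projection $L \to E'$ is $1$-Lipschitz with fibers $\{a\} \times T_{a}(K)$ of dimension at most $s$, so a Fubini/Marstrand-type slicing argument gives the upper bound $\Hd L \leq \Hd E + s$. Combining these and letting $s^{*} \nearrow \Hd E$ produces $\Hd E \leq s - \epsilon_{F}(\Hd E, t)$, contradicting $\Hd E > s - \epsilon$ once $\epsilon < \epsilon_{F}(\Hd E, t)$.

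The principal obstacle lies in the upper bound $\Hd L \leq \Hd E + s$: a naive slicing inequality of this form can fail for non-Borel sets (e.g.\ graphs of pathological continuous functions can exceed the slicing bound), so I would either exploit the analyticity of $K$ to ensure $L$ is analytic and apply the classical fiber-dimension theorem for analytic sets, or, more robustly, replace both the Furstenberg step and the slicing step by $\delta$-discretized covering-number analogues at a single scale. The discretized route also sidesteps a secondary issue, namely that $\epsilon_{F}(s^{*}, t)$ is not manifestly continuous in $s^{*}$; instead of a limit $s^{*} \nearrow \Hd E$, one applies the Furstenberg bound directly at a Frostman exponent just below $\Hd E$, with the tiny loss absorbed into a slightly smaller $\epsilon = \epsilon(s, t)$.
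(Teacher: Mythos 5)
Your dual configuration and the Furstenberg lower bound $\Hd L \geq 2s^{*} + \epsilon_{F}$ are fine (this direction of the duality is essentially the paper's Lemma \ref{lemma2} / Theorem \ref{t:mainDual}), but the argument hinges on the slicing inequality $\Hd L \leq \Hd E + s$, and this is a genuine gap. The inequality $\Hd A \leq \Hd(\pi_{1}A) + \sup_{a}\Hd(A_{a})$ is false in general, and \emph{analyticity does not rescue it}: your own counterexample class --- graphs of continuous functions, which can have Hausdorff dimension $2$ over a base of dimension $1$ with singleton fibers --- consists of compact, hence analytic, sets. So the first proposed fix does not exist; there is no ``classical fiber-dimension theorem'' of this form with Hausdorff dimension of the base. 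The structural reason the bound fails here is that the hypothesis $\Hd \pi_{e}(K) \leq s$ controls only Hausdorff-content covers of each fiber $T_{a}(K)$, at covering scales that depend on $a$; to cover $L$ efficiently you would need \emph{box-counting} bounds $|T_{a}(K)|_{\delta} \lesssim \delta^{-s}$ at a single scale $\delta$ valid for a large set of $a$ simultaneously, and nothing in the hypotheses provides that.

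Your fallback --- discretize everything at a single scale --- is indeed the only viable route, and it is what the paper does, but it is not a one-line patch: it is essentially the whole proof. Concretely, one fixes a Frostman measure $\mu$ on $K$ with exponent $t-\epsilon$, uses $\mathcal{H}^{s}(\pi_{\sigma}(K)) = 0$ plus a pigeonhole over dyadic scales to find, for each exceptional $\sigma$, a scale $\delta_{\sigma}$ at which a $[\log(1/\delta_{\sigma})]^{-2}$-fraction of $\mu$ is covered by $\leq \delta_{\sigma}^{-s}$ tubes, then pigeonholes again over $\sigma$ to fix a common scale $\delta$ for a subset of directions of comparable $\mathcal{H}^{s-\epsilon}$-content. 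One then still has to extract the non-concentration hypotheses --- a $(\delta,t,\delta^{-\epsilon})$-set of heavy squares $\overline{\mathcal{P}}$ and, for each $p \in \overline{\mathcal{P}}$, a $(\delta,s,\delta^{-\epsilon})$-set of incident tubes inherited from the $(\delta,s-\epsilon)$-set structure of the direction set --- because the input needed is the discretized Theorem \ref{t:mainTechnical} (or \ref{t:mainDual}), not the continuous Theorem \ref{mainFurstenberg}; the continuous statement cannot be applied to a single-scale configuration to produce covering-number bounds at that scale. Until those pigeonholing and non-concentration steps are supplied, the proposal does not close.
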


Theorems \ref{mainFurstenberg} and \ref{mainKaufman} are so closely related that they can be both deduced from a single $\delta$-discretised statement, as follows:

\begin{thm}\label{t:mainTechnical} For $s \in (0,1)$ and $t\in (s,2]$, there exists $\epsilon(s,t) > 0$ such that the following holds for all small enough $\delta \in 2^{-\N}$, depending only on $s$ and $t$. Let $\mathcal{P} \subset \mathcal{D}_{\delta}$ be a $(\delta,t,\delta^{-\epsilon})$-set with $\cup \mathcal{P} \subset [0,1)^{2}$, and let $\mathcal{T} \subset \mathcal{T}^{\delta}$ be a family of dyadic $\delta$-tubes. Assume that for every $p \in \mathcal{P}$, there exists a $(\delta,s,\delta^{-\epsilon})$-set $\mathcal{T}(p) \subset \mathcal{T}$ such that $T \cap p \neq \emptyset$ for all $T \in \mathcal{T}(p)$. Then $|\mathcal{T}| \geq \delta^{-2s - \epsilon}$. \end{thm}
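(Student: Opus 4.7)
I would prove Theorem \ref{t:mainTechnical} by contradiction, assuming $|\mathcal{T}| \le \delta^{-2s - \eta}$ for some $\eta \ll \epsilon(s,t)$, and deriving a violation of the hypotheses. The first preparatory move is to \emph{uniformize} the incidence structure: by pigeonholing on the popularity of tubes, on the common multiplicity of directions, and on the scale at which concentration sits, I would pass to subsets $\mathcal{P}' \subset \mathcal{P}$ and $\mathcal{T}' \subset \mathcal{T}$ of essentially full size, so that each tube of $\mathcal{T}'$ contains the same number $M$ of cubes of $\mathcal{P}'$, each $p \in \mathcal{P}'$ retains a $(\delta, s, \delta^{-O(\eta)})$-subset of tubes through it in $\mathcal{T}'$, and the direction set of $\mathcal{T}'$ splits into slabs of uniform multiplicity. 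A double count of incidences using $t > s$ then forces $M \gtrsim \delta^{s - t + O(\eta)}$, so tubes are \emph{heavy}.

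Next I would translate the tube deficit into a projection deficit. For every direction $\theta$ appearing in $\mathcal{T}'$, the set
\[
\pi_\theta\bigl(\{p \in \mathcal{P}' : \theta \text{ is the direction of some tube of } \mathcal{T}'(p)\}\bigr)
\]
is $\delta$-covered by the number $N_\theta$ of tubes of $\mathcal{T}'$ in direction $\theta$, and $\sum_\theta N_\theta = |\mathcal{T}'| \le \delta^{-2s - \eta}$. Combining this with the $(\delta, s)$-set structure of tube-directions at every $p \in \mathcal{P}'$, a further pigeonhole extracts a $(\delta, s, \delta^{-O(\eta)})$-set of exceptional directions $\Theta_{\mathrm{exc}} \subset S^1$ for which $|\pi_\theta(\mathcal{P}')|_\delta \lesssim \delta^{-s - O(\eta)}$. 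Theorem \ref{t:mainTechnical} is thereby reduced to showing that the exceptional direction set of such nearly-optimal projections must have $\delta$-covering number strictly smaller than $\delta^{-s}$.

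To obtain this strict improvement I would invoke a strengthening of Kaufman's projection theorem at the discretized level, i.e. a single-scale projection/convolution inequality with a quantitative gain. Natural inputs are Bourgain's discretized projection theorem, applicable when $\Theta_{\mathrm{exc}}$ is simultaneously $(\delta, s)$-regular and non-concentrated, or --- more flexibly --- Shmerkin's $L^q$-inverse theorem for convolutions, applied to measures associated with $\mathcal{P}'$ and $\Theta_{\mathrm{exc}}$. Either produces a universal $\epsilon_0 = \epsilon_0(s, t) > 0$ that contradicts the bounds above when $\eta$ is taken small enough.

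The main obstacle is that such single-scale projection improvements apply only under non-concentration at \emph{every} intermediate scale, whereas nothing in the hypotheses prevents $\mathcal{P}$, or the fibers $\Theta(p)$, from being heavily concentrated into a thin tube at some intermediate scale $\Delta = \delta^\tau$. Handling this is where induction on scales enters: I would decompose the configuration into its scale-$\Delta$ structure and the residual scale-$\delta/\Delta$ structure inside each heavy $\Delta$-tube, apply the inductive hypothesis at each of the two resulting scales, and concatenate the two small gains. The delicate bookkeeping --- ensuring that the $(\delta, s, \cdot)$- and $(\delta, t, \cdot)$-set hypotheses survive rescaling and branching with only $\delta^{O(\eta)}$ losses, and that the overall gain $\epsilon$ does not collapse after the $O(1/\eta)$ iterations needed to reach the required improvement over $\delta^{-2s}$ --- is, I expect, the technical heart of the proof.
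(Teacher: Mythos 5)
There is a genuine gap at the heart of your argument: the ``strengthened Kaufman'' black box you invoke does not exist as a prior result for the full range $s\in(t/2,t)$. Bourgain's discretized projection theorem only controls directions where the projection has covering number below $\delta^{-t/2-\kappa}$; for, say, $t=1$ and $s=3/4$ it gives nothing beyond Kaufman's bound, and a single-scale projection theorem with a gain over the exponent $s$ for \emph{every} $s<t$ is essentially equivalent to Theorem \ref{mainKaufman} itself --- i.e.\ to the statement you are trying to prove. Shmerkin's inverse theorem does not supply such a gain off the shelf either. The paper never reduces to a one-shot projection improvement for a general $(\delta,t)$-set. Instead, the gain is extracted only at scale blocks where $\mathcal{P}$ is $t'$-regular (in the weak sense $|\mathcal{P}|_{\delta^{1/2}}\le\delta^{-t'/2-\epsilon}$, Definition \ref{def:regularity}); there, a two-scale factorization of a hypothetical counterexample produces a product-structured set $\mathbf{Z}$ whose associated tube family is a $(\delta^{1/2},2s)$-set with projections of size $\le\delta^{-s/2}$ in a $(\delta^{1/2},t-s)$-set of directions --- exactly the half-dimensional regime where Bourgain's theorem (in He's form) applies. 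That engineering of the ``exactly half'' exponent is what your sketch is missing. Relatedly, your extraction of an exceptional direction set $\Theta_{\mathrm{exc}}$ with $|\pi_\theta(\mathcal{P}')|_\delta\lesssim\delta^{-s-O(\eta)}$ is too optimistic as stated: for a fixed $\theta$, the set $\{p:\theta\in\sigma(\mathcal{T}'(p))\}$ may be a tiny, low-dimensional subset of $\mathcal{P}'$, so the tube deficit does not directly yield a projection deficit for the full $t$-dimensional set.

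Your final paragraph correctly identifies induction on scales as the tool for handling intermediate-scale concentration, and this is indeed close in spirit to Proposition \ref{p:induction-on-scales}. But the way the gain accumulates is different from what you describe: there is no iteration of $O(1/\eta)$ steps amplifying a small gain. Rather, Proposition \ref{p:multiscaledecomp-kaufman} decomposes $[\delta,1]$ into blocks that are either $s'$-dimensional with $s'\ge s$ (no gain, but no loss, by the elementary Corollary \ref{prop5}), $t'$-regular with $t'\ge t$ (each contributing a fixed multiplicative gain $(\Delta_{j-1}/\Delta_j)^{\eta}$ via Theorem \ref{t:improvedIncidence}), or negligible; the hypothesis $t>s$ is then used once, through a telescoping slope count on the branching function $f_P$, to show the regular blocks occupy a positive proportion of $[0,\log(1/\delta)]$, which is where the final $\epsilon$ comes from.
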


For the definitions of the concepts appearing in Theorem \ref{t:mainTechnical}, see Section \ref{s:prelim}.  Theorems \ref{mainFurstenberg} and \ref{mainKaufman} will be reduced to Theorem \ref{t:mainTechnical} in Section \ref{s:discretisation}. See also Theorem \ref{t:mainDual} for a dual formulation of Theorem \ref{t:mainTechnical}.

\begin{remark}
In all three theorems stated above, the positive constant $\epsilon(s,t)$ can be taken uniform in any compact subset of $\{ (s,t): 0<s<\min(1,t)<2\}$. This follows since in Theorem \ref{t:mainTechnical}, if $\e>0$ works for a fixed pair $(s,t)$, then $\e/2$ works in the $\e/2$-neighbourhood of $(s,t)$.
\end{remark}

\begin{remark} The assumptions that $\cup \mathcal{P} \subset [0,1)^{2}$ and the tubes are dyadic can be relaxed, see Theorem \ref{t:mainTechnicalGeneral} for the details. \end{remark}

\subsection{Background on Furstenberg sets} In 1999, T. Wolff \cite{Wolff99} introduced what has become known as the Furstenberg set problem. Given a parameter $s\in (0,1)$, an $s$-Furstenberg set $K$ is a compact planar set such that for all directions $e\in S^1$ there is a line $\ell_e$ in direction $e$ such that $\Hd(K\cap\ell_e) \ge s$. The \emph{$s$-Furstenberg set problem} asks for the smallest possible Hausdorff dimension of an $s$-Furstenberg set. This problem has several motivations. It is related to investigations of H. Furstenberg \cite{Furstenberg70} on intersections of Cantor sets defined in terms of expansions to bases $2,3$ (the conjecture that motivated this connection has since been resolved by the second author \cite{Sh} and, independently, by M. Wu \cite{Wu19}). The Furstenberg set problem is also a ``fractal'' version of the Kakeya problem, and finally a ``discretized'' analog of point-line incidence problems in geometric combinatorics, related to the Szemer\'{e}di-Trotter theorem.

Using elementary arguments, Wolff showed that if $K$ is an $s$-Furstenberg set, then
\begin{equation}\label{wolffBound} \Hd(K)\ge \max(\tfrac{1}{2}+s,2s), \end{equation}
and constructed a $s$-Furstenberg sets of dimension $\tfrac{1}{2} +\tfrac{3s}{2}$, for $s \in (0,1]$. It is worth noting that the bounds \eqref{wolffBound} agree, and equal $1$, for $s = \tfrac{1}{2}$. Soon after Wolff's work, N. Katz and T. Tao \cite{MR1856956} connected the $\tfrac{1}{2}$-Furstenberg set problem to two other outstanding problems in geometric measure theory: the Falconer distance set problem and the (discretized) ring conjecture. In 2003, J. Bourgain \cite{Bourgain03} solved the discretized ring  conjecture which, in conjunction with the work of Katz-Tao, established that $\tfrac{1}{2}$-Furstenberg sets have Hausdorff dimension at least $1+\epsilon$ for a small universal $\epsilon>0$. A formal consequence of this result is also an $\epsilon$-improvement over Wolff's bounds for the $(\tfrac{1}{2} + \epsilon)$-Furstenberg set problem.

For $s > \tfrac{1}{2} + \epsilon$, Wolff's bounds \eqref{wolffBound} remained the strongest results in the $s$-Furstenberg set problem, until now. As an immediate corollary to Theorem \ref{mainFurstenberg}, we obtain the following $\epsilon$-improvement for all $s\in (1/2,1)$:
\begin{cor}\label{mainFurstenbergCor}
For every $s\in (\tfrac{1}{2},1)$ there exists $\epsilon=\epsilon(s)>0$ such that the Hausdorff dimension of every $s$-Furstenberg set is at least $2s+\epsilon$.
\end{cor}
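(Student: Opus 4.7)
The plan is a direct reduction to Theorem~\ref{mainFurstenberg} with the parameter $t=1$. Given an $s$-Furstenberg set $K$ with $s\in(\tfrac{1}{2},1)$, the definition supplies, for every direction $e\in S^{1}$, a line $\ell_{e}$ in direction $e$ with $\Hd(K\cap\ell_{e})\ge s$. I would form the line family
\[
    \mathcal{L} := \{\ell_{e} : e\in S^{1}\}
\]
and apply the theorem to the pair $(K,\mathcal{L})$.

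The only point requiring verification is that $\Hd\mathcal{L}\ge 1$. In the point--line duality recalled in Section~\ref{s:prelim}, the space of lines in $\R^{2}$ carries a parameterization in which the ``direction'' coordinate is locally Lipschitz (working, say, in one chart parameterizing non-vertical lines by (slope, intercept), with a second chart handling the vertical direction). Restricted to $\mathcal{L}$, this direction coordinate is surjective onto $S^{1}$ by construction. Since Lipschitz maps do not increase Hausdorff dimension, this forces $\Hd\mathcal{L}\ge\Hd S^{1}=1$. No measurability of $e\mapsto\ell_{e}$ enters, which is consistent with Theorem~\ref{mainFurstenberg} making no measurability assumption on $\mathcal{L}$.

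Because $s<1$, the value $t=1$ lies in $(s,2]$, so Theorem~\ref{mainFurstenberg} applies and yields $\epsilon=\epsilon(s,1)>0$ with $\Hd K\ge 2s+\epsilon$. Setting $\epsilon(s):=\epsilon(s,1)$ gives the claimed bound uniformly in $K$.

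There is no genuine obstacle: the corollary is labeled ``immediate'' precisely because its only content beyond Theorem~\ref{mainFurstenberg} is the elementary observation above that picking one line per direction produces a line set whose dual image dominates a $1$-dimensional set, and hence has Hausdorff dimension at least $1$.
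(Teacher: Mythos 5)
Your proposal is correct and is essentially the argument the paper intends: an $s$-Furstenberg set is an $(s,1)$-Furstenberg set, since the dual point set of $\{\ell_e : e\in S^1\}$ (restricted, say, to a bounded slope range) surjects onto a nondegenerate slope interval under the Lipschitz projection $\pi_1$, forcing $\Hd\mathcal{L}\ge 1$, after which Theorem \ref{mainFurstenberg} with $t=1$ applies. This matches the paper's (implicit) reduction, so there is nothing further to add.
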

We remark that, previously, the first author had shown in \cite{Orponen20} that the \emph{packing} dimension of an $s$-Furstenberg sets is at least $2s+\epsilon$, for $s \in (\tfrac{1}{2},1)$. This result, or more precisely a $\delta$-discretised version of it, plays an important role in this article. A similar improvement for the packing dimension of Furstenberg sets in the regime $s\in (0,\tfrac{1}{2})$ was obtained by the second author \cite{Shmerkin22}. Unfortunately, the methods of this article do not appear to be sufficient to obtain a similar Hausdorff dimension improvement for $s\in (0,\tfrac{1}{2})$.

\subsubsection{$(s,t)$-Furstenberg sets} Theorem \ref{mainFurstenberg} concerns a generalised notion of Furstenberg sets, in which the family of lines is arbitrary, and its Hausdorff dimension, typically denoted "$t$" in this paper, is an additional parameter. The study of such generalised Furstenberg sets was initiated by U. Molter and E. Rela \cite{MolterRela12}. See \cite{MR3973547,MR4002667,LutzStull17, Shmerkin22} for other results in this direction. To be precise, let us say that $K \subset \R^{2}$ is an \emph{$(s,t)$-Furstenberg set} if it satisfies the assumptions of Theorem \ref{mainFurstenberg}. With this terminology, an $(s,1)$-Furstenberg set is a (mild) generalisation of an $s$-Furstenberg set, in the sense of Wolff.

An argument similar to Wolff's proof of the bound $\Hd(K)\ge 2s$ for $s$-Furstenberg sets (see \cite[Appendix A]{HSY22}) shows that $\Hd(K)\ge 2s$ for every $(s,s)$-Furstenberg set $K$. It is also easy to see using a ``Cantor target'' construction that this is sharp. Thus, in Wolff's statement that every $s$-Furstenberg has dimension $\ge 2s$, it is not necessary to use the full force of the assumption: it suffices that there exists a subset $S \subset S^{1}$ of dimension $\Hd S \geq s$ such that $\Hd [K \cap \ell_{e}] \geq s$ for some line $\ell_{e}$ parallel to every vector $e \in S$.

It is natural to expect a stronger lower bound as soon as the assumption $\Hd (S) \geq s$ is upgraded to $\Hd (S) \geq t$ for some $t > s$. Until now, however, this has only been known for $t \ge 2s$. For $s = \tfrac{1}{2}$, this follows from the works of Bourgain \cite{Bourgain03} and Katz-Tao \cite{MR1856956}, already discussed above. For general $s \in (0,1)$, it was shown more recently by K.~H\'era, the second author, and A.~Yavicoli \cite{HSY22} that every $(s,2s)$-Furstenberg set $K \subset \R^{2}$ has $\Hd (K) \geq 2s + \epsilon(s)$. The value of the constant $\epsilon(s)$ was quantified very recently by D.~Di Benedetto and J.~Zahl \cite{BZ21}: for example, it follows from their results that a classical $1/2$-Furstenberg set has Hausdorff dimension at least $1+1/4536$. The case $(s,2s)$ of the Furstenberg set problem is special due to its close connection with the $\delta$-discretized version of the Erd\H{o}s-Szemer\'edi sum-product problem, see \cite{BZ21} for more information.

With this background, we may emphasise the key novelty of Theorem \ref{mainFurstenberg}: for any $t>s$, it gives an $\epsilon$-improvement over "elementary" bounds, whereas earlier works only accomplished this for $t = 2s$. Equivalently, if $t \in (0,2)$ is fixed, earlier works only gave an $\epsilon$-improvement over elementary bounds if $s = t/2$: the case $t = 1$ is particularly relevant for Corollary \ref{mainFurstenbergCor}.

Finally, we mention other recent results related to the Furstenberg set problem. R. Zhang \cite{Zhang17} completely resolved a discrete analog of the problem. More recently, L.~Guth, N.~Solomon and H.~Wang \cite{GSW19} proved incidence theorems for "well spaced" tubes. These can be seen to imply very strong bounds for the size of $\delta$-discretised Furstenberg-type sets, where lines are replaced by "well-spaced" $\delta$-tubes. Some further developments took place after a preprint of this article was first posted to the arXiv. In the article \cite{MR4452675}, D.~D{\k a}browski, the first author and M.~Villa  obtained a new explicit bound for $(s,t)$-Furstenberg sets for $t>1$, namely $\Hd K \geq 2s + (t - 1)(1 - s)$, which improves the bound in Theorem \ref{mainFurstenberg} when $t>1+\e$ for some absolute $\e>0$. This bound was further superseded by Y.~Fu-K.~Ren \cite{2021arXiv211105093F}, who proved that $\Hd K \geq \min\{2s + t - 1,1 + s\}$. The paper \cite{FuGanRen} of Y.~Fu-S.~Gan-K.~Ren also studies a closely related problem. The results and techniques in these works are very different from the ones in this article. Roughly speaking, the dimensional or spacing assumptions in \cite{MR4452675, FuGanRen, 2021arXiv211105093F,GSW19} are such that the discretised sum-product problem (or approximate Borel sub-ring obstruction) is not relevant. This, in particular, allows for more quantitative estimates for $(s,t)$-Furstenberg sets for $t > 1$.

\subsection{Background on projections} The study of the effect of projections on dimension is one of the oldest and most intensively studied problems in fractal geometry. See the surveys \cite{FFJ15, Mattila19} for an introduction to this vast area. J.~Marstrand's celebrated projection theorem \cite{Mar} asserts that if $K\subset\R^2$ is an analytic set of dimension $s\le 1$, then $\Hd \pi_e(K)=s$ for $\mathcal{H}^{1}$ almost all $e\in S^1$. In 1968, R.~Kaufman \cite{Ka} found a simple proof of Marstrand's theorem that also gave the following sharpening: if the analytic set $K \subset\mathbb{R}^2$ has Hausdorff dimension $s \in [0,1]$, then
\begin{equation} \label{eq:kaufman}
\Hd(\{ e\in S^1: \Hd\pi_e (K)< s \}) \leq s.
\end{equation}
It is natural to ask whether the right-hand side can be improved if one assumes, instead, that $\Hd(K)=t$ for some $t>s$. Kaufman's method is not able to exploit this ``extra largeness'' of the set $K$. This is similar (not coincidentally) to the phenomenon we described for $(s,t)$-Furstenberg sets. A different argument that can be perhaps considered ``folklore'' shows that for a planar analytic set $K$ with $\Hd(K)=t\in (0,2]$,
\begin{equation}\label{babyBourgain}
\Hd(\{ e\in S^1: \Hd \pi_e(K) \leq s\})=0, \qquad 0 \leq s < \tfrac{t}{2}. \end{equation}
Thus, Kaufman's bound \eqref{eq:kaufman} is far from sharp for $s<t/2$. An influential result of J. Bourgain \cite{Bourgain03, Bourgain10} extends \eqref{babyBourgain} to the case $s = t/2$, and even gives the following information for $s > t/2$ "close" to $t/2$: for every $t\in (0,2)$ and $\epsilon>0$, there is $\kappa=\kappa(\epsilon,t)>0$ such that
\[
\Hd(\{ e\in S^1: \Hd \pi_e(K) < t/2 + \kappa\})\le \epsilon.
\]
Taking $\epsilon<s$, we see that Kaufman's estimate is also not sharp for $s \leq t/2 + \kappa(\epsilon,t)$. Theorem \ref{mainKaufman} provides the first improvement over Kaufman's bound for arbitrary values of $s<t$. This is new for example in the case $t=1$, $s=3/4$. A variant of Theorem \ref{mainKaufman} for packing dimension, in the case $t = 1$, was proved in \cite{Orponen20}.

While Theorem \ref{mainKaufman} has been stated for $t \in (s,2]$, it is only news for $t \in (s,1 + \epsilon(s)]$. For larger "$t$", the following estimate due to K. Falconer \cite{MR673510} (case $s = 1$) and Y. Peres-W. Schlag \cite{MR1749437}, which can be obtained by a Fourier analytic method, is stronger than Theorem \ref{mainKaufman}:
\begin{equation}\label{falconer} \Hd \{e \in S^{1} : \Hd \pi_{e}(K) < s\} \leq \max\{0,1 + s - t\}. \end{equation}
Theorem \ref{mainKaufman} also provides a new improvement over the estimate \eqref{falconer} for $s\in (0,1)$ and $t\in [1,1+\epsilon(s)]$.

\subsection{Connections with other problems in geometric measure theory}

Beyond their intrinsic interest, a further motivation for the results in this paper lies in their application to other problems in geometric measure theory. Even though the "$\epsilon$" from our results is not explicit, and in any case would be very small, the second author and H.~Wang \cite{ShmerkingWang21} used a bootstrapping mechanism based on Theorem \ref{t:mainTechnical} to obtain new explicit estimates in the Falconer distance set problem \cite[Theorems 1.1--1.4]{ShmerkingWang21} and the dimension of radial projections \cite[Theorems 1.6 and 1.7]{ShmerkingWang21}. In particular, they obtained a full resolution of the Falconer distance set conjecture for Ahlfors-regular sets (and more generally, sets of equal Hausdorff and packing dimension). While the proofs in \cite{ShmerkingWang21} involve several old and new ideas, Theorem \ref{t:mainTechnical} is a crucial ingredient in all the proofs.

\subsection{Proof strategy}

We discuss some of the ideas involved in the proof of Theorem \ref{t:mainTechnical}. As mentioned already, it follows by combining results of Katz-Tao \cite{MR1856956} and Bourgain \cite{Bourgain03} that $s$-Furstenberg sets have Hausdorff dimension at least $1+\epsilon$ for $s$ "very close" to $\tfrac{1}{2}$. This assumption on $s$ is essential for the approach, as it forces a rather rigid ``product structure'' for the Furstenberg set (after projective transformation), that enables the application of Bourgain's discretized sum-product theorem.

Our proof also ultimately relies on Bourgain's theorem, but it follows a rather different path. The starting point is the main technical result of \cite{Orponen20} (it is there that Bourgain's projection theorem gets used). In the setting of Theorem \ref{t:mainTechnical}, it states that if $s \in (0,1)$, $t \in (s,2]$, and additionally $\mathcal{P}$ is \emph{$t$-regular} (in a fairly weak sense, see Definition \ref{def:regularity}), then either
\begin{equation} \label{eq:alternative}
|\mathcal{T}| \ge \delta^{-2s-\epsilon} \quad\text{or}\quad |\mathcal{T}|_{\delta^{1/2}} \ge \delta^{-s-\epsilon}.
\end{equation}
Here $|\mathcal{T}|_{\delta^{1/2}}$ denotes the number of dyadic $\delta^{1/2}$-tubes required to cover $\mathcal{T}$. Thus, if $P$ is $t$-regular, then one gets a gain in the size of $\mathcal{T}$ on (at least) one of the scales $\delta^{1/2},\delta$.

It turns out that the alternative \eqref{eq:alternative} is not necessary: under the $t$-regularity assumption on $\mathcal{P}$, one always gets $|\mathcal{T}| \ge \delta^{-2s-\epsilon}$, and thus Theorem \ref{t:mainTechnical} is established for $t$-regular sets $\mathcal{P}$. For the details, see Theorem \ref{t:improvedIncidence}. The reduction from Theorem \ref{t:improvedIncidence} to \eqref{eq:alternative} is based on a new ``induction on scales'' scheme for incidence counting, see Proposition \ref{p:induction-on-scales}, which is crucial also in a later part of the argument. Roughly speaking, under certain assumptions, Proposition \ref{p:induction-on-scales} bounds incidences at scale $\delta$ in terms of incidences at the coarser scales $\Delta$ and $\delta/\Delta$, where $\Delta \in (\delta,1]$ is a free parameter. Moreover, the families of squares and tubes arising at the coarser scales are related in a natural way to the original families.

A technical hurdle in the proof of Theorem \ref{t:improvedIncidence} is that \eqref{eq:alternative} was only stated in \cite{Orponen20} in the case $t = 1$: the general case $t \in (s,2]$ is fairly similar, but requires sufficiently many changes to merit writing up in Appendix \ref{appA} (the proof given in the appendix is also streamlined compared to \cite{Orponen20}).

A general $(\delta,t,\delta^{-\epsilon})$-set $\mathcal{P}$ can fail to be $t$-regular, and hence additional arguments are needed to complete the proof of Theorem \ref{t:mainTechnical}. A key innovation is a multi-scale decomposition of an arbitrary $(\delta,t,\delta^{-\epsilon})$-set $\mathcal{P}$ that enables us to exploit the gain for $t$-regular sets, see Proposition \ref{p:multiscaledecomp-kaufman}. This proposition provides a sequence of scales $\delta=\Delta_n < \Delta_{n-1}< \ldots < \Delta_0 = 1$ (depending on $\mathcal{P}$) such that, roughly speaking, one of the following three alternatives holds for each $j\in \{1,\ldots,n\}$:
\begin{itemize}
  \item $\mathcal{P}$ is $s$-dimensional between scales $\Delta_{j}$ and $\Delta_{j - 1}$.
  \item $\mathcal{P}$ is $t_j$-regular between scales $\Delta_{j}$ and $\Delta_{j - 1}$ for some $t_j \ge s$.
   \item (The ``bad'' case.) No information on $\mathcal{P}$ between scales $\Delta_{j}$ and $\Delta_{j - 1}$ is available.
\end{itemize}

The bad scales form a "negligible" proportion of all scales, so we ignore them here. If the first alternative occurs, we use elementary bounds (see Corollary \ref{prop5}). No gain is achieved, but also there is no loss. If the second alternative occurs for some $t_j > s + \eta$, then the special case of $t_j$-regular sets, discussed above, yields an $\epsilon$-gain (depending on $\eta$). The fact that $\mathcal{P}$ is a $(\delta,t,\delta^{-\epsilon})$-set is finally used to show that a positive proportion of the indices $j \in \{1,\ldots,n\}$ satisfy the second alternative, with $t_j> s+\eta$ for some $\eta>0$ depending only on $s$ and $t$.

The estimates from different scales are eventually combined via the ``induction on scales'' Proposition \ref{p:induction-on-scales}. The details are contained in Proposition \ref{prop3}. All these ingredients are put together in Section \ref{s:proof-main-thm}, where the proof of Theorem \ref{t:mainTechnical} is concluded.

The multiscale decomposition is inspired by similar ones in \cite{KeletiShmerkin19} and, especially, \cite{Shmerkin20}, although the details differ. The main difference to these papers is not in the multiscale decomposition itself, but rather in the way it is applied. The results of \cite{KeletiShmerkin19, Shmerkin20} concern projections, and the information from different scales is put together by means of entropy. The use of entropy seems challenging in the context of incidence counting, as in Theorem \ref{t:mainTechnical}. Thus, Proposition \ref{p:induction-on-scales} can be seen as a substitute for the multiscale entropy formulas that are a key element in many recent papers in the area.

Finally, we mention that induction-on-scales arguments have certainly been used before in problems involving incidence counting (see L. Guth's proof of the multilinear Kakeya inequality \cite{Guth15} for a very clean example). One novelty in this paper is that the scales in the inductive process are not arbitrary, but are chosen carefully in terms of the geometry of the set under consideration. A second novelty is that the "dimensions" $s,t$ of the families $\mathcal{T},\mathcal{P}$ are baked into the induction in a way which we have not seen before.

We close the introduction by a summary of the structure of the paper. Section \ref{s:prelim} contains preliminaries on point-line duality, and "classical" incidence bounds that do not require sum-product theory. Section \ref{s:discretisation} contains reductions of our main results, Theorems \ref{mainFurstenberg} and \ref{mainKaufman}, to their (common) $\delta$-discretised counterpart, Theorem \ref{t:mainTechnical}. Section \ref{s:thickTubeCover} consists of an auxiliary result, Proposition \ref{prop2}, whose motivation is thoroughly discussed at the head of Section \ref{s:thickTubeCover}. Section \ref{s:induction-on-scales} contains a key technical result, Proposition \ref{p:induction-on-scales}, which is needed to recombine estimates from various "scale blocks" $[\Delta_{j},\Delta_{j - 1}]$, as explained above. Section \ref{s:improvedIncidence} contains a special case of Theorem \ref{t:mainTechnical} where the set $\mathcal{P}$ is assumed to be $t$-regular. To be precise, Section \ref{s:improvedIncidence} only contains (the new) part of the proof, while a bulk of the work, heavily based on \cite{Orponen20}, is postponed to Appendix \ref{appA}.

Section \ref{s:combiningEstimates} contains Proposition \ref{prop3}, which is an application Proposition \ref{p:induction-on-scales}. Both Proposition \ref{p:induction-on-scales} and Proposition \ref{prop3} have the flair of "combining estimates from different scales", but Proposition \ref{prop3} does this in a way directly applicable in the proof of Theorem \ref{t:mainTechnical}. The hypotheses of Proposition \ref{prop3} accommodate data about $\mathcal{P}$ "looking $s$-dimensional" or "looking $t_{j}$-regular" between the scales $\Delta_{j - 1}$ and $\Delta_{j}$. Proposition \ref{prop3} can be applied to any $(\delta,t)$-set with such a structure: one has \emph{a priori} information that $\mathcal{P}$ "looks $s$-dimensional" or "looks $t_{j}$-regular" between the scales $\Delta_{j - 1}$ and $\Delta_{j}$. In Section \ref{s:multiScaleDecomposition}, the main task is to verify that a (rather) general $(\delta,t)$-set has such a structure if the lengths of the blocks $[\Delta_{j},\Delta_{j - 1}]$ are chosen appropriately. This idea of "finding good scale block decompositions for $(\delta,t)$-sets" stems from previous work of the second author \cite{KeletiShmerkin19,Shmerkin20}. Finally, Section \ref{s:9} puts all the pieces together to prove Theorem \ref{t:mainTechnical}.

 \subsection*{Acknowledgements} We are grateful to the reviewers for reading the manuscript carefully, and for making a large number of helpful suggestions.

\section{Preliminaries and elementary incidence estimates}\label{s:prelim}

\subsection{Notation}

We adopt some standard notational conventions. If $A,B>0$, we use the notation $A\lesssim B$ to mean $A\le C B$ for some universal $C>0$. Likewise, $A\sim B$ is a shortcut for $A\lesssim B\lesssim A$. Sometimes the implicit constant $C$ will be allowed to depend on certain parameters; these will be denoted by subscripts. Occasionally we will use the notation $A=O(B)$ for $A\lesssim B$ (and likewise with subscripts).

We will sometimes use the notation $A\lessapprox B$, $A\approx B$. The specific meaning will be specified each time, but it will generally be used to ``hide'' slowly growing functions of a small scale $\delta$, such as $\log(1/\delta)$, or $\delta^{-\epsilon}$.

Given a dyadic number $\delta\in 2^{-\mathbb{N}}$, we denote the family of half-open dyadic sub-cubes of $\R^d$ of side-length $\delta$ by $\mathcal{D}_{\delta}(\R^d)$. More generally, if $A\subset\R^d$, then we denote $\mathcal{D}_{\delta}(A) = \{ p\in\mathcal{D}_{\delta}(\R^d): p\cap A\neq\emptyset\}$. We will very often take $A=[0,1)^2$; in this case we simply write $\mathcal{D}_{\delta} = \mathcal{D}_{\delta}([0,1)^2)$. We also write $|A|_{\delta} := |\mathcal{D}_{\delta}(A)|$.

We finally extend the previous notations from subsets of $\R^{d}$ to sub-families of $\mathcal{D}_{\delta}(\R^{d})$, where $\delta \in 2^{-\N}$. For $\mathcal{P} \subset \mathcal{D}_{\delta}(\R^{d})$ and $\Delta \in 2^{-\N}$ with $\Delta \geq \delta$, we write $\mathcal{D}_{\Delta}(\mathcal{P}) := \mathcal{D}_{\Delta}(P)$, where $P := \cup \mathcal{P}$ is the union of the elements in $\mathcal{P}$. With the same notation, we also write $|\mathcal{P}|_{\Delta} := |P|_{\Delta}$.

\subsection{$(\delta,s)$-sets}

Even though the main results of this paper concern "infinitesimal" quantities like Hausdorff dimension, the proofs will be effective. Most of the technical and auxiliary results below will be phrased in terms of the following notion of $(\delta,s,C)$-sets, which are a kind of discretization of $s$-dimensional sets at scale $\delta$.

\begin{definition}[$(\delta,s,C)$-set]\label{def1} Let $P \subset \R^{d}$ be a bounded nonempty set, $d \geq 1$. Let $\delta > 0$ be a dyadic number, and let $0 \leq s \leq d$ and $C > 0$. We say that $P$ is a \emph{$(\delta,s,C)$-set} if
\begin{equation}\label{form53} |P \cap Q|_{\delta} \leq C \cdot |P|_{\delta} \cdot r^{s}, \qquad Q \in \mathcal{D}_{r}(\R^{d}), \, \delta \leq r \leq 1. \end{equation}
\end{definition}

We emphasize that this definition differs from related ones in e.g. \cite{MR1856956, Orponen20}, notably in that they have $\delta^{-s}$ in the right-hand side of \eqref{form53} in place of $|P|_{\delta}$. The ambient dimension $d$ will be clear from context; in this article, $d$ will always be either $1$ or $2$.

\begin{definition}[Families of $\delta$-cubes]
We extend the definition of $(\delta,s,C)$-sets to subsets of $\mathcal{D}_{\delta}(\R^{d})$: a finite family $\mathcal{P} \subset \mathcal{D}_{\delta}$ is called a $(\delta,s,C)$-set if $P :=\cup \mathcal{P}$ satisfies \eqref{form53}. \end{definition}

When the constant $C$ is not too important or assumed to be small, we will drop it from the notation and speak of $(\delta,s)$-sets.

\begin{remark} We will often use the following (easy) observations without further remark:
\begin{itemize}
\item If $P$ is a $(\delta, s, C)$-set and $P'\subset P$ has $|P'|_\delta \ge K^{-1} |P|_{\delta}$, then $P'$ is a $(\delta,s, CK)$-set.
\item If $P \subset \R^{d}$ is a $(\delta,s,C)$-set, we have the lower bound $|P|_{\delta} \geq C^{-1} \cdot \delta^{-s}$. This follows from applying \eqref{form53} to $Q\in\mathcal{D}_{\delta}(\R^d)$ such that $P\cap Q\neq\emptyset$.
\end{itemize}
\end{remark}

The following proposition is often useful for finding $(\delta,s,C)$-sets. Let $\mathcal{H}_{\infty}^s$ denote Hausdorff content, that is,
\[
\mathcal{H}_{\infty}^s(B) = \inf \left\{ \sum_i \diam(B_i)^s: B\subset \bigcup_i B_i \right\}.
\]

\begin{proposition}\label{deltasSet} Let $\delta \in 2^{-\N}$, and let $B \subset [-2,2]^{d}$ be a set with $\calH^{s}_{\infty}(B) =: \kappa > 0$. Then, there exists a $\delta$-separated $(\delta,s,C/\kappa)$-set $P \subset B$, where $C \geq 1$ is an absolute constant. Moreover, one can choose $P$ so that $|P| \leq \delta^{-s}$. \end{proposition}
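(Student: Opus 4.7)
The plan is to combine Frostman's lemma for Hausdorff content with a dyadic pigeonhole on the $\mu$-masses of $\delta$-cubes. Since $\calH^s_\infty(B) = \kappa > 0$, Frostman's lemma produces a finite positive Borel measure $\mu$ supported in $\overline{B} \subset [-2,2]^{d}$ with $\mu(\R^d) \gtrsim \kappa$ and $\mu(Q) \leq \diam(Q)^s$ for every cube $Q$, where the implicit constant is dimensional. This is the starting input.

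Next, I would partition the dyadic $\delta$-cubes carrying positive $\mu$-mass into dyadic mass classes $\mathcal{Q}_{j} := \{Q \in \mathcal{D}_{\delta} : \mu(Q) \in [2^{-j-1}, 2^{-j})\}$. The Frostman bound forces $j \geq s \log_{2}(1/\delta)$, and the contribution from extremely light cubes (say $j$ much larger than $d \log_{2}(1/\delta)$) is negligible because $2^{-j} \cdot |\mathcal{D}_{\delta}(\overline{B})| \ll \kappa$ there. Thus the relevant range of $j$ has cardinality $O(\log(1/\delta))$, and by pigeonhole there exists $j^{\ast}$ with $\sum_{Q \in \mathcal{Q}_{j^{\ast}}} \mu(Q) \gtrsim \kappa$. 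Writing $m := 2^{-j^{\ast}-1}$, I pick one point $x_Q \in Q \cap B$ for each $Q \in \mathcal{Q}_{j^{\ast}}$ (after a negligible perturbation of the Frostman support) and let $P := \{x_Q : Q \in \mathcal{Q}_{j^{\ast}}\}$, which is automatically $\delta$-separated.

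The verification is then essentially bookkeeping. For any dyadic $r$-cube $Q'$ with $\delta \leq r \leq 1$, each contributing point of $P \cap Q'$ corresponds to a $\delta$-cube of $\mu$-mass at least $m$, so
\[
|P \cap Q'|_{\delta} \cdot m \leq \sum_{Q \in \mathcal{Q}_{j^{\ast}}, \, Q \subset Q'} \mu(Q) \leq \mu(Q') \leq r^{s}.
\]
Combining $|P \cap Q'|_{\delta} \leq r^{s}/m$ with the pigeonhole lower bound $|P|_{\delta} = |\mathcal{Q}_{j^{\ast}}| \gtrsim \kappa/m$ yields $|P \cap Q'|_{\delta} \lesssim (r^{s}/\kappa)|P|_{\delta}$, which is the $(\delta, s, C/\kappa)$-set condition. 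The size bound $|P| \leq \delta^{-s}$ follows from $m \geq \delta^{s}/2$ together with $|\mathcal{Q}_{j^{\ast}}| \cdot m \leq \mu(\R^{d}) = O(1)$; if needed, one can simply discard points to enforce the bound.

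The main obstacle lies in ensuring that the constant $C$ is truly \emph{absolute}, since a naive pigeonhole over $O(\log(1/\delta))$ dyadic mass levels would introduce a $\log(1/\delta)$ factor into the inequality $\sum_{Q \in \mathcal{Q}_{j^{\ast}}} \mu(Q) \gtrsim \kappa$. Handling this requires either a refinement that confines $j^{\ast}$ to a bounded range near $s \log_{2}(1/\delta)$ (so that only $O(1)$ mass classes need be compared), or an alternative multi-scale refinement construction — iteratively selecting $\sim 2^{s}$ sub-cubes per parent — that bypasses the pigeonhole entirely. Past this one point, every remaining estimate is routine.
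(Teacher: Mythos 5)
The paper does not prove this proposition; it cites \cite[Lemma 3.13]{FasslerOrponen14}, whose argument is a direct multi-scale mass-distribution and selection construction. Your route (Frostman measure plus a single-level pigeonhole on the $\mu$-masses of $\delta$-cubes) is different, and as written it does not reach the stated conclusion. The problem is exactly the one you flag at the end: the admissible mass levels $j$ range over an interval of length $\sim (d-s)\log_2(1/\delta)+\log_2(1/\kappa)$, so the pigeonhole only yields $\sum_{Q\in\mathcal{Q}_{j^{\ast}}}\mu(Q)\gtrsim \kappa/\log(1/\delta)$, and your argument then produces a $(\delta,s,C\log(1/\delta)/\kappa)$-set rather than a $(\delta,s,C/\kappa)$-set. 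Neither of your proposed repairs is carried out, and the first one (confining $j^{\ast}$ to $O(1)$ levels) is not justified: nothing about a general Frostman measure forces its $\delta$-cube masses to concentrate near the level $\delta^{s}$, and choosing a special Frostman measure with that property is essentially as hard as proving the proposition. Your second suggestion (an iterative top-down selection of sub-cubes) is in substance the Fässler--Orponen proof, but it is named rather than given. So the key step is missing. (For many of the applications in this paper a $\log(1/\delta)$ loss would be absorbed by the $\approx_{\delta}$ notation, but the proposition as stated, and Lemma \ref{lem:thin-delta-subset} which is built on it, require an absolute constant.)

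Two secondary points. First, the cardinality bound: you write that $|P|\leq\delta^{-s}$ ``follows from $m\geq\delta^{s}/2$'', but the Frostman condition gives the opposite inequality $m\lesssim_{d}\delta^{s}$; if the selected level has $m\ll\delta^{s}$ then $|P|\sim\kappa/m$ can greatly exceed $\delta^{-s}$, and discarding points down to $\delta^{-s}$ is not harmless, because the non-concentration condition \eqref{form53} is normalised by $|P|_{\delta}$ and the surviving set could be badly distributed (this careful thinning is precisely what Lemma \ref{lem:thin-delta-subset} provides, and that lemma is derived from the present proposition, so you cannot invoke it here). Second, Frostman's lemma for an arbitrary set $B$ gives a measure supported on $\overline{B}$, and a dyadic cube of positive measure need not meet $B$ itself; this is fixable with an $O_{d}(1)$ loss by passing to adjacent cubes, or avoided entirely by running the dyadic mass-distribution construction directly on $\mathcal{D}_{\delta}(B)$, but ``after a negligible perturbation of the Frostman support'' does not address it.
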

The details can be found in \cite[Lemma 3.13]{FasslerOrponen14}.

\begin{remark} We remark that a $(\delta,s,C)$-set $P \subset \R^{d}$ need not be a finite set, so one may not replace the left hand side of the defining inequality \eqref{form53} by the cardinality of $P \cap Q$. In fact, our sets will often be the collections, or unions, of squares in $\mathcal{D}_{\delta}$. However, if $P \subset \R^{d}$ is a $(\delta,s,C)$-set then every, equivalently any, maximal $\delta$-separated subset $P_{\delta} \subset P$ satisfies the cardinality estimate
\begin{equation}\label{form55} |P_{\delta} \cap B(x,r)| \lesssim_d C \cdot |P_{\delta}| \cdot r^{s}, \qquad x \in \R^{d}, \, \delta \leq r \leq 1. \end{equation}
Indeed, it is easy to check that if $x \in \R^{d}$ and $\delta \leq r \leq 1$, then there exists a dyadic square "$Q$" of side-length $\sim r$ with the property $|P_{\delta} \cap B(x,r)| \sim_d |P \cap Q|_{\delta}$, and this implies \eqref{form55} when combined with the $(\delta,s,C)$-property of $P$. The converse implication also holds and is easy to check, but we will not need it, so we omit the details. \end{remark}

\begin{remark} \label{r:delta-set-via-measure}
Let $P\subset \R^{d}$ be a $(\delta,s,C)$-set. If we define
\[
\mu = |P|_\delta^{-1} \delta^{-d} \sum_{Q\in\mathcal{D}_{\delta}(P)} \mathcal{L}^{d}|_Q,
\]
where $\mathcal{L}^d|_Q$ is the restriction of Lebesgue measure to the cube $Q$, then it is easy to check that
\begin{equation} \label{eq:frostman}
\mu(B(x,r)) \lesssim C r^s
\end{equation}
for all $x\in\R^d$ and $r\in (0,1]$. For $r\in [\delta,1]$ this follows from the fact that $P$ is a $(\delta,s,C)$-set, and for $r<\delta$ an even better estimate follows from the fact that inside $\delta$-cubes $\mu$ is a multiple of Lebesgue measure. Conversely, if $P\subset\R^d$ is bounded and $\mu$ defined as above satisfies \eqref{eq:frostman}, then $P$ is a $(\delta,s,O(C))$-set.
\end{remark}

A consequence of \eqref{eq:frostman} is that if $P \subset \R^{d}$ is a $(\delta,s,C)$-set, and $P_{\delta}=\cup\mathcal{D}_{\delta}(P)$, then $\mathcal{H}_\infty^s(P_{\delta})\gtrsim C^{-1}$, since for each cover $(B_i)_i$ of $P_{\delta}$ we can apply  \eqref{eq:frostman} to a ball of radius $\diam(B_i)$ containing $B_i$. Therefore, by Proposition \ref{deltasSet}, the set $P_{\delta}$ contains a $\delta$-separated $(\delta,s,O(C))$-set $P'_{\delta}$ with $|P'_{\delta}| \leq \delta^{-s}$. By picking a point of $P$ from each $Q \in \mathcal{D}_{\delta}(P'_{\delta})$, we arrive at the following lemma.
\begin{lemma}\label{lem:thin-delta-subset}
Let $P\subset [-2,2]^{d}$ be a $(\delta,s,C)$-set. Then $P$ contains a $\delta$-separated $(\delta,s,O_d(C))$-subset $P'$ with $|P'| \leq \delta^{-s}$.
\end{lemma}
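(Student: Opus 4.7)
The proof is essentially laid out in the two paragraphs immediately preceding the statement, so my plan is simply to organise those observations. First I would associate to $P$ the measure
\begin{equation*}
\mu := |P|_\delta^{-1} \delta^{-d} \sum_{Q \in \mathcal{D}_\delta(P)} \mathcal{L}^d|_Q
\end{equation*}
introduced in Remark \ref{r:delta-set-via-measure}, check the Frostman bound $\mu(B(x,r)) \lesssim_d C r^s$ for all $x \in \R^d$ and $r \in (0,1]$ (splitting into the cases $r \geq \delta$, where the $(\delta,s,C)$-condition is used, and $r < \delta$, where the bound is trivial because inside $\delta$-cubes $\mu$ is a multiple of Lebesgue measure), and then conclude that the ``thickened'' set $P_\delta := \cup \mathcal{D}_\delta(P)$ satisfies $\mathcal{H}_\infty^s(P_\delta) \gtrsim_d C^{-1}$. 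For the last point, given any cover $\{B_i\}$ of $P_\delta$, I enclose each $B_i$ in a ball $\tilde B_i$ of radius $\diam(B_i)$ and test against $\mu$: since $\mu(P_\delta) = 1$ and $\mu(\tilde B_i) \lesssim_d C \diam(B_i)^s$, one gets $\sum_i \diam(B_i)^s \gtrsim_d C^{-1}$.

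Next I would feed $B := P_\delta$ into Proposition \ref{deltasSet} with $\kappa \gtrsim_d C^{-1}$, obtaining a $\delta$-separated $(\delta, s, O_d(C))$-set $P'_\delta \subset P_\delta$ with $|P'_\delta| \leq \delta^{-s}$. Each $Q \in \mathcal{D}_\delta(P'_\delta)$ is automatically a cube of $\mathcal{D}_\delta(P)$, so I may pick a single representative $y_Q \in P \cap Q$ and set $P' := \{y_Q : Q \in \mathcal{D}_\delta(P'_\delta)\} \subset P$. Then $|P'| = |\mathcal{D}_\delta(P'_\delta)| \leq |P'_\delta| \leq \delta^{-s}$, as required. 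The $(\delta, s, O_d(C))$-condition for $P'$ is inherited from that of $P'_\delta$ via the observations $\mathcal{D}_r(P') \subset \mathcal{D}_r(P'_\delta)$ for $r \geq \delta$ and $|P'|_\delta \sim_d |P'_\delta|_\delta$, which transfer the inequality \eqref{form53} up to a $d$-dependent constant.

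The only point that I expect to require a little care is the literal $\delta$-separation of $P'$: two representatives $y_{Q_1}, y_{Q_2}$ lying in neighbouring $\delta$-cubes could \emph{a priori} be within distance $\sqrt{d}\delta$ of each other. I would handle this by applying Proposition \ref{deltasSet} at a scale $c_d \delta$ in place of $\delta$ (equivalently, thinning $P'_\delta$ to a $C_d \delta$-separated subsystem for a suitable dimensional constant $C_d$), which forces the selected cubes to lie at mutual distance $\gtrsim \delta$ and thereby makes the representatives $y_Q$ genuinely $\delta$-separated. I do not foresee any further obstacle beyond this routine bookkeeping.
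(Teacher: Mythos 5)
Your proposal is correct and is essentially identical to the paper's own argument, which is exactly the content of the two paragraphs preceding the lemma (Frostman measure from Remark \ref{r:delta-set-via-measure}, lower bound on $\mathcal{H}^{s}_{\infty}(P_{\delta})$, Proposition \ref{deltasSet}, then picking representatives from $P$). Your extra remark about enforcing genuine $\delta$-separation of the representatives by working at scale $c_{d}\delta$ is a valid way to handle a detail the paper glosses over.
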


\subsection{Duality and $(\delta,s)$-sets of lines}

In addition to $(\delta,s)$-sets of points, we also wish to talk about $(\delta,s)$-sets of lines. These will be defined as the images of $(\delta,s)$-sets of points under the following "duality" map:

\begin{definition}[Dual sets] For $(a,b) \in \R^{2}$, define the \emph{dual line}
\begin{displaymath} \mathbf{D}(a,b) := \{(x,y)\in R^{2} : y = ax + b\} . \end{displaymath}
Then $\mathbf{D}$ is a one-to-one map between the points in $\R^{2}$, and the \emph{non-vertical} lines in $\R^{2}$. If $P \subset \R^{2}$ is a set, we write
\begin{displaymath} \mathbf{D}(P) := \{\mathbf{D}(a,b) : (a,b) \in P\}. \end{displaymath}
Thus $\mathbf{D}(P)$ is defined as a collection of lines, but we will often abuse notation by identifying $\mathbf{D}(P)$ with its union. If $P \subset \R^{2}$ is a bounded set, we say that a collection of lines $\mathbf{D}(P)$ is a $(\delta,s,C)$-set if $P$ is a $(\delta,s,C)$-set in the sense of Definition \ref{def1}.
\end{definition}

If $\mathcal{L}$ is a collection of non-vertical lines, then $\mathcal{L} = \mathbf{D}(P)$ for a unique set $P \subset \R^{2}$, and hence it is well-defined to ask whether $\mathcal{L}$ is a $(\delta,s,C)$-set of lines. The duality map $\mathbf{D}$ also allows us to define
\begin{displaymath} |\mathcal{L}|_{\delta} := |\mathbf{D}(P)|_{\delta} := |P|_{\delta}. \end{displaymath}
\begin{definition}[Slope set]\label{def2} The \emph{slope} of a non-vertical line $\ell = \mathbf{D}(a,b)$ is defined to be the number $a \in \R$; we will write $\sigma(\ell) := a$. More generally, the \emph{slope set} of a line family $\mathcal{L} = \mathbf{D}(P)$ is defined as $\sigma(\mathcal{L}) := \pi_{1}(P)$, where $\pi_{1}(a,b) = a$. \end{definition}

\begin{definition}[Dyadic $\delta$-tubes]\label{def:dyadicTubes} Let $\delta \in 2^{-\N}$. A \emph{dyadic $\delta$-tube} is a set of the form $T = \cup \mathbf{D}(p)$, where $p \in \mathcal{D}_{\delta}([-1,1)\times \R)$. In this context, we abbreviate $\mathbf{D}(p) := \cup \mathbf{D}(p)$. The collection of all dyadic $\delta$-tubes is denoted
\[
\mathcal{T}^{\delta} := \{\mathbf{D}(p) : p \in \mathcal{D}_{\delta}([-1,1)\times \R)\}.
\]
A finite collection of dyadic $\delta$-tubes $\{\mathbf{D}(p)\}_{p \in \mathcal{P}}$ is called a $(\delta,s,C)$-set if $\mathcal{P}$ is a $(\delta,s,C)$-set in the sense of Definition \ref{def1}.

We remark that a dyadic $\delta$-tube is not a tube in the usual sense (a $\delta$-neighbourhood of some line). However, $T':=\mathbf{D}([a,a+\delta]\times [b,b+\delta])\cap [-1,1)^2$ satisfies
\begin{align*}
\{(x,y)\in [-1,1)^2: &|y-(ax+b)|\in [0,\delta] \}\subset T'\\
&\subset  \{ (x,y)\in [-1,1)^2: |y-(ax+b)|\in [-\delta,2\delta]\}.
\end{align*}
Since for the most part we will only care about what happens inside $[-1,1)^2$, it is safe to think of a dyadic $\delta$-tube as comparable to a tube of width $\sim\delta$.
We also note that if $p,Q$ are dyadic cubes (of possibly different sizes), then $p\subset Q$ if and only if $\mathbf{D}(p)\subset \mathbf{D}(Q)$. However, unlike dyadic cubes, different tubes in $\mathcal{T}^{\delta}$ may intersect.

If $T = \mathbf{D}(p)$ is a dyadic $\delta$-tube, we define the slope $\sigma(T)$ as the left endpoint of the interval $\pi_{1}(p) \in \mathcal{D}_{\delta}([-1,1))$. Thus $\sigma(T) \in (\delta \cdot \Z) \cap [-1,1)$. (This notation is slightly inconsistent with Definition \ref{def2}, but we will make sure that this will not cause confusion.) If $\mathcal{T}$ is a collection of dyadic $\delta$-tubes, we often write $\sigma(\mathcal{T}) := \{\sigma(T) : T \in \mathcal{T}\} \subset \delta \cdot \Z$. \end{definition}

The choice of the rectangle $[-1,1) \times \R$ is somewhat arbitrary: the main purpose is to avoid dealing with "nearly vertical" tubes. We have now defined all the concepts appearing in our $\delta$-discretised main result, Theorem \ref{t:mainTechnical}. The precise choices of parameters for dyadic $\delta$-squares and $\delta$-tubes play no role for the validity of Theorem \ref{t:mainTechnical}, even if such normalisations are convenient in the proofs. This fact will be formalised in Theorem \ref{t:mainTechnicalGeneral}.

We then make some further notational conventions. From now on, sets of ``points'' will typically be subsets of $[0,1)^2$ while, as explained, dyadic $\delta$-tubes will be of the form $\mathbf{D}(p)$ with $p \in \mathcal{D}_{\delta}([-1,1) \times \R)$.  As above, squares in $\mathcal{D}_{\delta}(\R^{2})$ are often denoted by the letter "$p$", and we remind the reader that $\mathcal{D}_{\delta} := \mathcal{D}_{\delta}([0,1)^{2})$. Our slightly non-standard use of "$p$" leaves the notation "$Q$" applicable for squares in $\mathcal{D}_{\Delta}(\R^{2})$, with $\delta < \Delta \leq 1$.

As stated, $\mathcal{T}^\delta$ will denote the family of \emph{all} dyadic $\delta$-tubes. In contrast, "specific" families of $\delta$-tubes are denoted $\mathcal{T},\mathcal{T}_{\delta},\mathcal{T}_{\Delta}$. The notation $\mathcal{T}(p)$ will always denote a collection of dyadic tubes in $\mathcal{T}^{\delta}$ intersecting the dyadic square $p\in\mathcal{D}_{\delta}(\R^2)$. When dealing with two dyadic scales $\delta<\Delta$, as will often be the case, we will denote ``thin'' $\delta$-tubes by $T, T_j$, and ``thick'' $\Delta$-tubes by $\mathbf{T},\mathbf{T}_j$.

In general, a collection "$\mathcal{T}$" of dyadic $\delta$-tubes can have very different separation properties from that of its slope set "$\sigma(\mathcal{T})$", but if all the tubes in $\mathcal{T}$ intersect a common square $p \in \mathcal{D}_{\delta}$, things are different:
\begin{lemma}\label{tubesSlopes} Let $p \in \mathcal{D}_{\delta}$, and let $\mathcal{T}(p)\subset \mathcal{T}^{\delta}$ be a collection of dyadic $\delta$-tubes, all of which intersect $p$. Then the map $T \mapsto \sigma(T)$ is at most $10$-to-$1$ on $\mathcal{T}(p)$. In particular,
\begin{equation}\label{form63} |\{T \in \mathcal{T}(p) : \sigma(T) \in I\}| \leq 10|\sigma(\mathcal{T}(p)) \cap I|, \qquad I \subset \R. \end{equation}
\end{lemma}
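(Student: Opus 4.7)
The plan is to fix a slope value $a \in \delta\cdot\Z \cap [-1,1)$ and bound the number of dyadic $\delta$-tubes $T \in \mathcal{T}(p)$ with $\sigma(T) = a$; the second assertion \eqref{form63} will then follow by summing the fibre bound over $a \in \sigma(\mathcal{T}(p)) \cap I$. By Definition \ref{def:dyadicTubes}, each such $T$ has the form $T = \mathbf{D}([a,a+\delta) \times [b, b+\delta))$ with $b \in \delta\cdot\Z$, so the task reduces to counting the admissible intercepts $b$.

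The first step is to unwind the hypothesis $T \cap p \neq \emptyset$. Writing $p = [x_0, x_0+\delta) \times [y_0, y_0+\delta) \subset [0,1)^{2}$, this is equivalent to the existence of $(x,y) \in p$ and $(a',b') \in [a,a+\delta) \times [b, b+\delta)$ with $y = a'x + b'$, i.e.\ $b' = y - a'x$. Consequently, an intercept $b \in \delta\cdot\Z$ is admissible if and only if the dyadic interval $[b, b+\delta)$ meets the set
\[ E \;:=\; \{y - a'x : (x,y) \in p,\; a' \in [a, a+\delta)\}. \]
So what I really need is an upper bound on $\diam(E)$.

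The core geometric estimate is $\diam(E) < 4\delta$. For any two triples $(x_i, y_i, a_i') \in p \times [a, a+\delta)$, the triangle inequality yields
\[ |(y_1 - a_1'x_1) - (y_2 - a_2'x_2)| \leq |y_1 - y_2| + |x_1|\cdot |a_1' - a_2'| + |a_2'| \cdot |x_1 - x_2| < \delta + 1\cdot\delta + 2\cdot\delta = 4\delta, \]
using $|x_i| < 1$ (since $p \subset [0,1)^{2}$) and $|a_i'| \leq |a| + \delta \leq 1 + \delta \leq 2$ (since $a \in [-1,1)$). Hence $E$ is contained in an interval of length $< 4\delta$, which is met by at most $5$ dyadic intervals of the form $[b, b+\delta)$ with $b \in \delta\cdot\Z$. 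This proves the fibre bound $5 \leq 10$, and summing over $a \in \sigma(\mathcal{T}(p)) \cap I$ gives \eqref{form63}.

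There is no serious obstacle here; the argument is an exercise in unpacking the duality construction. The one point worth flagging is the estimate $|a_i'| \leq 2$: this is where the convention $a \in [-1,1)$ from Definition \ref{def:dyadicTubes} enters, and is precisely the reason near-vertical tubes must be excluded, since otherwise the range of $a'x$ would not be comparable to $\delta$ and the fibre bound would fail.
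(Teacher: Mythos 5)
Your argument is correct and follows essentially the same route as the paper's: fix a slope $a$, use the incidence $T\cap p\neq\emptyset$ to write $b'=y-a'x$, and apply the triangle inequality with $|x|\le 1$ and $|a'|\le 2$ to confine the admissible intercepts to an interval of length $O(\delta)$. Your packaging via the set $E$ and its diameter is a clean (and in fact slightly sharper) version of the paper's direct comparison of $|b_1-b_2|$ for two tubes of equal slope, and the reduction of \eqref{form63} to the fibre bound is the same.
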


\begin{proof} Let $a \in \sigma(\mathcal{T}(p))$ be a fixed slope, and assume that $T_{j} = \mathbf{D}([a,a + \delta) \times [b_{j},b_{j} + \delta)) \in \mathcal{T}(p)$ for $j \in \{1,2\}$, so in particular there exist points $(x_{1},y_{1}) \in T_{1} \cap p$ and $(x_{2},y_{2}) \in T_{2} \cap p$. This implies that there exist numbers $a',a'',b_{1}',b_{2}''$ with $\max\{|a' - a|,|a'' - a|,|b_{1}' - b_{1}|,|b_{2}'' - b_{2}|\} < \delta$ such that
\begin{displaymath} y_{1} = a' x_{1} + b_{1}' \quad \text{and} \quad y_{2} = a''x_{2} + b_{2}''. \end{displaymath}
Using $|y_{1} - y_{2}|,|x_{1} - x_{2}| \leq \delta$, and $|a'|, |x_{2}| \leq 1$ (since $p \subset [0,1)^{2}$, and $T \in \mathcal{T}^{\delta}$), it follows from the triangle inequality that
\begin{displaymath} |b_{1} - b_{2}| \leq |b_{1}' - b_{2}''| + 2\delta  = |(y_{1} - a'x_{1}) - (y_{2} - a''x_{2})| + 2\delta \leq 5\delta. \end{displaymath}
This implies the claim. \end{proof}
\begin{cor}\label{cor1} Let $p_{0} \in \mathcal{D}_{\delta}$, and let $\mathcal{T}\subset\mathcal{T}^{\delta}$ be a collection of dyadic $\delta$-tubes, all of which intersect $p_{0}$. If the slope set $\sigma(\mathcal{T})$ is a $(\delta,s,C)$-set for some $s \geq 0$ and $C > 0$, then also $\mathcal{T}$ is a $(\delta,s,10C)$-set. Conversely, if $\mathcal{T}$ is a $(\delta,s,C)$-set, then $\sigma(\mathcal{T})$ is a $(\delta,s,C')$-set for some $C' \sim C$.
\end{cor}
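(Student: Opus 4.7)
The plan is to reduce both implications to the $10$-to-$1$ statement \eqref{form63}, which essentially identifies a pencil of tubes through $p_{0}$ with its slope set. Writing $\mathcal{T} = \{\mathbf{D}(p) : p \in \mathcal{P}\}$ for the associated dual collection of dyadic $\delta$-squares, Lemma \ref{tubesSlopes} applied with $I = \R$ already yields the free cardinality comparison $|\sigma(\mathcal{T})| \leq |\mathcal{T}| = |\mathcal{P}|_{\delta} \leq 10\,|\sigma(\mathcal{T})|$, which I would use throughout to transfer mass estimates back and forth between the two sides.

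For the forward direction, I would fix $Q \in \mathcal{D}_{r}(\R^{2})$ with $\delta \leq r \leq 1$ and observe that any $p \in \mathcal{P}$ with $p \subset Q$ satisfies $\sigma(\mathbf{D}(p)) \in \pi_{1}(Q) \in \mathcal{D}_{r}(\R)$. Chaining \eqref{form63} with $I = \pi_{1}(Q)$, then the $(\delta,s,C)$-property of $\sigma(\mathcal{T})$ applied to $\pi_{1}(Q)$, and finally the bound $|\sigma(\mathcal{T})| \leq |\mathcal{P}|_{\delta}$ would immediately give $|\mathcal{P} \cap Q|_{\delta} \leq 10C\,|\mathcal{P}|_{\delta}\,r^{s}$, which is exactly the $(\delta,s,10C)$-property of $\mathcal{P}$.

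The reverse direction is the interesting half, and contains the main obstacle. Given $I \in \mathcal{D}_{r}(\R)$, the plan is to control $|\sigma(\mathcal{T}) \cap I|$ by covering the ``vertical slab'' $\mathcal{P}_{I} := \{p \in \mathcal{P} : \pi_{1}(p) \subset I\}$ by dyadic $r$-squares and applying the $(\delta,s,C)$-property of $\mathcal{P}$ on each. A priori $\mathcal{P}_{I}$ meets $O(1/r)$ such squares, which is useless; the hard part will be reducing this to $O(1)$. The crucial observation is that the common-incidence hypothesis pins down the second coordinate: unwinding $y = a'x + b'$ with $(a',b')$ within $\delta$ of $(a,b) \in p \in \mathcal{P}$ and $(x,y) \in p_{0} = [x_{0},x_{0}+\delta) \times [y_{0},y_{0}+\delta) \subset [0,1)^{2}$ forces $b$ to lie within $O(\delta)$ of the affine value $y_{0} - a x_{0}$. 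Thus $\mathcal{P}$ lies in an $O(\delta)$-neighbourhood of the $1$-parameter Lipschitz curve $a \mapsto (a,\,y_{0}-ax_{0})$, so as $a$ ranges over $I$, the intercept $b$ is confined to an interval $J$ of length $O(r)$, and $I \times J$ is covered by $O(1)$ elements of $\mathcal{D}_{r}(\R^{2})$. Summing the $(\delta,s,C)$-estimate over these squares and invoking $|\mathcal{P}|_{\delta} \leq 10\,|\sigma(\mathcal{T})|$ then gives $|\sigma(\mathcal{T}) \cap I| \lesssim C\,|\sigma(\mathcal{T})|\,r^{s}$, which is the desired $(\delta,s,C')$-property with $C' \sim C$. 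Everything outside this dimension-reduction step is bookkeeping built on top of Lemma \ref{tubesSlopes}.
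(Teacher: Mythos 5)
Your proposal is correct and follows essentially the same route as the paper: the forward direction is the same chain through \eqref{form63} and the $(\delta,s,C)$-property of the slope set, and the reverse direction hinges on the same key observation that incidence with the common square $p_{0}$ confines the intercepts to an $O(\delta)$-neighbourhood of $a \mapsto y_{0} - a x_{0}$, so that the relevant squares of $\mathcal{P}$ sit inside $O(1)$ dyadic $r$-squares. The only cosmetic difference is that the paper selects a single best dyadic interval $I'$ rather than summing over the $O(1)$ covering squares, which is an equivalent bookkeeping choice.
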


\begin{proof} Write $\mathcal{T} = \{\mathbf{D}(p)\}_{p \in \mathcal{P}}$, where $\mathcal{P} \subset \mathcal{D}_{\delta}$. Fix a square $Q_{r} \in \mathcal{D}_{r}$ with $\delta \leq r \leq 1$, and note that whenever $p \subset Q_{r}$, the slope $\sigma(T)$ lies on the interval $I := \pi_{1}(Q_{r}) \in \mathcal{D}_{r}(\R)$ of length $r \geq \delta$. Therefore,
\begin{displaymath} |\{p \in \mathcal{P} : p \subset Q_{r}\}| \leq |\{T \in \mathcal{T} : \sigma(T) \in I\}| \stackrel{\eqref{form63}}{\leq} 10|\sigma(\mathcal{T}) \cap I| \leq 10C \cdot |\sigma(\mathcal{T})| \cdot r^{s}.  \end{displaymath}
Since clearly $|\sigma(\mathcal{T})| \leq |\mathcal{P}| = |\mathcal{T}|$, we have now shown that $\mathcal{T}$ is a $(\delta,s,10C)$-set.

For the converse implication, we first observe that $|\mathcal{T}| \leq 10|\sigma(\mathcal{T})|$ by \eqref{form63} applied with $I = \R$. So, it suffices to show that if $I \in \mathcal{D}_{r}$, $\delta \leq r \leq 1$, then $|\sigma(\mathcal{T}) \cap I| \lesssim C \cdot |\mathcal{T}| \cdot r^{s}$. To this end, fix $I \in \mathcal{D}_{r}$ as above, and write $\{a_{1},\ldots,a_{M}\} := \sigma(\mathcal{T}) \cap I$. Since $a_{j} \in \sigma(\mathcal{T})$, there is at least one corresponding number $b_{j} \in \delta \cdot \Z$ such that
\begin{displaymath} [a_{j},a_{j} + \delta) \times [b_{j},b_{j} + \delta) \in \mathcal{P}. \end{displaymath}
(If there are several, just pick one.) We claim that all such numbers "$b_{j}$" lie on a single interval $J \subset \R$ of length $|J| \sim |I|$, determined by $I$ and the square $p_{0}$. To see this, recall that $\mathbf{D}([a_{j},a_{j} + \delta) \times [b_{j},b_{j} + \delta))$ intersects $p_{0} \in \mathcal{D}_{\delta}$. Spelling out what this means, for every $1 \leq j \leq M$ there exist $(x_{j},y_{j}) \in p_{0}$, $a_{j}' \in [a_{j}a_{j} + \delta)$, and $b_{j}' \in [b_{j},b_{j} + \delta)$ satisfying $y_{j} = a_{j}'x_{j} + b_{j}'$. The numbers $a_{j}'$ here range in the $\delta$-neighbourhood of the interval $I$, and the numbers $x_{j},y_{j}$ lie at distance $\delta$ from $x_{0},y_{0}$, the coordinates of the left corner of $p_{0}$. Therefore, the numbers $b_{j}$ range in the $C\delta$-neighbourhood of $y_{0} - I \cdot x_{0}$, which is an interval of length $\lesssim |I|$. This establishes the claim.

Consequently, we may pick an interval $I' \in \mathcal{D}_{r}(\R)$ such that
\begin{displaymath} |\sigma(\mathcal{T}) \cap I| = M \lesssim |\{1 \leq j \leq M : [a_{j},a_{j} + \delta) \times [b_{j},b_{j} + \delta) \subset I \times I'\}| \leq C \cdot |\mathcal{T}| \cdot r^{s}. \end{displaymath}
This completes the proof. \end{proof}

\subsection{An elementary incidence bound}

We next record an elementary incidence estimate, which is reminiscent of Theorem \ref{t:mainTechnical}, but without the "$\epsilon$"-gain (the parallel is even clearer in Corollary \ref{prop5}). In this section, the notation $A \lessapprox_{\delta} B$ will mean that there exists an absolute constant $C \geq 1$ such that
\begin{displaymath} A \leq C \cdot \log \left(\tfrac{1}{\delta} \right)^{C} B. \end{displaymath}

\begin{proposition}\label{incidenceProp} Let $0 \leq s \leq t \leq 1$, and let $C_{P},C_{T} \geq 1$. Let $\mathcal{P} \subset \mathcal{D}_{\delta}$ be a $(\delta,t,C_{P})$-set. Assume that for every $p \in \mathcal{P}$ there exists a $(\delta,s,C_{T})$-family $\mathcal{T}(p) \subset \mathcal{T}^{\delta}$ of dyadic $\delta$-tubes with the property that $T \cap p \neq \emptyset$ for all $T \in \mathcal{T}(p)$, and $|\mathcal{T}(p)|=M$ for some $M\ge 1$.

Let $\mathcal{T} \subset \mathcal{T}^{\delta}$ be arbitrary, and define $\mathcal{I}(\mathcal{P},\mathcal{T}) := \{(p,T) \in \mathcal{P} \times \mathcal{T} : T \in \mathcal{T}(p)\}$. Then
\begin{equation}\label{incidenceBound} |\mathcal{I}(\mathcal{P},\mathcal{T})| \lessapprox_{\delta} \max\left\{ \sqrt{C_{P}C_{T}} \cdot (M\delta^{s})^{\theta/2} \cdot  |\mathcal{T}|^{1/2}|\mathcal{P}|, |\mathcal{T}| \right\}, \end{equation}
where $\theta = \theta(s,t) := (1 - t)/(1 - s) \in [0,1]$. (If $s = t = 1$, then $\theta(s,t) := 0$.)
\end{proposition}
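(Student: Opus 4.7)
The plan is to combine a dyadic pigeonhole on the tube-multiplicities with a Cauchy--Schwarz / pair-counting argument. Write $n(T) := |\{p \in \mathcal{P} : T \in \mathcal{T}(p)\}|$, so $|\mathcal{I}(\mathcal{P},\mathcal{T})| = \sum_{T\in\mathcal{T}}n(T)$. Dyadic pigeonholing in $n(T)$ produces a dyadic $k\geq 1$ and a subfamily $\mathcal{T}_k \subseteq \mathcal{T}$ with $n(T)\in[k,2k)$ for all $T\in\mathcal{T}_k$ and $|\mathcal{I}(\mathcal{P},\mathcal{T})|\lessapprox_\delta k|\mathcal{T}_k|$. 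If $k=1$ then $k|\mathcal{T}_k|\leq|\mathcal{T}|$, which accounts for the second alternative in the max, so henceforth assume $k\geq 2$. In that case $\binom{n(T)}{2}\gtrsim k^2$ for every $T\in\mathcal{T}_k$, whence
\begin{displaymath}
k^2|\mathcal{T}_k| \;\lesssim\; \sum_{T\in\mathcal{T}_k}\binom{n(T)}{2} \;\leq\; \sum_{\substack{p_1,p_2\in\mathcal{P} \\ p_1\neq p_2}}|\mathcal{T}(p_1)\cap\mathcal{T}(p_2)|.
\end{displaymath}

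The geometric input is a two-sided bound for $|\mathcal{T}(p_1)\cap\mathcal{T}(p_2)|$ when $p_1\neq p_2$ lie at distance $r\in[\delta,1]$. The slopes of dyadic $\delta$-tubes meeting both squares are confined to an interval of length $\sim \delta/r$ (a direct consequence of the duality description of $\mathcal{T}^\delta$, essentially already appearing in the proof of Lemma \ref{tubesSlopes}); combined with Lemma \ref{tubesSlopes} this yields the elementary bound $|\mathcal{T}(p_1)\cap\mathcal{T}(p_2)|\lesssim r^{-1}$. Using instead that $\sigma(\mathcal{T}(p_1))$ is a $(\delta,s,O(C_T))$-set of cardinality $\sim M$ by Corollary \ref{cor1}, so that it places at most $\lesssim C_T M(\delta/r)^s$ elements in any interval of length $\delta/r$, one obtains the refined bound $|\mathcal{T}(p_1)\cap\mathcal{T}(p_2)|\lesssim C_T M(\delta/r)^s$. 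The two bounds balance exactly at the crossover scale $r_0 := (C_T M\delta^s)^{-1/(1-s)}$, and it is precisely this scale that will generate the exponent $\theta = (1-t)/(1-s)$ below.

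To sum over pairs, we dyadically decompose $\{(p_1,p_2): p_1\neq p_2\}$ by $|p_1-p_2|\sim 2^{-j}$; the $(\delta,t,C_P)$-assumption bounds the number of ordered pairs in each shell by $\lesssim C_P|\mathcal{P}|^2 2^{-jt}$. Using the $r^{-1}$ bound on the coarse side $r\geq r_0$ and the $C_T M(\delta/r)^s$ bound on the fine side $r\leq r_0$, the two resulting geometric series carry exponents $1-t\geq 0$ and $s-t\leq 0$ respectively, so each is dominated by its value at the boundary $r=r_0$, and both evaluate to $\lessapprox_\delta C_P(C_T M\delta^s)^\theta|\mathcal{P}|^2$ (with at most a logarithmic loss in the edge cases $s=t$ and $t=1$, absorbed into $\lessapprox_\delta$). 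Inserting this into the pair-count inequality, invoking $|\mathcal{T}_k|\leq|\mathcal{T}|$ via $(k|\mathcal{T}_k|)^2\leq (k^2|\mathcal{T}_k|)\cdot|\mathcal{T}_k|$, and using $C_T^\theta\leq C_T$ (valid since $\theta\in[0,1]$ and $C_T\geq 1$), produces $k|\mathcal{T}_k|\lessapprox_\delta \sqrt{C_PC_T}\,(M\delta^s)^{\theta/2}|\mathcal{T}|^{1/2}|\mathcal{P}|$, matching the first term in the maximum. The main technical point is to pick the crossover scale $r_0$ correctly so that the exponent $\theta$ emerges as the natural interpolation between the two geometric regimes, and to verify that the boundary evaluations on both sides of $r_0$ agree; after that, the proof reduces to routine dyadic bookkeeping.
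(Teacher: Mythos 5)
Your proposal is correct and follows essentially the same route as the paper: a Cauchy--Schwarz/pair-counting reduction, the two-sided bound $|\mathcal{T}(p_1)\cap\mathcal{T}(p_2)|\lesssim\min\{C_TM(\delta/r)^s,\,r^{-1}\}$ obtained from the slope sets via Lemma \ref{tubesSlopes} and Corollary \ref{cor1}, and a sum over dyadic distance shells using the $(\delta,t,C_P)$-property of $\mathcal{P}$. The only differences are bookkeeping: the paper handles the diagonal by a direct case split on whether $|\mathcal{I}(\mathcal{P},\mathcal{T})|$ dominates the off-diagonal pair sum rather than by pigeonholing on $n(T)$, and it interpolates the minimum pointwise via $\min\{a,b\}\le a^{\theta}b^{1-\theta}$ rather than splitting the shell sum at the crossover scale $r_0$ (the pointwise interpolation also sidesteps the degenerate case $s=1$, where your $r_0$ is undefined).
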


\begin{proof} Using Cauchy-Schwarz, we first estimate as follows:
\begin{align*} |\mathcal{I}(\mathcal{P},\mathcal{T})| &= \sum_{T \in \mathcal{T}} |\{p \in \mathcal{P} : T \in \mathcal{T}(p)\}| \\
&\leq |\mathcal{T}|^{1/2} \left|  \{(T,P,P'): T\in\mathcal{T}(p)\cap \mathcal{T}(p')  \}\right|^{1/2}  \\
&\leq |\mathcal{T}|^{1/2} \Bigg( |\mathcal{I}(\mathcal{P},\mathcal{T})| + \sum_{p \neq p'} |\mathcal{T}(p) \cap \mathcal{T}(p')| \Bigg)^{1/2}.
\end{align*}
Assume first that $|\mathcal{I}(\mathcal{P},\mathcal{T})| \geq 2 \sum_{p \neq p'} |\mathcal{T}(p) \cap \mathcal{T}(p')|$. Then
\begin{equation}\label{form95} |\mathcal{I}(\mathcal{P},\mathcal{T})| \leq |\mathcal{T}|^{1/2} \sqrt{3/2} \cdot |\mathcal{I}(\mathcal{P},\mathcal{T})|^{1/2} \quad \Longrightarrow \quad |\mathcal{I}(\mathcal{P},\mathcal{T})| \leq \tfrac{3}{2}|\mathcal{T}|. \end{equation}
To make progress in the opposite case, we will interpolate between the following upper bounds for $|\mathcal{T}(p) \cap \mathcal{T}(p')|$:
\begin{equation}\label{form94} |\mathcal{T}(p) \cap \mathcal{T}(p')| \lesssim \min \left\{C_{T} \cdot M \cdot \left(\tfrac{\delta}{d(p,p')} \right)^{s}, \tfrac{1}{d(p,p')} \right\}, \end{equation}
where $d(p,p')$ stands for the distance of the midpoints of $p$ and $p'$. Both bounds in \eqref{form94} follow by observing that if $T \in \mathcal{T}(p) \cap \mathcal{T}(p')$, then in particular $T \cap p \neq \emptyset \neq T \cap p'$, which forces $\sigma(T)$ to lie on a certain interval $I \subset [-1,1)$ of length $|I| \lesssim \delta/d(p,p')$, and of course also in $\sigma(\mathcal{T}(p))$. Therefore,
\begin{displaymath} |\mathcal{T}(p) \cap \mathcal{T}(p')| \leq |\{T \in \mathcal{T}(p) : \sigma(T) \in I\}| \stackrel{\eqref{form63}}{\lesssim} |\sigma(\mathcal{T}(p)) \cap I|. \end{displaymath}
Now \eqref{form94} follows from  Corollary \ref{cor1}, and the fact that $\sigma(\mathcal{T}(p))$ is a $\delta$-separated $(\delta,s,O(C_{T}))$-set of cardinality $\le M$. Write $\theta := \theta(s,t) := (1 - t)/(1 - s) \in [0,1]$. (If $s = t = 1$, we set $\theta := 0$.) The parameter $\theta$ is chosen so that $t = s\theta+(1-\theta)$. Then \eqref{form94} and the inequality $\min\{a,b\} \leq a^{\theta}b^{1 - \theta}$ imply that
\begin{displaymath} |\mathcal{T}(p) \cap \mathcal{T}(p')| \lesssim (C_{T}M\delta^{s})^{\theta} \cdot d(p,p')^{-t}. \end{displaymath}
By the $(\delta,t,C_{P})$-hypothesis of $\mathcal{P}$, for fixed $p\in\mathcal{P}$ we have
\begin{displaymath}
\sum_{p'\neq p} d(p,p')^{-t} \lesssim \sum_{ \sqrt{2}\cdot\delta \le 2^{-j} \le \sqrt{2} } 2^{t j}|\{p'\in \mathcal{P}: d(p,p') \le 2^{-j}\}| \lessapprox_{\delta} C_{P}\cdot |\mathcal{P}|.
\end{displaymath}
We deduce that
\begin{displaymath} \sum_{p \neq p'} |\mathcal{T}(p) \cap \mathcal{T}(p')| \lesssim (C_{T}M\delta^{s})^{\theta} \sum_{p \neq p'} d(p,p')^{-t} \lessapprox_{\delta} C_{P}(C_{T}M\delta^{s})^{\theta} \cdot |\mathcal{P}|^{2}. \end{displaymath}
Therefore, in the case $|\mathcal{I}(\mathcal{P},\mathcal{T})| < 2\sum_{p \neq p'} |\mathcal{T}(p) \cap \mathcal{T}(p')|$, we obtain
\begin{displaymath} |\mathcal{I}(\mathcal{P},\mathcal{T})| \lessapprox_{\delta} C_{P}^{1/2}(C_{T}M\delta^{s})^{\theta/2} \cdot  |\mathcal{T}|^{1/2}|\mathcal{P}| \leq \sqrt{C_{P}C_{T}} \cdot (M\delta^{s})^{\theta/2} \cdot  |\mathcal{T}|^{1/2}|\mathcal{P}|. \end{displaymath}
Combining this estimate with \eqref{form95} completes the proof. \end{proof}

We record a corollary, which is the form we will use.
\begin{cor}\label{prop5} Let $0 \leq s \leq t \leq 1$, and let $C_{P},C_{T} \geq 1$. Let $\mathcal{P} \subset \mathcal{D}_{\delta}$ be a $(\delta,t,C_{P})$-set. Assume that for every $p \in \mathcal{P}$ there exists a $(\delta,s,C_{T})$-set $\mathcal{T}(p) \subset \mathcal{T}^{\delta}$ of dyadic $\delta$-tubes with the properties that $T \cap p \neq \emptyset$ for all $T \in \mathcal{T}(p)$, and $|\mathcal{T}(p)| \sim M$ for some $M \geq 1$. Then,
\begin{equation} \label{form61}
 |\mathcal{T}| \gtrapprox_{\delta} (C_{P}C_{T})^{-1} \cdot M\delta^{-s} \cdot  (M\delta^s)^{\tfrac{t-s}{1-s}}, \end{equation}
 where $\mathcal{T} = \bigcup_{p \in \mathcal{P}} \mathcal{T}(p)$. (If $s = t = 1$, we interpret $(t - s)/(1 - s) = 1$). \end{cor}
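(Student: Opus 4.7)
The strategy is simply to feed the set $\mathcal{T} = \bigcup_{p \in \mathcal{P}} \mathcal{T}(p)$ into Proposition~\ref{incidenceProp}, bounding the number of incidences from below by the pigeonhole identity $|\mathcal{I}(\mathcal{P},\mathcal{T})| = \sum_{p \in \mathcal{P}} |\mathcal{T}(p)| \sim M |\mathcal{P}|$, and then solving the resulting inequality for $|\mathcal{T}|$. The rest is essentially arithmetic with the exponent $\theta = (1-t)/(1-s)$, checking that both branches of the maximum in \eqref{incidenceBound} lead to the desired lower bound, noting that $1 - \theta = (t-s)/(1-s)$.

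In more detail: Proposition~\ref{incidenceProp} gives
\begin{displaymath}
M|\mathcal{P}| \lessapprox_\delta \max\Bigl\{ \sqrt{C_P C_T}\,(M\delta^s)^{\theta/2} |\mathcal{T}|^{1/2}|\mathcal{P}|,\; |\mathcal{T}|\Bigr\}.
\end{displaymath}
In the first (generic) case, dividing by $|\mathcal{P}|$, squaring, and rearranging yields
\begin{displaymath}
|\mathcal{T}| \gtrapprox_\delta (C_P C_T)^{-1} M^{2-\theta}\delta^{-s\theta} = (C_P C_T)^{-1} \cdot M \delta^{-s} \cdot (M\delta^s)^{1-\theta},
\end{displaymath}
which is exactly the target since $1 - \theta = (t-s)/(1-s)$. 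In the second case, I would use that $|\mathcal{P}| \geq C_P^{-1}\delta^{-t}$ (a standard consequence of the $(\delta,t,C_P)$-hypothesis) to get $|\mathcal{T}| \gtrapprox_\delta M C_P^{-1}\delta^{-t}$, and then verify the elementary inequality
\begin{displaymath}
M C_P^{-1}\delta^{-t} \gtrsim (C_P C_T)^{-1} M^{2-\theta}\delta^{-s\theta},
\end{displaymath}
which upon simplification reduces to $C_T \gtrsim (M\delta)^{1-\theta}$.

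To close this last inequality I would use that, because all tubes in $\mathcal{T}(p)$ cross the common cube $p$, Corollary~\ref{cor1} shows that $\sigma(\mathcal{T}(p)) \subset [-1,1)$ is a $(\delta,s,O(C_T))$-set, and applying the defining property at scale $r = 1$ gives $M = |\mathcal{T}(p)| \lesssim C_T \delta^{-s}$, i.e.\ $M\delta^s \lesssim C_T$. Combined with $\delta \leq 1$ this yields $(M\delta)^{1-\theta} \leq C_T^{1-\theta} \leq C_T$, so Case~2 also produces the required bound.

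The main obstacle, such as it is, lies not in any substantive argument but rather in keeping the exponent algebra straight: one must confirm that the two numerologies $-s + s(1-\theta) = -s\theta$ and $(t-s)/(1-s) = 1-\theta$ match the target exponents, and that the elementary estimate $M\delta^s \lesssim C_T$ really does close the gap in the degenerate case where $|\mathcal{T}|$ itself dominates in \eqref{incidenceBound}. The case $s = t = 1$ (with the convention $\theta = 0$, $1-\theta = 1$) drops out as a limiting case of the same calculation.
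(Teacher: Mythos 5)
Your overall strategy is exactly the paper's: plug $|\mathcal{I}(\mathcal{P},\mathcal{T})| \sim M|\mathcal{P}|$ into Proposition \ref{incidenceProp} and treat the two branches of the maximum separately; the Case 1 arithmetic is correct and matches the paper line for line.

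There is, however, a flaw in how you close Case 2. You claim that the $(\delta,s,C_T)$-set property of $\mathcal{T}(p)$ (via the slope set) yields $M = |\mathcal{T}(p)| \lesssim C_T\delta^{-s}$ by "applying the defining property at scale $r=1$". That deduction is valid for the standard Katz--Tao normalisation, but not for this paper's Definition \ref{def1}, whose right-hand side is $C\cdot|P|_\delta\cdot r^s$ rather than $C\cdot\delta^{-s}\cdot r^s$: at $r=1$ the inequality is vacuous, and indeed the paper explicitly warns that $M$ "is allowed to be much larger than $\delta^{-s}$". So the intermediate bound $M\delta^s\lesssim C_T$ is false in general. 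The needed inequality $(M\delta)^{1-\theta}\lesssim C_T$ is nevertheless true, but for a different reason: all tubes of $\mathcal{T}(p)$ meet the common square $p$, so by Lemma \ref{tubesSlopes} the slope map is $O(1)$-to-one into the $\delta$-separated set $(\delta\cdot\Z)\cap[-1,1)$, whence $M\lesssim\delta^{-1}$ and therefore $(M\delta)^{1-\theta}\lesssim 1\le C_T$. This is precisely the paper's one-line remark that "a calculation using that $M\lesssim\delta^{-1}$ shows that the second lower bound is smaller". With that substitution your argument is complete.
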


 \begin{proof} Write $\mathcal{I}(\mathcal{P},\mathcal{T}) = \{(p,\mathcal{T}) \in \mathcal{P} \times \mathcal{T} : T \in \mathcal{T}(p)\}$, as before. Under the current hypotheses, $|\mathcal{I}(\mathcal{P},\mathcal{T})| \sim M|\mathcal{P}|$, and on the other hand \eqref{incidenceBound} implies that
 \begin{equation}\label{form58} |\mathcal{I}(\mathcal{P},\mathcal{T})| \lessapprox_{\delta} \max\{\sqrt{C_{P}C_{T}} \cdot (M\delta^{s})^{\theta/2} \cdot  |\mathcal{T}|^{1/2}|\mathcal{P}|, |\mathcal{T}| \}, \end{equation}
 where $\theta = (1 - t)/(1 - s)$. If the second term dominates, then
 \begin{displaymath} |\mathcal{T}| \gtrapprox_\delta |\mathcal{I}(\mathcal{P},\mathcal{T})| \sim M|\mathcal{P}| \geq M \cdot C_{P}^{-1} \cdot \delta^{-t}. \end{displaymath}
If the first term on the right of \eqref{form58} dominates, then
\begin{align*}  |\mathcal{T}| & \gtrapprox_{\delta} |\mathcal{I}(\mathcal{P},\mathcal{T})|^{2} \cdot |\mathcal{P}|^{-2} \cdot \left(C_{P}C_{T} \cdot ( M\delta^s)^{\theta}\right)^{-1} \\
&\sim (C_{P}C_{T})^{-1} \cdot M \delta^{-s} \cdot  (M\delta^s)^{\tfrac{t-s}{1-s}}.
\end{align*}
A calculation using that $M\lesssim \delta^{-1}$ shows that the second lower bound is smaller.  \end{proof}

\section{Discretising the main results}\label{s:discretisation}

We start the section by formulating a superficially stronger version of Theorem \ref{t:mainTechnical}, which is more comfortable to apply, but can be easily reduced to Theorem \ref{t:mainTechnical}. Only for this  statement and proof, we set
\begin{displaymath}
\mathcal{T}^{\delta}_R := \{\mathbf{D}(p) : p \in \mathcal{D}_{\delta}([-R,R)\times\R)\}.
\end{displaymath}

\begin{thm}\label{t:mainTechnicalGeneral} Let $R \in 2^{\N}$. For $s \in (0,1)$ and $t\in (s,2]$, there exists $\epsilon(s,t) > 0$ such that the following holds for all small enough $\delta \in 2^{-\N}$, depending only on $s,t,R$. Let $\mathcal{P} \subset \mathcal{D}_{\delta}([-R,R)^{2})$ be a $(\delta,t,\delta^{-\epsilon})$-set, and let $\mathcal{T} \subset \mathcal{T}^{\delta}_R$. Assume that for every $p \in \mathcal{P}$, there exists a $(\delta,s,\delta^{-\epsilon})$-set $\mathcal{T}(p) \subset \mathcal{T}$ such that $\overline{T} \cap \bar{p} \neq \emptyset$ for all $T \in \mathcal{T}(p)$. Then $|\mathcal{T}| \geq \delta^{-2s - \epsilon}$. \end{thm}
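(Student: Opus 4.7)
The plan is to reduce Theorem \ref{t:mainTechnicalGeneral} to Theorem \ref{t:mainTechnical} via pigeonholing followed by a bi-Lipschitz affine normalisation. Since $R$ is fixed, any multiplicative losses of the form $O_R(1)$ incurred along the way can be absorbed into the $\delta^{-\epsilon}$ factors of the hypothesis, provided $\delta$ is chosen sufficiently small in terms of $R,s,t$. The minor discrepancy between the closure condition $\bar T\cap\bar p\neq\emptyset$ here and the open condition in Theorem \ref{t:mainTechnical} is a cosmetic issue that can be handled by slightly enlarging each $p$ to a bounded number of neighbouring $\delta$-squares.

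First I would perform two pigeonhole reductions. Partitioning $[-R,R)^2$ into the $(2R)^2$ unit cells in $\mathcal D_1([-R,R)^2)$, I can find a single unit cell $Q_0$ containing a subfamily $\mathcal P_1\subset\mathcal P$ with $|\mathcal P_1|\ge(2R)^{-2}|\mathcal P|$; this $\mathcal P_1$ remains a $(\delta, t, (2R)^2\delta^{-\epsilon})$-set. Next, partitioning the slope range $[-R,R)$ into $2R$ unit intervals and performing a double pigeonhole (first over $p$, then across $p$'s), I obtain a common slope interval $I^*$, a subfamily $\mathcal P_2\subset\mathcal P_1$ with $|\mathcal P_2|\gtrsim R^{-1}|\mathcal P_1|$, and subfamilies $\mathcal T^*(p)\subset\mathcal T(p)$ with $\sigma(\mathcal T^*(p))\subset I^*$ and $|\mathcal T^*(p)|\ge(2R)^{-1}|\mathcal T(p)|$ for every $p\in\mathcal P_2$. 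The dimensional constants are now of the form $O(R^{O(1)})\delta^{-\epsilon}$, and all relevant tube slopes lie in the common unit interval $I^*$.

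Next I would apply the affine map $\Psi(x,y) = \lambda^{-1}\bigl(x-x_0,\,(y-y_0)-c_0(x-x_0)\bigr)$, where $(x_0,y_0)$ is the lower-left corner of $Q_0$, $c_0$ is the midpoint of $I^*$, and $\lambda\sim R$ is chosen so that $\Psi(Q_0)\subset[0,1)^2$. The map $\Psi$ preserves point-line incidences, sends lines of slope $a\in I^*$ to lines of slope $a-c_0\in[-1/2,1/2)\subset[-1,1)$, and is bi-Lipschitz with constants $\sim R$. Images of dyadic $\delta$-squares and $\delta$-tubes are parallelograms of diameter $\sim\delta/\lambda$; I would replace each by the $O_R(1)$ dyadic $\delta'$-squares and dyadic $\delta'$-tubes it meets, where $\delta'\in 2^{-\mathbb N}$ is a dyadic approximation of $\delta/\lambda$. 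This replacement preserves the $(\delta',t,\cdot)$- and $(\delta',s,\cdot)$-set structures up to $O_R(1)$ factors, via Corollary \ref{cor1} together with the observation that the $(\delta,s,C)$-property is stable under bi-Lipschitz distortions. The resulting families $\tilde{\mathcal P}$ and $\{\tilde{\mathcal T}(p)\}$ satisfy the hypotheses of Theorem \ref{t:mainTechnical} at scale $\delta'$ with constant bounded by $(\delta')^{-2\epsilon}$, say, provided $\delta$ is small enough depending on $R$.

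Applying Theorem \ref{t:mainTechnical} at scale $\delta'$ then yields $|\tilde{\mathcal T}|\ge(\delta')^{-2s-\epsilon'(s,t)}$ for some $\epsilon'(s,t)>0$, and pulling back through the pigeonhole and normalisation steps gives $|\mathcal T|\ge\delta^{-2s-\epsilon(s,t)}$ for a suitable (possibly smaller) $\epsilon(s,t)>0$, once $\delta$ is small enough in terms of $R,s,t$. The main technical obstacle is the bookkeeping in the affine-normalisation step, i.e.\ verifying that the image of a $(\delta,s,\cdot)$-set of tubes, after coarse-graining to the dyadic $\delta'$-lattice, is itself a $(\delta',s,\cdot)$-set with only an $O_R(1)$ loss in the constant. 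This is routine given that $\Psi$ is bi-Lipschitz with constants depending only on $R$ and that affine maps preserve the slope and transversality structures encoded in Definition \ref{def:dyadicTubes}.
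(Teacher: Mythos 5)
Your reduction is correct in outline and follows the same basic strategy as the paper (affine normalisation into $[0,1)^2$ followed by an application of Theorem \ref{t:mainTechnical}), but the normalisation itself is genuinely different. The paper uses the single anisotropic map $S(x,y)=(\tfrac12,\tfrac12)+(\tfrac{x}{4R},\tfrac{y}{(4R)^2})$, which acts on dual coordinates by $(a,b)\mapsto(\tfrac{a}{4R},\tfrac{b}{(4R)^2}+\cdots)$ and therefore compresses the whole slope range $[-R,R)$ into $[-\tfrac14,\tfrac14]$ at once, with \emph{no pigeonholing} and \emph{no change of scale}: every $\delta$-square and every $\delta$-tube is sent into a set covered by $O(1)$ standard dyadic $\delta$-objects. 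Your route — pigeonhole to one unit cell $Q_0$ and one unit slope interval $I^*$, then shear and rescale isotropically, passing to a new scale $\delta'\sim\delta/R$ — buys nothing extra here but costs two pigeonholings and the scale change; all losses are indeed $O_R(1)$, so it works, and it has the mild virtue of using only bi-Lipschitz maps rather than an anisotropic one.

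One step you dismiss as routine deserves to be spelled out, since it is the only place where the argument could silently fail. After covering $\Psi(\bar p)$ by $O_R(1)$ squares $p'$ and each $\Psi(\overline T)$, $T\in\mathcal T^*(p)$, by $O_R(1)$ tubes $T'$, the incidence $\overline T\cap\bar p\neq\emptyset$ only tells you that \emph{some} pair $(p',T')$ of representatives is incident; it does not yet give you a single square $p''$ together with a full $(\delta',s,\cdot)$-set of tubes all meeting that same $p''$. You must pigeonhole once more: for each fixed $p$, since there are only $O_R(1)$ candidate squares $p'$, one of them, say $p''$, receives incident representatives $T'$ for a proportion $\gtrsim_R 1$ of the tubes $T\in\mathcal T^*(p)$, and that sub-collection is still a $(\delta',s,O_R(1)\delta^{-\epsilon})$-set. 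This is exactly the representative-selection argument the paper carries out explicitly at the end of its proof; with it included, your proof is complete.
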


\begin{proof} Let $\epsilon := \epsilon_{1}/2$, where $\epsilon_{1}(s,t) > 0$ is the constant of Theorem \ref{t:mainTechnical}. Let
\begin{displaymath}
 S(x,y) := (\tfrac{1}{2},\tfrac{1}{2}) + (\tfrac{x}{4R},\tfrac{y}{(4R)^2}), \quad (x,y)\in\R^2.
\end{displaymath}
The map $S$ evidently sends $[-R,R)^{2}$ inside $[\tfrac{1}{4},\tfrac{3}{4})^{2} \subset [0,1)^{2}$, but it also reduces the slopes of tubes $T \in \mathcal{T}^{\delta}_R$ by a factor of $4R$. Indeed, it is easy to check that if $\ell = \mathbf{D}(z)$ with $z \in [-R,R] \times \R$, then $\sigma(S(\ell)) \in [-1/4,1/4]$. One can also check that if $T \in \mathcal{T}^{\delta}_R$, then $S(\overline{T})$ can be covered by a family $\mathcal{S}(T) \subset \mathcal{T}^{\delta}$ of (standard) dyadic $\delta$-tubes with $|\mathcal{S}(T)| \sim 1$. For every $p \in \mathcal{D}_{\delta}([R,R)^{2})$, we also choose a collection $\mathcal{S}(p) \subset \mathcal{D}_{\delta}$ of (standard) dyadic $\delta$-squares such that $S(\bar{p}) \subset \cup \mathcal{S}(p)$, and $|\mathcal{S}(p)| \sim 1$.

Now the following facts need a little checking, which we leave to the reader:
\begin{itemize}
\item $\mathcal{P}' := \bigcup_{p \in \mathcal{P}} \mathcal{S}(p) \subset \mathcal{D}_{\delta}$ is a $(\delta,t,C\delta^{-\epsilon})$-set with $C \sim_{R} 1$.
\item $\mathcal{T}' := \bigcup_{T \in \mathcal{T}} \mathcal{S}(T) \subset \mathcal{T}^{\delta}$ satisfies $|\mathcal{T}'| \sim_{R} |\mathcal{T}|$.
\item The sets $\mathcal{S}(\mathcal{T}(p)) := \bigcup_{T \in \mathcal{T}(p)} \mathcal{S}(T)$ are $(\delta,s,C\delta^{-\epsilon})$-sets for $p \in \mathcal{P}$, with $C \sim_{R} 1$.
\end{itemize}
As we will see in a moment, the pair $(\mathcal{P}',\mathcal{T}')$ "almost" satisfies the hypotheses of Theorem \ref{t:mainTechnical}, and hence $|\mathcal{T}| \sim_{R} |\mathcal{T}'| \gtrsim \delta^{-2s - \epsilon_{1}}$. Recalling that $\epsilon = \epsilon_{1}/2$, this will conclude the proof.

The word "almost" still calls for an explanation. Recall that for all $p \in \mathcal{P}$ and $T \in \mathcal{T}(p)$, we are assuming that $\overline{T} \cap \bar{p} \neq \emptyset$. Consequently $S(\overline{T}) \cap S(\bar{p}) \neq \emptyset$. Since
\begin{displaymath}  S(\bar{p}) \subset \cup \mathcal{S}(p) \quad \text{and} \quad S(\overline{T}) \subset \cup \mathcal{S}(T) \subset \cup \mathcal{S}(\mathcal{T}(p)), \end{displaymath}
for each pair $(p,T) \in \mathcal{P} \times \mathcal{T}(p)$ we may choose a representative $(p',T') \in (\mathcal{S}(p),\mathcal{S}(\mathcal{T}(p)))$ such that $T' \cap p' \neq \emptyset$. Here both $p',T'$ depend on $p,T$, but there are only $\sim 1$ different choices of $(p',T')$ for each $(p,T)$. In particular, for $p \in \mathcal{P}$ fixed, we may choose $p'' \in \mathcal{S}(p)$ in such a way that $T' \cap p'' \neq \emptyset$ for $\sim |\mathcal{T}(p)|$ choices of $T' \in \mathcal{S}(\mathcal{T}(p))$.

Now we reduce $\mathcal{P}'$ to the subset $\mathcal{P}''$ of squares $p''$ obtained by the pigeonholing procedure above. Evidently $\mathcal{P''}$ remains a $(\delta,t,C\delta^{-\epsilon})$-set. Also, for every $p'' \in \mathcal{P}''$, there exists a $(\delta,s,C\delta^{-\epsilon})$-subset of $\mathcal{S}(\mathcal{T}(p)) \subset \mathcal{T}'$, all tubes in which intersect $p''$. Therefore $(\mathcal{P}'',\mathcal{T}')$ satisfies the assumptions of Theorem \ref{t:mainTechnical}, taking $\delta > 0$ so small that $C \leq \delta^{-\epsilon} = \delta^{-\epsilon_{1}/2}$. Now the proof of Theorem \ref{t:mainTechnicalGeneral} can be completed as discussed above. \end{proof}

We then proceed to show how to reduce the proofs of Theorems \ref{mainFurstenberg} and \ref{mainKaufman} to the statement above.
\subsection{Discretising Furstenberg sets} This section contains the proof of Theorem \ref{mainFurstenberg}.

We  formulate a dual version of Theorem \ref{t:mainTechnical}, which is more suited for the application to Furstenberg sets:
\begin{thm}\label{t:mainDual} For every $s \in (0,1)$ and $t \in (s,2]$, there exists $\epsilon = \epsilon(s,t) > 0$ such that the following holds for all small enough $\delta\in 2^{-\N}$ depending only on $s,t$. Let $\mathcal{T} \subset \mathcal{T}^{\delta}$ be a $(\delta,t,\delta^{-\epsilon})$-set of dyadic $\delta$-tubes. Assume that for every $T \in \mathcal{T}$, there exists a $(\delta,s,\delta^{-\epsilon})$-set $\mathcal{P}(T) \subset \mathcal{D}_{\delta}$ such that $T \cap p \neq \emptyset$ for all $p \in \mathcal{P}(T)$. Then $|\mathcal{P}| \geq \delta^{-2s - \epsilon}$, where $\mathcal{P} := \bigcup_{T \in \mathcal{T}} \mathcal{P}(T)$. \end{thm}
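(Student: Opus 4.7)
The plan is to reduce Theorem~\ref{t:mainDual} to Theorem~\ref{t:mainTechnicalGeneral} by invoking point–line duality. The key observation is the symmetric incidence relation $(x_0, y_0) \in \mathbf{D}(a, b) \iff (a, b) \in \mathbf{D}(-x_0, y_0)$: dualising should interchange the roles of ``tubes'' and ``squares'' in the hypotheses, sending the given $(\delta, t)$-set of tubes to a $(\delta, t)$-set of points in the dual plane, and the $(\delta, s)$-sets of incident squares to $(\delta, s)$-sets of incident dual tubes, matching exactly the set-up of Theorem~\ref{t:mainTechnicalGeneral}.

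Concretely, I would write $\mathcal{T} = \{\mathbf{D}(p^\ast) : p^\ast \in \mathcal{P}^\ast\}$ for the unique family $\mathcal{P}^\ast \subset \mathcal{D}_\delta([-1, 1) \times \R)$. By definition of a $(\delta, t, \delta^{-\epsilon})$-set of tubes, $\mathcal{P}^\ast$ is itself such a set; and since every $T \in \mathcal{T}$ meets some $p \in \mathcal{P}(T) \subset [0,1)^2$, the coordinates of $p^\ast \in \mathcal{P}^\ast$ are bounded by an absolute $R$, so $\mathcal{P}^\ast \subset \mathcal{D}_\delta([-R, R)^2)$. Next, for each $p \in \mathcal{P} = \bigcup_T \mathcal{P}(T)$ I would define a ``dual tube'' as follows: with $\sigma(x, y) := (-x, y)$, let $q_p$ be the unique element of $\mathcal{D}_\delta([-1, 0) \times \R)$ approximating $\sigma(p)$ (they agree up to a half-open adjustment of size $\delta$), and set $T^\ast_p := \mathbf{D}(q_p) \in \mathcal{T}^\delta_R$. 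A direct unwinding of the definitions shows that $T^\ast_p$ is, up to an $O(\delta)$ thickening, the set of $(a, b)$ for which $\mathbf{D}(a, b)$ meets $p$, so
\begin{displaymath}
\overline{T} \cap \overline{p} \neq \emptyset \iff \overline{T^\ast_p} \cap \overline{p^\ast} \neq \emptyset, \qquad p^\ast := \mathbf{D}^{-1}(T),
\end{displaymath}
with closures absorbing the discretisation error. For each $p^\ast \in \mathcal{P}^\ast$ corresponding to $T \in \mathcal{T}$, the family $\mathcal{T}^\ast(p^\ast) := \{T_p^\ast : p \in \mathcal{P}(T)\}$ inherits the $(\delta, s, O(\delta^{-\epsilon}))$-property from $\mathcal{P}(T)$, since $p \mapsto q_p$ is essentially a reflection.

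With these reductions, the pair $(\mathcal{P}^\ast, \mathcal{T}^\ast)$ with $\mathcal{T}^\ast := \bigcup_{p^\ast \in \mathcal{P}^\ast} \mathcal{T}^\ast(p^\ast)$ fits the hypotheses of Theorem~\ref{t:mainTechnicalGeneral} (after, say, replacing $\epsilon$ by $\epsilon/2$ to absorb absolute constants in the set parameters). That theorem yields $|\mathcal{T}^\ast| \geq \delta^{-2s - \epsilon}$, and the injectivity of $p \mapsto T^\ast_p$ gives $|\mathcal{T}^\ast| = |\mathcal{P}|$, which is the desired lower bound. The only real obstacle is bookkeeping: verifying that the dyadic-tube and dyadic-square conventions line up under duality, that the $O(\delta)$ shifts from discretisation are absorbed by the closure-based formulation of Theorem~\ref{t:mainTechnicalGeneral}, and that the $(\delta, s)$- and $(\delta, t)$-set constants pick up only absolute factors; the slope range $[-1, 0]$ of the dual tubes and the bounded ambient box for $\mathcal{P}^\ast$ are precisely the technicalities that motivated passing from Theorem~\ref{t:mainTechnical} to its parameter-$R$ version in the first place.
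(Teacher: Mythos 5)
Your proposal is correct and follows essentially the same route as the paper: dualise to interchange squares and tubes and invoke Theorem \ref{t:mainTechnicalGeneral} with a bounded $R$, using the incidence symmetry $(x_0,y_0)\in\mathbf{D}(a,b)\iff(a,b)\in\mathbf{D}(-x_0,y_0)$. The only (immaterial) difference is that you place the reflection $(x,y)\mapsto(-x,y)$ on the squares-to-dual-tubes map, whereas the paper places it on the tubes-to-dual-points map $\mathbf{D}^{\ast}$; the two configurations are mirror images of one another, and the incidence verification you sketch is exactly the paper's Lemma \ref{lemma2}.
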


Theorem \ref{t:mainDual}  follows from Theorem \ref{t:mainTechnical}, or rather from the generalised version in Theorem \ref{t:mainTechnicalGeneral}, by swapping the roles of dyadic squares and dyadic tubes. More precisely, let $\mathcal{T}$ and $\{\mathcal{P}(T)\}_{T \in \mathcal{T}}$ be the families specified in Theorem \ref{t:mainDual}. We define the map "$\mathbf{D}^{\ast}$" on $\mathcal{T}^{\delta}$ as follows: if $T = \mathbf{D}(p) \in \mathcal{T}^{\delta}$, then
\begin{displaymath} \mathbf{D}^{\ast}(T) := \{(-a,b) : (a,b) \in p\}. \end{displaymath}
Now we let $\mathcal{P}^{\ast} := \{ \mathbf{D}^{*}(T): T\in \mathcal{T}\}$, and $\mathcal{T}^{\ast} := \cup \{\mathbf{D}(\mathcal{P}(T)) : T \in \mathcal{T}\}$. First, note that $\mathcal{P}^{\ast}$ is a collection of squares of side-length $\delta$. These squares are ``almost dyadic'', except that they are not half-open in the standard way. Without changing notation, we rearrange the boundaries of the squares in $\mathcal{P}^{\ast}$ so that they become dyadic. Note that $\mathcal{P}^{\ast}\subset [-1,1]\times\R$ by definition. In fact this can be sharpened: since every tube $T = \mathbf{D}(p) \in \mathcal{T}$ intersects $[0,1)^{2}$ by assumption, one has $p \subset [-1,1) \times [-1,2)$, and consequently $\cup \mathcal{P}^{\ast} \subset [-2,2]^{2}$. Further, $\cup \mathcal{P}^{\ast}$ is a $(\delta,t,\delta^{-\epsilon})$-set, since $\mathcal{T}$ is.

Second, notice that $\mathcal{T}^{\ast}$ contains a $(\delta,s,\delta^{-\epsilon})$-set $\mathcal{T}^{\ast}(p^{\ast}) := \mathbf{D}(\mathcal{P}(T))$ for each $p^{\ast} := \mathbf{D}^{\ast}(T) \in \mathcal{P}^{\ast}$. We will check in Lemma \ref{lemma2} below that the $T^{\ast} \cap p^{\ast} \neq \emptyset$ for all $T^{\ast} \in \mathcal{T}^{\ast}(p^{\ast})$. It follows that the pair $(\mathcal{P}^{\ast},\mathcal{T}^{\ast})$ satisfies the hypotheses of Theorem \ref{t:mainTechnicalGeneral} with $R = 2$, and the conclusion of Theorem \ref{t:mainDual} follows.

It remains to check that every square $p^{\ast} \in \mathcal{P}^{\ast}$ satisfies $T^{\ast} \cap p^{\ast} \neq \emptyset$ for all $T^{\ast} \in \mathcal{T}^{\ast}(p^{\ast})$. This follows from the next lemma, which explains our need to introduce the map "$\mathbf{D}^{\ast}$":

\begin{lemma}\label{lemma2} Let $p \in \mathcal{D}_{\delta}$ and $T \in \mathcal{T}^{\delta}$ with $T \cap p \neq \emptyset$. Then $\mathbf{D}(p) \cap \mathbf{D}^{\ast}(T) \neq \emptyset$.  \end{lemma}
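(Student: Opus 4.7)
The plan is to unpack the definitions of $\mathbf{D}$ and $\mathbf{D}^{\ast}$ and observe that the two nonemptyness statements are, in fact, the same algebraic condition on the parameters, up to the sign flip that was built into $\mathbf{D}^{\ast}$ precisely for this reason. The proof should be a one-line verification.

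More concretely, I would write $T = \mathbf{D}(q)$ for the unique $q \in \mathcal{D}_{\delta}([-1,1) \times \R)$. By the definition of $\mathbf{D}$, the hypothesis $T \cap p \neq \emptyset$ supplies a witness: there exist $(x,y) \in p$ and $(a,b) \in q$ with
\[
y = ax + b.
\]
I then propose the candidate intersection point $(-a,b)$. On the one hand, since $(a,b) \in q$, the definition $\mathbf{D}^{\ast}(T) = \{(-a',b') : (a',b') \in q\}$ gives immediately $(-a,b) \in \mathbf{D}^{\ast}(T)$. On the other hand, the displayed equation rearranges to $b = x \cdot (-a) + y$, which says precisely that $(-a,b)$ lies on the dual line $\mathbf{D}(x,y) = \{(u,v) : v = xu + y\}$. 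Since $(x,y) \in p$ implies $\mathbf{D}(x,y) \subset \mathbf{D}(p)$, we conclude $(-a,b) \in \mathbf{D}(p) \cap \mathbf{D}^{\ast}(T)$, as desired.

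There is no genuine obstacle: the content of the lemma is the symmetry of the incidence relation $y = ax + b$ under exchanging the ``point'' $(x,y)$ with the ``slope--intercept pair'' $(a,b)$, accompanied by the slope flip $a \mapsto -a$ that is baked into $\mathbf{D}^{\ast}$. Without this sign, the point $(a,b)$ would lie on $\mathbf{D}(x,y)$ only under the equation $b = ax + y$, which fails to match the defining relation $y = ax + b$ of $\mathbf{D}(a,b)$; this is exactly why $\mathbf{D}^{\ast}$ was introduced with the minus sign in the first coordinate.
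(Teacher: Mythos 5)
Your proof is correct and is essentially identical to the paper's: both take a point in $T \cap p$, use the incidence relation $y = ax + b$, rearrange it to $b = (-a)x + y$, and exhibit $(-a,b)$ as the desired intersection point. No differences worth noting.
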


\begin{proof} Since $T \in \mathcal{T}^{\delta}$, we may write $T = \mathbf{D}(p_{T})$ for some $p_{T} \in \mathcal{D}_{\delta}([0,1)\times [-1,2))$. Let $(c,d) \in T \cap p$. Since $(c,d) \in T$, we have $d = ac + b$ for some $(a,b) \in p_{T}$. Equivalently $b = c(-a) + d$, which implies that $(-a,b) \in \mathbf{D}(p)$. Also, $(-a,b) \in \mathbf{D}^{\ast}(T)$ by definition, so $(-a,b) \in \mathbf{D}(p) \cap \mathbf{D}^{\ast}(T)$. \end{proof}

To obtain the lower bound for the Hausdorff dimension of Furstenberg sets claimed in Theorem \ref{mainFurstenberg}, we appeal to the following auxiliary result, which is Lemma 3.3 from \cite{HSY22}.
\begin{lemma}\label{HSYLemma} Assume that every discretised $(\delta,C,s,t)$-Furstenberg set has Lebesgue measure $\gtrsim_{C} \delta^{2 - u}$, for some $u \geq 0$. Then every $(s,t)$-Furstenberg set has Hausdorff dimension at least $u$.  \end{lemma}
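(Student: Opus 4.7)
The plan is to argue by contradiction. Suppose $K$ is an $(s,t)$-Furstenberg set with $\Hd K = u' < u$, and fix $u'' \in (u', u)$ and a small $\eta > 0$ (eventually chosen with $\eta \ll u - u''$).

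First I would apply Frostman's lemma to the Hausdorff-dimensional hypotheses. Since $\Hd \mathcal{L} \geq t$, identifying $\mathcal{L}$ with its dual point set there is a probability measure $\nu$ supported on $\mathcal{L}$ satisfying $\nu(B(x,r)) \lesssim r^{t-\eta}$ for all balls. Similarly, for each $\ell \in \mathcal{L}$, there is a probability measure $\mu_\ell$ on $K \cap \ell$ with $\mu_\ell(B(x,r)) \lesssim r^{s-\eta}$. A routine measurable-selection and pigeonholing argument lets me assume these Frostman bounds hold with a uniform constant on a $\nu$-positive subfamily, which I continue to denote $\mathcal{L}$.

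Next I would select a good scale. Since $\Hd K < u''$ we have $\mathcal{H}^{u''}_\infty(K) = 0$, so for any $\kappa > 0$ I can fix a cover of $K$ by dyadic squares $\{Q_i\}$ with $\sum (\diam Q_i)^{u''} < \kappa$. Grouping the $Q_i$ into dyadic scales and pigeonholing on each line over these scales, then pigeonholing again over the possible outcomes using $\nu$, I obtain a single scale $\delta \in 2^{-\N}$ (as small as wanted, by subdividing $Q_i$ if necessary) and a $\nu$-positive subfamily $\mathcal{L}' \subset \mathcal{L}$ such that for every $\ell \in \mathcal{L}'$, the squares in the cover with side-length $\sim \delta$ carry $\mu_\ell$-mass $\gtrsim (\log 1/\delta)^{-1}$. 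Applying Proposition \ref{deltasSet} to $\mu_\ell$ restricted to these squares produces, for each $\ell \in \mathcal{L}'$, a $(\delta, s, \delta^{-O(\eta)})$-set of $\delta$-squares $\mathcal{P}(T_\ell)$ lying in the $\delta$-neighborhood of $K$; applying it once more to $\nu|_{\mathcal{L}'}$ yields a $(\delta, t, \delta^{-O(\eta)})$-set of tubes $\mathcal{T} = \{T_\ell\}$. Together these assemble a discretised $(\delta, C, s, t)$-Furstenberg set $F_\delta$ with $C = \delta^{-O(\eta)}$, contained in the $O(\delta)$-neighborhood of the selected piece of the cover.

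The contradiction now follows by comparing two bounds on $|F_\delta|$. By hypothesis $|F_\delta| \gtrsim_C \delta^{2-u}$; on the other hand $F_\delta$ is contained in the $O(\delta)$-fattening of the scale-$\delta$ portion of the cover, whose Lebesgue measure is $\lesssim \kappa \cdot \delta^{2-u''}$ by the $u''$-content bound. Choosing $\eta$ small enough that $2 - u'' + O(\eta) < 2 - u$, and then $\delta$ small enough depending on $\kappa$ and the hypothesised $C$-dependence, forces $\kappa$ to be bounded below by an absolute constant; since $\kappa$ was arbitrary, this is the sought contradiction and $\Hd K \geq u$ follows. The main technical obstacle is the double pigeonhole producing a single scale $\delta$ that simultaneously (i) extracts a $\nu$-positive line family whose $\mu_\ell$-measures are $\delta$-concentrated on the cover, and (ii) keeps the global $\delta$-scale cover cost comparable to $\kappa \cdot \delta^{2-u''}$. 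Once this scale is isolated, the Frostman-to-$(\delta,s)$-set conversion via Proposition \ref{deltasSet} and the final measure comparison are essentially routine.
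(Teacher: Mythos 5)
Your overall strategy is the standard one and is essentially sound; note that the paper does not prove this lemma itself but quotes it from \cite{HSY21}, and your argument is a close cousin of the paper's own discretisation of the projection theorem in Section \ref{s:discretisation} (the double pigeonholing over scales and then over lines/directions, followed by Proposition \ref{deltasSet}). The final measure comparison also has the right logic: the cover gives $|F_{\delta}| \lesssim \kappa\,\delta^{2-u''}$ while the hypothesis gives $|F_{\delta}| \gtrsim \delta^{2-u+O(\eta)}$, whence $\kappa \gtrsim \delta^{u''-u+O(\eta)} \geq c > 0$, contradicting the arbitrariness of $\kappa$. However, the displayed condition ``choosing $\eta$ small enough that $2-u''+O(\eta) < 2-u$'' is written backwards: it is equivalent to $u < u'' - O(\eta)$, which is impossible since $u'' < u$. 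The condition you actually need (and state correctly at the outset) is $O(\eta) < u - u''$, so that the exponent $u''-u+O(\eta)$ is negative. This is a slip rather than a conceptual error, but as written that sentence cannot be satisfied.

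The more substantive gap is the appeal to Frostman's lemma. Lemma \ref{HSYLemma} concerns \emph{every} $(s,t)$-Furstenberg set: in the terminology of \cite{HSY21} this means $\mathcal{H}^{t}(\mathcal{L})>0$ and $\mathcal{H}^{s}(K\cap\ell)>0$ with no measurability assumptions whatsoever (Theorem \ref{mainFurstenberg} explicitly makes none), and Frostman's lemma is only valid for analytic sets. Your ``routine measurable-selection'' step inherits the same problem, since the subfamilies produced by pigeonholing need not be $\nu$-measurable. The repair is standard and is exactly what Proposition \ref{deltasSet} is designed for: work with Hausdorff content $\mathcal{H}^{s}_{\infty}$ and $\mathcal{H}^{t}_{\infty}$ throughout. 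Countable subadditivity of content replaces the measure-theoretic pigeonholing over scales (some scale class carries content $\gtrsim 1/j^{2}$), and uniformising the content lower bound over $\ell$ is again subadditivity rather than measurable selection; Proposition \ref{deltasSet} then converts positive content directly into $(\delta,s,C)$-sets with $C \approx (\log(1/\delta))^{2}$. With that substitution — and with the understanding that the hypothesised dependence $\gtrsim_{C}$ is at worst polynomial in $C$, so that the $\delta^{-O(\eta)}$ (or polylogarithmic) constants are absorbed into the $O(\eta)$ loss in the exponent — your argument goes through.
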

Let us explain the terminology. In the language of \cite{HSY22}, an $(s,t)$-Furstenberg set is a set $K \subset \R^{2}$ with the property that there exists a line set $\mathcal{L} \subset \R^{2}$ with $\mathcal{H}^{t}(\mathcal{L}) > 0$ such that $\mathcal{H}^{s}(K \cap \ell) > 0$ for all $\ell \in \mathcal{L}$. Evidently, to prove Theorem \ref{mainFurstenberg}, it suffices to show that if $0 < s < t \leq 2$, then every $(s,t)$-Furstenberg set $K \subset \R^{2}$ satisfies $\Hd K \geq 2s + \epsilon$, where $\epsilon > 0$ only depends on $s,t$ and is bounded away from $0$ in a small neighborhood of $(s,t)$.

A \emph{discretised $(\delta,C,s,t)$-Furstenberg set}, on the other hand, is a set $F \subset B(2)$ of the following kind. The set $F$ can be expressed as
\begin{displaymath} F = \bigcup_{\ell \in \mathcal{L}} F(\ell), \end{displaymath}
where $\mathcal{L}$ is a $(\delta,t,C)$-set of lines, and for each $\ell \in \mathcal{L}$, the set $F(\ell)$ is a $(\delta,s,C)$-set of the form $F(\ell) = \cup \mathcal{P}(\ell)$, where
\begin{displaymath} \mathcal{P}(\ell) \subset \mathcal{D}_{\delta} \quad \text{and} \quad F(\ell) \subset \ell_{2\delta} = \{x \in \R^{2} : \dist(x,\ell) < 2\delta\}. \end{displaymath}
According to Lemma \ref{HSYLemma}, our main theorem on Furstenberg sets, Theorem \ref{mainFurstenberg}, will follow once we manage to show that every $(\delta,C,s,t)$-Furstenberg set has Lebesgue measure $\gtrsim_{C} \delta^{2 - (2s + \epsilon)}$, for some $\epsilon = \epsilon(s,t) > 0$ which is bounded away from $0$ in a neighborhood of $s,t$ (this robustness is needed in order to replace ``Hausdorff dimension $u$'' by ``positive $u$-dimensional measure'' for $u\in\{s,t\}$). Equivalently, $|\mathcal{P}| \gtrsim_{C} \delta^{-2s - \epsilon}$, where $\mathcal{P} = \bigcup_{\ell \in \mathcal{L}} \mathcal{P}(\ell)$.

This claim easily follows from Theorem \ref{t:mainDual}. Indeed, for every $\ell \in \mathcal{L}$, one first selects a representative dyadic $\delta$-tube $T = T(\ell) \in \mathcal{T}^{\delta}$ with the property
\begin{displaymath} |\{p \in \mathcal{P}(\ell) : T \cap p \neq \emptyset\}| \sim |\mathcal{P}(\ell)|. \end{displaymath}
This can be done, because $\cup \mathcal{P}(\ell) \subset \ell_{2\delta} \cap B(2)$ can be covered by $\sim 1$ dyadic $\delta$-tubes. (To be precise, for this we need to assume that the lines $\ell \in \mathcal{L}$ all have slope between $(0,1)$, and they intersect the $x$-axis on the segment $\{0\} \times (0,1)$. It is easy to reduce the study of general Furstenberg sets to ones with these constraints.)  Then, the $(\delta,t,C)$-property of $\mathcal{L}$ translates into the $(\delta,t,C')$-set property of $\mathcal{T} = \{T(\ell) : \ell \in \mathcal{L}\}$, and the families $\mathcal{P}(T) := \{p \in \mathcal{P}(\ell) : T \cap p \neq \emptyset\}$ are $(\delta,s,C')$-sets, for some $C' \sim C$. Therefore, Theorem \ref{t:mainDual} implies that $|\mathcal{P}| \geq \delta^{-2s - \epsilon}$, as desired. While this argument was detailed for a specific pair $(s,t)$, using that a $(\delta,u,\delta^{-\epsilon})$-set is automatically a $(\delta,u-\epsilon/2,\delta^{-\epsilon/2})$-set, we see that in Theorem \ref{t:mainDual} the value of "$\epsilon$" can be taken uniform in a neighborhood of $(s,t)$. The proof of Theorem \ref{mainFurstenberg} is complete.

\subsection{Discretising projections} This section contains the proof of Theorem \ref{mainKaufman}. We start with some standard reductions. It is enough to consider directions $e=(e_1,e_2)$ in a given $\pi/4$ arc; after a suitable rotation, we may furthermore assume that $e_1\in (-1,0]$. Since scaling does not change dimension, we may thus redefine the family of projections as $\pi_{\sigma} \colon \R^{2} \to \R$, $\sigma \in (-1,0]$, where $\pi_{\sigma}(x,y) = \sigma x + y$.  Likewise, we may assume that the set $K$ in question is contained in $[0,1)^2$, at the cost of weakening the assumption to $\Hd(K)>t-\epsilon/2$, where $\epsilon>0$ is arbitrarily small.

Fix, then, $s \in (0,1)$ and $t \in (s,2]$ and an analytic set $K\subset [0,1)^2$ with $\Hd(K)>t-\epsilon/2$.  Our re-defined goal is then to show that
\begin{equation}\label{form99} \Hd\{\sigma \in (-1,0] : \Hd \pi_{\sigma}(K) \leq s\} \leq s - \epsilon, \end{equation}
where $\epsilon=\epsilon(s,t)>0$.  This is equivalent to showing that
\begin{equation}\label{form98} \Hd \{\sigma \in (-1,0] : \Hd \pi_{\sigma}(K) < s\} \leq s - \epsilon. \end{equation}
To be precise, to deduce \eqref{form99} from \eqref{form98}, we need to know that the value of the constant "$\epsilon(s,t) > 0$" is bounded away from zero in a neighbourhood of $s$. This will follow from the application of Theorem \ref{t:mainTechnical}, in moment, where the analogous constant "$\epsilon(s,t) > 0$" has this property.

Use Frostman's lemma to find a probability measure $\mu$ with $\spt \mu \subset K$, which satisfies $\mu(B(x,r)) \lesssim r^{t-\epsilon}$ for all $x \in \R^{2}$ and $r > 0$. To reach a contradiction, assume that \eqref{form98} fails: thus $\mathcal{H}^{s - \epsilon}(\Sigma) > 0$, where
\begin{displaymath} \Sigma := \{\sigma \in (-1,0] : \Hd \pi_{\sigma}(K) < s\}, \end{displaymath}
and $\epsilon > 0$ is a parameter to be fixed at the end of the argument. The value of $\epsilon$ will only depend on $s,t$. Let $\mu_\sigma=\pi_\sigma\mu$ be the push-down of $\mu$ under $\pi_{\sigma}$. Since $\mu_{\sigma}$ is supported on $\pi_{\sigma}(K)$, we have $\mathcal{H}^{s}(\spt \mu_{\sigma}) = 0$. Therefore, given any threshold $\delta_{0} \in 2^{-\N}$, there exists a collection $\mathcal{I}_{\sigma}$ of dyadic intervals of $\R$ such that $|I| \leq \delta_{0}$ for all $I \in \mathcal{I}_{\sigma}$,
\begin{displaymath} \sum_{I \in \mathcal{I}_{\sigma}} \mu_{\sigma}(I) = 1 \quad \text{and} \quad \sum_{I \in \mathcal{I}_{\sigma}} |I|^{s} \leq 1. \end{displaymath}
By the pigeonhole principle, we can then find a dyadic number $\delta_{\sigma} = 2^{-j(\sigma)}$, $j(\sigma) \geq 0$, such that $\delta_{\sigma} \leq \delta_{0}$ and, denoting $\mathcal{I}_{\sigma}(\delta_{\sigma}) := \{I \in \mathcal{I} : |I| = \delta_{\sigma}\}$, we have
\begin{equation}\label{form96} \sum_{I \in \mathcal{I}_{\sigma}(\delta_{\sigma})} \mu_{\sigma}(I) \geq c/j^{2} = c \cdot [\log(1/\delta_{\sigma})]^{-2} \quad \text{and} \quad |\mathcal{I}_{\sigma}(\delta_{\sigma})| \leq \delta_{\sigma}^{-s}. \end{equation}
The choice of "$j(\sigma)$" depends on $\sigma \in \Sigma$, but this dependence can be essentially eliminated by another application of the pigeonhole principle: recalling that $\mathcal{H}^{s - \epsilon}(\Sigma) > 0$, there exists a fixed index $j_{0} \geq 0$, and a subset $\overline{\Sigma} \subset \Sigma$ with $\mathcal{H}^{s - \epsilon}(\overline{\Sigma}) \gtrsim 1/j_{0}^{2}$, such that
\begin{displaymath} \delta_{\sigma} = 2^{-j(\sigma)} = 2^{-j_{0}} =: \delta, \qquad \sigma \in \overline{\Sigma}. \end{displaymath}
At this point, we discretise everything at the scale $\delta > 0$ we located above. In particular, by the lower bound $\mathcal{H}^{s - \epsilon}_{\infty}(\overline{\Sigma}) \gtrsim 1/j_{0}^{2} =  [\log(1/\delta)]^{-2}$, there exists by Lemma \ref{deltasSet} a $(\delta,s - \epsilon,C \cdot [\log(1/\delta)]^{2})$-set
\begin{displaymath} \Sigma' \subset (\delta \cdot \Z) \cap \overline{\Sigma}(\delta) \quad \text{with} \quad |\Sigma'| \leq \delta^{-s + \epsilon}. \end{displaymath}
The notation $\overline{\Sigma}(\delta)$ refers to the $\delta$-neighbourhood of $\overline{\Sigma}$. For every $\sigma \in \Sigma'$, it follows from \eqref{form96} that a certain fairly large subset of $\spt \mu$ may be covered by $\lesssim \delta^{-s}$ dyadic tubes with common slope $-\sigma \in (\delta \cdot \Z) \cap [0,1)$ (it is easy to check that  the pre-images $\pi_{\sigma}^{-1}\{r\}$, $r \in \R$, are lines with slope $-\sigma \in [0,1)$.) More precisely, for every $\sigma \in \Sigma'$, there exists a family of dyadic tubes $\mathcal{T}_{\sigma}$ with the properties
\begin{displaymath} |\mathcal{T}_{\sigma}| \lesssim \delta^{-s} \quad \text{and} \quad \mu(\cup \mathcal{T}_{\sigma}) \gtrsim [\log(1/\delta)]^{-2}. \end{displaymath}
We write $\mathcal{T} := \bigcup_{\sigma \in \Sigma'} \mathcal{T}_{\sigma}$, and we record that $|\mathcal{T}| \lesssim \delta^{-2s + \epsilon}$, since $|\Sigma'| \leq \delta^{-s + \epsilon}$. Let $\mathcal{P}_{\sigma} \subset \mathcal{D}_{\delta}$ be the collection of dyadic sub-squares of $[0,1)^{2}$ which have non-empty intersection with at least one of the tubes from $\mathcal{T}_{\sigma}$. We note that $\mu(\cup \mathcal{P}_{\sigma}) \geq \mu(\cup \mathcal{T}_{\sigma})$, hence
\begin{displaymath} \sum_{p \in \mathcal{D}_{\delta}} \mu(p) \cdot |\{\sigma \in \Sigma' : p \in \mathcal{P}_{\sigma}\}| = \sum_{\sigma \in \Sigma'} \mu(\cup \mathcal{P}_{\sigma}) \gtrsim |\Sigma'| \cdot [\log(1/\delta)]^{-2} \gtrapprox_{\delta} \delta^{-s + \epsilon}, \end{displaymath}
also recalling that $\Sigma'$ is a $(\delta,s - \epsilon,C \cdot [\log(1/\delta)]^{2})$-set. Since on the other hand
\begin{displaymath}
|\{\sigma \in \Sigma' : p \in \mathcal{P}_{\sigma}\}| \leq |\Sigma'| \leq \delta^{-s + \epsilon}
\end{displaymath}
for all $p \in \mathcal{D}_{\delta}$, and $\mu$ is a probability measure, we may infer the existence of a family $\mathcal{P} \subset \mathcal{D}_{\delta}$ with the following properties:
\begin{equation}\label{form97} \mu(\cup \mathcal{P}) \approx_{\delta} 1 \quad \text{and} \quad |\{\sigma \in \Sigma' : p \in \mathcal{P}_{\sigma}\}| \approx_{\delta} \delta^{-s + \epsilon} \text{ for all } p \in \mathcal{P}. \end{equation}
Since $\spt \mu \subset K \subset [0,1)^{2}$, we may also assume that $\cup \mathcal{P} \subset [0,1)^{2}$. The family $\mathcal{P}$ may not be a $(\delta,t)$-set, but since $\mu(\cup \mathcal{P}) \approx_{\delta} 1$, and $\mu$ is a $(t-\epsilon)$-dimensional Frostman measure, we have $\mathcal{H}^{t-\epsilon}_{\infty}(\cup \mathcal{P}) \approx_{\delta} 1$. Therefore, by another application of Lemma \ref{deltasSet}, there exists a $(\delta,t-\epsilon,C)$-set (and thus also a $(\delta,t, C\delta^{-\epsilon})$-set)  $\overline{\mathcal{P}} \subset \mathcal{P}$ with $C \approx_{\delta} 1$. For every $p \in \overline{\mathcal{P}}$, we recall from \eqref{form97} that there correspond $\approx_{\delta} \delta^{-s + \epsilon}$ choices of $\sigma \in \Sigma'$ such that at least one tube from $\mathcal{T}_{\sigma}$ intersects $p$. Since $\Sigma'$ is a $(\delta,s - \epsilon,C)$-set, also with $C \approx_{\delta} 1$, the family of these tubes forms a $(\delta,s,C \delta^{-\epsilon})$-set $\mathcal{T}(p) \subset \mathcal{T}$.

Therefore, we have now constructed a $(\delta,t,C \delta^{-\epsilon})$-set $\overline{\mathcal{P}} \subset \mathcal{D}_{\delta}$, and for every $p \in \overline{\mathcal{P}}$ a $(\delta,s,C\delta^{-\epsilon})$-set $\mathcal{T}(p)$ of dyadic $\delta$-tubes, all intersecting $p$. By Theorem \ref{t:mainTechnical}, these facts should imply that $|\mathcal{T}| \geq \delta^{-2s - \epsilon}$, assuming that $\epsilon = \epsilon(s,t) > 0$ was chosen small enough. However, in fact $|\mathcal{T}| \leq |\Sigma'| \cdot \max_{\sigma \in \Sigma'} |\mathcal{T}_{\sigma}| \lesssim \delta^{-2s + \epsilon}$, and a contradiction ensues for $\delta > 0$ small enough. This completes the proof of Theorem \ref{mainKaufman}.

\section{Covering thin tubes with thick tubes without losing separation}\label{s:thickTubeCover}

Before formulating the main result in this section, let us briefly explain what it achieves. Let $0 < \delta \leq \Delta \leq 1$ be dyadic numbers. Let $Q \in \mathcal{D}_{\Delta}$, and let $\mathcal{P} \subset \mathcal{D}_{\delta}(Q)$ be a family of sub-squares of $Q$. Assume that for every $p \in \mathcal{P}$, there is a $(\delta,s)$-set $\mathcal{T}(p) \subset \mathcal{T}^{\delta}$ such that $T \cap p \neq \emptyset$ for all $T \in \mathcal{T}(p)$. Then all the $\delta$-tubes in the family $\mathcal{T} = \bigcup_{p \in P} \mathcal{T}(p)$ intersect $Q$. Now, suppose that we cover $\cup \mathcal{T}$ by some minimal collection of dyadic $\Delta$-tubes, say $\mathcal{T}_{\Delta}(Q)$. Then all the tubes in $\mathcal{T}_{\Delta}(Q)$ intersect $Q$. Is $\mathcal{T}_{\Delta}(Q)$ a $(\Delta,s)$-set? Certainly not: even a fixed family $\mathcal{T}(p)$ need not be a $(\Delta,s)$-set. What is worse, the families $\mathcal{T}(p)$ can be so different from each other that $\mathcal{T}_{\Delta}(Q)$ may not enjoy any properties of the individual families $\mathcal{T}(p)$. Regardless: Proposition \ref{prop2} will imply that after refining both $\mathcal{P}$ and the families $\mathcal{T}(p)$ appropriately, the minimal cover $\mathcal{T}_{\Delta}(Q)$ is, in fact, a $(\Delta,s)$-set. In fact, Proposition \ref{prop2} will not make any reference to the fixed square "$Q$", but in practice, we will apply it in situations as described above.

In Proposition \ref{prop2}, the notation $A \lessapprox_{\Delta} B$ means that there exists an absolute constant $C \geq 1$ such that $A \leq C \cdot [\log (1/\Delta)]^{C} B$.

\begin{proposition}\label{prop2} Let $0 < \delta \leq \Delta \leq 1$ be dyadic numbers, and let $C_{1},M \geq 1$. Let $\mathcal{P}$ be a finite set, and assume that for every $p \in \mathcal{P}$, there is an associated $(\delta,s,C_{1})$-set $\mathcal{T}(p) \subset \mathcal{T}^{\delta}$ with $\tfrac{M}{2} < |\mathcal{T}(p)| \leq M$, and such that $T \cap [0,1)^{2} \neq \emptyset$ for all $T \in \mathcal{T}(p)$ and all $p \in \mathcal{P}$.

Then, there exist a subset $\overline{\mathcal{P}} \subset \mathcal{P}$ of cardinality $|\overline{\mathcal{P}}| \approx_{\Delta} |\mathcal{P}|$, and a collection $\overline{\mathcal{T}}_{\Delta} \subset \mathcal{T}^{\Delta}$ of dyadic $\Delta$-tubes intersecting $[0,1)^2$ with the following properties:
\begin{enumerate}
\item \label{prop2i} $\overline{\mathcal{T}}_{\Delta}$ is a $(\Delta,s,C_{2})$-set with $C_{2} \lessapprox_{\Delta} C_{1}$,
\item \label{prop2ii} There exists a constant $H \approx_{\Delta} M \cdot |\mathcal{P}|/|\overline{\mathcal{T}}_{\Delta}|$ such that
\begin{displaymath} |\{(p,T) \in \overline{\mathcal{P}} \times \mathcal{T}^{\delta} : T \in \mathcal{T}(p) \text{ and } T \subset \mathbf{T}\}| \gtrsim H, \qquad \mathbf{T} \in \overline{\mathcal{T}}_{\Delta}. \end{displaymath}
\end{enumerate}
\end{proposition}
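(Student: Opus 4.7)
The plan is a two-stage dyadic pigeonholing, first applied individually to each $\mathcal{T}(p)$, and then to the multiplicity with which $\Delta$-tubes appear across different $p$'s.

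\textbf{Stage 1 (local pigeonhole).} For each $p \in \mathcal{P}$ and each $\mathbf{T} \in \mathcal{T}^{\Delta}$, set $N(p,\mathbf{T}) := |\{T \in \mathcal{T}(p) : T \subset \mathbf{T}\}|$. Note that $\sum_{\mathbf{T}} N(p,\mathbf{T}) = |\mathcal{T}(p)| \sim M$ and $N(p,\mathbf{T}) \in \{0,1,\dots,(\Delta/\delta)^{2}\}$. By dyadic pigeonholing, there exists $m(p) \in 2^{\N}$, and a set $\mathcal{T}_{\Delta}'(p)$ of dyadic $\Delta$-tubes such that $N(p,\mathbf{T}) \in [m(p),2m(p))$ for all $\mathbf{T} \in \mathcal{T}_{\Delta}'(p)$, and $\sum_{\mathbf{T} \in \mathcal{T}_{\Delta}'(p)} N(p,\mathbf{T}) \gtrsim M/\log(1/\delta)$. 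In particular $|\mathcal{T}_{\Delta}'(p)| \approx_\Delta M/m(p)$. A second pigeonhole in the parameter $m(p)$ produces $\mathcal{P}_{1} \subset \mathcal{P}$ with $|\mathcal{P}_{1}| \approx_\Delta |\mathcal{P}|$ on which $m(p) \equiv m$ is constant.

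\textbf{Stage 2 (global pigeonhole).} Define the multiplicity $\lambda(\mathbf{T}) := |\{p \in \mathcal{P}_{1} : \mathbf{T} \in \mathcal{T}_{\Delta}'(p)\}|$. Another dyadic pigeonhole yields $\lambda_{0} \geq 1$ and a family $\overline{\mathcal{T}}_{\Delta} \subset \mathcal{T}^{\Delta}$ such that $\lambda(\mathbf{T}) \sim \lambda_{0}$ for all $\mathbf{T} \in \overline{\mathcal{T}}_{\Delta}$, and $\sum_{\mathbf{T} \in \overline{\mathcal{T}}_{\Delta}}\lambda(\mathbf{T}) \approx_{\Delta} \sum_{p \in \mathcal{P}_{1}} |\mathcal{T}_{\Delta}'(p)| \approx_{\Delta} |\mathcal{P}| M/m$. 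Setting $\overline{\mathcal{P}} := \mathcal{P}_{1}$, the count $H := m\lambda_{0}$ then satisfies $H \approx_{\Delta} M|\mathcal{P}|/|\overline{\mathcal{T}}_{\Delta}|$, and property \eqref{prop2ii} is immediate since for every $\mathbf{T} \in \overline{\mathcal{T}}_{\Delta}$,
\[
|\{(p,T) \in \overline{\mathcal{P}} \times \mathcal{T}^{\delta} : T \in \mathcal{T}(p),\, T \subset \mathbf{T}\}| = \sum_{p:\,\mathbf{T} \in \mathcal{T}_{\Delta}'(p)} N(p,\mathbf{T}) \sim m\lambda(\mathbf{T}) \sim H.
\]

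\textbf{Verifying \eqref{prop2i}.} This is the heart of the argument. For each $p \in \overline{\mathcal{P}}$, the pigeonholing in Stage 1 allows us to transfer the $(\delta,s,C_{1})$-property of $\mathcal{T}(p)$ to a $(\Delta,s,C_{1}')$-property of $\mathcal{T}_{\Delta}'(p)$ with $C_{1}' \lessapprox_{\Delta} C_{1}$: indeed, for any dyadic $r$-square $Q_{r}$ in the dual plane with $\Delta \le r \le 1$, each $\mathbf{T} \subset Q_{r}$ carries $\geq m$ tubes of $\mathcal{T}(p)$, whence $m \cdot |\mathcal{T}_{\Delta}'(p) \cap Q_{r}| \leq |\mathcal{T}(p) \cap Q_{r}|_{\delta} \leq C_{1} \cdot M \cdot r^{s}$, and division by $|\mathcal{T}_{\Delta}'(p)| \approx_\Delta M/m$ gives what we want. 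Now sum over $p \in \overline{\mathcal{P}}$ and use near-constancy of $\lambda$:
\[
\lambda_{0} \cdot |\overline{\mathcal{T}}_{\Delta} \cap Q_{r}| \sim \sum_{\mathbf{T} \in \overline{\mathcal{T}}_{\Delta} \cap Q_{r}} \lambda(\mathbf{T}) \leq \sum_{p \in \overline{\mathcal{P}}} |\mathcal{T}_{\Delta}'(p) \cap Q_{r}| \lessapprox_{\Delta} C_{1} |\overline{\mathcal{P}}| (M/m) r^{s}.
\]
Dividing by $\lambda_{0}$ and using $\lambda_{0} |\overline{\mathcal{T}}_{\Delta}| \approx_{\Delta} |\overline{\mathcal{P}}| M/m$ yields $|\overline{\mathcal{T}}_{\Delta} \cap Q_{r}| \lessapprox_{\Delta} C_{1} |\overline{\mathcal{T}}_{\Delta}| r^{s}$, proving \eqref{prop2i}.

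The main conceptual point---and the step I expect to be the trickiest to set up cleanly---is the passage to \eqref{prop2i}: a union of $(\Delta,s)$-sets is in general not a $(\Delta,s)$-set, but the Stage 2 pigeonholing ensures that the multiplicity $\lambda$ is essentially constant on $\overline{\mathcal{T}}_{\Delta}$, and this uniformity converts the ``per-$p$'' local Frostman bound into a global one for the union.
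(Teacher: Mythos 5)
Your proposal is correct and follows essentially the same two-stage dyadic pigeonholing as the paper's own proof: first fix the per-packet count $m(p)$ and then the multiplicity $\lambda(\mathbf{T})$, after which claim (1) follows from the near-constancy of $\lambda$ on $\overline{\mathcal{T}}_{\Delta}$ combined with the Frostman bound transferred from $\mathcal{T}(p)$ to $\mathcal{T}'_{\Delta}(p)$ (the paper extracts a single $p_{0}$ achieving the average instead of summing over all $p$, but this is the same computation). The one point to patch is that in Stage 2 the multiplicity ranges over $\{1,\ldots,|\mathcal{P}_{1}|\}$, so to keep the pigeonholing loss at $\approx_{\Delta}1$ you must first discard the light dyadic levels (those with $\lambda_{0}\lesssim \Delta^{2}\sum_{\mathbf{T}}\lambda(\mathbf{T})$ up to logarithms), which is harmless because there are only $O(\Delta^{-2})$ dyadic $\Delta$-tubes meeting $[0,1)^{2}$ — exactly the reduction the paper performs.
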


\begin{remark} To understand the numerology, note that
\begin{align*} \frac{1}{|\overline{\mathcal{T}}_{\Delta}|} & \sum_{\mathbf{T} \in \overline{\mathcal{T}}_{\Delta}} |\{(p,T) \in \overline{\mathcal{P}} \times \mathcal{T}^{\delta} : T \in \mathcal{T}(p) \text{ and } T \subset \mathbf{T}\}|\\
& \qquad \leq \frac{|\{(p,T) \in \mathcal{P} \times \mathcal{T}^{\delta} : T \in \mathcal{T}(p)\}|}{|\overline{\mathcal{T}}_{\Delta}|} \le \frac{M \cdot |\mathcal{P}|}{|\overline{\mathcal{T}}_{\Delta}|} \approx_{\Delta} H, \end{align*}
so the uniform lower bound in \eqref{prop2ii} essentially matches the upper bound for the average. \end{remark}

\begin{remark} In our concrete applications of Proposition \ref{prop2}, $\mathcal{P}$ will be a collection of $\delta$-sub-squares of a $\Delta$-square $Q$, but this plays no role in the proof so we chose to state it for an arbitrary finite set indexing a collection of $(\delta,s,C_{1})$-sets of dyadic $\delta$-tubes.
\end{remark}

\begin{proof}[Proof of Proposition \ref{prop2}] Let $\mathcal{T}_{\Delta}$ be a minimal cover of $\mathcal{T} := \bigcup_{p \in \mathcal{P}} \mathcal{T}(p)$ by dyadic $\Delta$-tubes. Then each tube in $\mathcal{T}_{\Delta}$ intersects $[0,1)^{2}$, hence $|\mathcal{T}_{\Delta}| \leq 100\Delta^{-2}$. For $p \in \mathcal{P}$ fixed, different tubes in $\mathcal{T}_{\Delta}$ may contain different numbers of tubes from $\mathcal{T}(p)$, and we need to perform an initial pigeonholing to fix this. Let
\begin{displaymath} \mathcal{T}_{\Delta,j}(p) := \{\mathbf{T} \in \mathcal{T}_{\Delta} : 2^{j - 1} < |\{T \in \mathcal{T}(p) : T \subset \mathbf{T}\}| \leq 2^{j}\}, \end{displaymath}
and note that
\begin{displaymath} M \sim |\mathcal{T}(p)| \leq \sum_{2^{j} \leq M} 2^{j} \cdot |\mathcal{T}_{\Delta,j}(p)|. \end{displaymath}
Since $|\mathcal{T}_{\Delta}| \leq 100\Delta^{-2}$, the sum over those indices $j \geq 0$ with $2^{j} \leq M\Delta^{2}/200$ cannot dominate the left hand side. Thus,
\begin{displaymath} M \lesssim \sum_{M\Delta^{2}/200 \leq 2^{j} \leq M} 2^{j} \cdot |\mathcal{T}_{\Delta,j}(p)|. \end{displaymath}
Now, the number of terms in the sum is bounded by $\lessapprox_{\Delta} 1$, so there exists an index $j = j(p)$ such that $2^{j} \cdot |\mathcal{T}_{\Delta,j}(p)| \approx_{\Delta} M$. We write
\begin{equation}\label{form66} m_{1}(p) := 2^{j(p)}, \quad \mathcal{T}_{\Delta}(p) := \mathcal{T}_{\Delta,j(p)}(p), \quad \text{and} \quad m_{2}(p) := 2^{-j(p)} \cdot M \approx_{\Delta} |\mathcal{T}_{\Delta}(p)|. \end{equation}
Thus $|\{T \in \mathcal{T}(p) : T \subset \mathbf{T}\}| \sim m_{1}(p)$ for all $\mathbf{T} \in \mathcal{T}_{\Delta}(p)$, and the union of the tubes in $\mathcal{T}_{\Delta}(p)$ contains $\approx_{\Delta} M$ distinct tubes from $\mathcal{T}(p)$.

The next trouble is that $m_{1}(p),m_{2}(p)$ depend on $p \in \mathcal{P}$, and another pigeonholing is needed to fix that. Noting that there are $\lessapprox_{\Delta} 1$ possible choices for the pair $(m_{1}(p),m_{2}(p))$ (which is in fact determined by $m_{2}(p)\lessapprox_{\Delta} 1$ alone), there exists a fixed pair $(m_{1},m_{2})$ such that
\begin{displaymath} |\{p \in \mathcal{P} : (m_{1}(p),m_{2}(p)) = (m_{1},m_{2})\}| \approx_{\Delta} |\mathcal{P}|. \end{displaymath}
We let $\overline{\mathcal{P}}$ be the subset of $\mathcal{P}$ defined above. For $p \in \overline{\mathcal{P}}$, we let
\begin{displaymath} \overline{\mathcal{T}}(p) := \{T \in \mathcal{T}(p) : T \subset \mathbf{T} \text{ for some }\mathbf{T} \in \mathcal{T}_{\Delta}(p)\}. \end{displaymath}
Then $|\overline{\mathcal{T}}(p)| \approx_{\Delta} M$ by the choice of $\mathcal{T}_{\Delta}(p)$ in \eqref{form66}. After these initial reductions, the tube families $\overline{\mathcal{T}}(p)$ have gained a small amount of uniformity: each $\overline{\mathcal{T}}(p)$, $p \in \overline{\mathcal{P}}$, is covered by the $\approx_{\Delta} m_{2}$ dyadic $\Delta$-tubes in $\mathcal{T}_{\Delta}(p)$, and
\begin{equation}\label{form43} |\{T \in \overline{\mathcal{T}}(p) : T \subset \mathbf{T}\}| \sim \begin{cases} m_{1}, & \text{if } \mathbf{T} \in \mathcal{T}_{\Delta}(p), \\ 0, & \text{if } \mathbf{T} \in \mathcal{T}_{\Delta} \, \setminus \, \mathcal{T}_{\Delta}(p). \end{cases} \end{equation}
We continue the proof by calculating that
\begin{equation} \label{form28}
\begin{split}  M \cdot |\overline{\mathcal{P}}| & \approx_{\Delta} \sum_{p \in \overline{\mathcal{P}}} \sum_{\mathbf{T} \in \mathcal{T}_{\Delta}(p)} |\{T \in \overline{\mathcal{T}}(p) : T \subset \mathbf{T}\}| \\
& \sim m_{1} \sum_{\mathbf{T} \in \mathcal{T}_{\Delta}} |\{p \in \overline{\mathcal{P}} : \mathbf{T} \in \mathcal{T}_{\Delta}(p)\}| \sim m_{1} \sum_{2^{j} \leq |\overline{\mathcal{P}}|} 2^{j} \cdot |\mathcal{T}_{\Delta,j}|,
\end{split}
\end{equation}
where
\begin{equation}\label{form64} \mathcal{T}_{\Delta,j} := \{\mathbf{T} \in \mathcal{T}_{\Delta} : 2^{j - 1} < |\{p \in \overline{\mathcal{P}} :  \mathbf{T} \in \mathcal{T}_{\Delta}(p) \}| \leq 2^{j}\}. \end{equation}
In fact, the sum on the right hand side of \eqref{form28} can be restricted to those $j \geq 0$ with $2^{j} \geq c|\overline{\mathcal{P}}|\Delta^{2}$ for some $c \approx_{\Delta} 1$, since $m_1 \le M$ and $|\mathcal{T}_{\Delta,j}| \leq |\mathcal{T}_{\Delta}| \leq 100\Delta^{-2}$, and consequently the sum over $2^{j} < c|\overline{\mathcal{P}}|\Delta^{2}$ cannot dominate the left hand side of \eqref{form28}. This observation is used to infer that there are only $\approx_{\Delta} 1$ choices of "$j$" one needs to consider. From \eqref{form28}, one may now deduce the existence of
\begin{equation}\label{form67} c|\overline{\mathcal{P}}|\Delta \leq 2^{j} \leq |\overline{\mathcal{P}}| \quad \text{such that} \quad 2^{j} \cdot |\mathcal{T}_{\Delta,j}| \approx_{\Delta} \frac{M \cdot |\overline{\mathcal{P}}|}{m_{1}} = m_{2} \cdot |\overline{\mathcal{P}}|. \end{equation}
We then define $\overline{\mathcal{T}}_{\Delta} := \mathcal{T}_{\Delta,j}$ for the index "$j$" located above. We record that, by the definition of $\mathcal{T}_{\Delta,j}$ in \eqref{form64}, we have
\begin{equation}\label{form29a} |\{p \in \overline{\mathcal{P}} : \mathbf{T} \in \mathcal{T}_{\Delta}(p)\}| \sim 2^{j} \approx_{\Delta} |\overline{\mathcal{T}}_{\Delta}|^{-1} \cdot m_{2} \cdot |\overline{\mathcal{P}}|, \qquad \mathbf{T} \in \overline{\mathcal{T}}_{\Delta}. \end{equation}
It remains to verify the properties \eqref{prop2i}-\eqref{prop2ii}. We claim that \eqref{prop2ii} is valid with the constant $H := 2^{j} \cdot m_{1}$, which indeed satisfies $H \approx_{\Delta} M \cdot |\mathcal{P}|/|\overline{\mathcal{T}}_{\Delta}|$ by \eqref{form67}, and $|\overline{\mathcal{P}}| \approx_{\Delta} |\mathcal{P}|$. To prove \eqref{prop2ii} with this choice of "$H$", fix $\mathbf{T} \in \overline{\mathcal{T}}_{\Delta}$, and note that
\begin{align*} |\{(p,T) \in \overline{\mathcal{P}} \times \mathcal{T}^{\delta} : T \in \mathcal{T}(p) \text{ and } T \subset \mathbf{T}\}| = \sum_{p \in \overline{\mathcal{P}}} |\{T \in \mathcal{T}(p) : T \subset \mathbf{T}\}|. \end{align*}
Recalling \eqref{form43}, one has $|\{T \in \mathcal{T}(p) : T \subset \mathbf{T}\}| \gtrsim m_1$ for all $\mathbf{T} \in \mathcal{T}_{\Delta}(p)$. Therefore,
\begin{displaymath} |\{(p,T) \in \overline{\mathcal{P}} \times \mathcal{T}^{\delta} : T \in \mathcal{T}(p) \text{ and } T \subset \mathbf{T}\}| \gtrsim m_1 \cdot |\{p \in \overline{\mathcal{P}} : \mathbf{T} \in \mathcal{T}_{\Delta}(p)\}|.\end{displaymath}
In combination with \eqref{form29a}, this shows that $|\{(p,T) \in \overline{\mathcal{P}} \times \mathcal{T}^{\delta} : T \in \mathcal{T}(p) \text{ and } T \subset \mathbf{T}\}| \gtrsim 2^{j} \cdot m_{1} = H$, as claimed.

To prove the claim \eqref{prop2i}, fix a dyadic tube $\mathbf{T}_{r} \in \mathcal{T}^{r}$ with $r \geq \Delta$. Write
\begin{displaymath}
 \overline{\mathcal{T}}_{\Delta}(\mathbf{T}_{r}) := \{\mathbf{T} \in \overline{\mathcal{T}}_{\Delta} : \mathbf{T} \subset \mathbf{T}_{r}\}.
\end{displaymath}
To finish the proof, we must show that $|\overline{\mathcal{T}}_{\Delta}(\mathbf{T}_{r})| \lessapprox_{\Delta} C_{1} \cdot |\overline{\mathcal{T}}_{\Delta}| \cdot r^{s}$. Start by observing that
\begin{align*} |\overline{\mathcal{T}}_{\Delta}(\mathbf{T}_{r})| \cdot |\overline{\mathcal{T}}_{\Delta}|^{-1} \cdot m_{2} \cdot |\overline{\mathcal{P}}| &\stackrel{\eqref{form29a}}{\approx_{\Delta}} \sum_{\mathbf{T} \in \overline{\mathcal{T}}_{\Delta}(\mathbf{T}_{r})} |\{p \in \overline{\mathcal{P}} : \mathbf{T} \in \mathcal{T}_{\Delta}(p)\}| \\
&= \sum_{p \in \overline{\mathcal{P}}} |\overline{\mathcal{T}}_{\Delta}(\mathbf{T}_{r}) \cap \mathcal{T}_{\Delta}(p)|.
\end{align*}
Dividing by $|\overline{\mathcal{P}}|$, we find that there exists $p_{0} \in \overline{\mathcal{P}}$ with
\begin{displaymath} |\overline{\mathcal{T}}_{\Delta}(\mathbf{T}_{r}) \cap \mathcal{T}_{\Delta}(p_{0})| \gtrapprox_{\Delta} |\overline{\mathcal{T}}_{\Delta}(\mathbf{T}_{r})| \cdot |\overline{\mathcal{T}}_{\Delta}|^{-1} \cdot m_{2}. \end{displaymath}
Recall from \eqref{form43} that $|\{T \in \mathcal{T}(p_{0}) : T \subset \mathbf{T}\}| \gtrsim m_{1}$ for $\mathbf{T} \in \mathcal{T}_{\Delta}(p_{0})$. Consequently,
\begin{displaymath} |\{T \in \mathcal{T}(p_0) : T \subset \mathbf{T}_{r}\}| \gtrsim m_{1} \cdot |\overline{\mathcal{T}}_{\Delta}(\mathbf{T}_{r}) \cap \mathcal{T}_{\Delta}(p_{0})| \gtrapprox_{\Delta} |\overline{\mathcal{T}}_{\Delta}(\mathbf{T}_{r})| \cdot |\overline{\mathcal{T}}_{\Delta}|^{-1} \cdot M. \end{displaymath}
Finally, using the $(\delta,s,C_{1})$-set property of $\mathcal{T}(p_{0})$, and recalling $|\mathcal{T}(p_{0})| \sim M$, we deduce that
\begin{displaymath} |\overline{\mathcal{T}}_{\Delta}(\mathbf{T}_{r})| \lessapprox_{\Delta} |\overline{\mathcal{T}}_{\Delta}| \cdot M^{-1} \cdot |\{T \in \mathcal{T}(p_0) : T \subset \mathbf{T}_{r}\}| \lesssim C_{1} \cdot |\overline{\mathcal{T}}_{\Delta}| \cdot r^{s}. \end{displaymath}
This completes the proof of the proposition.  \end{proof}

\section{An induction on scales scheme for incidence counting}\label{s:induction-on-scales}

The main result of this section is Proposition \ref{p:induction-on-scales}. Roughly speaking, this proposition will allow us to prove incidence estimates at scale $\delta$ in terms of incidence estimates at coarser scales, so it can be seen as an induction on scales mechanism for incidence counting. It will be a crucial step in the proof of Theorem \ref{t:mainTechnical}. The proof relies on Proposition \ref{prop2} and careful pigeonholing.

Fix two dyadic scales $0<\delta<\Delta\le 1$ and families $\mathcal{P}_0\subset\mathcal{D}_{\delta}$ and $\mathcal{T}_0 \subset \mathcal{T}^{\delta}$. For $Q \in \mathcal{D}_{\Delta}$ and $\mathbf{T} \in \mathcal{T}^{\Delta}$, we denote
\[
\mathcal{P}_0\cap Q = \{ p\in\mathcal{P}_0: p\subset Q\} \quad \text{and} \quad \mathcal{T}_0 \cap \mathbf{T} := \{T \in \mathcal{T}_0 : T \subset \mathbf{T}\}.
\]
We also write
\[
\mathcal{D}_{\Delta}(\mathcal{P}_0) = \{ Q\in\mathcal{D}_{\Delta}: \mathcal{P}_0 \cap Q \neq\emptyset\} \quad \text{and} \quad \mathcal{T}^{\Delta}(\mathcal{T}_0) := \{\mathbf{T} \in \mathcal{T}^{\Delta} : \mathcal{T}_0 \cap \mathbf{T} \neq \emptyset\}.
\]
Finally, if $S:\R^2\to\R^2$ is a map, we let $S(\mathcal{P}_0)=\{ S(p):p\in\mathcal{P}_0\}$ and $S(\mathcal{T}_0) = \{S(T) : T \in \mathcal{T}_0\}$.

\begin{definition}
Fix $\delta\in 2^{-\mathbb{N}}$, $s\in [0,1]$, $C>0$,  $M\in\mathbb{N}$. We say that a pair $(\mathcal{P}_0,\mathcal{T}_0) \subset \mathcal{D}_{\delta} \times \mathcal{T}^{\delta}$ s a \emph{$(\delta,s,C,M)$-nice configuration} if for every $p\in\mathcal{P}_0$ there exists a $(\delta,s,C)$-set $\mathcal{T}(p) \subset\mathcal{T}_0$ with $ |\mathcal{T}(p)| = M$ and such that $T \cap p \neq\emptyset$ for all $T\in\mathcal{T}(p)$.
\end{definition}

We make some remarks on this definition:
\begin{enumerate}[(\rm a)]
\item No non-concentration assumptions are made on $\mathcal{P}$ - only on the families $\mathcal{T}(p)$.
\item In practice, we often have $|\mathcal{T}(p)|\sim M$ rather than $|\mathcal{T}(p)|=M$. However, we can easily get a nice configuration by trimming each family $\mathcal{T}(p)$ to $M'\sim M$ elements - this will only incur an innocuous  constant loss in the parameter "$C$". So we will not distinguish between $|\mathcal{T}(p)|\sim M$ and $|\mathcal{T}(p)|=M$ in the sequel.
\item Finally, we point out that $|\mathcal{T}(p)| \geq \delta^{-s}/C$ by virtue of being a $(\delta,s,C)$-set, but $M$ is allowed to be much larger than $\delta^{-s}$.
\end{enumerate}

In the next proposition, for $\Delta \in 2^{-\N}$ and $Q \in \mathcal{D}_{\Delta}$, the map $S_{Q} \colon \R^{2} \to \R^{2}$ is the homothety that maps $Q$ to the square $[0,1)^{2}$.  Also, the notation $A \lessapprox_{\delta} B$ means that there exists an absolute constant $C \geq 1$ such that $A \leq C \cdot [\log (1/\delta)]^{C} B$.

\begin{proposition}\label{p:induction-on-scales} Let $\delta,\Delta \in 2^{-\N}$ with $\delta \leq \Delta$. Let $(\mathcal{P}_0,\mathcal{T}_0)$ be a $(\delta,s,C_1,M)$-nice configuration. Then there exist sets $\mathcal{P}\subset \mathcal{P}_0$ and $\mathcal{T}(p)\subset\mathcal{T}_0(p)$, $p\in \mathcal{P}$, such that denoting $\mathcal{T}=\bigcup_{p\in \mathcal{P}} \mathcal{T}(p)$ the following hold:

\begin{enumerate}[(\rm i)]
\item \label{it-induction-i} $|\mathcal{D}_{\Delta}(\mathcal{P})| \approx_{\delta} |\mathcal{D}_{\Delta}(\mathcal{P}_{0})|$ and $|\mathcal{P}\cap Q| \approx_{\delta}|\mathcal{P}_0\cap Q|$ for all $Q\in\mathcal{D}_{\Delta}(\mathcal{P})$.
\item \label{it-induction-ii} $|\mathcal{T}(p)|\gtrapprox_{\delta} |\mathcal{T}_0(p)|=M$ for $p\in\mathcal{P}$.
\item \label{it-induction-iii} There are $\mathcal{T}_{\Delta}\subset\mathcal{T}^{\Delta}$, $C_{\Delta} \approx_{\delta} C_{1}$ and $M_{\Delta} \geq 1$ such that $(\mathcal{D}_{\Delta}(\mathcal{P}),\mathcal{T}_{\Delta})$ is a $(\Delta,s,C_{\Delta},M_{\Delta})$-nice configuration. Moreover, the associated families $\mathcal{T}_{\Delta}(Q)$ satisfy
\begin{equation} \label{app21} 
\cup \mathcal{T}(p) \subset \cup \mathcal{T}_{\Delta}(Q), \qquad p \in \mathcal{P} \cap Q, \,\ Q\in\mathcal{D}_{\Delta}(\mathcal{P}). 
\end{equation}
\item \label{it-induction-iv} For each $Q\in\mathcal{D}_{\Delta}(\mathcal{P})$ there exist $C_Q \approx_{\delta} C_1$, $M_Q\ge 1$,  and a family of tubes $\mathcal{T}_{Q}\subset\mathcal{T}^{\delta/\Delta}$ such that $(S_{Q}(\mathcal{P}\cap Q),\mathcal{T}_Q)$ is $(\delta/\Delta,s,C_Q,M_Q)$-nice. Moreover,
\begin{equation}\label{app1} \mathcal{D}_{\delta/\Delta} \left[ \sigma\left(\mathcal{T}_{Q}(S_{Q}(p))\right) \right] = \mathcal{D}_{\delta/\Delta}[\sigma(\mathcal{T}(p))], \qquad p \in \mathcal{P} \cap Q. \end{equation}
\end{enumerate}
Furthermore, the families $\mathcal{T}_{\Delta},\mathcal{T}_{Q}$ can be chosen so that
\begin{equation}\label{lower-bound-T}
\frac{|\mathcal{T}_0|}{M} \gtrapprox_\delta \frac{|\mathcal{T}_{\Delta}|}{M_\Delta}\cdot \left( \max_{Q \in \mathcal{D}_{\Delta}(\mathcal{P})}\frac{|\mathcal{T}_{Q}|}{M_Q} \right).
\end{equation}
\end{proposition}

Thanks to Proposition \ref{p:induction-on-scales}, the problem of finding  lower bounds for $|\mathcal{T}_0|$ is reduced to finding lower bounds for the cardinalities of the families of $\Delta$-tubes $\mathcal{T}_{\Delta}$ and $(\delta/\Delta)$-tubes $\mathcal{T}_{Q}$. This is the induction on scales mechanism described at the beginning of this section.

\begin{remark} Note that $C_{\Delta},C_{Q} \approx_{\delta} C_{1}$ instead of $C_{\Delta} \approx_{\Delta} C_{1}$ and $C_{Q} \approx_{\delta/\Delta} \approx C_{1}$. So, for the proposition to be useful in practice one needs
\[
\log(1/\Delta),\log(\Delta/\delta)\sim \log(1/\delta).
\]
\end{remark}

\begin{remark} \label{r:P-P_0}
We also point out that no guarantees are made that $|\mathcal{P}| \approx_\delta |\mathcal{P}_0|$. However, it follows from Claim \eqref{it-induction-i} that if the sets $\mathcal{P}_0\cap Q$ , $Q\in\mathcal{D}_{\Delta}(\mathcal{P}_0)$, have comparable cardinalities to begin with, then the cardinalities of $\mathcal{P}$ and $\mathcal{P}_0$ are also roughly comparable - this will be the case in our applications.
\end{remark}

\begin{remark}\label{rem6} The following extra property can be added to the requirements of the families $\mathcal{T}_{\Delta}(Q)$, $Q \in \mathcal{D}_{\Delta}(\mathcal{P})$ (we will point out in Remark \ref{rem7} the small extra step which needs to be taken). Fix $\bar{\Delta} \in [\Delta,1] \cap 2^{-\N}$. Then, the map
\begin{displaymath} \overline{\mathbf{T}} \mapsto |\overline{\mathbf{T}} \cap \mathcal{T}_{\Delta}(Q)|, \qquad \overline{\mathbf{T}} \in  \mathcal{T}^{\bar{\Delta}}(\mathcal{T}_{\Delta}(Q)),\, Q\in\mathcal{D}_{\Delta}(\mathcal{P}), \end{displaymath}
is constant (independent of $Q$). 
\end{remark}

%\begin{remark}
%The families $\mathcal{T}_{Q}$ are related to the geometry of $\mathcal{P}\cap Q$ and $\{ T\in\mathcal{T}: T\cap Q\neq\emptyset\}$.  Each tube $T\in\mathcal{T}_{Q}(S_{Q}(p))$, $p\in \mathcal{P}\cap Q$, is contained in an $O(\delta/\Delta)$-neighbourhood of $S_Q(T')$ for some $T' \in \mathcal{T}(p)$. Moreover, for this tube $T'$, the number of tubes in $\mathcal{T}(p)\cap \mathcal{T}^{\delta/\Delta}(T')$ is $\approx_{\delta} M/M_Q$. These facts are not strictly speaking necessary in the sequel -- we will only need \eqref{lower-bound-T} -- so we decided to leave them outside the statement of Proposition \ref{p:induction-on-scales}.
%\end{remark}

\begin{proof}[Proof of Proposition \ref{p:induction-on-scales}]

 Fix $Q \in \mathcal{D}_{\Delta}(\mathcal{P}_0)$. By applying Proposition \ref{prop2} to the set $\mathcal{P}_{0} \cap Q$, we may find a subset $\overline{\mathcal{P}}_{Q} \subset \mathcal{P}_0 \cap Q$ of cardinality $|\overline{\mathcal{P}}_{Q}| \approx_{\Delta} |\mathcal{P}_0 \cap Q|$, and a family of dyadic $\Delta$-tubes $\overline{\mathcal{T}}_{\Delta}(Q)$ intersecting $Q$ such that the following properties hold:
\begin{itemize}
\item[(T1) \phantomsection \label{T1}] $\overline{\mathcal{T}}_{\Delta}(Q)$ is a $(\Delta,s,C_{\Delta})$-set for some $C_{\Delta} \approx_{\Delta} C_1$.
\item[(T2) \phantomsection \label{T2}] There exists a constant $H_{Q} \approx_{\Delta} M \cdot |\overline{\mathcal{P}}_{Q}|/|\overline{\mathcal{T}}_{\Delta}(Q)|$ such that
\begin{displaymath} |\{(p,T) \in \overline{\mathcal{P}}_{Q} \times \mathcal{T}^\delta : T \in \mathcal{T}_0(p) \text{ and } T \subset \mathbf{T}\}| \gtrsim H_{Q}, \qquad \mathbf{T} \in \overline{\mathcal{T}}_{\Delta}(Q). 
\end{displaymath}
\end{itemize}
Note that, even though $C_{\Delta}$ a priori depends on $Q$, the implicit constant in $C_{\Delta} \approx_{\Delta} C_1$ is independent of $Q$ and so we can indeed take a uniform value over all $Q\in \mathcal{D}_{\Delta}(\mathcal{P}_0)$.

All the tubes in $\overline{\mathcal{T}}_{\Delta}(Q)$ intersect $[0,1)^{2}$, so $|\overline{\mathcal{T}}_{\Delta}(Q)| \le 100\Delta^{-2}$. By the pigeonhole principle, we may find $\overline{M}_{\Delta} \geq 1$, and a subset $\overline{\mathcal{Q}} \subset \mathcal{D}_{\Delta}(\mathcal{P}_{0})$ with cardinality $|\overline{\mathcal{Q}}| \approx_{\Delta} |\mathcal{D}_{\Delta}(\mathcal{P}_0)|$, such that $\overline{M}_{\Delta} \leq |\overline{\mathcal{T}}_{\Delta}(Q)| \leq 2\overline{M}_{\Delta}$ for all $Q \in \overline{\mathcal{Q}}$.

Write
\begin{displaymath}  \overline{\mathcal{T}}_{\Delta} := \bigcup_{Q \in \overline{\mathcal{Q}}} \overline{\mathcal{T}}_{\Delta}(Q).
\end{displaymath}
We will next perform another pigeonholing to ensure that $|\mathcal{T}_0 \cap \mathbf{T}|$ is roughly constant for all tubes $\mathbf{T}$ in a substantial subset of $\overline{\mathcal{T}}_{\Delta}$. To this end, we define
\begin{displaymath} \mathcal{I}(\overline{\mathcal{Q}},\overline{\mathcal{T}}_{\Delta}) := \{(Q,\mathbf{T}) \in \overline{\mathcal{Q}} \times \overline{\mathcal{T}}_{\Delta} : \mathbf{T} \in \overline{\mathcal{T}}_{\Delta}(Q)\}.\end{displaymath}
Note that $|\mathcal{I}(\overline{\mathcal{Q}},\overline{\mathcal{T}}_{\Delta})| \sim |\overline{\mathcal{Q}}| \cdot \overline{M}_{\Delta}$. For $j \geq 1$, let
\begin{displaymath} \overline{\mathcal{T}}_{\Delta,j} := \{\mathbf{T} \in \overline{\mathcal{T}}_{\Delta} : 2^{j - 1} < |\mathcal{T}_0 \cap \mathbf{T}| \leq 2^{j}\}. \end{displaymath}
Since $|\mathcal{T}^{\delta} \cap \mathbf{T}| \leq (\Delta/\delta)^{2} \leq \delta^{-2}$ for all $\mathbf{T} \in \overline{\mathcal{T}}_{\Delta}$, we have
\begin{displaymath}
|\overline{\mathcal{Q}}| \cdot \overline{M}_{\Delta} \sim |\mathcal{I}(\overline{\mathcal{Q}},\overline{\mathcal{T}}_{\Delta})| = \sum_{2^{j} \leq \delta^{-2}} |\{(Q,\mathbf{T}) \in \overline{\mathcal{Q}} \times \overline{\mathcal{T}}_{\Delta,j} : \mathbf{T} \in \overline{\mathcal{T}}_{\Delta}(Q)\}|.
\end{displaymath}
Therefore, we may pick $j \in \{1,\ldots,2\log (1/\delta)\}$ such that, writing
\[
\mathcal{T}_{\Delta} := \overline{\mathcal{T}}_{\Delta,j},\quad \mathcal{T}_{\Delta}(Q) := \mathcal{T}_{\Delta} \cap \overline{\mathcal{T}}_{\Delta}(Q), \qquad Q \in \overline{\mathcal{Q}},
\]
we have
\begin{equation}\label{form68} \sum_{Q \in \overline{\mathcal{Q}}} |\mathcal{T}_{\Delta}(Q)| = |\{(Q,\mathbf{T}) \in \overline{\mathcal{Q}} \times \mathcal{T}_{\Delta} : \mathbf{T} \in \mathcal{T}_{\Delta}(Q)\}| \approx_{\delta} |\overline{\mathcal{Q}}| \cdot \overline{M}_{\Delta}. \end{equation}
We write $N_{\Delta} := 2^{j}$ for this index "$j$", so
\begin{equation}\label{form70}
|\mathcal{T}_0 \cap \mathbf{T}| \sim N_{\Delta}, \qquad \mathbf{T} \in \mathcal{T}_{\Delta}(Q) \subset \mathcal{T}_{\Delta}.
\end{equation}
Since  $\overline{M}_{\Delta} \sim |\overline{\mathcal{T}}_{\Delta}(Q)|\ge |\mathcal{T}_{\Delta}(Q)|$ for $Q\in\overline{\mathcal{Q}}$,  we infer from \eqref{form68} that there exists a further subset of $\mathcal{Q} \subset \overline{\mathcal{Q}}$ of cardinality $|\mathcal{Q}|\approx_{\delta} |\overline{\mathcal{Q}}|$ such that
\begin{equation}\label{form86} |\mathcal{T}_{\Delta}(Q)| \approx_{\delta} \overline{M}_{\Delta} \sim |\overline{\mathcal{T}}_{\Delta}(Q)|, \qquad Q \in \mathcal{Q}. \end{equation}

\begin{remark}\label{rem7} At this point, if we desire the extra property in Remark \ref{rem6}, small additional refinements are needed. Recall that $\bar{\Delta} \in [\Delta,1] \cap 2^{-\N}$, and we desire that every $\overline{\mathbf{T}} \in \mathcal{T}^{\bar{\Delta}}(\mathcal{T}_{\Delta}(Q))$ contains a common number of elements from $\mathcal{T}_{\Delta}(Q)$. If $Q$-dependence is allowed, this is a matter of very straightforward pigeonholing: the only cost is that the cardinality of $\mathcal{T}_{\Delta}(Q)$ will decrease by a factor of $\approx_{\Delta} 1$, and in particular \eqref{form70}-\eqref{form86} are not affected.

After this step has been accomplished individually for every $Q \in \mathcal{Q}$, we first reduce $\mathcal{Q}$ to a further subset $\mathcal{Q}'$ with $|\mathcal{Q}'| \approx_{\Delta} |\mathcal{Q}|$ so that $\overline{\mathbf{T}} \mapsto |\overline{\mathbf{T}} \cap \mathcal{T}_{\Delta}(Q)|$ only varies within a factor of $2$ for $Q \in \mathcal{Q}$. After this, we finally discard a few tubes from each intersection $\overline{\mathbf{T}} \cap \mathcal{T}_{\Delta}(Q)$ to achieve the desired precise constancy. \end{remark}

To comply with the definition of "niceness" in Claim \eqref{it-induction-iii}, we reduce the families $\mathcal{T}_{\Delta}(Q)$ so that they have common cardinality
\begin{displaymath} M_{\Delta} := \min \{|\mathcal{T}_{\Delta}(Q)| : Q \in \mathcal{Q}\} \approx_{\delta} \overline{M}_{\Delta}. \end{displaymath}
(If we know and want that $\overline{\mathbf{T}} \mapsto |\overline{\mathbf{T}} \cap \mathcal{T}_{\Delta}(Q)|$ is constant in the sense of Remark \ref{rem6}, this reduction has to be performed by deleting "blocks" of the form $\overline{\mathbf{T}} \cap \mathcal{T}_{\Delta}(Q)$.) Since $\overline{\mathcal{T}}_{\Delta}(Q)$ was a $(\Delta,s,C_{\Delta})$-set by \nref{T1}, also $\mathcal{T}_{\Delta}(Q)$ remains a $(\Delta,s,C_{\Delta})$-set, with constant $C_{\Delta} \approx_{\delta} C_1$. We now define
\begin{displaymath} \mathcal{T}_{\Delta} := \bigcup_{Q \in \mathcal{Q}} \mathcal{T}_{\Delta}(Q). \end{displaymath}
This finalises the definition of the family $\mathcal{T}_{\Delta}$ appearing in Claim \eqref{it-induction-iii}.

%The families $\mathcal{T}(p)$ appearing in Claim \eqref{it-induction-ii} will be properly defined in Section \ref{ss:contruction-T_Q} below. For the time being, it is only relevant to know that they satisfy $|\mathcal{T}(p)| \approx_{\delta} |\mathcal{T}_{0}(p)| = M$ for all $p \in \mathcal{P}_{0}$. We pretend for the remainder of this section that the families $\mathcal{T}(p)$ have already been defined, and we establish Claims \eqref{it-induction-i}-\eqref{it-induction-iii} (out of which \eqref{it-induction-ii} is already clear by what we declared to know about $\mathcal{T}(p)$).

We next begin processing the families $\mathcal{T}_{0}(p)$ towards the families $\mathcal{T}(p)$. For $Q \in \mathcal{Q}$ fixed, let
\begin{equation}\label{app22} \overline{\mathcal{T}}(p) := \bigcup_{\mathbf{T} \in \mathcal{T}_{\Delta}(Q)} (\mathcal{T}_{0}(p) \cap \mathbf{T}), \qquad p \in \overline{\mathcal{P}}_{Q}. \end{equation}
Now, it is clear that \eqref{app21} holds, so the proof of Claim \eqref{it-induction-iii} is complete -- at least when we declare that the final families $\mathcal{T}(p)$ (to be finalised in Section \ref{ss:contruction-T_Q}) will be subfamilies of $\overline{\mathcal{T}}(p)$. To be precise, we also need to know that $\mathcal{Q}=\mathcal{D}_{\Delta}(\mathcal{P})$ - this will indeed be the case, see \eqref{form5} below.

The first issue with definition \eqref{app22} is that it is not guaranteed that $|\overline{\mathcal{T}}(p)| \approx_{\delta} M$ for all $p \in \overline{\mathcal{P}}_{Q}$ (the second issue is that to attain \eqref{app1}, we will need to refine $\overline{\mathcal{T}}(p)$ further; we will return to this in Section \ref{ss:contruction-T_Q}). Recall from \eqref{form86} that $|\mathcal{T}_{\Delta}(Q)| \approx_{\delta} |\overline{\mathcal{T}}_{\Delta}(Q)|$ for $Q \in \mathcal{Q}$. Since also $H_{Q} \approx_{\Delta} M \cdot |\overline{\mathcal{P}}_{Q}|/|\overline{\mathcal{T}}_{\Delta}(Q)|$ by \nref{T2}, we record that
\begin{align*}
\sum_{p\in\overline{\mathcal{P}}_{Q}} |\overline{\mathcal{T}}(p)| & \overset{\eqref{app22}}{=}
\sum_{p\in\overline{\mathcal{P}}_{Q}} \sum_{\mathbf{T}\in\mathcal{T}_{\Delta}(Q)} |\mathcal{T}_0(p)\cap\mathbf{T}| \\
&=\sum_{\mathbf{T} \in \mathcal{T}_{\Delta}(Q)} |\{(p,T) \in \overline{\mathcal{P}}_{Q} \times \mathcal{T}^{\delta} : T \in \mathcal{T}_{0}(p) \text{ and } T \subset \mathbf{T}\}|  \\
&\gtrapprox_{\delta} |\mathcal{T}_{\Delta}(Q)| \cdot H_{Q} \approx_{\delta} M \cdot |\overline{\mathcal{P}}_{Q}|.
\end{align*}
Since $|\overline{\mathcal{T}}(p)|\le |\mathcal{T}_0(p)| = M$ for all $p \in \overline{\mathcal{P}}_Q \subset \mathcal{P}_0$, this implies the existence of a subset $\mathcal{P}_{Q} \subset \overline{\mathcal{P}}_{Q}$ of cardinality $|\mathcal{P}_{Q}| \approx_{\delta} |\overline{\mathcal{P}}_{Q}|$ such that
\begin{equation}\label{form72} |\overline{\mathcal{T}}(p)| \approx_{\delta} M, \qquad p \in \mathcal{P}_{Q}. \end{equation}
We now define
\begin{equation}\label{form5}
\mathcal{P} := \bigcup_{Q \in \mathcal{Q}} \mathcal{P}_{Q} \quad \text{and} \quad \overline{\mathcal{T}} := \bigcup_{p \in \mathcal{P}} \overline{\mathcal{T}}(p), \end{equation}
and we note, thanks to \eqref{form72}, that Claim \eqref{it-induction-ii} is satisfied for $p \in \mathcal{P}$, and for the families $\overline{\mathcal{T}}(p)$ defined in \eqref{app22}. Since $\overline{\mathcal{T}}(p)$ will be refined once more in the sequel into $\mathcal{T}(p)$, we remark that Claim \eqref{it-induction-ii} remains valid as long $|\mathcal{T}(p)| \approx_{\delta} |\overline{\mathcal{T}}(p)|$, and we will make sure that this is the case. Similarly, Claim \eqref{it-induction-i} is clearly satisfied by the set $\mathcal{P}$. In the sequel, the family $\mathcal{D}_{\Delta}(\mathcal{P}) = \mathcal{Q}$ will remain intact, but the sets $\mathcal{P}_{Q}$ will be refined once more while maintaining $|\mathcal{P}_{Q}|\approx_{\delta}|\mathcal{P}_0\cap Q|$. Clearly this will not influence the validity of Claim \eqref{it-induction-i}.

We start proving Claim \eqref{it-induction-iv} and, concurrently, the lower bound \eqref{lower-bound-T} on $|\mathcal{T}_0|$.  We first note that
\begin{equation}\label{form80}
|\mathcal{T}_0| \geq |\mathcal{T}_{\Delta}| \cdot \min_{\mathbf{T}\in\mathcal{T}_{\Delta}} |\mathcal{T}_0 \cap \mathbf{T}| \stackrel{\eqref{form70}}{\sim} |\mathcal{T}_{\Delta}| \cdot N_{\Delta}.
\end{equation}
We fix $Q \in \mathcal{Q}$ for the rest of the argument, and define the collection of $\delta$-tubes
\begin{displaymath} \mathcal{T}(Q) := \bigcup_{p \in \mathcal{P}_{Q}} \overline{\mathcal{T}}(p). \end{displaymath}

We next observe, using $|\mathcal{T}_{\Delta}(Q)| = M_{\Delta}$  and, recalling \eqref{app22}, that
\begin{equation}\label{form71}
|\mathcal{T}(Q)| \leq \sum_{\mathbf{T} \in \mathcal{T}_{\Delta}(Q)} |\mathcal{T}_{0} \cap \mathbf{T}| \stackrel{\eqref{form70}}{\lesssim} M_{\Delta} \cdot N_{\Delta}.
\end{equation}
We adopt the notation $\mathcal{P}^Q := S_Q(\mathcal{P}_Q) = S_Q(\mathcal{P}\cap Q) \subset \mathcal{D}_{\delta/\Delta}([0,1)^{2})$. During the remainder of the proof, we will construct a family $\mathcal{T}_Q$ of $(\delta/\Delta)$-tubes such that (after a final refinement of $\mathcal{P}_Q$) the pair $(\mathcal{P}^Q,\mathcal{T}_Q)$ is a $(\delta/\Delta,s,C_Q, M_Q)$-nice configuration, for some $C_{Q},M_{Q} \geq 1$, satisfying Claim \eqref{it-induction-iv}, in particular \eqref{app1}. We will also show that
\begin{equation}\label{form46} |\mathcal{T}(Q)| \gtrapprox_{\delta} \frac{|\mathcal{T}_{Q}|}{M_{Q}}\cdot M .\end{equation}
This will finish the proof, since combining \eqref{form80}, \eqref{form71}, and \eqref{form46} yields \eqref{lower-bound-T}.

\subsection{Construction of $\mathcal{T}_Q$ and proof of \eqref{form46}}\label{ss:contruction-T_Q} We abbreviate $\bar{\delta}= \delta/\Delta$. The main task remaining is to define the family $\mathcal{T}_Q \subset \mathcal{T}^{\bar{\delta}}$. We know that each square $p \in \mathcal{P}_{Q}$ intersects each $\delta$-tube in the $(\delta,s,C)$-set $\overline{\mathcal{T}}(p)$ defined in \eqref{app22}, with $C\approx_\delta C_1$. However, the separation of these tubes is of the order "$\delta$", not "$\delta/\Delta$", and this causes a need for further processing.

Fix $p \in \mathcal{P}_{Q}$. If $\mathbf{T}_{\bar{\delta}} \in \mathcal{T}^{\bar{\delta}}$ with $\mathbf{T}_{\bar{\delta}} \cap p \neq \emptyset$, the family $\overline{\mathcal{T}}(p) \cap \mathbf{T}_{\bar{\delta}}$ is called a \emph{tube packet}. A generic tube packet is denoted $\Xi(p)$, thus $\Xi(p) = \overline{\mathcal{T}}(p) \cap \mathbf{T}_{\bar{\delta}}$ for some $\mathbf{T}_{\bar{\delta}} \in \mathcal{T}^{\bar{\delta}}$. Since the tubes in a fixed tube packet $\Xi(p)$ have a common ancestor in $\mathcal{T}^{\bar{\delta}}$, the slope set $\sigma(\Xi(p))$ is contained in a dyadic interval of length $\bar{\delta}$, determined by $\mathbf{T}_{\bar{\delta}}$.

Every tube in $\overline{\mathcal{T}}(p)$ lies in precisely one tube packet, and every tube packet $\Xi(p)$ satisfies $|\Xi(p)| \in \{0,\ldots,\delta^{-2}\}$. Therefore, we may find dyadic numbers $m(p) \in \{1,\ldots,\delta^{-2}\}$ and $M(p) \in \{1,\ldots,|\overline{\mathcal{T}}(p)|\}$, and a collection of tube packets
\begin{displaymath} \Xi_{1}(p),\ldots,\Xi_{M(p)}(p) \subset \overline{\mathcal{T}}(p) \end{displaymath}
such that
\begin{displaymath} |\Xi_{j}(p)| \sim m(p), \, 1 \leq j \leq M(p), \quad \text{and} \quad m(p) \cdot M(p) \approx_{\delta} |\overline{\mathcal{T}}(p)| \stackrel{\eqref{form72}}{\approx_{\delta}} M. \end{displaymath}
By pigeonholing again, we may find a number $M_Q\in \N$, and a subset of $\mathcal{P}_{Q}$ of cardinality $\approx_\delta |\mathcal{P}_{Q}|$ (we keep the notation $\mathcal{P}_{Q}$) such that $M(p) \sim M_Q$ for all $p \in \mathcal{P}_{Q}$. For $p \in \mathcal{P}_{Q}$, we finally define 
\begin{equation}\label{app7} \mathcal{T}(p) := \bigcup_{j = 1}^{M(p)} \Xi_{j}(p). \end{equation}
Note that this (final) family $\mathcal{T}(p)$ still satisfies $|\mathcal{T}(p)| \approx_{\delta} M$, so Claim \eqref{it-induction-ii} was not violated. After this point, the family $\mathcal{P}_{Q}$ will no longer be refined, so the final definition of $\mathcal{P}$ is now given by \eqref{form5}, with the current definition of $\mathcal{P}_{Q}$.

With these conventions, $|\mathcal{P}_{Q}| \approx_{\delta} |\mathcal{P}_0 \cap Q|$, and
\begin{equation}\label{form47} |\Xi_{j}(p)| \approx_{\delta} |\mathcal{T}(p)|/M_Q\approx_{\delta} M/M_Q=: m_Q \approx_{\delta} m(p), \quad p \in \mathcal{P}_{Q}, \, 1 \leq j \leq M_Q. \end{equation}
We adopt the notational convention that squares in $\mathcal{P}^{Q} = S_{Q}(\mathcal{P}_{Q}) \subset \mathcal{D}_{\bar{\delta}}$ are denoted "$q$". We now intend to cook up $(\bar{\delta},s)$-sets of tubes $\mathcal{T}_{Q}(q)$ incident to the squares $q \in \mathcal{P}^{Q}$. For $q = S_{Q}(p)$, with $p \in \mathcal{P}_{Q}$, the most obvious attempt might be to define $\mathcal{T}_{Q}(q) := S_{Q}(\mathcal{T}(p))$, since all the sets $S_{Q}(T)$, $T \in \mathcal{T}(p)$, intersect $q = S_{Q}(p)$. The worst way in which this fails is that the slopes of the tubes in $\mathcal{T}(p)$ are $\delta$-separated, whereas we desire that the slopes of $\mathcal{T}_{Q}(q)$ are $\bar{\delta}$-separated; applying the homothety $S_{Q}$ does not rescue the situation, as homotheties preserve slopes.

To fix the issue, we start by performing a "thinning" procedure to the families $\mathcal{T}(p)$. Namely, for every $p \in \mathcal{P}_{Q}$, we define a new tube family $\mathcal{T}'(p)$ with the heuristic idea to select a single tube from each tube packet $\Xi_{1}(p),\ldots,\Xi_{M_Q}(p)$. A slightly different technical implementation is more tractable. Fix a tube packet $\Xi = \Xi_{j}(p)$, $1 \leq j \leq M_Q$, and let $\mathbf{T}_{j} \in \mathcal{T}^{\bar{\delta}} $ be the dyadic ancestor of the tubes in $\Xi$. Let $T_{j} \in \mathcal{T}^{\delta}$ be a tube with the properties
\begin{equation} \label{eq:properties-Tj}
T_{j} \cap p \neq \emptyset, \quad T_{j} \subset \mathbf{T}_{j}, \quad \text{and} \quad \sigma(T_{j}) = \sigma(\mathbf{T}_{j}) \in (\bar{\delta} \cdot \Z) \cap [-1,1). \end{equation}
Thus, $T_{j}$ may, or may not, be an element of $\Xi$; we will only need that $T_{j}$ has the same ancestor in $\mathcal{T}^{\bar{\delta}}$ as all the tubes in $\Xi_{j}(p)$. We say that the tube packet $\Xi_{j}(p)$ is \emph{represented by $T_{j}$.} Letting $1 \leq j \leq M_Q$ vary, we thus obtain a collection $\mathcal{T}'(p)$ of dyadic $\delta$-tubes, whose slopes are $\bar{\delta}$-separated, and in fact
\begin{equation} \label{form78} 
\sigma(p) := \sigma(\mathcal{T}'(p)) \subset (\bar{\delta} \cdot \Z) \cap [-1,1), \qquad p \in \mathcal{P}_{Q}. 
\end{equation}
Another useful property, needed to establish \eqref{app1}, is that all the slopes of the tubes in $\mathcal{T}(p)$ are contained in the $\bar{\delta}$-intervals around the slopes of $\mathcal{T}'(p)$. In fact, more precisely, the slope set $\sigma(p)$ consists exactly of the left endpoints of those dyadic $\bar{\delta}$-intervals which contain at least one slope from $\sigma(\mathcal{T}(p)) \subset \delta \cdot \Z$. In symbols,
\begin{equation}\label{app8} \mathcal{D}_{\bar{\delta}}\left[\sigma(p)\right] = \mathcal{D}_{\bar{\delta}}\left[\sigma(\mathcal{T}(p))\right], \qquad p \in \mathcal{P}_{Q}. \end{equation}
Note the resemblance with \eqref{app1}.

For $p \in \mathcal{P}_{Q}$ fixed, we claim that the slope set $\sigma(p)$ is a $(\bar{\delta},s,C_Q)$-set with $C_Q\approx_\delta C_1$. To see this, fix a dyadic interval $I \subset \R$ of length $|I| \geq \bar{\delta}$, and let $I_{1},\ldots,I_{k} \subset I$ be an enumeration of those dyadic $\bar{\delta}$-intervals which have non-empty intersection with $I \cap \sigma(p)$. Since each $I_{i}$ intersects $\sigma(p)$ at some point $\sigma(T_{j}) \in \bar{\delta} \cdot \Z$, with $T_{j} \in \mathcal{T}'(p)$, we see that $I_{i} = [\sigma(T_{j}),\sigma(T_{j}) + \bar{\delta})$, and $I_{i}$ contains all the slopes of the tube packet $\Xi_{j}(p)$ represented by $T_{j}$: in other words $\sigma(\Xi_{j}(p)) \subset I_{i}$. Since $|\sigma(\Xi_{j}(p))| \sim |\Xi_{j}(p)| \approx_{\delta} m_Q$ by \eqref{form47} (and since all the tubes in $\Xi_{j}(p)$ are incident to $p$, see Lemma \ref{tubesSlopes}), we may use the knowledge that $\sigma(\mathcal{T}(p))$ is a $(\delta,s,C)$-set of cardinality $\approx_{\delta} M$ for $C \approx_\delta C_1$ as follows:
\begin{equation}\label{form48} |I \cap \sigma(p)| \approx_{\delta} \tfrac{1}{m_Q} \cdot |I \cap \sigma(\mathcal{T}(p))| \lessapprox_{\delta} \tfrac{1}{m_Q} \cdot C \cdot M \cdot |I|^{s} \stackrel{\eqref{form47}}{=} C \cdot  M_Q\cdot |I|^{s}. \end{equation}
Since $|\sigma(p)| \sim M_Q$, we infer from \eqref{form48} that $\sigma(p)$ is a $(\bar{\delta},s,C_Q)$-set with $C_Q\approx_\delta C_1$.

Now that the slopes between the tubes in $\mathcal{T}'(p)$ are $\bar{\delta}$-separated, there is hope that the sets "$\mathcal{T}_{Q}(q) := S_{Q}(\mathcal{T}'(p))$" are, roughly speaking, $(\bar{\delta},s)$-sets of dyadic $\bar{\delta}$-tubes intersecting $q = S_{Q}(p)$. A minor issue is that the homothetic image of a dyadic tube is not exactly a dyadic tube. Instead, the following holds, as can be verified by straightforward computation: write $Q = [x_{0},x_{0} + \Delta) \times [y_{0},y_{0} + \Delta)$, and let $T = \mathbf{D}(I \times J) \in \mathcal{T}^{\delta}$ be arbitrary, with $|I|,|J| = \delta$, $I \times J \subset [-1,1) \times \R$. Then,
\begin{displaymath} S_{Q}(T) \subset \mathbf{D}\left(I \times \tfrac{x_{0}I + J - y_{0}}{\Delta} \right).  \end{displaymath}
Here $\mathbf{J} = (x_{0}I + J - y_{0})/\Delta$ is an interval of length $|\mathbf{J}| \sim \delta/\Delta = \bar{\delta}$. It follows that $S_{Q}(T)$ may be covered by a family $\overline{\mathcal{S}}_{Q}(T)$ consisting of $|\overline{\mathcal{S}}_{Q}(T)| \sim 1$ sets of the form $\mathbf{D}(I \times \mathbf{J}')$, where $\mathbf{J}' \in \mathcal{D}_{\bar{\delta}}(\R)$. The sets $\mathbf{D}(I \times \mathbf{J}')$ are not quite elements of $\mathcal{T}^{\bar{\delta}}$, because $I \notin \mathcal{D}_{\bar{\delta}}(\R)$.

We specialise the discussion to the situation where $T \in \mathcal{T}'(p)$ for some $p \in \mathcal{P}_{Q}$. Then $\sigma(T) \in \sigma(p) \subset (\bar{\delta} \cdot \Z) \cap [-1,1)$. Therefore, if $T = \mathbf{D}(I \times J)$, with $I = [\sigma,\sigma + \delta)$, we have $\mathbf{I} = [\sigma,\sigma + \bar{\delta}) \in \mathcal{D}_{\bar{\delta}}$. It follows that the sets $\mathbf{D}(I \times \mathbf{J}') \in \overline{\mathcal{S}}_{Q}(T)$, defined above, are individually covered by the dyadic tubes $\mathbf{D}(\mathbf{I} \times \mathbf{J}') \in \mathcal{T}^{\bar{\delta}}$. Now, the collection $\mathcal{S}_{Q}(T)$, formed by the sets $\mathbf{D}(\mathbf{I} \times \mathbf{J}')$, consists of $\sim 1$ elements of $\mathcal{T}^{\bar{\delta}}$, and its union covers $S_{Q}(T)$. Moreover,
\begin{equation}\label{form11} \sigma(\mathcal{S}_{Q}(T)) = \{\sigma\} = \{\sigma(T)\}, \qquad T \in \mathcal{T}'(p). \end{equation}

Now we are prepared to define the family $\mathcal{T}_{Q}$. We write
\begin{displaymath} \mathcal{T}' := \bigcup_{p \in \mathcal{P}_{Q}} \mathcal{T}'(p) \quad \text{and} \quad \mathcal{T}_{Q} := \cup \{\mathcal{S}_{Q}(T) : T \in \mathcal{T}'\}. \end{displaymath}
We also define the following sub-families $\mathcal{T}_{Q}(q) \subset \mathcal{T}_{Q}$, for $q=S_{Q}(p) \in \mathcal{P}^{Q}$. By \eqref{eq:properties-Tj}, the sets $S_{Q}(T)$, $T \in \mathcal{T}'(p)$, intersect $q$. Consequently, at least one of the dyadic $\bar{\delta}$-tubes in $\mathcal{S}_{Q}(T)$ also intersects $q$, and we include this tube in $\mathcal{T}_{Q}(q)$. The $(\bar{\delta},s,C)$-set property of $\mathcal{T}_{Q}(q)$ then follows from \eqref{form48}, \eqref{form11} and Corollary \ref{cor1}.

We now finish the proof of \eqref{app1}. Fix $p \in \mathcal{P}_{Q} = \mathcal{P} \cap Q$. By the definition of $\mathcal{T}_{Q}(S_{Q}(p))$ just above, the slope set $\sigma[\mathcal{T}_{Q}(S_{Q}(p))] \subset \bar{\delta} \cdot \Z$ coincides with $\sigma(\mathcal{T}'(p)) = \sigma(p)$. Therefore, \eqref{app1} is a consequence of \eqref{app8}.

It remains to show that \eqref{form46} holds.  Note that every tube $T' \in \mathcal{T}'$ represents at least one tube packet $\Xi(T') \subset \mathcal{T}(Q)$ with $|\Xi(T')| \approx_{\delta} m_Q$, recalling \eqref{form47}. If we knew that the tube packets represented by distinct tubes in $\mathcal{T}'$ are disjoint (this turns out to be "true enough"), we could easily complete our estimate as follows:
\begin{equation} \label{form9} |\mathcal{T}(Q)| \gtrsim \sum_{T' \in \mathcal{T}'} |\Xi(T')|  \stackrel{\eqref{form47}}{\gtrapprox_{\delta}} |\mathcal{T}'| \cdot \frac{M}{M_Q} \gtrsim |\mathcal{T}_Q| \cdot \frac{M}{M_Q},
\end{equation}
as desired in \eqref{form46}.

What is literally true, and still sufficient for the estimate above, is that there exists a subset $\mathcal{T}'' \subset \mathcal{T}'$ of cardinality $|\mathcal{T}''| \sim |\mathcal{T}'|$ such that the tube packets represented by the elements in $\mathcal{T}''$ are disjoint. The proof of the whole proposition is completed by justifying this statement.

Fix $p \in \mathcal{P}$ and $T' \in \mathcal{T}'(p)$, and write $\Xi(T') \in \{\Xi_{1}(p),\ldots,\Xi_{M_Q}(p)\}$ for the tube packet represented by $T'$. Then, note that all the tubes from $\Xi(T')$ have roughly the same intersection with $Q$, and this intersection roughly agrees with $T' \cap Q$. More precisely, there exists an absolute constant $C \geq 1$ such that, denoting the $\delta$-neighborhood of a set $A$ by $A_{\delta}$, the following holds:
\begin{equation}\label{form25} p \in \mathcal{P}_{Q} \text{ and } T' \in \mathcal{T}'(p) \quad \Longrightarrow \quad T_{2\delta} \cap Q \subset T_{C\delta}' \text{ for all } T \in \Xi(T'). \end{equation}
Indeed, this follows from the facts that $T,T'$ are $\delta$-tubes intersecting $p$ and their slopes are $(\delta/\Delta)$-close. This observation yields a criterion for checking that two tube packets are disjoint. With the constant "$C$" as in \eqref{form25}, we define that two tubes $T,T' \in \mathcal{T}^{\delta}$ are \emph{separated} if
\begin{displaymath} \sigma(T) \neq \sigma(T') \quad \text{or} \quad T_{C\delta} \cap T_{C\delta}' \cap [0,1)^{2} = \emptyset. \end{displaymath}

\begin{lemma}\label{lemma3} If $T_{1}',T_{2}' \in \mathcal{T}'$ are separated, then $\Xi(T_{1}') \cap \Xi(T_{2}') = \emptyset$. \end{lemma}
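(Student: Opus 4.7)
The plan is to argue by contradiction: assume that there exists a common tube $T \in \Xi(T_1') \cap \Xi(T_2')$, and derive that $T_1'$ and $T_2'$ fail both criteria in the definition of separation.

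First, I would observe that each tube packet $\Xi(T_i')$ is, by definition, the set of tubes in some $\mathcal{T}(p_i)$ with $p_i \in \mathcal{P}_Q$ that share a common $\bar{\delta}$-dyadic ancestor $\mathbf{T}_i \in \mathcal{T}^{\bar{\delta}}$; moreover, the representative tube $T_i'$ was chosen with slope $\sigma(T_i') = \sigma(\mathbf{T}_i) \in (\bar{\delta} \cdot \Z) \cap [-1,1)$ and $T_i' \subset \mathbf{T}_i$, see \eqref{eq:properties-Tj}. Since the shared tube $T$ is a dyadic $\delta$-tube contained in both $\mathbf{T}_1$ and $\mathbf{T}_2$, and dyadic tubes of the same width are either disjoint or identical (they are indexed by nested dyadic squares under duality), one concludes $\mathbf{T}_1 = \mathbf{T}_2$. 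In particular, $\sigma(T_1') = \sigma(\mathbf{T}_1) = \sigma(\mathbf{T}_2) = \sigma(T_2')$, so the first alternative in the definition of separation fails.

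Consequently, separation must come from the second alternative, namely $T_{1,C\delta}' \cap T_{2,C\delta}' \cap [0,1)^2 = \emptyset$. To contradict this, I apply \eqref{form25} twice: writing $T_i' \in \mathcal{T}'(p_i)$ for some $p_i \in \mathcal{P}_Q$, and using that $T \in \Xi(T_i')$, one obtains $T_{2\delta} \cap Q \subset T_{i,C\delta}'$ for $i = 1,2$. Now $T \in \mathcal{T}(p_i)$ means $T \cap p_i \neq \emptyset$, and since $p_i \subset Q \subset [0,1)^2$, the set $T_{2\delta} \cap Q$ is a nonempty subset of $[0,1)^2$. Picking any point in it yields a point of $T_{1,C\delta}' \cap T_{2,C\delta}' \cap [0,1)^2$, contradicting separation.

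The only mildly delicate point is the step showing $\mathbf{T}_1 = \mathbf{T}_2$, which relies on the dyadic structure: a tube packet is determined by its $\bar{\delta}$-ancestor, and since tube packets partition $\mathcal{T}(p)$ for each $p$, the common element $T$ forces the two ancestors to coincide. Once this is in place, the rest is a direct application of \eqref{form25} together with the fact that $T$ genuinely meets $Q$.
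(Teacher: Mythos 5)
Your proof is correct and follows essentially the same route as the paper's: the slope alternative of ``separation'' is ruled out using the dyadic structure of the $\bar{\delta}$-ancestors together with \eqref{eq:properties-Tj} and \eqref{form78}, and the spatial alternative is ruled out by applying \eqref{form25} to a common tube $T$ and noting that $T_{2\delta}\cap Q$ is a nonempty subset of $[0,1)^{2}$ contained in both $(T_{1}')_{C\delta}$ and $(T_{2}')_{C\delta}$ --- exactly the paper's argument, just packaged as a single contradiction instead of a case split. One small caution: your claim that dyadic tubes of the same width are ``either disjoint or identical'' is false for the tubes as point sets (the paper explicitly warns that distinct tubes in $\mathcal{T}^{\delta}$ may intersect); the correct justification is the one in your parenthetical, namely that $T\subset\mathbf{T}$ for dyadic tubes is equivalent to containment of the dual dyadic squares, so a $\delta$-tube has a unique dyadic $\bar{\delta}$-ancestor, which is all that is needed to conclude $\mathbf{T}_{1}=\mathbf{T}_{2}$ and hence $\sigma(T_{1}')=\sigma(T_{2}')$.
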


\begin{proof} If $T_{1}',T_{2}'$ are separated for the reason that $\sigma(T_{1}') \neq \sigma(T_{2}')$, then the slopes of all the tubes in $\Xi(T_{1}')$ are distinct from all the slopes in $\Xi(T_{2}')$: this follows from $\sigma(T_{1}'),\sigma(T_{2}') \in \bar{\delta} \cdot \Z$ (recall \eqref{form78}), and $\sigma(\Xi(T_{j}')) \subset [\sigma(T_{j}'),\sigma(T_{j}') + \bar{\delta})$ for $j \in \{1,2\}$.

Let us then assume that $\sigma(T_{1}') = \sigma(T_{2}')$, but the second separation condition holds. Since $T_{1}',T_{2}' \in \mathcal{T}'$, one may find squares $p_{1},p_{2} \in \mathcal{P}_{Q}$ such that $T_{j}' \in \mathcal{T}'(p_{j})$ for $j \in \{1,2\}$. If $T \in \Xi(T_{1}') \cap \Xi(T_{2}')$, then $T \cap p_{1} \neq \emptyset \neq T \cap p_{2}$, since $T \in \mathcal{T}(p_{1}) \cap \mathcal{T}(p_{2})$. Thus,
\begin{displaymath} p_{1},p_{2} \subset T_{2\delta} \cap Q \subset (T_{1}')_{C\delta} \cap (T_{2}')_{C\delta} \cap [0,1)^{2}. \end{displaymath}
by \eqref{form25}. This violates the assumption that $T_{1}',T_{2}'$ are separated. Hence  $\Xi(T_{1}') \cap \Xi(T_{2}') = \emptyset$ also in this case.  \end{proof}
It remains to find a separated collection $\mathcal{T}'' \subset \mathcal{T}'$ of cardinality $|\mathcal{T}''| \sim |\mathcal{T}'|$:

\begin{lemma}\label{lemma4} The family $\mathcal{T}'$ contains a sub-collection of comparable cardinality whose elements are separated. \end{lemma}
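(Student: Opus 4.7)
The plan is to exploit the fact that the first clause in the definition of ``separated'' is free for tubes with different slopes, so it suffices to thin $\mathcal{T}'$ within each slope class. For a fixed slope $\sigma$, set $\mathcal{T}'_\sigma := \{T \in \mathcal{T}' : \sigma(T) = \sigma\}$. Each $T \in \mathcal{T}'_\sigma$ is determined by its intercept $b(T) \in \delta \cdot \Z$, and a direct computation (using $|\sigma| \leq 1$ and the restriction to $[0,1)^2$) produces an absolute constant $C' = O(C)$ such that $T_{C\delta} \cap T'_{C\delta} \cap [0,1)^2 \neq \emptyset$ for $T, T' \in \mathcal{T}'_\sigma$ forces $|b(T) - b(T')| \leq C'\delta$.

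Given this, I would enumerate the tubes of $\mathcal{T}'_\sigma$ in order of intercept $b$, and greedily build $\mathcal{T}''_\sigma \subset \mathcal{T}'_\sigma$ by always adding the next tube whose intercept exceeds the most recently selected intercept by more than $C'\delta$. Since the intercepts lie in $\delta \cdot \Z$, each selected tube ``blocks'' at most $2C'+1$ candidates, so $|\mathcal{T}''_\sigma| \geq |\mathcal{T}'_\sigma|/(2C'+1) \gtrsim |\mathcal{T}'_\sigma|$. By construction any two distinct elements of $\mathcal{T}''_\sigma$ have intercepts differing by more than $C'\delta$, hence are separated in the sense required.

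Finally I would set $\mathcal{T}'' := \bigcup_{\sigma \in \sigma(\mathcal{T}')} \mathcal{T}''_\sigma$. Summing the slope-by-slope estimates gives $|\mathcal{T}''| \gtrsim |\mathcal{T}'|$, as the loss factor is the same absolute constant in every slope class. Two tubes $T_1, T_2 \in \mathcal{T}''$ of different slopes are separated by the first clause of the definition, while two of the same slope lie in a common $\mathcal{T}''_\sigma$ and are separated by construction.

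The only substantive point is the routine geometric estimate relating intercept distance to intersection of the $C\delta$-neighborhoods within $[0,1)^2$; this is not really an obstacle but just a calculation, and the combinatorics is a one-line greedy argument. Thus the lemma should follow with minimal effort once one unwinds the definition of separation.
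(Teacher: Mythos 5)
Your proposal is correct and follows essentially the same route as the paper's proof: split $\mathcal{T}'$ by slope, verify that within a fixed slope class an intersection of the $C\delta$-neighbourhoods inside $[0,1)^2$ forces the intercepts to lie within $C'\delta$ of each other, and then extract a $(C'\delta)$-separated subset of intercepts at the cost of an absolute constant factor. The paper likewise leaves the intercept computation to the reader, so nothing is missing relative to the intended argument.
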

\begin{proof} Recall that $\sigma(\mathcal{T}') \subset \bar{\delta} \cdot \Z$ by \eqref{form78}. By the definition of "separation" it suffices to prove the following claim for any fixed $\sigma \in \bar{\delta} \cdot \Z$: the collection $\mathcal{T}'(\sigma) := \{T' \in \mathcal{T}' : \sigma(T') = \sigma\}$ contains a separated set of cardinality $\sim |\mathcal{T}'(\sigma)|$. We leave the algebra to the reader, but the idea is the following: one verifies that if $T_{j}' = \mathbf{D}([\sigma,\sigma + \delta) \times [b_{j},b_{j} + \delta)) \in \mathcal{T}'(\sigma)$, $j \in \{1,2\}$, are two tubes with $(T'_{1})_{C\delta} \cap (T'_{2})_{C\delta} \cap [0,1)^{2} \neq \emptyset$, then $|b_{1} - b_{2}| \leq C'\delta$ for some $C' \sim C$. Consequently, a separated subset of $\mathcal{T}'(\sigma)$ can be found by choosing a $(C'\delta)$-separated subset of $\{b \in \delta \cdot \Z : \mathbf{D}([\sigma,\sigma + \delta) \times [b,b + \delta)) \in \mathcal{T}'(\sigma)\}$. \end{proof}

As we discussed around \eqref{form9}, the proof of \eqref{form46}, and hence the proof of the proposition, is now complete. \end{proof}

\section{An improved incidence estimate for regular sets}\label{s:improvedIncidence}

\begin{definition}\label{def:regularity} Let $\delta \in 2^{-\N}$ be a dyadic number such that also $\delta^{1/2} \in 2^{-\N}$. Let $C,K > 0$, and let $0 \leq s \leq d$. A non-empty set $\mathcal{P} \subset \mathcal{D}_{\delta}$ is called $(\delta,s,C,K)$-regular if $\mathcal{P}$ is a $(\delta,s,C)$-set, and moreover
\begin{displaymath} |\mathcal{P}|_{\delta^{1/2}} \leq K \cdot \delta^{-s/2}. \end{displaymath}
\end{definition}

\begin{thm}\label{t:improvedIncidence} Given $s \in (0,1)$ and $t \in (s,2]$, there exists $\epsilon = \epsilon(s,t) > 0$ such that the following holds for small enough $\delta \in 2^{-\N}$, depending only on $s,t$. Let $\mathcal{P} \subset \mathcal{D}_{\delta}$ be a $(\delta,t,\delta^{-\epsilon},\delta^{-\epsilon})$-regular set. Assume that for every $p \in \mathcal{P}$, there exists a $(\delta,s,\delta^{-\epsilon})$-set of dyadic tubes $\mathcal{T}(p) \subset \mathcal{T}^{\delta}$ such that $T \cap p \neq \emptyset$ for all $T \in \mathcal{T}(p)$. Then,
\begin{equation}\label{form8} |\mathcal{T}| \geq \delta^{-2s - \epsilon}, \end{equation}
where $\mathcal{T} = \bigcup_{p \in P} \mathcal{T}(p)$.
\end{thm}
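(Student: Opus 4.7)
The strategy is to apply Proposition \ref{p:induction-on-scales} at the intermediate scale $\Delta=\delta^{1/2}$ and combine the resulting factorisation with the alternative dichotomy from Appendix \ref{appA} (the extension of \eqref{eq:alternative} to the full range $t \in (s,2]$) and the elementary incidence bound Corollary \ref{prop5}. Suppose for contradiction that $|\mathcal{T}| < \delta^{-2s-\epsilon}$ for a parameter $\epsilon=\epsilon(s,t)>0$ to be fixed at the end. A standard dyadic pigeonhole reduces to the case $|\mathcal{T}(p)| \sim M$ uniform in $p\in\mathcal{P}$; Corollary \ref{prop5} applied directly to $(\mathcal{P},\{\mathcal{T}(p)\})$ implies that any $M$ appreciably larger than $\delta^{-s}$ already yields the desired lower bound, so we may assume $M \approx_\delta \delta^{-s}$. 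Using the regularity hypothesis $|\mathcal{P}|_{\delta^{1/2}} \le \delta^{-t/2-\epsilon}$, a further pigeonholing makes $|\mathcal{P}\cap Q|$ comparable to a fixed $N \sim \delta^{-t/2}$ across all $Q\in \mathcal{D}_\Delta(\mathcal{P})$, at the cost of $\delta^{-O(\epsilon)}$ factors.

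Now apply Proposition \ref{p:induction-on-scales} with $\Delta = \delta^{1/2}$. The uniformity from the previous step ensures $\mathcal{D}_\Delta(\mathcal{P})$ is a $(\Delta,t,\delta^{-O(\epsilon)})$-set, and each rescaled family $S_Q(\mathcal{P}\cap Q)$ is a $(\bar\delta,t,\delta^{-O(\epsilon)})$-set at scale $\bar\delta := \delta/\Delta = \delta^{1/2}$. Corollary \ref{prop5} applied to each thin nice pair $(S_Q(\mathcal{P}\cap Q),\mathcal{T}_Q)$ yields the baseline bound
\[
\max_{Q\in \mathcal{D}_\Delta(\mathcal{P})}\frac{|\mathcal{T}_Q|}{M_Q} \gtrapprox_\delta \bar\delta^{-s}=\delta^{-s/2}.
\]
For the thick factor we use the alternative from Appendix \ref{appA}: either $|\mathcal{T}|\geq \delta^{-2s-\epsilon_0}$ (and we are done), or $|\mathcal{T}|_{\delta^{1/2}} \gtrapprox_\delta \delta^{-s-\epsilon_0}$. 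After carefully interleaving the Step 1 pigeonholings with the refinements of Proposition \ref{p:induction-on-scales} (so that a chosen multiplicity level for the $\Delta$-covering of $\mathcal{T}$ survives), this second case passes to $|\mathcal{T}^\Delta(\mathcal{T})| \gtrapprox_\delta \delta^{-s-\epsilon_0}$.

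Writing $M_\Delta = \Delta^{-s-\alpha}$ with $\alpha \ge -O(\epsilon)$, the alternative-derived bound gives $|\mathcal{T}^\Delta(\mathcal{T})|/M_\Delta \gtrapprox_\delta \Delta^{-s-(2\epsilon_0-\alpha)}$, while Corollary \ref{prop5} applied to the thick nice pair $(\mathcal{D}_\Delta(\mathcal{P}),\mathcal{T}^\Delta(\mathcal{T}))$ at scale $\Delta$ gives $|\mathcal{T}^\Delta(\mathcal{T})|/M_\Delta \gtrapprox_\delta \Delta^{-s-\alpha(t-s)/(1-s)}$. Taking the maximum of these two and optimising over $\alpha$ (crossover at $\alpha_\ast = 2\epsilon_0(1-s)/(1+t-2s)$) yields the uniform improvement
\[
\frac{|\mathcal{T}^\Delta(\mathcal{T})|}{M_\Delta} \gtrapprox_\delta \Delta^{-s-\epsilon'}, \qquad \epsilon' := \frac{2\epsilon_0(t-s)}{1+t-2s}>0.
\]
Inserting the thick and thin bounds into the factorisation \eqref{lower-bound-T} of Proposition \ref{p:induction-on-scales} gives
\[
\frac{|\mathcal{T}|}{M} \gtrapprox_\delta \Delta^{-s-\epsilon'}\cdot \bar\delta^{-s} = \delta^{-s-\epsilon'/2},
\]
and so $|\mathcal{T}| \gtrapprox_\delta \delta^{-2s-\epsilon'/2+O(\epsilon)}$, which contradicts $|\mathcal{T}|<\delta^{-2s-\epsilon}$ once $\epsilon$ is chosen sufficiently small compared to $\epsilon'$.

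The main obstacle is the step flagged above: ensuring that the quantity $|\mathcal{T}^\Delta(\mathcal{T})|$ produced by Proposition \ref{p:induction-on-scales} genuinely inherits the $\delta^{-s-\epsilon_0}$ gain the alternative delivers for $|\mathcal{T}|_{\delta^{1/2}}$. This is subtle because the refinements in Proposition \ref{p:induction-on-scales} can in principle discard $\Delta$-tubes; the remedy is to fix the multiplicity level of the $\Delta$-covering of $\mathcal{T}$ by a preliminary pigeonhole and then arrange that Proposition \ref{p:induction-on-scales} is run on the surviving sub-family. A secondary technical point is that Corollary \ref{prop5} is stated for $t \le 1$, so the range $t \in (1,2]$ requires a minor adaptation of Proposition \ref{incidenceProp} (the interpolation exponent $\theta$ no longer lies in $[0,1]$), but this is a routine modification that does not affect the overall structure of the argument.
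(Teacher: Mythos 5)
Your proposal is correct and follows essentially the same route as the paper: reduce to $M\approx\delta^{-s}$ and uniform $\delta^{1/2}$-columns, run Proposition \ref{p:induction-on-scales} at $\Delta=\delta^{1/2}$, feed the Appendix dichotomy into the thick factor and Corollary \ref{prop5} into the thin factor, and recombine via \eqref{lower-bound-T} (your continuous optimisation over $\alpha$ is just a smoother version of the paper's two-case split on whether $\mathbf{M}\ge\delta^{-s/2-C\epsilon/2}$). The obstacle you flag is resolved in the paper more simply than by your proposed preliminary pigeonhole: one applies Theorem \ref{mainAppendix} \emph{after} the refinement, to the pair $(\mathcal{P},\{\mathcal{T}(p)\})$ output by Proposition \ref{p:induction-on-scales} (which still satisfies the hypotheses since $|\mathcal{P}|\gtrapprox|\mathcal{P}_0|$ after discarding light squares), so that $|\mathcal{T}|_{\delta^{1/2}}=|\mathcal{T}^{\delta^{1/2}}(\mathcal{T})|$ holds tautologically and the gain transfers for free.
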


This theorem is a variant of  \cite[Theorem 3.12]{Orponen20}, with two essential changes. The first one is that in \cite[Theorem 3.12]{Orponen20}, the conclusion \eqref{form8} is replaced by the following alternative: either
\begin{equation}\label{form84} |\mathcal{T}| \geq \delta^{-2s - \epsilon} \quad \text{or} \quad |\mathcal{T}|_{\delta^{1/2}} \geq \delta^{-s - \epsilon}. \end{equation}
It turns out that this superficially weaker statement can be used to deduce the stronger one given in Theorem \ref{t:improvedIncidence}: this is the content of the present section. The second essential difference is that \cite[Theorem 3.12]{Orponen20} only considers the case $t = 1$. While the changes required in the proof are not difficult, they still affect the argument so substantially that we have decided to provide all the details in Appendix \ref{appA}, see Theorem \ref{mainAppendix}.

During the proof of Theorem \ref{t:improvedIncidence}, will write "$A \lessapprox B$" if $A \leq C\delta^{-C\epsilon}B$, where $C \geq 1$ is an absolute constant, and "$\epsilon$" is a sufficiently small constant to be determined (it will end up being the one from the statement of Theorem \ref{t:improvedIncidence}). The notation $A \approx B$ means that $A \lessapprox B \lessapprox A$. Factors which are logarithmic in $\delta$ will also be hidden by the "$\lessapprox$" notation: $A \leq B \cdot (\log \tfrac{1}{\delta})^{C}$ will be abbreviated to $A \lessapprox B$. We will also abbreviate the terms "$(\Delta,u,\delta^{-C\epsilon})$-set" and "$(\Delta,u,\delta^{-C\epsilon},\delta^{-C\epsilon})$-regular set" to "$(\Delta,u)$-set" and "$(\Delta,u)$-regular set", respectively, for $\Delta \in \{\delta,\delta^{1/2}\}$ and $u \in \{s,t\}$.

\begin{proof}[Proof of Theorem \ref{t:improvedIncidence}]
Since we aim to use Proposition \ref{p:induction-on-scales}, we will denote $\mathcal{P}_0 := \mathcal{P}$ and $\mathcal{T}_0 := \mathcal{T}$, reserving $\mathcal{P},\mathcal{T}$ for the objects provided by the proposition. By Lemma \ref{lem:thin-delta-subset}, we may assume that $|\mathcal{T}_0(p)| \approx \delta^{-s}$ for all $p \in \mathcal{P}$. Pigeonholing and passing to a subset $\mathcal{P}'_0\subset \mathcal{P}_0$ with $|\mathcal{P}'_0|\approx |\mathcal{P}_0|$ (which we continue to denote by $\mathcal{P}_0$), we may further assume that $|\mathcal{T}_0(p)|=M\approx \delta^{-s}$ for all $p\in \mathcal{P}_0$. By another application of Lemma \ref{lem:thin-delta-subset}, we may reduce to the case where $|\mathcal{P}_0| \approx \delta^{-t}$ (passing to subsets will not break any upper bound on $|\mathcal{P}_0|_{\delta^{1/2}}$, so we can also retain the $(\delta,t,\delta^{-\epsilon},\delta^{-\epsilon})$-regularity of $\mathcal{P}_0$).

Then the statement looks very much like Theorem \ref{mainAppendix}: the only difference is that, according to Theorem \ref{mainAppendix}, either \eqref{form8} holds, or then
\begin{displaymath} |\mathcal{T}_{0}|_{\delta^{1/2}} \geq \delta^{-s - \epsilon}. \end{displaymath}
The plan is to use Proposition \ref{p:induction-on-scales}, applied to a suitable refinement of $\mathcal{P}_0$, to show that either of the alternatives presented by Theorem \ref{mainAppendix} can be parlayed into the claim of Theorem \ref{t:improvedIncidence}.

Let $\mathcal{Q}_{0} := \mathcal{D}_{\delta^{1/2}}(\mathcal{P}_{0})$ (the minimal cover of $\mathcal{P}_{0}$ by dyadic $\delta^{1/2}$-squares). Then $|\mathcal{Q}_{0}| \leq \delta^{-t/2 - \epsilon}$ by the assumption that $\mathcal{P}_0$ is $(\delta,t,\delta^{-\epsilon},\delta^{-\epsilon})$-regular. We call a square $Q \in \mathcal{Q}_{0}$ \emph{light} if $|\{p \in \mathcal{P}_{0} : p \subset Q\}| \leq \delta^{-t/2 + 5\epsilon}$. From $|\mathcal{Q}_{0}| \leq \delta^{-t/2 - \epsilon}$ and $|\mathcal{P}_{0}| \geq \delta^{-t +\epsilon}$, it follows that $\leq \tfrac{1}{2} \cdot |\mathcal{P}_{0}|$ squares in $\mathcal{P}_{0}$ are covered by the union of the light squares in $\mathcal{Q}_{0}$. We now discard the light squares from $\mathcal{Q}_{0}$, and also their contents from $\mathcal{P}_{0}$. We keep the notation $\mathcal{P}_{0}$ and $\mathcal{Q}_{0}$ for the remaining squares. Then, by definition, the (remaining) squares in $\mathcal{Q}_{0}$ are \emph{heavy}: $|\{p \in \mathcal{P}_{0} : p \subset Q\}| \gtrapprox \delta^{-t/2}$ for all $Q \in \mathcal{Q}_{0}$.

After these initial refinements we apply Proposition \ref{p:induction-on-scales} to the $(\delta,s,\delta^{-\epsilon},M)$-nice configuration $(\mathcal{P}_0,\mathcal{T}_0)$, at the scale $\Delta=\delta^{1/2}$. Let
\[
\mathcal{P} \subset \mathcal{P}_{0},\quad\mathcal{T} \subset \mathcal{T}_{0}, \quad \mathcal{T}_{\delta^{1/2}}\subset\mathcal{T}^{\delta^{1/2}},\quad \mathbf{M}:= M_{\delta^{1/2}} \geq 1,\quad\mathbf{C} := C_{\delta^{1/2}} \approx 1
\]
be provided by the proposition, and write $\mathcal{Q} := \mathcal{D}_{\delta^{1/2}}(\mathcal{P})$. We remark that by Proposition \ref{p:induction-on-scales}\eqref{it-induction-i}, we have $|\mathcal{Q}| \approx |\mathcal{Q}_{0}|$ and $|\mathcal{P} \cap Q| \approx |\mathcal{P}_{0} \cap Q| \approx \delta^{-t/2}$. Thus, the cardinality of $\mathcal{Q}_{0}$ was not substantially reduced when passing to $\mathcal{Q}$, and the remaining squares in $\mathcal{Q}$ remain (essentially) as heavy relative to $\mathcal{P}$ as those in $\mathcal{Q}_{0}$ were relative to $\mathcal{P}_{0}$.

By Claim \eqref{it-induction-iii} in Proposition \ref{p:induction-on-scales}, $(\mathcal{Q},\mathcal{T}_{\delta^{1/2}})$ is a $(\delta^{1/2},s,\mathbf{C},\mathbf{M})$-nice configuration. This means, by definition, that for every $Q \in \mathcal{Q}$, there exists a non-empty $(\delta^{1/2},s,\mathbf{C})$-set $\mathcal{T}_{\delta^{1/2}}(Q) \subset \mathcal{T}_{\delta^{1/2}}$ of cardinality $\mathbf{M} = |\mathcal{T}_{\delta^{1/2}}(Q)| \gtrapprox \delta^{-s/2}$ such that $\mathbf{T} \cap Q \neq \emptyset$ for all $\mathbf{T} \in \mathcal{T}_{\delta^{1/2}}(Q)$. Thus, by Corollary \ref{prop5} applied at scale $\delta^{1/2}$, we obtain
\begin{equation}\label{form4} |\mathcal{T}_{\delta^{1/2}}| \gtrapprox \mathbf{M}\delta^{-s/2} \cdot (\mathbf{M}\delta^{s/2})^{\frac{t - s}{1 - s}}. \end{equation}
Next, for $Q\in \mathcal{Q}$, let $\mathcal{T}_Q, M_Q, C_Q$ be the objects in Proposition \ref{p:induction-on-scales}, Claim \eqref{it-induction-iv}.  In other words, for the homothety $S_{Q} \colon Q \to [0,1)^{2}$, the set $(S_{Q}(\mathcal{P}\cap Q),\mathcal{T}_{Q})$ is a $(\delta^{1/2},s,C_{Q},M_{Q})$-nice configuration, where $C_{Q} \approx_{\delta} 1$, and
\begin{displaymath}
 S_{Q}(\mathcal{P}\cap Q) = \{S_{Q}(p) : p \in \mathcal{P},p\subset  Q\} \subset \mathcal{D}_{\delta^{1/2}}.
\end{displaymath}
Since $|\mathcal{P}\cap Q| \approx |\mathcal{P}_0\cap Q| \approx \delta^{-t/2}$ for all $Q \in \mathcal{Q}$, as we already observed above, one can check that $S_{Q}(\mathcal{P}\cap Q)$ is a $(\delta^{1/2},t)$-set. Therefore, Corollary \ref{prop5} implies that
\begin{equation} \label{eq:improved-incidence-2}
|\mathcal{T}_Q| \gtrapprox M_Q\cdot \delta^{-s/2}.
\end{equation}

After these preliminaries, we apply Theorem \ref{mainAppendix}, or \eqref{form84} to the family $\mathcal{P}$, and the $(\delta,s)$-sets $\mathcal{T}(p)$, for $p \in \mathcal{P}$, produced by Claim \eqref{it-induction-ii} in Proposition \ref{p:induction-on-scales}. Note that since only heavy squares were retained in $\mathcal{Q}_{0}$, it follows from Claim \eqref{it-induction-i} in Proposition \ref{p:induction-on-scales} that $|\mathcal{P}|\gtrapprox_{\delta}|\mathcal{P}_0|$ and hence $\mathcal{P}$ is a $(\delta,t)$-regular set. Let $C$ be a large constant, depending on $s$ and $t$, to be determined in a moment. Provided $\epsilon$ is small enough in terms of $s,t$ and $C$, Theorem \ref{mainAppendix}, or \eqref{form84}, implies that either
\begin{displaymath} |\mathcal{T}_0| \geq |\mathcal{T}| \geq \delta^{-2s - C\epsilon} \quad \text{or} \quad |\mathcal{T}_{\delta^{1/2}}| = |\mathcal{T}|_{\delta^{1/2}}  \geq \delta^{-s - C\epsilon}. \end{displaymath}
The first alternative is what we seek to prove, so suppose that the second alternative holds. If $\mathbf{M} \ge \delta^{-s/2- C\epsilon/2}$, then we get from \eqref{form4} that there is $C_1\gtrsim_{s,t} C$ such that
\begin{equation} \label{eq:improved-incidence-1}
|\mathcal{T}_{\delta^{1/2}}| \ge \mathbf{M} \delta^{-s/2-C_1 \epsilon}.
\end{equation}
If, on the other hand, $\mathbf{M} <\delta^{-s/2- C\epsilon/2}$, then \eqref{eq:improved-incidence-1} also holds by the assumption $|\mathcal{T}_{\delta^{1/2}}| \geq \delta^{-s - C\epsilon}$.

We finally deduce from \eqref{lower-bound-T}, \eqref{eq:improved-incidence-2}, and \eqref{eq:improved-incidence-1} that, for any $Q \in \mathcal{Q}$,
\[
|\mathcal{T}_{0}|  \gtrapprox \frac{|\mathcal{T}_{\delta^{1/2}}|}{\mathbf{M}} \cdot \frac{|\mathcal{T}_{Q}|}{M_{Q}} \cdot M \gtrapprox M \cdot \delta^{-s- C_1 \epsilon} .
\]
Since $M\gtrapprox \delta^{-s}$, we conclude that if $C$ (and therefore $C_1$) is taken sufficiently large in terms of $s,t$ only, then $|\mathcal{T}_0| \ge \delta^{-2s-\epsilon}$. The proof of Theorem \ref{t:improvedIncidence} is complete. \end{proof}

\section{Combining incidence estimates from multiple scales}\label{s:combiningEstimates}

Recall the notation $\mathcal{D}_{\Delta}(P) = \{ Q\in\mathcal{D}_{\Delta}: P \cap Q \neq\emptyset\}$ for $P \subset [0,1)^{2}$ and $\Delta \in 2^{-\N}$, and $S_{Q}$ for the homothety mapping $Q$ to $[0,1)^2$.

\begin{definition} Let $0 < \delta < \Delta \leq 1$ be dyadic numbers, and $P \subset [0,1)^{2}$. Let also $0 \leq s \leq 2$ and $C > 0$.

\begin{enumerate}
\item We say that \emph{$P$ is an $(s,C)$-set between the scales $\delta$ and $\Delta$} if $S_{Q}(P\cap Q) \subset [0,1)^2$ is a $(\delta/\Delta,s,C)$-set for all $Q \in \mathcal{D}_{\Delta}(P)$.
\item We say that \emph{$P$ is $(s,C,K)$-regular between the scales $\delta$ and $\Delta$} if $S_{Q}(P\cap Q)$ is a $(\delta/\Delta,s,C)$-set such that
\[
|S_Q (P\cap Q)|_{(\delta/\Delta)^{1/2}} \le K (\delta/\Delta)^{-s/2}
\]
for all $Q \in \mathcal{D}_{\Delta}(P)$.
\end{enumerate}
\end{definition}

\begin{definition}\label{def:uniformity}
Let $n \geq 1$, and let
\begin{displaymath} \delta = \Delta_{n} < \Delta_{n - 1} < \ldots < \Delta_{1} \leq \Delta_{0} = 1 \end{displaymath}
be a sequence of dyadic scales.  We say that a set $P\subset [0,1)^2$ is \emph{$(\Delta_j)_{j=1}^n$-uniform} if there is a sequence $(N_j)_{j=1}^n$ such that $|P\cap Q|_{\Delta_{j}} = N_j$ for all $j\in \{1,\ldots,n\}$ and all $Q\in\mathcal{D}_{\Delta_{j - 1}}(P)$. As usual, we extend this definition to $\mathcal{P}\subset\mathcal{D}_{\delta}$ by applying it to $\cup\mathcal{P}$.
\end{definition}

\begin{lemma} \label{l:uniformization}
Given $P\subset [0,1)^{2}$ and a sequence $\delta = \Delta_{n} < \Delta_{n - 1} < \ldots < \Delta_{1} \leq \Delta_{0} = 1$ of dyadic numbers, $n \geq 1$, there is a $(\Delta_j)_{j=1}^n$-uniform set $P'\subset P$ such that
\begin{equation}\label{form12}
|P'|_\delta \ge  \left(4 n^{-1}\log(1/\delta)\right)^{-n} |P|_\delta. \end{equation}
\end{lemma}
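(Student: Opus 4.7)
The plan is a top-down pigeonhole refinement along the scales $\Delta_{0},\Delta_{1},\ldots,\Delta_{n}$. I will build a nested system of families $\{[0,1)^{2}\}=\mathcal{Q}_{0},\mathcal{Q}_{1},\ldots,\mathcal{Q}_{n}$ with $\mathcal{Q}_{j}\subset\mathcal{D}_{\Delta_{j}}(P)$ and integers $N_{1},\ldots,N_{n}\ge 1$ such that every $Q\in\mathcal{Q}_{j-1}$ has exactly $N_{j}$ children in $\mathcal{Q}_{j}$ (in the sense of dyadic subcubes). Once this is achieved, take $P':= P\cap\bigcup\mathcal{Q}_{n}$. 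Then $\mathcal{D}_{\Delta_{j}}(P')=\mathcal{Q}_{j}$ for every $j$: the inclusion ``$\subset$'' is immediate, while ``$\supset$'' follows because every $Q\in\mathcal{Q}_{j}$ has a descendant in $\mathcal{Q}_{n}$ by $N_{i}\ge 1$, and that descendant meets $P$ by construction. The required equality $|P'\cap Q|_{\Delta_{j}}=N_{j}$ for $Q\in\mathcal{D}_{\Delta_{j-1}}(P')$ then becomes the defining property of the $\mathcal{Q}_{j}$'s, so $P'$ is automatically $(\Delta_{j})_{j=1}^{n}$-uniform.

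Given $\mathcal{Q}_{j-1}$, write $\tilde P^{(j-1)}:= P\cap\bigcup\mathcal{Q}_{j-1}$ and set $a_{j}(Q):=|P\cap Q|_{\Delta_{j}}$ for each $Q\in\mathcal{Q}_{j-1}$; this is a positive integer bounded by $(\Delta_{j-1}/\Delta_{j})^{2}$, so its dyadic index $k(Q):=\lfloor\log_{2}a_{j}(Q)\rfloor$ takes at most $L_{j}:=\lfloor 2\log_{2}(\Delta_{j-1}/\Delta_{j})\rfloor+1$ distinct values. Weighting each $Q$ by $|P\cap Q|_{\delta}$ (so that $\sum_{Q\in\mathcal{Q}_{j-1}}|P\cap Q|_{\delta}=|\tilde P^{(j-1)}|_{\delta}$), the pigeonhole principle yields a value $k_{j}$ such that $\mathcal{Q}_{j-1}^{\ast}:=\{Q\in\mathcal{Q}_{j-1}:k(Q)=k_{j}\}$ satisfies
\begin{displaymath}
\sum_{Q\in\mathcal{Q}_{j-1}^{\ast}}|P\cap Q|_{\delta}\ \ge\ L_{j}^{-1}\,|\tilde P^{(j-1)}|_{\delta}.
\end{displaymath}
Set $N_{j}:= 2^{k_{j}}$, so that $N_{j}\le a_{j}(Q)<2N_{j}$ for every $Q\in\mathcal{Q}_{j-1}^{\ast}$. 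For each such $Q$ I greedily retain the $N_{j}$ children $Q'\subset Q$ in $\mathcal{D}_{\Delta_{j}}(P)$ with the largest values of $|P\cap Q'|_{\delta}$; since the top $N_{j}$ of the $a_{j}(Q)$ children collectively carry at least a fraction $N_{j}/a_{j}(Q)\ge 1/2$ of the total weight $|P\cap Q|_{\delta}$, defining $\mathcal{Q}_{j}$ as the union of these retained children gives $|\tilde P^{(j)}|_{\delta}\ge(2L_{j})^{-1}|\tilde P^{(j-1)}|_{\delta}$, with uniformity at scale $\Delta_{j}$ built in.

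Iterating, $|P'|_{\delta}\ge|P|_{\delta}\prod_{j=1}^{n}(2L_{j})^{-1}$. The telescoping identity $\sum_{j=1}^n\log_{2}(\Delta_{j-1}/\Delta_{j})=\log_{2}(1/\delta)$ gives $\sum_{j=1}^{n}L_{j}\le 2\log_{2}(1/\delta)+n$, so AM-GM yields
\begin{displaymath}
\prod_{j=1}^{n}2L_{j}\ \le\ \Bigl(\tfrac{1}{n}\sum_{j=1}^{n}2L_{j}\Bigr)^{n}\ \le\ \Bigl(\tfrac{4\log_{2}(1/\delta)+2n}{n}\Bigr)^{n},
\end{displaymath}
which in the only nontrivial regime $n\lesssim\log(1/\delta)$ is bounded by $(4n^{-1}\log(1/\delta))^{n}$ once the constant is adjusted for the $\log$ versus $\log_{2}$ comparison. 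The main technical balancing act is between the pigeonhole cost $L_{j}$ and the factor-of-$2$ loss unavoidably incurred when pruning to the \emph{exact} cardinality $N_{j}$; it is precisely the telescoping of $\sum_{j}\log_{2}(\Delta_{j-1}/\Delta_{j})$, combined with AM-GM, that prevents the $n$-fold product from blowing up and converts the sum bound into the claimed geometric-mean-type estimate.
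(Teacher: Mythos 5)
There is a genuine gap, and it lies in the direction of your iteration. Your construction proceeds top-down: at step $j$ you restrict $\mathcal{Q}_{j-1}$ to the sub-collection $\mathcal{Q}_{j-1}^{\ast}$ of cubes with the popular dyadic type $k_j$, and only those cubes receive children in $\mathcal{Q}_j$. This has two consequences you do not account for. First, your claim that $\mathcal{D}_{\Delta_j}(P')\supset\mathcal{Q}_j$ "because every $Q\in\mathcal{Q}_j$ has a descendant in $\mathcal{Q}_n$ by $N_i\ge 1$" is false: a cube $Q\in\mathcal{Q}_j\setminus\mathcal{Q}_j^{\ast}$ is discarded at step $j+1$ and has no descendants at all. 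Second, and fatally for uniformity, the number of level-$j$ cubes inside a fixed $Q\in\mathcal{Q}_{j-1}$ that survive all subsequent prunings varies with $Q$. Concretely, with four scales $1>\Delta_1>\Delta_2>\Delta_3$, suppose $\mathcal{Q}_1=\{Q_1,Q_2\}$ and each $Q_i$ retains exactly $N_2=2$ children $R$'s in $\mathcal{Q}_2$; if at step $3$ the popular type $k_3$ is attained by both children of $Q_2$ but only one child of $Q_1$, then in the final set $P'$ one has $|P'\cap Q_1|_{\Delta_2}=1$ while $|P'\cap Q_2|_{\Delta_2}=2$, so $P'$ is not $(\Delta_j)_{j=1}^n$-uniform and no relabelling of $N_2$ repairs this. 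The exact child count you establish at scale $\Delta_{j-1}$ is destroyed by the pigeonholing at scale $\Delta_j$.

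The paper avoids this by running the same pigeonhole \emph{bottom-to-top}: one first uniformises the finest scale pair $(\Delta_{n-1},\Delta_n)$, obtaining $P_n\subset P$ with $P_n\cap Q\in\{\emptyset,\,P\cap Q\}$ for $Q\in\mathcal{D}_{\Delta_{n-1}}$, and then iterates upward. The key point is that pruning at a coarser scale $\Delta_{i-1}$ either keeps a $\Delta_i$-cube's content entirely or deletes it entirely, so the already-established property $|P_{i+1}\cap Q|_{\Delta_j}\in\{0,N_j\}$ for $j>i$ is preserved (a count can only drop to $0$, never to an intermediate value). Your weighting by $|P\cap Q|_\delta$, the dyadic-type pigeonhole with loss $O(\log(\Delta_{j-1}/\Delta_j))$ per scale, and the concluding telescoping-plus-AM-GM computation are all essentially identical to the paper's and are fine; it is only the order of the scales in the induction that needs to be reversed (and, correspondingly, the "greedy retention of the $N_j$ heaviest children" replaced by keeping all of $P\cap Q$ for the selected coarse cubes $Q$).
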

\begin{proof}
This is a standard ``bottom-to-top'' pigeonholing argument, but since we have not seen this particular statement in the literature we provide the details. Pigeonholing, we can locate a dyadic number $N_n\in [1,(\Delta_{n-1}/\Delta_n)^2]$ such that
\[
\sum \big\{ |P\cap Q|_\delta: Q\in\calD_{\Delta_{n-1}}, |P\cap Q|_{\Delta_n}\in [N_n,2 N_n)  \big\} \ge  \left(2\log(\Delta_{n-1}/\Delta_n)\right)^{-1}|P|_\delta.
\]
We can then obtain a set $P_{n}\subset P$ such that $|P_{n} \cap Q|_{\Delta_n}=N_{n}$ whenever $|P \cap Q|_{\Delta_n}\in [N_{n},2 N_{n})$, $Q \in \mathcal{D}_{\Delta_{n - 1}}$, and $P_{n}\cap Q=\emptyset$ otherwise. Then
\[
|P_{n}|_\delta \ge \left(4\log(\Delta_{n-1}/\Delta_n)\right)^{-1}|P|_\delta.
\]
We continue inductively. Suppose $P_n \supset P_{n-1}\cdots \supset P_{i + 1}$ have been constructed for some $i \geq 0$ with the property that $|P_{i + 1} \cap Q|_{\Delta_{j}}\in\{ 0, N_{j}\}$ for all $j \in \{i + 1,\ldots,n\}$ and all $Q\in\calD_{\Delta_{j - 1}}$. If $i = 0$, the construction terminates, and we set $P' := P_{1}$. Otherwise, proceeding as above (replacing "$P$" by "$P_{i + 1}$"), there exist a dyadic number $N_{i}\in [1,(\Delta_{i - 1}/\Delta_{i})^2]$ and a subset $P_{i}\subset P_{i + 1}$ such that the following holds:
\begin{enumerate}
\item $P_{i} \cap Q \in \{\emptyset,P_{i + 1} \cap Q\}$ for all $Q\in\calD_{\Delta_{i}}$.
\item $|P_{i}\cap \overline{Q}|_{\Delta_i} \in \{ 0, N_{i}\}$ for all $\overline{Q} \in\calD_{\Delta_{i-1}}$.
\item $|P_{i}|_\delta \ge \left(4\log(\Delta_{i-1}/\Delta_i)\right)^{-1}|P_{i + 1}|_\delta$.
\end{enumerate}
It follows from  properties (1)--(2) and the inductive hypothesis that $|P_{i} \cap Q|_{\Delta_j}\in\{ 0, N_{j}\}$ for all $j \in \{i,\ldots,n\}$ and $Q \in \mathcal{D}_{\Delta_{j - 1}}$.

The set $P' := P_{1}$ is $(\Delta_j)_{j=1}^n$-uniform by construction, and the lower bound \eqref{form12} follows at once from property (3) and the geometric mean-arithmetic mean inequality.
\end{proof}

\begin{lemma}[Uniformisation lemma]\label{lemma5}  Let
\begin{displaymath} \delta = \Delta_{n} < \Delta_{n - 1} < \ldots < \Delta_{1} \leq \Delta_{0} = 1 \end{displaymath}
be a sequence of dyadic scales, and let $J_{1},J_{2} \subset \{1,\ldots,n\}$. Let $P\subset [0,1)^2$ be $(\Delta_j)_{j=1}^n$-uniform. Assume further that
\begin{itemize}
\item $P$ is a $(s_j,C)$-set between the scales $\Delta_{j}$ and $\Delta_{j - 1}$ for all $j\in J_1$, and
\item $P$ is $(t_j,C,K)$-regular between the scales $\Delta_j$ and $\Delta_{j-1}$ for all $j \in J_2$.
\end{itemize}
Let $P'\subset P$ satisfy $|P'|_\delta \ge L^{-1} |P|_\delta$, and write $M=M(n,L,\delta)=L \cdot [4\log(1/\delta)]^n$. Then there is a $(\Delta_j)_{j=1}^n$-uniform subset $P''\subset P'$ such that $|P''|\ge M^{-1} |P|$ and, furthermore, $P''$ is an $(s_j,M C)$-set between the scales $\Delta_{j}$ and $\Delta_{j - 1}$ for all $j\in J_1$, and  $P''$ is $(t_j,M C,K)$-regular between the scales $\Delta_j$ and $\Delta_{j-1}$ for all $j \in J_2$. Finally, if $P'$ is a union of $\delta$-squares to begin with, then the same property remains true for $P''$.
\end{lemma}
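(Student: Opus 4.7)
My plan is to apply Lemma \ref{l:uniformization} directly to $P'$ and then read off the between-scales properties of the resulting uniform set $P''$ from those of the ambient set $P$, at the cost of losing only a factor $M$ in the set-constants. Concretely, I would first use Lemma \ref{l:uniformization} on $P'$ to produce a $(\Delta_j)_{j=1}^n$-uniform subset $P''\subset P'$ with $|P''|_\delta\geq (4n^{-1}\log(1/\delta))^{-n}|P'|_\delta$. Since $4n^{-1}\log(1/\delta)\leq 4\log(1/\delta)$ (so the inverse power is at least $(4\log(1/\delta))^{-n}$) and $|P'|_\delta\ge L^{-1}|P|_\delta$, this already gives the required bound $|P''|_\delta\ge M^{-1}|P|_\delta$.

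The core of the argument is a telescoping observation. Let $(N_j)_{j=1}^n$ and $(N_j')_{j=1}^n$ denote the uniform counts of $P$ and $P''$ respectively. Using $\mathcal{D}_{\Delta_0}(P)=\{[0,1)^2\}$ and the uniformity of $P$, an induction on $j$ yields $|P|_{\Delta_j}=\prod_{i=1}^j N_i$, and in particular $|P|_\delta=\prod_{j=1}^n N_j$; similarly $|P''|_\delta=\prod_j N_j'$. Since $P''\subset P$ one has $N_j'\le N_j$ for every $j$, and the lower bound $|P''|_\delta\ge M^{-1}|P|_\delta$ then forces $\prod_j (N_j/N_j')\le M$, which in turn gives the individual ratio bound $N_j/N_j'\le M$ for each $j\in\{1,\dots,n\}$.

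With this in hand, fix $j\in J_1$ and $Q\in \mathcal{D}_{\Delta_{j-1}}(P'')\subset \mathcal{D}_{\Delta_{j-1}}(P)$. Unfolding the definition, the $(s_j,C)$-set property of $P$ between $\Delta_j$ and $\Delta_{j-1}$ says that $|P\cap Q'|_{\Delta_j}\le C\cdot N_j\cdot (r/\Delta_{j-1})^{s_j}$ for every dyadic cube $Q'\subset Q$ of side $r\in[\Delta_j,\Delta_{j-1}]$. Combining $|P''\cap Q'|_{\Delta_j}\le |P\cap Q'|_{\Delta_j}$ with the uniformity of $P''$ and the per-scale ratio bound gives $|P''\cap Q'|_{\Delta_j}\le MC\cdot |P''\cap Q|_{\Delta_j}\cdot(r/\Delta_{j-1})^{s_j}$, which rescaled by $S_Q$ is exactly the claimed $(s_j,MC)$-set property. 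The same chain of inequalities yields the $(t_j,MC)$-set part of regularity for $j\in J_2$; the ancillary regularity inequality $|S_Q(P''\cap Q)|_{(\Delta_j/\Delta_{j-1})^{1/2}}\le K(\Delta_j/\Delta_{j-1})^{-t_j/2}$ is inherited \emph{for free} from $P$ since covering numbers are monotone under inclusion, so the constant $K$ is preserved with no loss whatsoever. Finally, inspecting the proof of Lemma \ref{l:uniformization}: each refinement $P_i$ is obtained by discarding entire $\Delta_i$-cubes; at the finest scale $\Delta_n=\delta$ these are already $\delta$-cubes, so if $P'$ is a union of $\delta$-squares to begin with, then $P''$ is too.

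The only real obstacle is bookkeeping: turning the \emph{single} global cardinality loss of factor $M$ into a \emph{uniform, per-scale} loss of factor $M$ in the set constants. This is exactly what the simultaneous uniformity of $P$ and $P''$ buys, since it lets one split the telescoping equality $|P|_\delta/|P''|_\delta=\prod_j(N_j/N_j')$ factor by factor; the asymmetry between "$C$ inflates" and "$K$ does not" comes from the fact that the $C$-constant is normalized by $|P\cap Q|_{\Delta_j}=N_j$ (which shrinks when passing to $P''$) while the $K$-bound is an absolute upper bound on a covering number (which can only shrink when passing to a subset).
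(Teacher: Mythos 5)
Your proposal is correct and follows essentially the same route as the paper: apply Lemma \ref{l:uniformization} to $P'$, use the telescoping product $|P|_\delta/|P''|_\delta=\prod_j (N_j/N_j'')\le M$ together with $N_j''\le N_j$ to get the per-scale ratio bound $N_j''\ge M^{-1}N_j$, and then transfer the $(s_j,C)$- and $(t_j,C,K)$-properties with the constant $C$ inflated to $MC$ while $K$ is preserved by monotonicity of covering numbers. The paper leaves the final transfer step as "easily seen"; you have simply written it out, and your details are accurate.
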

\begin{proof}
By assumption, there is a sequence $(N_j)_{j=1}^n$ such that  $|P\cap Q|_{\Delta_{j}} = N_j$ for all $j\in \{1,\ldots,n\}$ and all $Q\in\mathcal{D}_{\Delta_{j - 1}}(P)$. Let $P''$ be the set obtained by applying Lemma \ref{l:uniformization} to $P'$; let $N''_{j}$ be the corresponding cardinalities. Note that $N''_{j} \le N_{j}$ for all $j\in\{1,\ldots,n\}$ and that $|P''|_{\delta} \ge M^{-1} |P|_{\delta}$. Since
\[
|P''|_{\delta} = \prod_{j=1}^n N''_j ,\quad |P|_{\delta} =  \prod_{j=1}^n N_j,
\]
we must have $N''_j \ge M^{-1}N_j$ for all $j\in\{1,\ldots,n\}$. It follows that
\[
|P''\cap Q|_{\Delta_j} \ge M^{-1} |P\cap Q|_{\Delta_j} \quad\text{ for all }j \in \{1,\ldots,n\}, Q\in\mathcal{D}_{\Delta_{j - 1}}(P'').
\]
This is easily seen to imply the claim.
\end{proof}

\begin{proposition}\label{prop3} Given $s\in (0,1)$, $t>s$, $\tau\in (0,1)$, $n\ge 1$, if $\epsilon_{G},\eta>0$ are taken small enough in terms of $s$ and $t$ only, $\lambda$ is taken small enough in terms of $s,t, \tau$ and $n$, and $0< \epsilon_{N}\le \epsilon_{G}$, then the following holds for all $C_P\ge 1$, and $0 < \delta \leq \delta_{0} = \delta_{0}(s,t,\epsilon_{N},\tau,n,C_P,\lambda) \leq 1$.

Let $(\mathcal{P},\mathcal{T}) \subset \mathcal{D}_{\delta} \times \mathcal{T}^{\delta}$ be a $(\delta,s,\delta^{-\lambda},M)$-nice configuration for some $M \geq 1$. Let
\begin{displaymath} \delta = \Delta_{n} < \Delta_{n - 1} < \ldots < \Delta_{1} \leq \Delta_{0} = 1 \end{displaymath}
be a sequence of dyadic scales, and assume that $\mathcal{P}$ is $(\Delta_{j})_{j = 1}^{n}$-uniform. We assume that the scale indices $\{1,\ldots,n\}$ are partitioned into \emph{normal scales}, \emph{good scales}, and \emph{bad scales}, denoted $\mathcal{N},\mathcal{G}$, and $\mathcal{B}$, respectively. We assume that
\begin{equation}\label{form87} \Delta_{j}/\Delta_{j - 1} \leq \delta^{\tau}, \qquad j \in \mathcal{N} \cup \mathcal{G}. \end{equation}
Moreover, the family $\mathcal{P}$ has the following structure at the normal and good scales:
\begin{itemize}
\item If $j \in \mathcal{N}$, then $\mathcal{P}$ is an $(s,[\log(1/\delta)]^{C_{P}} \cdot (\Delta_{j - 1}/\Delta_{j})^{\epsilon_{N}})$-set between the scales $\Delta_{j}$ and $\Delta_{j - 1}$.
\item If $j \in \mathcal{G}$, there exists a number $t_{j} \geq t$ such that $\mathcal{P}$ is
\begin{displaymath} (t_{j},[\log(1/\delta)]^{C_{P}} \cdot (\Delta_{j - 1}/\Delta_{j})^{\epsilon_G},[\log(1/\delta)]^{C_{P}} \cdot (\Delta_{j - 1}/\Delta_{j})^{\epsilon_{G}})\text{-regular} \end{displaymath}
between the scales $\Delta_{j}$ and $\Delta_{j - 1}$.
\end{itemize}
If $j \in \mathcal{B}$, there is no information about the distribution of $\mathcal{P}$ between the scales $\Delta_{j}$ and $\Delta_{j - 1}$.

Then, there exist constants $C = C(n,s,\tau,C_{P}) > 0$ and $C'=C'(n,\tau)>0$ such that
\begin{equation}\label{form44} |\mathcal{T}| \geq \left[ \log \left(\tfrac{1}{\delta} \right) \right]^{-C} \cdot M \cdot \delta^{C' \lambda} \cdot  \delta^{-s + \epsilon_{N}} \cdot \prod_{j \in \mathcal{G}} \left(\tfrac{\Delta_{j - 1}}{\Delta_{j}} \right)^{\eta} \cdot \prod_{j \in \mathcal{B}} \tfrac{\Delta_{j}}{\Delta_{j - 1}}. \end{equation}
\end{proposition}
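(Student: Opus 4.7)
The strategy is to iterate Proposition \ref{p:induction-on-scales} through the sequence of dyadic scales $\Delta_1 > \Delta_2 > \cdots > \Delta_n = \delta$. Writing $\bar{\delta}_j := \Delta_j/\Delta_{j-1}$, I apply the proposition at the $j$-th step at scale $\bar{\delta}_j$ inside a specific $\Delta_{j-1}$-descendant $Q^*_{j-1}$ chosen (via the maximum in \eqref{lower-bound-T}) to maximise the relative tube count. Letting $|\mathcal{T}^{(j)}|/M^{(j)}$ denote the resulting coarse tube-count ratio at this scale, repeated use of \eqref{lower-bound-T} yields
\begin{displaymath}
\frac{|\mathcal{T}|}{M} \gtrapprox \prod_{j=1}^{n} \frac{|\mathcal{T}^{(j)}|}{M^{(j)}},
\end{displaymath}
where the implicit constant absorbs a cumulative $[\log(1/\delta)]^{O(n)} \delta^{O(n\lambda)}$ loss coming from the pigeonholing in each application of Proposition \ref{p:induction-on-scales}.

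The coarse point set appearing at step $j$, after rescaling to $[0,1)^{2}$ via $S_{Q^*_{j-1}}$, is essentially $\mathcal{D}_{\bar{\delta}_j}(S_{Q^*_{j-1}}(\mathcal{P} \cap Q^*_{j-1}))$, whose structure is precisely what the hypothesis on $\mathcal{P}$ between scales $\Delta_j$ and $\Delta_{j-1}$ describes. To ensure these structural properties survive the pigeonholing in Proposition \ref{p:induction-on-scales} (which only keeps an $\approx_\delta$-comparable subset of $\mathcal{P}$), I invoke the $(\Delta_j)_{j=1}^n$-uniformity hypothesis on $\mathcal{P}$ together with Lemma \ref{lemma5} to re-uniformise after each iteration; this only worsens the set-theoretic constants by logarithmic and $\delta^{-O(\lambda)}$ factors.

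With the structural properties in hand, I lower bound each factor $|\mathcal{T}^{(j)}|/M^{(j)}$ according to the type of scale. For $j \in \mathcal{N}$, the coarse point set is an $(s,\cdot)$-set with constant $\lessapprox \bar{\delta}_j^{-\epsilon_N}$, and Corollary \ref{prop5} with $t=s$ gives $|\mathcal{T}^{(j)}|/M^{(j)} \gtrapprox \bar{\delta}_j^{-s+\epsilon_N}$. For $j \in \mathcal{G}$, the coarse point set is $(t_j,\cdot,\cdot)$-regular with $t_j \geq t > s$, and I split into two cases: if $M^{(j)}$ is so large that the $(M^{(j)}\bar{\delta}_j^{s})^{(t_j-s)/(1-s)}$ bonus term in Corollary \ref{prop5} already contributes at least $\bar{\delta}_j^{-\eta}$, I use Corollary \ref{prop5} directly; otherwise $M^{(j)}$ is essentially $\leq \bar{\delta}_j^{-s}$ and Theorem \ref{t:improvedIncidence} applied at scale $\bar{\delta}_j$ gives $|\mathcal{T}^{(j)}| \geq \bar{\delta}_j^{-2s-\eta'}$, yielding $|\mathcal{T}^{(j)}|/M^{(j)} \gtrapprox \bar{\delta}_j^{-s-\eta}$. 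For $j \in \mathcal{B}$, only the trivial bound $|\mathcal{T}^{(j)}|/M^{(j)} \geq 1$ is available.

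Multiplying across all scales, using $\prod_j \bar{\delta}_j = \delta$, the elementary inequality $\bar{\delta}_j^{s} \geq \bar{\delta}_j$ (since $s \leq 1$), and $\prod_{j\in\mathcal{N}}\bar{\delta}_j^{\epsilon_N} \geq \delta^{\epsilon_N}$, produces
\begin{displaymath}
\frac{|\mathcal{T}|}{M} \gtrapprox \delta^{-s+\epsilon_N} \cdot \prod_{j \in \mathcal{G}} \left(\tfrac{\Delta_{j-1}}{\Delta_j}\right)^{\eta} \cdot \prod_{j \in \mathcal{B}} \tfrac{\Delta_j}{\Delta_{j-1}},
\end{displaymath}
which, after accounting for the accumulated $[\log(1/\delta)]^{-C}$ and $\delta^{C'\lambda}$ losses from the $n$ iterations, is exactly \eqref{form44}. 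The main obstacle I anticipate is the careful bookkeeping of the regularity constants across iterations: at each step the pigeonholing loses $\delta^{-O(\lambda)}$ and the ``uniformisation'' loses $[\log(1/\delta)]^{O(1)}$, and these must remain under the desired gains $\bar{\delta}_j^{\eta}$ and $\bar{\delta}_j^{\epsilon_N}$. This is where the hypothesis \eqref{form87} ($\bar{\delta}_j \leq \delta^{\tau}$ for $j \in \mathcal{N} \cup \mathcal{G}$) plays its role, guaranteeing that each good or normal scale is ``wide enough'' to absorb a per-step $\delta^{O(\lambda)}$ loss into $\bar{\delta}_j^{\eta}$ or $\bar{\delta}_j^{\epsilon_N}$, once $\lambda$ is chosen small relative to $\tau, \eta, \epsilon_N$.
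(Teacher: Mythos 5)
Your proposal is correct and follows essentially the same route as the paper: the paper organises the argument as a formal induction on $n$ that peels off the top scale via Proposition \ref{p:induction-on-scales}, re-uniformises the rescaled set with Lemma \ref{lemma5}, and bounds the coarse factor by Corollary \ref{prop5} (normal), a Corollary \ref{prop5}/Theorem \ref{t:improvedIncidence} case split on the multiplicity (good), or the trivial bound (bad) — which, unrolled, is exactly your telescoping product over the $n$ scale blocks. The only (harmless) imprecision is that the cumulative loss from rescaling the niceness constants compounds like $\delta^{O_\tau(1)^{n}\lambda}$ rather than $\delta^{O(n\lambda)}$, but since $C'$ is allowed to depend on $n$ and $\tau$ and \eqref{form87} controls each rescaling, this does not affect the conclusion.
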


\begin{remark} Earlier in the paper we defined the notation $A \lessapprox_{\delta} B$, which meant that there exists an absolute constant $C \geq 1$ such that
\begin{equation}\label{form85} A \leq C \cdot \left[\log \left(\tfrac{1}{\delta}\right) \right]^{C} \cdot B. \end{equation}
Now, the constants "$C_{P},n,s,\tau$" appearing in the statement of Proposition \ref{prop3} will be regarded as "absolute" in the sense that the "$\lessapprox_{\delta}$" notation may also hide constants depending on $C_{P},n,s,\tau$. Therefore, the inequality \eqref{form44} could be abbreviated to
\begin{displaymath} |\mathcal{T}| \gtrapprox_\delta M \cdot \delta^{C'\lambda} \cdot \delta^{-s + \epsilon_{N}} \cdot \prod_{j \in \mathcal{G}} \left(\tfrac{\Delta_{j - 1}}{\Delta_{j}} \right)^{\eta} \cdot \prod_{j \in \mathcal{B}} \tfrac{\Delta_{j}}{\Delta_{j - 1}}. \end{displaymath}
In most occurrences of "$\lessapprox_{\delta}$" below, the constant "$C$" in \eqref{form85} will in fact be absolute, but we do not differentiate this in the notation. Even more wastefully, the notation "$\lessapprox_{\delta}$" will be used in situations where "$\log(1/\delta)$" in \eqref{form85} could be improved to "$\log(\Delta_{j - 1}/\Delta_{j})$". \end{remark}

\begin{proof}[Proof of Proposition \ref{prop3}] The proof will proceed by induction on the number "$n$" in $\{\Delta_{j}\}_{j = 1}^{n}$. More precisely, we will prove the case $n = 1$ separately below in a moment, whereas for $n > 1$, we use Proposition \ref{p:induction-on-scales} to reduce the proof of the main estimate \eqref{form44} to the case $n - 1$, applied with new constants $C_{P}',\lambda'$ of the form  $C_{P}' = C_{P} + C_{n}$ and $\lambda' = O(\tau^{-1})\lambda$.

While proving \eqref{form44}, we may assume that various powers of $\log(1/\delta)$, whose magnitude depends only on $n,s,\tau,C_{P}$, are tiny compared to $\delta^{-\epsilon_{N} \tau}$, thus
\begin{equation}\label{rev2} (\log(1/\delta))^{O(n,s,\tau,C_{P})} \leq \delta^{-\epsilon_{N} \tau} = \delta^{-\min\{\epsilon_{G},\epsilon_{N}\}\tau}. \end{equation}
This is because the upper bound $\delta_{0}$ for $\delta$ is allowed to depend on $s,t,\epsilon_{N},\tau,C_{P}$, and $\epsilon_{N}\le\epsilon_{G}$. This smallness of "$\delta$" will be assumed without further remark.

Recall that $(\mathcal{P},\mathcal{T})$ is assumed to be a $(\delta,s,\delta^{-\lambda},M)$-nice configuration. By definition, this means that for every $p \in \mathcal{P}$, there exists a $(\delta,s,\delta^{-\lambda})$-set $\mathcal{T}(p) \subset \mathcal{T}$ of cardinality $|\mathcal{T}(p)| = M$ such that $T \cap p \neq \emptyset$ for all $p \in \mathcal{T}(p)$.

\subsection{Case $n = 1$} If $1 \in \mathcal{B}$, then $|\mathcal{T}| \geq |\mathcal{T}(p)| = M$, where $p \in \mathcal{P}$ is arbitrary; this yields \eqref{form44} in the case $1 \in \mathcal{B}$, since $s<1$. If $1 \in \mathcal{N}$, Corollary \ref{prop5} applied with $t=s$, $C_{T}=\delta^{-\lambda}$ and $C_{P}\lessapprox_{\delta} \delta^{-\epsilon_{N}}$ shows that
\begin{equation}\label{form50} |\mathcal{T}| \gtrapprox_{\delta} M \cdot \delta^{\lambda} \cdot \delta^{-s + \epsilon_{N}}, \end{equation}
which yields \eqref{form44}. Finally, if $1 \in \mathcal{G}$, then $\mathcal{P}$ is
\[
(\delta,\bar{t},[\log(1/\delta)]^{C_{P}} \cdot \delta^{-\epsilon_{G}},[\log(1/\delta)]^{C_{P}} \cdot \delta^{-\epsilon_{G}})\text{-regular}
\]
between the scales $\delta$ and $1$ for some $\bar{t} \geq t$. We claim that if $\epsilon_{G},\eta > 0$ are sufficiently small, depending on $s$ and $t$ alone, then \eqref{form50} can be upgraded to
\begin{equation}\label{form51} |\mathcal{T}| \geq M \cdot \delta^{-s - \eta}, \end{equation}
which is better than needed.

Since $\bar{t} \geq t$, in particular $\mathcal{P}$ is a $(\delta,t,[\log(1/\delta)]^{C_{P}} \cdot \delta^{-\epsilon_{G}})$-set. By Corollary \ref{prop5}, this implies
\begin{equation}\label{rev1} |\mathcal{T}| \gtrapprox_{\delta} \delta^{\epsilon_{G}} \cdot M \delta^{-s} \cdot (M\delta^s)^{\tfrac{t-s}{1-s}}. \end{equation}
If it happens that $M\geq\delta^{-s-\gamma}$, where
\begin{equation}\label{choiceOfGamma}
\gamma = \frac{(\eta+2\epsilon_{G})(1-s)}{(t-s)},
\end{equation}
then \eqref{form51} follows immediately from \eqref{rev1}, since $\epsilon_{N} \leq \epsilon_{G}$. On the other hand, if $M \leq \delta^{-s - \gamma}$, we use Theorem \ref{t:improvedIncidence} instead. Let $\epsilon_0=\epsilon_0(s,t)>0$ be the constant given by the theorem. We assume that $\epsilon_G$, $\eta$ and $\lambda$ in the current proposition are taken sufficiently small in terms of $s, t,\epsilon_{0}$: then $\mathcal{P}$ is a $(\delta,\bar{t},\delta^{-\epsilon_{0}},\delta^{-\epsilon_{0}})$-regular set and the families $\mathcal{T}(p)$, $p \in \mathcal{P}$, are $(\delta,s,\delta^{-\epsilon_0})$-sets for $\delta > 0$ small enough. This implies \eqref{form51}, since $|\mathcal{T}| \geq \delta^{-2s - \epsilon_{0}} \geq M \cdot \delta^{-s - \epsilon_{0}+\gamma} \geq M \cdot \delta^{-s - \eta}$, again taking $\eta$ and $\epsilon_G$ small enough in terms of $\epsilon_0$ only. Since $\epsilon_0$ depends on $s,t$, we conclude that ultimately $\eta, \epsilon_G$ and $\lambda$ need to be taken small in terms of $s,t$ only.

\subsection{Case $n \geq 2$} We assume that the claim has already been established for indices strictly smaller than some given $n \geq 2$. We abbreviate
\[
\Delta := \Delta_{1},\quad \bar{\delta}:=\frac{\delta}{\Delta}=\frac{\delta}{\Delta_1}.
\]
We are going to apply Proposition \ref{p:induction-on-scales}; this is permissible since $(\mathcal{P},\mathcal{T})$ is a $(\delta,s,C_{1},M)$-nice configuration with $C_{1} := \delta^{-\lambda}$. Let $\mathcal{P}', \mathcal{T}'$ be the sets provided by the proposition, and let
\begin{equation}\label{rev3} \mathcal{P}_{\Delta} = \mathcal{D}_{\Delta}(\mathcal{P}') \quad \text{and} \quad \mathcal{T}_{\Delta} \subset \mathcal{T}^{\Delta}, \end{equation}
be the objects appearing in Claim \eqref{it-induction-iii} of Proposition \ref{p:induction-on-scales}. Thus $(\mathcal{P}_{\Delta},\mathcal{T}_{\Delta})$ is a $(\Delta,s,C_{\Delta},M_{\Delta})$-nice configuration for some $C_{\Delta} \approx_{\delta} \delta^{-\lambda}$ and $M_{\Delta} \geq 1$.

Fix any $Q\in\mathcal{P}_{\Delta}$ for the rest of the proof. Let us write $\mathcal{P}^Q = S_{Q}(\mathcal{P}'\cap Q)$ and $\mathcal{T}_Q$ for the families appearing in Claim \eqref{it-induction-iv} of Proposition \ref{p:induction-on-scales}. Thus $(\mathcal{P}^{Q},\mathcal{T}_Q)$ is a $(\bar{\delta},s,C_{\bar{\delta}},M_{\bar{\delta}})$-nice configuration for some $C_{\bar{\delta}} := C_{Q} \approx_{\delta} \delta^{-\lambda}$, and $M_{\bar{\delta}} := M_{Q} \geq 1$. Our goal is to bound $|\mathcal{T}_\Delta|$ and $|\mathcal{T}_Q|$ from below in such a way that the desired conclusion will follow from \eqref{lower-bound-T}. The lower bound for $|\mathcal{T}_Q|$ will rely on the induction hypothesis.

We start by bounding $|\mathcal{T}_\Delta|$ from below. If $1 \in \mathcal{B}$, then we use the trivial bound
\begin{equation}\label{form79} |\mathcal{T}_{\Delta}| \geq  M_{\Delta}. \end{equation}
Assume then that $1 \in \mathcal{N}$, so $\mathcal{P}$ is an $(s,[\log(1/\delta)]^{C_{P}} \cdot \Delta^{-\epsilon_{N}})$-set between the scales $\Delta = \Delta_{1}$ and $1 = \Delta_{0}$. It follows from \eqref{it-induction-i} in Proposition \ref{p:induction-on-scales} that $\mathcal{P}_\Delta$ is a $(\Delta,s,\log\left(1/\delta\right)^{C_{P} + C} \cdot \Delta^{-\epsilon_{N}})$-set for some $C \sim 1$. Since $1 \in \mathcal{N}$, the factor $[\log(1/\delta)]^{C_{P} + C}$ is small compared to $\Delta^{-\epsilon_{N}}$ by \eqref{form87} and \eqref{rev2}. Since $(\mathcal{P}_{\Delta},\mathcal{T}_{\Delta})$ is a $(\Delta,s,C_{\Delta},M_{\Delta})$-nice configuration with $C_{\Delta} \approx \delta^{-\lambda}$, it follows from Corollary \ref{prop5} applied with $t=s$, $C_T\approx_{\delta} \delta^{-\lambda}$ and $C_P\lessapprox_{\delta} \Delta^{-\epsilon_{N}}$ that
\begin{equation}\label{form45} |\mathcal{T}_{\Delta}| \gtrapprox_{\delta} M_{\Delta} \cdot \delta^{\lambda}\cdot \Delta^{-s + \epsilon_{N}}. \end{equation}
Finally, if $1 \in \mathcal{G}$, hence $P$ is $(t_{1},[\log(1/\delta)]^{C_{P}} \cdot \Delta^{-\epsilon_{G}},[\log(1/\delta)]^{C_{P}} \cdot \Delta^{-\epsilon_{G}})$-regular with $t_{1} \geq 1$, then \eqref{form45} can be upgraded as follows:
\begin{equation}\label{form45a} |\mathcal{T}_{\Delta}| \geq M_{\Delta} \cdot  \Delta^{-s - \eta}. \end{equation}
This requires an application of Corollary \ref{prop5} and Theorem \ref{t:improvedIncidence}, and a similar case chase between the options "$M_{\Delta} \geq \Delta^{-s -\gamma}$" and "$M_{\Delta} < \Delta^{-s - \gamma}$" for a suitable $\gamma$ as we recorded below \eqref{form51}; the details are so similar that we omit them here. In particular, at this point we need to know that $[\log(1/\delta)]^{C_{P}}$ is small compared to $\Delta^{-\epsilon_{G}}$. This is true by \eqref{form87}, \eqref{rev2}.

We move to estimating $|\mathcal{T}_Q|$ from below.  This will rely on the induction hypothesis, except in one special case, which we treat immediately. This is the case where $\{2,\ldots,n\} \subset \mathcal{B}$. Then we use the trivial bound $|\mathcal{T}_Q| \ge M_Q$ to deduce from \eqref{lower-bound-T} that
\[
|\mathcal{T}| \gtrapprox_\delta  M \cdot M_\Delta^{-1}\cdot |\mathcal{T}_\Delta| .
\]
Together with our earlier estimates for $|\mathcal{T}_\Delta|$, \eqref{form79}--\eqref{form45a}, this gives the desired claim \eqref{form44} since, telescoping,
\[
\prod_{j = 2}^{n} \frac{\Delta_{j}}{\Delta_{j - 1}}= \frac{\Delta_n}{\Delta_1}=\frac{\delta}{\Delta} \le \left(\frac{\delta}{\Delta}\right)^{s-\epsilon_N}.
\]

In the sequel, we may therefore assume that there exists $j \in \{2,\ldots,n\} \cap [\mathcal{N} \cup \mathcal{G}] \neq \emptyset$, and in particular
\begin{equation}\label{form88} \bar{\delta} = \frac{\delta}{\Delta} \leq \frac{\Delta_{j}}{\Delta_{j - 1}} \stackrel{\eqref{form87}}{\leq} \delta^{\tau}. \end{equation}

The short story is that, thanks to \eqref{it-induction-i} in Proposition \ref{p:induction-on-scales}  the set $\mathcal{P}^Q= S_{Q}(\mathcal{P}'\cap Q)$ satisfies roughly the same hypotheses as $\mathcal{P}$ for the scale sequence $\{\bar{\Delta}_{j}\}_{j = 0}^{n - 1} := \{\Delta_{j}/\Delta\}_{j = 1}^{n}$. So, the induction hypothesis may (roughly speaking) be applied to $\mathcal{P}^{Q}$. The main technical hurdle is that the set $\mathcal{P}^{Q}$ should be uniform, and a priori it does not need to be: all we know about $\mathcal{P}' \cap Q \subset \mathcal{P} \cap Q$ is that $|\mathcal{P}' \cap Q| \approx_{\delta} |\mathcal{P} \cap Q|$. We resolve this by appealing to the Uniformisation lemma, Lemma \ref{lemma5}. We proceed to the details. As indicated, we write
\begin{displaymath} \bar{\Delta}_{j - 1} := \Delta_{j}/\Delta, \qquad j \in \{1,\ldots,n\}. \end{displaymath}
By the hypothesis of the proposition, $S_{Q}(\mathcal{P} \cap Q)$ is $(\bar{\Delta}_{j})_{j = 1}^{n - 1}$-uniform, and additionally an $(s,[\log(1/\delta)]^{C_{P}} \cdot (\bar{\Delta}_{j - 1}/\bar{\Delta}_{j})^{\epsilon_{N}})$-set between the scales $\bar{\Delta}_{j}$ and $\bar{\Delta}_{j - 1}$ for all $j \in \bar{\mathcal{N}} := (\mathcal{N} - 1) \, \setminus \, \{0\}$, and finally a $(t_{j},[\log(1/\delta)]^{C_{P}} \cdot (\bar{\Delta}_{j - 1}/\bar{\Delta}_{j})^{\epsilon_{G}},[\log(1/\delta)]^{C_{P}} \cdot (\bar{\Delta}_{j - 1}/\bar{\Delta}_{j})^{\epsilon_{G}})$-regular set between the scales $\bar{\Delta}_{j}$ and $\bar{\Delta}_{j - 1}$ for all $j \in \bar{\mathcal{G}} := (\mathcal{G} - 1) \, \setminus \, \{0\}$. Further, we know from Proposition \ref{p:induction-on-scales}\eqref{it-induction-i} that $|\mathcal{P}^{Q}| \approx_{\delta} |\mathcal{P} \cap Q|$.

We then apply the Uniformisation lemma, Lemma \ref{lemma5}: there exists a further subset of $\mathcal{P}^{Q}$, which we keep denoting with the same letter, with the following properties:
\begin{itemize}
\item[(P1) \phantomsection \label{P1}] $\mathcal{P}^{Q}$ is $(\bar{\Delta}_{j})_{j = 1}^{n - 1}$-uniform, and $|\mathcal{P}^{Q}| \approx_{\delta} |\mathcal{P} \cap Q|$ (recall that the "$\approx_{\delta}$" notation also tolerates the constants depending on "$n$", which arise from Lemma \ref{lemma5}),
\item[(P2) \phantomsection \label{P2}] $\mathcal{P}^{Q}$ is an $(s,[\log(1/\delta)]^{C_{P} + C_{n}} \cdot (\bar{\Delta}_{j - 1}/\bar{\Delta}_{j})^{\epsilon_{N}})$-set between the scales $\bar{\Delta}_{j}$ and $\bar{\Delta}_{j - 1}$ for all $j \in \bar{\mathcal{N}}$, where $C_{n} \geq 1$ only depends on $n$,
\item[(P3) \phantomsection \label{P3}] $\mathcal{P}^{Q}$ is $(t_{j},[\log(1/\delta)]^{C_{P} + C_{n}} \cdot (\bar{\Delta}_{j - 1}/\bar{\Delta}_{j})^{\epsilon_{G}},[\log(1/\delta)]^{C_{P} + C_{n}} \cdot (\bar{\Delta}_{j - 1}/\bar{\Delta}_{j})^{\epsilon_{G}})$-regular between the scales $\bar{\Delta}_{j}$ and $\bar{\Delta}_{j - 1}$ for all $j \in \bar{\mathcal{G}}$.
\end{itemize}
In order to apply the inductive hypothesis to the set $\mathcal{P}^{Q}$, it will be important to note that the multiplicative factors $[\log(1/\delta)]^{C_{P} + C_{n}}$ in \nref{P2}-\nref{P3} can be further bounded from above by $[\log(1/\bar{\delta})]^{C_{P} + C_{n}}$, for a -- say -- twice larger constant $C_{n}$. This follows from \eqref{form88}:
\begin{equation}\label{form89} [\log(1/\delta)]^{C_{P} + C_{n}} \leq \left(\tfrac{1}{\tau} \right)^{C_{P} + C_{n}} \cdot [\log(1/\bar{\delta})]^{C_{P} + C_{n}} \leq [\log(1/\bar{\delta})]^{C_{P} + 2C_{n}},\end{equation}
assuming that $\delta$, and hence $\bar{\delta}$, is chosen small enough in terms of $\tau, C_{P}$. The same argument, together with the bound $C_{\bar{\delta}} = C_Q\lessapprox_\delta \delta^{-\lambda}$ given by Proposition \ref{p:induction-on-scales}\eqref{it-induction-iv}, shows that $\mathcal{T}_Q(q)$ can be assumed to be a $(\bar{\delta},s,(\bar{\delta})^{-2\tau^{-1}\lambda})$-set for all $q \in \mathcal{P}^{Q}$. Here $\mathcal{T}_{Q}(q) \subset \mathcal{T}_{Q}$ is the $(\bar{\delta},s,C_{\bar{\delta}})$-set of tubes in $\mathcal{T}_{Q}$, all elements of which intersect $q$, whose existence is guaranteed by the $(\bar{\delta},s,C_{\bar{\delta}},M_{\bar{\delta}})$-niceness of $(\mathcal{P}^{Q},\mathcal{T}_{Q})$.

We have now verified that, provided $\delta$ is taken small enough, $\mathcal{P}^Q$, $\mathcal{T}_Q$, $\bar{\delta}$ and $(\bar{\Delta}_j)_{j=0}^{n-1}$ satisfy all the assumptions of the current proposition, with (new) constants $C'_P, \lambda'$ that depend only on the original ones, and $n$; in particular, $\lambda'= 2\tau^{-1}\lambda$, so that $\lambda'$ can be taken sufficiently small in terms of $s,t,\tau,n - 1$. Writing $\bar{\mathcal{B}} := (\mathcal{B} - 1) \, \setminus \, \{0\}$, the inductive hypothesis yields
\begin{equation}\label{form49}
\begin{split}
|\mathcal{T}_{Q}| & \gtrapprox_{\bar{\delta}} M_{\bar{\delta}} \cdot \bar{\delta}^{C' \lambda}\cdot \bar{\delta}^{-s + \epsilon_{N}} \cdot \prod_{j \in \bar{\mathcal{G}}} \left(\tfrac{\bar{\Delta}_{j - 1}}{\bar{\Delta}_{j}} \right)^{\eta} \cdot \prod_{j \in \bar{\mathcal{B}}} \tfrac{\bar{\Delta}_{j}}{\bar{\Delta}_{j - 1}} \\
& \ge  M_{\bar{\delta}} \cdot \delta^{C'\lambda} \cdot \left(\tfrac{\Delta}{\delta}\right)^{s  - \epsilon_{N}} \cdot \prod_{j \in \mathcal{G} \, \setminus \, \{1\}} \left(\tfrac{\Delta_{j - 1}}{\Delta_{j}} \right)^{\eta} \cdot \prod_{j \in \mathcal{B} \, \setminus \, \{1\}} \tfrac{\Delta_{j}}{\Delta_{j - 1}}.
\end{split}
\end{equation}
To clarify, the implicit constant in \eqref{form49} is of the form $[\log(1/\bar{\delta})]^{-O(1)}$, where "$O(1)$" depends on the constants of the sets $\mathcal{P}^{Q}$. As we have observed, these constants ultimately depend only on the original constant $C_P$, and on $n$. Also, $C'$ depends only on $\tau$ and $n-1$. Furthermore, \eqref{form49} holds if $\delta$ is sufficiently small in terms of $C_P, \lambda,t,\epsilon_{N},n,s,\tau$ so that $\bar{\delta}$ is also sufficiently small, recall \eqref{form88}, to apply the inductive hypothesis.

Combining \eqref{form49} with \eqref{form79} and the trivial estimate $\Delta^{s - \epsilon_{N}} \geq \Delta$ in the case $1\in\mathcal{B}$, \eqref{form45} in the case $1 \in \mathcal{N}$ and, finally, \eqref{form45a} in the case $1 \in \mathcal{G}$, we deduce that, in any case,
\begin{equation}\label{rev4}
|\mathcal{T}_\Delta|\cdot |\mathcal{T}_Q| \gtrapprox_\delta M_\Delta \cdot M_{\bar{\delta}}  \cdot \delta^{(C'+2)\lambda}\cdot \delta^{-s + \epsilon_{N}} \cdot \prod_{j \in \mathcal{G}} \left(\tfrac{\Delta_{j - 1}}{\Delta_{j}} \right)^{\eta} \cdot \prod_{j \in \mathcal{B}} \tfrac{\Delta_{j}}{\Delta_{j - 1}}.
\end{equation}
To conclude the proof, recall from \eqref{lower-bound-T} (with current notation, see below \eqref{rev3}) that
\begin{displaymath} |\mathcal{T}| \gtrapprox_{\delta} M \cdot \frac{|\mathcal{T}_{\Delta}| \cdot |\mathcal{T}_{Q}|}{M_{\Delta} \cdot M_{\bar{\delta}}}. \end{displaymath}
Along with \eqref{rev4}, this shows that \eqref{form44} holds, finishing the proof of Proposition \ref{prop3}. \end{proof}

\section{Choosing good multiscale decompositions of uniform sets}\label{s:multiScaleDecomposition}

The goal of this section is to obtain a multiscale decomposition $(\Delta_i)_{i=1}^n$ of a uniform $(\delta,t,\delta^{-\e})$-set $P$ such the hypotheses of Proposition \ref{prop3} hold. More precisely, we will prove the following result:

\begin{proposition}\label{p:multiscaledecomp-kaufman}
Given $s\in (0,1)$, $t\in (s,2]$, $\Delta \in 2^{-\N}$ and $\e>0$ there is $\tau=\tau(\e,s,t) \in (0,\epsilon]$ such that the following holds for $m$ sufficiently large in terms of $s,t,\Delta,\e$.

Let $\delta=\Delta^m$. Let $P\subset [0,1)^{2}$ be a $(\Delta^i)_{i=1}^m$-uniform $(\delta,t,\delta^{-\e})$-set. Then there are numbers $t_{j} \in [s,2]$, $1 \leq j \leq n$, and scales
\[
\delta = \Delta_{n} < \Delta_{n - 1} < \ldots < \Delta_{1} < \Delta_{0} = 1,
\]
with each $\Delta_j$ an integer power of $\Delta$, and a partition $\{1,\ldots,n\} = \mathcal{S} \cup \mathcal{B}$ ("structured" and "bad" indices) such that the following properties hold:
\begin{enumerate}[(\rm i)]
\item  \label{cor:multiscaledecomp-kaufman:i} $\Delta_{j - 1}/\Delta_{j} \ge \delta^{-\tau}$ for all $j \in \mathcal{S}$, and $\prod_{j \in \mathcal{B}} (\Delta_{j - 1}/\Delta_{j}) \leq \delta^{-\epsilon}$.
\item \label{cor:multiscaledecomp-kaufman:ii} For each $j \in \mathcal{S}$, the set $P$ is a $(t_{j},(\Delta_{j - 1}/\Delta_{j})^{\e})$-set between the scales $\Delta_{j}$ and $\Delta_{j - 1}$. Moreover, if $t_{j} >s$, then $P$ is $(t_{j},(\Delta_{j - 1}/\Delta_{j})^{\e},(\Delta_{j - 1}/\Delta_{j})^{\e})$-regular between the scales $\Delta_{j}$ and $\Delta_{j - 1}$.
\item \label{cor:multiscaledecomp-kaufman:iii} $\prod_{j \in \mathcal{S}} (\Delta_{j - 1}/\Delta_{j})^{t_{j}} \ge \delta^{\epsilon -t}$.
\item \label{cor:multiscaledecomp-kaufman:iv} If $j\in\mathcal{B}$, then $j+1\notin\mathcal{B}$ for all $j \in \{1,\ldots,n - 1\}$.
\end{enumerate}
\end{proposition}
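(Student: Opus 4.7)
The proof reduces the geometric statement to a combinatorial one about a single sequence of local dimensions. Since $P$ is $(\Delta^i)_{i=1}^m$-uniform, for each $i \in \{1,\ldots,m\}$ set $N_i := |P \cap Q|_{\Delta^i}$ (independent of the choice of $Q \in \mathcal{D}_{\Delta^{i-1}}(P)$), $s_i := -\log N_i/\log \Delta \in [0,2]$, and $\sigma_k := \sum_{i=1}^k s_i$. Unpacking the $(\delta,t,\delta^{-\epsilon})$-set hypothesis at the dyadic scale $r=\Delta^k$ yields the Frostman-type inequality $\sigma_k \ge kt - m\epsilon$ for every $k \in \{0,\ldots,m\}$. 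Conversely, the $(t_j,(\Delta_{j-1}/\Delta_j)^\epsilon)$-set property of $P$ between $\Delta_j=\Delta^{m_j}$ and $\Delta_{j-1}=\Delta^{m_{j-1}}$ is equivalent to $\sigma_l - \sigma_{m_{j-1}} \ge (l-m_{j-1})t_j - \epsilon(m_j-m_{j-1})$ on $l \in [m_{j-1},m_j]$, while the regularity variant adds the matching upper bound at the midpoint. So the task becomes: choose breakpoints $0=m_0<\cdots<m_n=m$ and slopes $t_j \in [s,2]$ producing these inequalities.

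The construction proceeds in three stages. \emph{Stage A (bad prefix).} Let $k^*$ minimise $k \mapsto \sigma_k - sk$. Since $\sigma_0 = 0$ and $\sigma_{k^*}\ge k^*t - m\epsilon$ by Frostman, we obtain $k^*(t-s) \le m\epsilon$, hence $k^* \le m\epsilon/(t-s)$. The minimality of $k^*$ forces $\sigma_k - \sigma_{k^*} \ge s(k-k^*)$ on $[k^*,m]$, so every chord based at $k^*$ has slope at least $s$. \emph{Stage B (lower convex envelope).} On $[k^*,m]$ I take the lower convex envelope $\phi$ of the discrete graph of $\sigma$, obtaining breakpoints $k^* = n_0 < n_1 < \cdots < n_r = m$ and strictly increasing slopes $\alpha_1 < \cdots < \alpha_r$, all at least $s$ by Stage A. Because $\phi = \sigma$ at breakpoints and $\phi \le \sigma$ elsewhere, on each $[n_{i-1},n_i]$ one has $\sigma_l \ge \sigma_{n_{i-1}} + (l - n_{i-1})\alpha_i$ with no error, i.e.\ the $(\alpha_i,1)$-set property.

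\emph{Stage C (regularity and length).} For an envelope interval with $\alpha_i > s$ I call it \emph{regular} if $\sigma_{l^\sharp} - \phi(l^\sharp) \le \epsilon(n_i - n_{i-1})$ at the midpoint $l^\sharp := \lfloor(n_{i-1}+n_i)/2\rfloor$; this is exactly the upper bound needed to promote the $(t_j,C)$-set property to $(t_j,C,K)$-regularity with constants polynomial in $(\Delta_{j-1}/\Delta_j)^{-\epsilon}$. If the interval is not regular, I bisect at $l^\sharp$ and recurse on the two halves, re-applying the lower convex envelope on each side; any sub-interval whose new local slope drops below $s$ is absorbed into the bad set. After at most $\log_2(1/\tau)$ levels of bisection every surviving interval is either regular or has length below $\tau m$, and the latter are declared bad. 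Finally I merge any two consecutive bad intervals (enforcing \eqref{cor:multiscaledecomp-kaufman:iv}) and prepend the bad prefix $[0,k^*]$.

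Properties \eqref{cor:multiscaledecomp-kaufman:ii} and \eqref{cor:multiscaledecomp-kaufman:iv} hold by construction. For \eqref{cor:multiscaledecomp-kaufman:i}, the total bad length is bounded by $k^*$ plus the combined length of the short leftovers from Stage C; pigeonholing the $\epsilon m$ Frostman deficit across the at most $O(1/\tau)$ refinement generations, with $\tau$ chosen as a sufficiently small polynomial function of $\epsilon$ in terms of $s,t$, keeps this below $\epsilon m$. Property \eqref{cor:multiscaledecomp-kaufman:iii} then follows from $\sum_{j \in \mathcal{S}} (m_j - m_{j-1}) t_j = \sum_{j \in \mathcal{S}} (\sigma_{m_j} - \sigma_{m_{j-1}}) \ge \sigma_m - 2\sum_{j \in \mathcal{B}}(m_j-m_{j-1}) \ge m(t-\epsilon) - O(m\epsilon)$, which re-exponentiates to $\delta^{\epsilon - t}$ after adjusting constants. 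The main obstacle is the accounting in Stage C: one must show that the iterative midpoint-refinement does not spawn a large aggregate length of short intervals. This requires careful bookkeeping that ties the midpoint-deviations at each refinement level back to the rigid $\epsilon m$-Frostman budget, together with the correct polynomial choice of $\tau = \tau(\epsilon,s,t)$; a secondary technical point is that the bad prefix alone yields $\prod_{\mathcal{B}}(\Delta_{j-1}/\Delta_j) \le \delta^{-\epsilon/(t-s)}$, which is absorbed into the stated bound $\delta^{-\epsilon}$ by running the entire construction with a slightly smaller internal parameter $\epsilon' = \epsilon(t-s)$.
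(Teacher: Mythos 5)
Your reduction to the branching function is the same as the paper's (your $\sigma_k$ is the function $f_P$ of \eqref{codeFunction}, and the dictionary "superlinearity $=$ Frostman property, midpoint deviation $=$ regularity" is correct), and Stages A--B are sound: the minimality of $k^*$ does force all lower convex envelope slopes on $[k^*,m]$ to be at least $s$, and the envelope gives the $(\alpha_i,O(1))$-set property on each segment. The construction breaks in Stage C. When you bisect a non-regular segment at its midpoint, the chord slope of the right half drops below the envelope slope by at least the midpoint deviation, and nothing prevents it from dropping below $s$ on a sub-interval of length comparable to $m$. Concretely, take $s=\tfrac12$, $t=1$, and a uniform set whose branching function has slope $2$ on $[0,k_1]$ and slope $\tfrac14$ on $[k_1,m]$ with $k_1\approx \tfrac{3}{7}m$; this satisfies $f(x)\ge tx-\epsilon m$, has empty bad prefix ($k^*=0$), and its convex envelope is a single chord of slope $\approx 1$. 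The midpoint test fails, you bisect, and the right half $[m/2,m]$ is exactly linear with slope $\tfrac14<s$; your rule "any sub-interval whose new local slope drops below $s$ is absorbed into the bad set" then places a length-$\tfrac{m}{2}$ interval in $\mathcal{B}$, destroying claim \eqref{cor:multiscaledecomp-kaufman:i}. The constraint $f(x)\ge tx-\epsilon m$ controls only chords from the origin, so interior and terminal stretches of slope below $s$ are unavoidable and cannot be discarded as bad; they must be retained as \emph{structured} intervals. The paper's Lemma \ref{lem:combinatorial-kaufman} does exactly this: it merges each sub-$s$ stretch with enough of the preceding higher-slope region to form an $\epsilon$-superlinear compound interval of slope exactly $s$, which becomes a structured index with $t_j=s$ (satisfying only the set property, with regularity required only when $t_j>s$). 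Your scheme has no mechanism producing such slope-$s$ compound intervals.

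Separately, the accounting you flag as "the main obstacle" is a genuine second gap, not a bookkeeping formality. The midpoint gap $\sigma_{l^\sharp}-\phi(l^\sharp)$ between $\sigma$ and its convex envelope is not controlled by the $\epsilon m$ Frostman deficit (a concave profile deviates from its chord by $\Theta(m)$ while satisfying the Frostman bound), so there is no budget forcing most bisection branches to terminate in regular intervals before reaching length $\tau m$; a priori the never-regular leftovers could cover a positive proportion of $[0,m]$. The paper sidesteps the recursion entirely by invoking the quantified Rademacher-type Lemma \ref{lem:tube-null} (from \cite{Shmerkin20}), which directly supplies $\epsilon$-linear intervals of length $\ge\tau m$ covering all but $\epsilon m$ of $[0,m]$, and only afterwards corrects the slopes. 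To repair your argument you would need to replace the midpoint bisection by such a lemma (or prove an equivalent covering statement) and add the slope-$s$ merging step.
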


Let $P$ be as in the statement of this proposition. By definition of $(\Delta^i)_{i=1}^m$-uniformity, there is a sequence $(N_i)_{i=1}^{m}$ of numbers in $[1,\Delta^{-2}]$ such that
\[
|P\cap Q|_{\Delta^{i}} = N_i \quad\text{for all }Q\in\calD_{\Delta^{i - 1}}, P\cap Q\neq \emptyset,
\]
for all $i \in \{1,\ldots,m\}$. We define a function $f=f_P:[0,m]\to [0,2m]$, depending on the sequence $(N_i)_{i=1}^{m}$, by setting $f(0)=0$,
\begin{equation}\label{codeFunction}
f(j) = \sum_{i=1}^{j} \frac{\log(N_i)}{\log(1/\Delta)}, \qquad 1\le j\le m, \end{equation}
and interpolating linearly. This function encodes the ``branching structure'' of the set $P$, and it is convenient to study the multiscale geometry of $P$ via the function $f$.

\begin{definition} \label{def:eps-linear}
Given a function $f:[a,b]\to\R$, we let
\[
s_f(a,b) = \frac{f(b)-f(a)}{b-a}
\]
be the slope of the affine function $L_{f,a,b}$ that agrees with $f$ on $a$ and $b$, namely
\[
L_{f,a,b}(x) := f(a) +s_f(a,b)(x-a), \qquad x \in \R.
\]
We say that \emph{$(f,a,b)$ is $\e$-linear} or \emph{$f$ is $\e$-linear on $[a,b]$} if
\[
\big|f(x)-  L_{f,a,b}(x) \big|\le \e |b-a|, \qquad x\in [a,b].
\]
Likewise, we say that \emph{$(f,a,b)$ is $\e$-superlinear} or \emph{$f$ is $\e$-superlinear on $[a,b]$} if
\[
f(x) \ge L_{f,a,b}(x) - \e|b-a|, \qquad x\in [a,b].
\]
\end{definition}

The following lemma provides a dictionary between properties of the Lipschitz function $f_P$ and properties of the set $P$.
\begin{lemma} \label{lem:regular-is-frostman-ahlfors}
Let $P$ be a $(\Delta^i)_{i=1}^m$-uniform set with associated function $f=f_P$ and let $\delta=\Delta^m$.
\begin{enumerate}
  \item If $f$ is $\e$-superlinear on $[0,m]$, then $P$ is a $(\delta,s_f(0,m),O_\Delta(1)\delta^{-\e})$- set.
  \item If $f$ is $\e$-linear on $[0,m]$ then $P$ is $(s_f(0,m),O_{\Delta}(1)\delta^{-\e},O_\Delta(1)\delta^{-\e})$-regular between scales $\delta$ and $1$.
\end{enumerate}
\end{lemma}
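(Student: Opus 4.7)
The plan is to unpack the definitions and reduce both statements to elementary inequalities about the function $f = f_P$. The key consequence of $(\Delta^i)_{i=1}^{m}$-uniformity, obtained by iterating the defining identity $|P \cap Q|_{\Delta^i} = N_i$ between scales $\Delta^j$ and $\Delta^k$, is
\[
|P \cap Q|_{\Delta^k} = \prod_{i=j+1}^{k} N_i = \Delta^{-(f(k) - f(j))}, \qquad Q \in \mathcal{D}_{\Delta^j}(P),\ 0 \leq j \leq k \leq m.
\]
Specialising to $j = 0$, $k = m$ gives $|P|_\delta = \Delta^{-f(m)}$, and since $f(0) = 0$ we have $s := s_f(0,m) = f(m)/m$. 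This identity is the only place where uniformity enters.

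For part (1), I would fix a dyadic $r \in [\delta,1]$ and choose $j \in \{0,\ldots,m\}$ with $\Delta^{j+1} < r \leq \Delta^j$. Any $Q \in \mathcal{D}_r$ is then contained in a unique ancestor $Q' \in \mathcal{D}_{\Delta^j}$, so
\[
|P \cap Q|_\delta \leq |P \cap Q'|_\delta = \Delta^{-(f(m) - f(j))}.
\]
Applying $\epsilon$-superlinearity in the form $f(j) \geq s j - \epsilon m$ and using $\Delta^j \leq \Delta^{-1} r$ converts this to
\[
|P \cap Q|_\delta \leq \Delta^{-s(m - j)} \cdot \delta^{-\epsilon} = |P|_\delta \cdot \Delta^{sj} \cdot \delta^{-\epsilon} \leq \Delta^{-s} \cdot |P|_\delta \cdot r^s \cdot \delta^{-\epsilon},
\]
which is precisely the $(\delta, s, O_\Delta(1) \delta^{-\epsilon})$-set property.

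For part (2), $\epsilon$-linearity implies $\epsilon$-superlinearity on $[0,m]$, so part (1) already gives the required $(\delta, s, O_\Delta(1)\delta^{-\epsilon})$-set condition. Since $[0,1)^2$ is the sole element of $\mathcal{D}_1(P)$ and the corresponding homothety $S_Q$ is the identity, regularity between the scales $\delta$ and $1$ reduces to the additional estimate $|P|_{\delta^{1/2}} \leq K \delta^{-s/2}$ with $K = O_\Delta(1)\delta^{-\epsilon}$. Taking $j$ as the smallest integer with $\Delta^j \leq \delta^{1/2}$ (so $\Delta^j \geq \Delta \cdot \delta^{1/2}$) and invoking $\epsilon$-linearity in the form $f(j) \leq sj + \epsilon m$, one concludes
\[
|P|_{\delta^{1/2}} \leq |P|_{\Delta^j} = \Delta^{-f(j)} \leq \Delta^{-sj} \cdot \delta^{-\epsilon} \leq \Delta^{-s} \cdot \delta^{-s/2} \cdot \delta^{-\epsilon}.
\]

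There is no genuine obstacle here; both parts are immediate after the dictionary between $f$ and the counting quantities is set up. The only minor technicality is rounding an arbitrary dyadic scale $r$ (or $\delta^{1/2}$) to the nearest integer power of $\Delta$, which costs only multiplicative constants of the form $\Delta^{-O(1)}$, and is absorbed in the $O_\Delta(1)$ notation.
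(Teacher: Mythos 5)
Your proof is correct and is exactly the argument the paper has in mind: the paper omits the proof entirely (calling the lemma ``an immediate consequence of the definitions''), and your computation $|P\cap Q|_{\Delta^k}=\Delta^{-(f(k)-f(j))}$ for $Q\in\mathcal{D}_{\Delta^j}(P)$, combined with rounding $r$ (resp.\ $\delta^{1/2}$) to the nearest power of $\Delta$ at the cost of an $O_\Delta(1)$ factor, is the intended dictionary. No gaps.
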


\begin{proof}

Let $Q\in\mathcal{D}_{\Delta^j}(P)$, with $0\le j< m$. By the uniformity of $P$, the definition of $f$, and the assumption that $f$ is $\e$-superlinear,
\begin{align*}
|P\cap Q|_\delta &= N_{j+1}\cdots N_m = \frac{|P|}{N_1\cdots N_j}\\
&= |P|\cdot \Delta^{f(j)} \le \delta^{-\e m}\cdot |P|\cdot (\Delta^j)^{s_f(0,m)}.
\end{align*}
If $Q\in\mathcal{D}_r(P)$ for a dyadic $r\ge \delta$, then $Q\subset\widehat{Q}$ for some $\widehat{Q}\in\mathcal{D}_{\Delta^j}(P)$ with $\Delta^j \le \Delta^{-1} r$, so we get the same bound up to a $O_{\Delta}(1)$ factor.

Now suppose $f$ is $\e$-linear. By the $(\Delta^i)_{i=1}^m$uniformity of $P$,
\begin{align*}
|P|_{\delta^{1/2}} &\le N_1\cdots N_{\lceil m/2\rceil} = \Delta^{f(\lceil m/2\rceil)}\\
&\le \delta^{-\e}\cdot \Delta^{\lceil m/2\rceil \cdot s_f(0,m)} \le \Delta^{-1/2} \delta^{-\e} \cdot \delta^{-s_f(0,m)/2}.
\end{align*}
This completes the proof. \end{proof}

The following lemma is \cite[Lemma 4.4]{Shmerkin20}:
\begin{lemma} \label{lem:tube-null}
For every $\e>0$ there is $\tau>0$ such that the following holds: for any $1$-Lipschitz function $f:[a,b]\to\R$ there exists a family of non-overlapping intervals $\{ [c_j,d_j]\}_{j=1}^M$ such that:
\begin{enumerate}[(\rm i)]
  \item $(f,c_j,d_j)$ is $\e$-linear for all $j$.
  \item $d_j-c_j\ge \tau(b-a)$ for all $j$.
  \item $\big|[a,b] \, \setminus \, \bigcup_j [c_j,d_j]\big| \le \e(b-a)$.
\end{enumerate}
\end{lemma}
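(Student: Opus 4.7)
After rescaling (and noting that $\e$-linearity is invariant under affine rescalings of the domain and codomain), I may assume $[a,b] = [0,1]$. The plan is in two stages: first, for a fixed $1$-Lipschitz $f$, construct a decomposition with some $\tau>0$ depending on $f$ and $\e$; then upgrade to a uniform $\tau = \tau(\e)$ by a compactness/contradiction argument on the space of $1$-Lipschitz functions on $[0,1]$ with $f(0) = 0$.

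For the fixed-$f$ stage, Rademacher's theorem gives $f' \in L^\infty([0,1])$ with $\|f'\|_\infty \le 1$, and Lebesgue differentiation applied to $f'$ shows that a.e.\ $x_0 \in [0,1]$ is a Lebesgue point of $f'$. At such an $x_0$, writing $f(y) - f(x_0) = \int_{x_0}^y f'(t)\,dt$ and isolating $f'(x_0)(y-x_0)$ yields $\sup_{|y-x_0|\le h}|f(y)-f(x_0)-f'(x_0)(y-x_0)| = o(h)$ as $h\to 0$. Comparing with the endpoint interpolant $L_{f,\, x_0-h,\, x_0+h}$ (whose slope differs from $f'(x_0)$ by $o(1)$) then shows that $[x_0-h, x_0+h]$ is $(\e/8)$-linear once $h \le h(x_0,\e)$. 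By Egoroff's theorem, I obtain a uniform scale $h_0 = h_0(f,\e) \in (0, \e/2]$ and a set $E \subset [0,1]$ with $|E| \ge 1 - \e/8$ such that this property holds for every $x \in E$ and every $h \le h_0$.

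Now partition $[0,1]$ into consecutive length-$h_0$ intervals $J_1, \ldots, J_N$ (with a leftover remainder of length $\le h_0 \le \e/2$). Call $J_k$ \emph{heavy} if $|J_k \cap E| \ge (3/4)h_0$; pigeonholing against $|E^c| \le \e/8$ shows that the total length of non-heavy $J_k$'s is at most $\e/2$. For any heavy $J_k$, pick $x \in J_k \cap E$: then $J_k \subset [x-h_0,\, x+h_0]$, and the routine inheritance estimate (if $[c,d]$ is $\e'$-linear and $[c',d'] \subset [c,d]$, then by the triangle inequality applied to the two endpoint interpolants, $|f - L_{f,c',d'}| \le 2\e'(d-c)$ on $[c',d']$) implies that $J_k$ is $(\e/2)$-linear. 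The heavy $J_k$'s thus form a non-overlapping family of common length $h_0$ of $(\e/2)$-linear intervals covering all but $\le \e$ of $[0,1]$, establishing the conclusion for this $f$ with $\tau = h_0(f,\e)$.

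To remove the $f$-dependence, I argue by contradiction. Suppose the uniform conclusion fails for some $\e_0 > 0$, producing $1$-Lipschitz $f_n:[0,1]\to\R$ with $f_n(0)=0$ for which no family satisfying (i)--(iii) with $\e = \e_0$ and $\tau = 1/n$ exists. By Arzel\`a--Ascoli, a subsequence $f_{n_k}$ converges uniformly to a $1$-Lipschitz limit $f$; apply the previous stage to $f$ with parameter $\e_0/2$ to obtain a non-overlapping family $\{[c_j, d_j]\}$ of $(\e_0/2)$-linear intervals of common length $\tau^* > 0$ covering $\ge 1 - \e_0/2$ of $[0,1]$. For $k$ so large that $\|f_{n_k} - f\|_\infty \le \e_0 \tau^*/8$ and $1/n_k < \tau^*$, both the values of $f$ and the endpoint interpolants on each $[c_j, d_j]$ perturb by at most $\|f_{n_k}-f\|_\infty$ when passing from $f$ to $f_{n_k}$; the triangle inequality then upgrades $(\e_0/2)$-linearity into $\e_0$-linearity for $f_{n_k}$ (the slack $(\e_0/2)\tau^*$ absorbs the $O(\|f_{n_k}-f\|_\infty)$ error since $d_j - c_j \ge \tau^*$), producing a forbidden decomposition and the contradiction. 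The delicate step is precisely this final perturbative passage, which is why the fixed-$f$ construction is carried out with $\e/8$ and $\e/2$ margins rather than $\e$ itself.
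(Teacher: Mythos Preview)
Your proof is correct. The paper does not actually prove this lemma; it quotes it as \cite[Lemma 4.4]{Shmerkin20}. Your route---Rademacher and Lebesgue differentiation to get $\epsilon$-linearity on small centred intervals at a.e.\ point, Egoroff to uniformise the scale over a large set, a grid partition to extract the family, and finally Arzel\`a--Ascoli compactness to upgrade $\tau(f,\epsilon)$ to a uniform $\tau(\epsilon)$---is clean and self-contained. The one trade-off worth flagging is that the compactness step is non-constructive: you obtain the existence of $\tau(\epsilon)>0$ but no explicit dependence on $\epsilon$, whereas a direct combinatorial argument (as in the cited reference) gives an effective bound. For the purposes of this paper that distinction is immaterial, since only the qualitative statement is invoked downstream. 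A minor technicality: near the endpoints of $[0,1]$ the interval $[x-h_0,x+h_0]$ may protrude, but this is harmless after extending $f$ to a $1$-Lipschitz function on $\R$, or by absorbing a boundary layer of width $2h_0\le\epsilon$ into the uncovered set.
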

In fact, by considering $f/2$ instead of $f$, Lemma \ref{lem:tube-null} also holds for $2$-Lipschitz functions.

\begin{lemma} \label{lem:combinatorial-kaufman}
Fix $s\in(0,1)$ and $t\in (s,2]$. For every $\epsilon>0$ there is $\tau>0$ such that the following holds: for any $2$-Lipschitz function $f:[0,m]\to\R$ with $f(0)=0$ such that
\[
f(x)\ge t x-\e m \quad\text{for all }x\in [0,m],
\]
there exists a family of non-overlapping intervals $\{ [c_j,d_j]\}_{j=1}^n$ contained in $[0,m]$ such that:
\begin{enumerate}[(\rm i)]
  \item \label{it:lem:combinatorial-kaufman-i} For each $j$, at least one of the following alternatives holds:
  \begin{enumerate}[(\rm a)]
  \item \label{it:lem:combinatorial-kaufman-ia}  $(f,c_j,d_j)$ is $\e$-linear with $s_f(c_j,d_j)\ge s$.
  \item  \label{it:lem:combinatorial-kaufman-ib} $(f,c_j,d_j)$ is $\e$-superlinear with $s_f(c_j,d_j) = s$.  \end{enumerate}
  \item \label{it:lem:combinatorial-kaufman-ii} $d_j-c_j\ge \tau m$ for all $j$.
  \item \label{it:lem:combinatorial-kaufman-iii} $\big|[0,m] \, \setminus \, \bigcup_j [c_j,d_j]\big| \lesssim_{s,t} \e m$.
\end{enumerate}
\end{lemma}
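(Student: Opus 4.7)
The strategy is to refine Lemma \ref{lem:tube-null} by combining its output with an intermediate-value-theorem merging procedure.

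The first step is to apply Lemma \ref{lem:tube-null} to $f/2$ (which is $1$-Lipschitz, as remarked after that lemma) with parameter $\e'=\e'(s,t,\e)>0$, to be chosen much smaller than $\e$. This produces a family of non-overlapping intervals $\{[a_k,b_k]\}_{k=1}^N$ of $[0,m]$, each of length $\ge \tau_0 m$ (with $\tau_0$ depending on $\e'$), covering all but measure $\le\e' m$, on each of which $f$ is $\e'$-linear with some slope $\sigma_k\in[-2,2]$. I split the indices into $H=\{k:\sigma_k\ge s\}$ and $L=\{k:\sigma_k<s\}$. Intervals indexed by $H$ already satisfy alternative \eqref{it:lem:combinatorial-kaufman-ia} and can be kept verbatim. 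The hypothesis $f(m)\ge(t-\e)m$, combined with the $2$-Lipschitz bound $\sigma_k\le 2$ and the trivial bound $\sigma_k<s$ on $L$, forces $\sum_{k\in H}(b_k-a_k)\ge \frac{t-s-O(\e)}{2-s}m$. Thus the total $H$-measure is a positive fraction of $m$, and this global excess of high slope is precisely what powers the merging step below.

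The second step is to process the $L$-intervals by absorbing each maximal run into neighboring $H$-material, producing alternative-\eqref{it:lem:combinatorial-kaufman-ib} intervals of slope exactly $s$. For each maximal run $[a_{k_0},b_{k_1}]$ of consecutive $L$-intervals, I consider the continuous function $h(x):=f(x)-f(a_{k_0})-s(x-a_{k_0})$, which is negative at $x=b_{k_1}$. By the global lower bound from the previous paragraph, $h$ must eventually become non-negative to the right of $b_{k_1}$ (one verifies this by checking that $h(m)\ge 0$ whenever $a_{k_0}$ is not too close to $m$; the terminal segment near $m$ can be absorbed into the uncovered slack). Setting $c_j=a_{k_0}$ and letting $d_j$ be the \emph{first} $x>b_{k_1}$ with $h(x)=0$, we have $s_f(c_j,d_j)=s$ by construction, and the minimality of $d_j$ ensures that on $[c_j,d_j]$ the function $h$ stays above some minimum value controlled by the telescoped slope deficits $(s-\sigma_k)(b_k-a_k)$ for $k\in[k_0,k_1]$ plus an $O(\e')(d_j-c_j)$ term from $\e'$-linearity. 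Because $s-\sigma_k\le s+2\lesssim_{s}1$ and we can insist that the total $L$-material absorbed into each merge is a bounded fraction of $d_j-c_j$ (by greedily extending only as far as needed), this yields the required $\e$-superlinearity provided $\e'$ is chosen small enough. Adjacent $L$-runs separated by a short $H$-interval are merged into a single alternative-(b) interval if individual absorption would fail.

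The main obstacle is controlling the total length of ``leftovers'' arising from the merging: when we stop at $d_j$ inside some $H$-interval, the remaining portion of that interval may be shorter than the final threshold $\tau m$ and must be discarded. To ensure the total discarded measure is $\lesssim_{s,t}\e m$ as required by \eqref{it:lem:combinatorial-kaufman-iii}, I would set $\tau=c(s,t)\e\tau_0$ with $c(s,t)$ small: since there are at most $\tau_0^{-1}$ merges, and each produces at most two leftover fragments of length $<\tau m$, their total measure is at most $2\tau_0^{-1}\cdot\tau m=2c(s,t)\e m$. Adding the $\e' m$ lost to Lemma \ref{lem:tube-null} itself and the $O(\e m)$ terminal slack, we obtain the bound in \eqref{it:lem:combinatorial-kaufman-iii}. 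All the parameters $\e',\tau_0,\tau$ depend only on $\e,s,t$ as required.
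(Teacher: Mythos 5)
There is a genuine gap in your merging step, and it is exactly at the point where the paper's proof has its one clever idea. You anchor the \emph{left} endpoint of the merged interval at the start of the low-slope run and search \emph{rightward} for the first zero of $h(x)=f(x)-f(c_j)-s(x-c_j)$. Two things go wrong. First, superlinearity fails: over the low-slope run $h$ dips to roughly $-\sum_k (s-\sigma_k)(b_k-a_k)$, a fixed positive fraction of the run's length, while the first zero of $h$ can occur only a comparable distance past the run (e.g.\ if the next interval has slope $2$, the recovery length is about $\tfrac{s-\sigma}{2-s}$ times the run length). So $\min h \sim -(d_j-c_j)$, not $\ge -\e(d_j-c_j)$; choosing $\e'$ small does not help, since the offending term is the slope-deficit sum, not the linearity error, and "extending only as far as needed" cannot lengthen $d_j-c_j$ because the first zero is by definition as early as possible. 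Second, and more fatally, $h$ may have \emph{no} zero to the right at all, and the run need not be short: take $f$ with slope $2$ on $[0,m/2]$ and slope $0$ on $[m/2,m]$ (this satisfies $f(x)\ge tx-\e m$ for any $t\le 1$). The low-slope run $[m/2,m]$ has measure $m/2$, so it cannot be dumped into the exceptional set of item (iii), yet there is nothing to its right to recover with. Low-slope runs of length comparable to $m$ occur precisely when $f$ has accumulated slack above the line $tx-\e m$, which is exactly when your bound $h(m)\ge 0$ (which implicitly uses $f(a_{k_0})\le 2a_{k_0}$ with no slack) breaks down.

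The fix is to reverse the orientation of the merge, as the paper does: fix the \emph{right} endpoint $d_k$ of the (rightmost) bad interval, use the hypothesis $f(d_k)\ge td_k-\e m$ to see that $s_f(0,d_k)>s$ once $d_k\gtrsim_{s,t}\e m$, and take the \emph{largest} $c'<c_k$ with $s_f(c',d_k)=s$. The maximality of $c'$ forces $s_f(x,d_k)<s$ for $x\in(c',c_k)$, and writing $s=s_f(c',d_k)$ as a convex combination of $s_f(c',x)$ and $s_f(x,d_k)$ then gives $f(x)\ge f(c')+s(x-c')$ on the entire leftward extension for free; only the final bad interval $[c_k,d_k]$ contributes an error, and that one is controlled by its $\e^2$-linearity. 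In the example above this produces the interval $[\tfrac{(1-s)m}{2-s},m]$, on which $h\ge 0$ throughout. Your preliminary accounting (the lower bound on the total length of high-slope intervals) is correct and mirrors the paper's use of the hypothesis, but it is not by itself enough to make the rightward construction work.
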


\begin{remark} We briefly discuss the difference between Lemmas \ref{lem:tube-null} and \ref{lem:combinatorial-kaufman}. Lemma \ref{lem:tube-null} is simply a quantified version of Rademacher's theorem: a Lipschitz function is approximated by affine functions almost everywhere. \emph{A priori}, we cannot expect any lower bounds on the slopes of these affine functions. Lemma \ref{lem:combinatorial-kaufman} yields such lower bounds under the extra assumption $f(x) \geq tx - \epsilon m$. This hypothesis is far too weak to guarantee that the slopes of each affine function would exceed $t$ -- or even some fixed $s < t$. However, it guarantees that we can combine the "intervals of affinity" in a useful way. Roughly speaking, we leave those intervals untouched where the slope of the affine function exceeds "$s$" to begin with. Then, we combine those intervals together where the slope of the affine function is initially smaller than "$s$". As a consequence, we lose the affine approximation on the compound intervals $[a,b]$, but -- and this is the key point -- we are able to retain the property that $f$ is $\epsilon$-superlinear on $[a,b]$ with slope $s_{f}(a,b) = s$.

Why could we simply not combine \textbf{all} the intervals together? After all, the initial hypothesis $f(x) \geq tx - \epsilon m$ (almost) says that $f$ is $\epsilon$-superlinear on $[0,t]$ with slope $\geq t$. This kind of information would not be useful: in fact, the upper bound $s_{f}(c_{j},d_{j}) \leq s$ in part (b) is just as crucial as the (matching) lower bound. Since $t > s$, and $f(x) \geq tx - \epsilon m$ for all $x \in [0,m]$, it will imply that alternative (b) cannot occur "all the time". In other words, the total length of intervals satisfying alternative (a) is not negligible, quantitatively. This is, at the end of the day, where all our "gains" stem from, see \eqref{form90}. \end{remark}

\begin{proof}[Proof of Lemma \ref{lem:combinatorial-kaufman}]
Let $\{ [c_j,d_j]\}_{j=1}^M$ be the decomposition provided by Lemma \ref{lem:tube-null} applied with $\e^2$ in place of $\e$, with the $c_j$ in increasing order. Let $\tau=\tau(\e^2)$ be the number provided by  Lemma \ref{lem:tube-null}. We will denote the intervals in the decomposition we are seeking by $\{ [\wt{c}_i,\wt{d}_i]\}_i$.

If $s_f(c_j,d_j)\ge s$ for all $j$ there is nothing to do. Otherwise, let $k$ satisfy $s_f(c_k,d_k) < s$ and be largest with this property. If $d_k\le \e m/(t-s)$, then we remove all intervals $[c_i,d_i]$ with $i\le k$ and we are done. Otherwise, note that
\[
s_f(0,d_k)= \frac{f(d_k)}{d_k} \ge  \frac{t d_k-\e m}{d_k} >s,
\]
see Figure \ref{fig2}. Since $s_f(c_k,d_k)< s$ and $f$ is piecewise affine, there is a largest $c'\in (0,c_k)$ such that $s_f(c',d_k)=s$.

\begin{figure}[h!]
\begin{center}
\begin{overpic}[scale = 1]{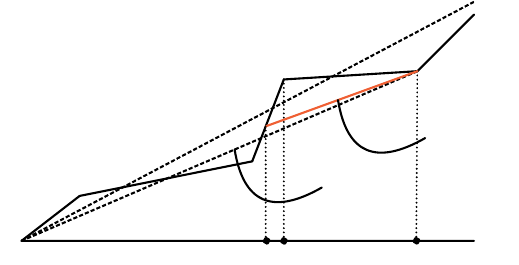}
\put(83,24){$s_{f}(c',d_{k}) = s$}
\put(49.5,-0.5){$c'$}
\put(53.5,-0.5){$c_{k}$}
\put(78,-0.5){$d_{k}$}
\put(3.5,-0.5){$0$}
\put(89,-0.5){$m$}
\put(89,40){$f$}
\put(93,50){$x \mapsto tx$}
\put(63.5,14.5){$s_{f}(0,d_{k}) > s$}
\end{overpic}
\caption{Selecting the interval $(c',d_{k})$.}\label{fig2}
\end{center}
\end{figure}

Fix $x\in (c',c_k)$. Since $s_f(c',d_k)=s$ is a convex combination of $s_f(c',x)$ and $s_f(x,d_k)<s$ (by the maximality of $c'$), we have $f(x)> f(c')+s(x-c')$. On the other hand, if $x\in [c_k,d_k]$ then, since $(f,c_k,d_k)$ is $\epsilon^2$-superlinear and $s_f(c',d_k)=s$, $s_f(c_k,d_k)<s$,
\begin{align*}
f(x) &\ge f(d_k)-s_f(c_k,d_k)(d_k-x) - \e^2(d_k-c_k) \\
&> f(c')+(d_k-c')s - s(d_k-x)-\e^2(d_k-c') \\
&= f(c')+ s (x-c')-\e^2(d_k-c').
\end{align*}
We conclude that $(f,c',d_k)$ is $\e^2$-superlinear. We define $[\wt{c}_i,\wt{d}_i]=[c_i,d_i]$ for $i>k$,  and $[\wt{c}_k,\wt{d}_k]=[c',d_k]$. To continue the construction, consider several cases:
\begin{itemize}
  \item If $c'\notin \cup_j [c_j,d_j]$, then let $\ell$ be the largest index with $d_\ell<c'$ and repeat the procedure by selecting a largest $k'\le\ell$ such that $s_f(c_{k'},d_{k'})<s$ (if $c'<c_1$ or no such $k'$ exists, we stop).
  \item If $c'\in [c_{\ell},d_{\ell}]$ and $(c' - c_{\ell}) \leq \e (d_{\ell} - c_{\ell})$, we discard the piece $[c_\ell,c']$ and repeat the process for the intervals up to $[c_{\ell-1},d_{\ell-1}]$ (if $\ell=1$, we stop).
  \item If $c'\in [c_{\ell},d_{\ell}]$ and $(c' - c_{\ell}) > \e (d_{\ell} - c_{\ell})$, we replace the interval by $[c_{\ell},d_{\ell}]$ by $[c_{\ell},c']$ and repeat the procedure of the first point after this substitution. Note that $(f,c_\ell,c')$ is $O(\e)$-linear (this is the reason why we applied Lemma \ref{lem:tube-null} with $\e^2$ in place of $\e$).
\end{itemize}
By construction, the process must finish in $\le \tau^{-1}$ steps and all resulting intervals satisfy one of the alternatives in \eqref{it:lem:combinatorial-kaufman-i}. All the resulting intervals have length $\ge \e\tau(b-a)$, since they contain at least a proportion $\e$ of some $[c_j,d_j]$. Finally, the sum of their lengths is at least $(1-O_{s,t}(\e))m$ since a proportion at least $(1-\e)$ of each interval $[c_j,d_j]$ with $d_j \gtrsim_{s,t} \e m$ is contained in some $[\wt{c}_i,\wt{d}_i]$.  This yields the claim, with $\e\tau$ in place of $\tau$.
\end{proof}

We can now conclude the proof of Proposition \ref{p:multiscaledecomp-kaufman}.

\begin{proof}[Proof of Proposition \ref{p:multiscaledecomp-kaufman}]
Let $(N_i)_{i=1}^{m}$ be the sequence associated to the $(\Delta^i)_{i=1}^m$-uniformity of $P$. Note that, since $P$ is a $(\delta,t,\delta^{-\e})$-set, if $Q\in\calD_{\Delta^{i - 1}}(P)$, then
\[
N_i \cdots N_{m} = |P\cap Q|_{\delta} \le \delta^{-\e} \Delta^{(i - 1)t} |P|_\delta = \delta^{-\e} \Delta^{(i - 1)t} N_{1}\cdots N_{m}.
\]
Hence $\prod_{j=1}^{i - 1} N_{j} \ge \Delta^{(1 - i)t}\delta^\e = \Delta^{(1 - i)t + \epsilon m}$ for all $i \in \{1,\ldots,m\}$, and evidently also $\prod_{j = 1}^{m} N_{j} = |P|_{\delta} \geq \delta^{-t + \epsilon} = \Delta^{-m t  + \epsilon m}$. Translated into the function $f=f_P$ from \eqref{codeFunction}, this implies that $f(i) \ge  ti -\e m$ for all $i \in \{0,1,\ldots,m\}$ and therefore, by piecewise linearity, $f(x) \ge t x-\e m$ for all $x\in [0,m]$. Then we can apply Lemma \ref{lem:combinatorial-kaufman} to $f$; let  $([a_i,b_i])_{i=1}^{\bar{n}}$ be the resulting intervals. By perturbing the endpoints a little, we may assume that $a_i$ and $b_i$ are integers (these perturbations can be absorbed into any $\e$-terms or $\tau$-terms by taking $m$ sufficiently large in terms of $\e$ and adjusting the values of $\e,\tau$ slightly). Then, we define the scale sequence
\begin{displaymath} \delta = \Delta_{n} < \Delta_{n - 1} < \ldots < \Delta_{1} < \Delta_{0} = 1 \end{displaymath}
as the unique sequence "spanned" by the intervals $[\Delta^{b_{i}},\Delta^{a_{i}}]$, $1 \leq i \leq \bar{n}$. This means that every interval $[\Delta_{j},\Delta_{j - 1}]$, $1 \leq j \leq n$, has one of the following types (a)-(c):
\begin{itemize}
\item[(a)] $[\Delta_{j},\Delta_{j - 1}] \in \{[\Delta^{b_{i}},\Delta^{a_{i}}] : 1 \leq i \leq \bar{n}\}$, or
\item[(b)] $[\Delta_{j},\Delta_{j - 1}] = [\Delta^{a_{i + 1}},\Delta^{b_{i}}]$ for some $1 \leq i \leq \bar{n} - 1$, or
\item[(c)] $[\Delta_{j},\Delta_{j - 1}] \in \{[\Delta^{a_{1}},1],[\delta,\Delta^{b_{\bar{n}}}]\}$.
\end{itemize}

We let
\begin{displaymath} \mathcal{S} := \{1 \leq j \le n : [\Delta_{j},\Delta_{j - 1}] \text{ has type (a)}\}, \end{displaymath}
and $\mathcal{B} := \{1,\ldots,n\} \, \setminus \, \mathcal{S}$. Claim \eqref{cor:multiscaledecomp-kaufman:i} is now immediate from Lemma \ref{lem:combinatorial-kaufman}. To prove Claim \eqref{cor:multiscaledecomp-kaufman:ii}, fix $j \in \mathcal{S}$, so $[\Delta_{j},\Delta_{j - 1}] = [\Delta^{b_{i}},\Delta^{a_{i}}]$ for some $1 \leq i \leq \bar{n}$. Let $t_j =s_f(a_i,b_i)$. It follows at once from the assumed $(\Delta^i)_{i=1}^m$-uniformity of $P$ (see Definition \ref{def:uniformity}) that if $Q\in\calD_{\Delta^{a_i}}$, then $S_Q(P\cap Q)$ is $(\Delta^j)_{j=1}^{b_i-a_i}$-uniform. In this case Claim \eqref{cor:multiscaledecomp-kaufman:ii} is a consequence of Lemma \ref{lem:regular-is-frostman-ahlfors} and Lemma \ref{lem:combinatorial-kaufman}\eqref{it:lem:combinatorial-kaufman-i} (with $O(\epsilon)$ in place of $\epsilon$ - to get $\epsilon$ one simply applies the argument to $\epsilon/C$ for a suitable $C$).

To prove Claim \eqref{cor:multiscaledecomp-kaufman:iii}, recall that $(b_i-a_i)t_i = f(b_i)-f(a_i)$, and the union of the intervals $[a_i,b_i]$  covers $[0,m]$ up to measure $\lesssim_{s,t}\epsilon m$. Since $f$ is $2$-Lipschitz, $f(0) = 0$, and $f(m)\ge m(t-\epsilon)$ as we saw earlier, we get
\begin{align*}
\sum_{i=1}^{\bar{n}} (b_i-a_i)t_i & = \sum_{i = 1}^{\bar{n}} f(b_{i}) - f(a_{i})\\
& \geq (f(m) - f(0)) - 2\left| [0,m] \, \setminus \bigcup_{i = 1}^{\bar{n}} [a_{i},b_{i}] \right|\\
& \ge m(t - O_{s,t}(\epsilon)).
\end{align*}
It follows that
\begin{displaymath} \prod_{j \in \mathcal{S}} (\Delta_{j - 1}/\Delta_{j})^{t_{j}} = \prod_{i = 1}^{\bar{n}} \Delta^{-(b_{i} - a_{i})t_{i}}  \geq \Delta^{-m(t - O_{s,t}(\epsilon))} = \delta^{-t + O_{s,t}(\epsilon)}. \end{displaymath}
This implies Claim \eqref{cor:multiscaledecomp-kaufman:iii} with constant "$O_{s,t}(\epsilon)$" in place of "$\epsilon$". To remove the dependence on $s,t$, simply run the previous argument with $\epsilon' := \epsilon/C_{s,t}$ in place of $\epsilon$. This makes the choice of "$\tau$" also dependent on $s,t$, as stated in Proposition \ref{p:multiscaledecomp-kaufman}. Finally, Claim \eqref{cor:multiscaledecomp-kaufman:iv} is clear from construction. The proof of Proposition \ref{p:multiscaledecomp-kaufman} is complete. \end{proof}

\section{Proof of Theorem \ref{t:mainTechnical}}\label{s:9}

\label{s:proof-main-thm}

We have now all the ingredients needed to prove Theorem \ref{t:mainTechnical} whose statement we repeat here:
\begin{thm} For $s \in (0,1)$ and $t\in (s,2]$, there exists $\epsilon(s,t) > 0$ such that the following holds for all small enough $\delta \in 2^{-\N}$, depending only on $s$ and $t$. Let $\mathcal{P} \subset \mathcal{D}_{\delta}$ be a $(\delta,t,\delta^{-\epsilon})$-set with $\cup \mathcal{P} \subset [0,1)^{2}$, and let $\mathcal{T} \subset \mathcal{T}^{\delta}$ be a family of dyadic $\delta$-tubes. Assume that for every $p \in \mathcal{P}$, there exists a $(\delta,s,\delta^{-\epsilon})$-set $\mathcal{T}(p) \subset \mathcal{T}$ such that $T \cap p \neq \emptyset$ for all $T \in \mathcal{T}(p)$. Then $|\mathcal{T}| \geq \delta^{-2s - \epsilon}$.  \end{thm}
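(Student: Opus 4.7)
My strategy is to combine the ingredients developed earlier in the paper, along the lines advertised in the introduction: initial reductions to a nice configuration, multiscale uniformization, then Proposition~\ref{p:multiscaledecomp-kaufman} followed by Proposition~\ref{prop3}, and finally an arithmetic balance between ``good'' and ``bad'' scales.

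\textbf{Step 1 (reduction and uniformization).} Using Lemma~\ref{lem:thin-delta-subset} I first replace each $\mathcal{T}(p)$ by a $(\delta,s,O(\delta^{-\epsilon}))$-subset of cardinality $\sim\delta^{-s}$; pigeonholing on $\mathcal{P}$ then gives a common value $|\mathcal{T}(p)|=M\approx\delta^{-s}$, so $(\mathcal{P},\mathcal{T})$ becomes a $(\delta,s,\delta^{-O(\epsilon)},M)$-nice configuration. I then fix a small dyadic $\Delta\in 2^{-\mathbb{N}}$ and an integer $m$ with $\delta=\Delta^m$, and apply Lemma~\ref{l:uniformization} to $\cup\mathcal{P}$ at the scales $(\Delta^i)_{i=1}^m$. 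For $\Delta$ small (depending on $\epsilon$), the uniformization loss $(4\log(1/\Delta))^m\le\delta^{-\epsilon}$ is negligible, so the resulting $\mathcal{P}'\subset\mathcal{P}$ is $(\Delta^i)_{i=1}^m$-uniform and remains a $(\delta,t,\delta^{-2\epsilon})$-set. Note that $(\Delta^i)$-uniformity passes to any sub-sequence of scales.

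\textbf{Step 2 (multiscale decomposition).} Apply Proposition~\ref{p:multiscaledecomp-kaufman} to $\mathcal{P}'$ with an auxiliary parameter $\epsilon_1$, producing scales $\delta=\Delta_n<\cdots<\Delta_0=1$ (integer powers of $\Delta$), exponents $t_j\in[s,2]$, and a partition $\{1,\ldots,n\}=\mathcal{S}\sqcup\mathcal{B}$. Write $r_j:=\log(\Delta_{j-1}/\Delta_j)/\log(1/\delta)$, so that $\sum_j r_j=1$. The proposition guarantees $\sum_{j\in\mathcal{B}}r_j\le\epsilon_1$, $\sum_{j\in\mathcal{S}}t_jr_j\ge t-\epsilon_1$, and $\Delta_{j-1}/\Delta_j\ge\delta^{-\tau}$ on $\mathcal{S}$, with $\tau=\tau(\epsilon_1,s,t)>0$. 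I split $\mathcal{S}=\mathcal{N}\cup\mathcal{G}$ according to whether $t_j=s$ or $t_j>s$. Clause~\eqref{cor:multiscaledecomp-kaufman:ii} of Proposition~\ref{p:multiscaledecomp-kaufman} then delivers exactly the ``normal'' and ``good'' scale hypotheses of Proposition~\ref{prop3}, the case $t_j>s$ supplying the regularity needed on $\mathcal{G}$ (in the sense of Definition~\ref{def:regularity}).

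\textbf{Step 3 (induction on scales, and balancing).} Fix $\eta,\epsilon_G>0$ small in terms of $s,t$ so that Proposition~\ref{prop3} applies, set $\epsilon_N=\epsilon_1\le\epsilon_G$, choose $\lambda>0$ small in terms of $s,t,\tau,n$ (permitted since $n\lesssim 1/\tau$), and finally take the $\epsilon$ in the theorem small enough that the nice configuration has parameter $\le\delta^{-\lambda}$. Proposition~\ref{prop3} then yields
\begin{equation*}
|\mathcal{T}|\gtrapprox_\delta M\cdot\delta^{C'\lambda-s+\epsilon_N}\cdot\prod_{j\in\mathcal{G}}(\Delta_{j-1}/\Delta_j)^{\eta}\cdot\prod_{j\in\mathcal{B}}\frac{\Delta_j}{\Delta_{j-1}}.
\end{equation*}
Using $M\gtrsim\delta^{-s}$ and converting each factor to a power of $1/\delta$, the target bound $|\mathcal{T}|\ge\delta^{-2s-\epsilon}$ reduces to
\begin{equation*}
\eta\sum_{j\in\mathcal{G}}r_j-\sum_{j\in\mathcal{B}}r_j\ge\epsilon+\epsilon_N+C'\lambda.
\end{equation*}
A two-line computation using $t_j\le 2$ on $\mathcal{G}$, $t_j=s$ on $\mathcal{N}$, and $\sum_{\mathcal{S}}r_j=1-\sum_{\mathcal{B}}r_j$ yields $(2-s)\sum_{\mathcal{G}}r_j\ge(t-s)-\epsilon_1$, hence $\eta\sum_{\mathcal{G}}r_j-\sum_{\mathcal{B}}r_j\ge\eta(t-s)/(2-s)-O(\epsilon_1)$. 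Since $\eta(t-s)/(2-s)$ is a positive quantity depending only on $s,t$, choosing the parameters in the order $\eta\to\epsilon_1\to\tau\to\lambda\to\epsilon$, each much smaller than the previous, makes the inequality hold and completes the proof.

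\textbf{Main obstacle.} The delicate point is the chain of parameter dependencies: the per-good-scale gain $\eta$ must be fixed at the start (depending only on $s,t$, via Theorem~\ref{t:improvedIncidence}), but $\tau$, $n$, and $\lambda$ are determined only after $\epsilon_1$ is chosen, and the original $\epsilon$ is chosen last. The argument works because Proposition~\ref{p:multiscaledecomp-kaufman}\eqref{cor:multiscaledecomp-kaufman:iii} forces a definite fraction $\ge(t-s)/(2-s)$ of good scales as soon as $t>s$, and this fraction depends only on $s,t$, so a fixed $\eta$ produces a gain that dwarfs the subsequently-chosen small parameters.
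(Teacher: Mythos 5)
Your overall route is the paper's: uniformize via Lemma \ref{l:uniformization}, decompose via Proposition \ref{p:multiscaledecomp-kaufman}, feed the result into Proposition \ref{prop3}, and close with an arithmetic count of good scales. But there is a genuine gap in Step 2, and it propagates into the balancing computation of Step 3.

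The gap is the definition of $\mathcal{G}$. You declare $j$ good whenever $t_j>s$, and assert that clause \eqref{cor:multiscaledecomp-kaufman:ii} of Proposition \ref{p:multiscaledecomp-kaufman} "delivers exactly the good scale hypothesis" of Proposition \ref{prop3}. It does not: the good-scale hypothesis of Proposition \ref{prop3} requires $\mathcal{P}$ to be $t_j$-regular between $\Delta_j$ and $\Delta_{j-1}$ for some $t_j\geq t$, not merely for some $t_j>s$. This requirement is not cosmetic. The per-good-scale gain $\eta$ in \eqref{form44} ultimately comes from Theorem \ref{t:improvedIncidence} applied with dimension parameter $t_j$, and the $\epsilon(s,t')$ of that theorem degenerates as $t'\downarrow s$ (for $t'=s$ there is no gain at all, by the Cantor-target example). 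So scales with $t_j\in(s,t)$ cannot be treated as good with a uniform $\eta=\eta(s,t)$, and your final inequality $\eta\sum_{\mathcal{G}}r_j-\sum_{\mathcal{B}}r_j\geq\epsilon+\epsilon_N+C'\lambda$ presupposes exactly such a uniform gain over all of your $\mathcal{G}$.

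If you repair the partition by putting the threshold at $t$ (so scales with $t_j\in(s,t)$ become normal -- which is legitimate, since a $(t_j,C)$-set with $t_j\geq s$ is an $(s,C)$-set), then your balancing computation collapses: on $\mathcal{N}$ you can no longer use $t_j=s$, only $t_j<t$, and the estimate $2\sum_{\mathcal{G}}r_j+t\sum_{\mathcal{N}}r_j\geq t-\epsilon_1$ gives $(2-t)\sum_{\mathcal{G}}r_j\geq-\epsilon_1$, i.e.\ nothing. The paper's fix is to place the threshold at $t-\epsilon_G/2$ and observe that $(t_j,\cdot^{\epsilon},\cdot^{\epsilon})$-regularity with $t_j\geq t-\epsilon_G/2$ formally upgrades to $(t,\cdot^{\epsilon_G},\cdot^{\epsilon_G})$-regularity when $\epsilon\leq\epsilon_G/2$; the same telescoping then yields $\prod_{j\in\mathcal{G}}(\Delta_{j-1}/\Delta_j)\geq\delta^{-\epsilon_G/8}$, i.e.\ the good scales occupy a fraction $\gtrsim\epsilon_G$ of $[\delta,1]$ rather than the fraction $(t-s)/(2-s)$ you claim. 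That smaller fraction still suffices because $\eta$ and $\epsilon_G$ are fixed in terms of $s,t$ before $\epsilon_N,\lambda,\epsilon$ are chosen, so the net gain $\eta\epsilon_G/8$ dominates the losses. Your "main obstacle" paragraph, which leans on the $(t-s)/(2-s)$ fraction, therefore rests on the flawed partition; the rest of your outline (Steps 1 and 3 apart from the count) matches the paper.
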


\begin{proof} We may assume without loss of generality that the given dyadic number $\delta > 0$ has the form $\delta = \Delta^{m}$, where $\Delta \in 2^{-\N}$ is another dyadic number, small in a manner depending only on $\epsilon$. Indeed, we can apply this case to a scale $\delta$ satisfying $\Delta^{m+1}<\delta\le \Delta^m$; this reduction will generate several constants depending on $\Delta$ that can be absorbed into a $\delta^{-\epsilon/2}$ term if $\delta$ is small enough. As another reduction, we may assume that the families $\mathcal{T}(p)$ have constant cardinality $|\mathcal{T}(p)| =: M \geq 1$, and that they are $(\delta,s,\delta^{-\lambda})$-sets, where $\lambda>0$ depends on $\epsilon$ in addition to $s$ and $t$. Since the statement of the theorem is preserved by making each of the "$\epsilon$" smaller, this is a formally equivalent statement.

The proof will involve a large number of positive quantities, some of which have already been mentioned, that ultimately depend on $s$ and $t$. To convince the reader (and the authors!) that there  is no circular reasoning, these are their dependencies: $\epsilon_{G}=\epsilon_{G}(s,t)$, $\eta=\eta(s,t)$, $\epsilon=\epsilon(\epsilon_{G},\eta)$, $\Delta=\Delta(\epsilon)$, $\tau=\tau(\epsilon,s,t)$, $n\le n_0(\tau)$, $\epsilon_{N}=\epsilon_{N}(\epsilon)$, $C=C(s,n,\tau)$, $C'=C'(n,\tau)$, $\lambda=\lambda(s,t,\tau,n,\epsilon_{G},\eta,C')$. Since there are only $n_0(\tau)$ possible values for $n$, any quantity depending on it effectively depends on $\tau$. Thus ultimately all parameters depend on $s,t$ only.

We start by applying Lemma \ref{l:uniformization} to $\mathcal{P}$ (which as usual we identify with $\cup\mathcal{P}$) and the sequence of scales $\Delta^{0} > \Delta^{1} > \ldots > \Delta^{m} = \delta$. The product is a $(\Delta^{i})_{i = 1}^{m}$-uniform subset $\mathcal{P}' \subset \mathcal{P} \subset \mathcal{D}_{\delta}$ with
\begin{displaymath} |\mathcal{P}'| \geq (4m^{-1} \log(1/\delta))^{-m} \cdot |\mathcal{P}| = \delta^{\frac{\log 4+\log\log(1/\Delta)}{\log(1/\Delta)}}\cdot|\mathcal{P}|.\end{displaymath}
In particular, if $\Delta \in 2^{-\N}$ is chosen small enough, depending on $\epsilon > 0$ only, then  $\mathcal{P}'$ is a $(\delta,t,\delta^{-2\epsilon})$-set. Performing such a reduction, we simplify notation by assuming, to begin with, that $\mathcal{P}$ is a $(\Delta^{i})_{i = 1}^{m}$-uniform $(\delta,t,\delta^{-\epsilon})$-set.

Next, we apply Proposition \ref{p:multiscaledecomp-kaufman} to the set $\mathcal{P}$, with the parameters $\Delta \in 2^{-\N}$ and $s \in (0,t)$ fixed above, and of course the parameter $\epsilon > 0$ in the statement of the theorem; we are free to choose $\epsilon$ small in a manner depending on $s$ and $t$, and we will exercise this freedom a little later. At this point, the application of Proposition \ref{p:multiscaledecomp-kaufman} produces a number $\tau = \tau(\epsilon,s,t) > 0$, a sequence of scales
\begin{displaymath} \delta = \Delta_{n} < \Delta_{n - 1} < \ldots < \Delta_{1} < \Delta_{0} = 1, \end{displaymath}
with $n \leq m$, each of the form $\Delta_{j} = \Delta^{i_{j}}$ for some $i_{j} \in \{0,\ldots,m\}$, and a partition of $\{1,\ldots,n\}$ into the \emph{structured} and \emph{bad} indices $\mathcal{S}$ and $\mathcal{B}$. Since $\mathcal{P}$ is $(\Delta^i)_{i=0}^m$-uniform and the $\Delta_{j}$ are integer powers of $\Delta$, we deduce that $\mathcal{P}$ is $(\Delta_i)_{i=0}^{n}$-uniform. It follows from Claims \eqref{cor:multiscaledecomp-kaufman:i},\eqref{cor:multiscaledecomp-kaufman:iv} in Proposition \ref{p:multiscaledecomp-kaufman} that $n\le n_0(\tau):= 2 \floor{\tau^{-1}}+1$.

The next step will be to apply Proposition \ref{prop3} to the set $\mathcal{P}$, and the sequence of scales $\{\Delta_{j}\}_{j = 1}^{n}$ located above. Before doing this, we further split the structured indices $\mathcal{S}$ into \emph{normal} and \emph{good} indices $\mathcal{N}$ and $\mathcal{G}$. In doing this, one has to pay great attention to the various small parameters involved. Namely, Proposition \ref{prop3} promises certain conclusions if $\epsilon_{G}, \eta$ are chosen small enough in terms of $s$ and $t$ only, and $\lambda$ is chosen small enough in terms of $s,t,\tau$ and $n$. While $\epsilon_{G}$ and $\eta$ will remain henceforth fixed (with many other parameters, notably $\epsilon$, depending on them), we will later make $\lambda$ even smaller in terms of other parameters.

With this notation, a scale index $j \in \mathcal{S}$ is declared \emph{good}, hence placed in $\mathcal{G}$, in case $\mathcal{P}$ is $(t_{j},(\Delta_{j - 1}/\Delta_{j})^{\epsilon_{G}},(\Delta_{j - 1}/\Delta_{j})^{\epsilon_{G}})$-regular between the scales $\Delta_{j}$ and $\Delta_{j - 1}$ for some $t_{j} \geq t$. Otherwise, $j \in \mathcal{S}$ is declared \emph{normal}, and placed in $\mathcal{N}$.

Are there any good indices with this definition? Yes, there are, if $\epsilon > 0$ is now chosen small enough in terms of $\epsilon_{G}$. Indeed, recall from Proposition \ref{p:multiscaledecomp-kaufman}\eqref{cor:multiscaledecomp-kaufman:ii} that $\mathcal{P}$ is a $(t_{j},(\Delta_{j - 1}/\Delta_{j})^{\epsilon})$-set for a certain index $t_{j} \in [s,2]$, and if $t_{j} > s$, then moreover $\mathcal{P}$ is $(t_{j},(\Delta_{j - 1}/\Delta_{j})^{\epsilon},(\Delta_{j - 1}/\Delta_{j})^{\epsilon})$-regular between the scales $\Delta_{j}$ and $\Delta_{j - 1}$.

Now, if
\begin{displaymath} t_{j} \geq t - \epsilon_{G}/2 > s \quad \text{and} \quad \epsilon \leq \epsilon_{G}/2, \end{displaymath}
then it is a formal consequence of $(t_{j},(\Delta_{j - 1}/\Delta_{j})^{\epsilon},(\Delta_{j - 1}/\Delta_{j})^{\epsilon})$-regularity that the set $\mathcal{P}$ is also $(t,(\Delta_{j - 1}/\Delta_{j})^{\epsilon_{G}},(\Delta_{j - 1}/\Delta_{j})^{\epsilon_{G}})$-regular between the scales $\Delta_{j}$ and $\Delta_{j - 1}$. In summary,
\begin{displaymath} t_{j} \geq t - \epsilon_{G}/2 \quad \Longrightarrow \quad j \in \mathcal{G}. \end{displaymath}
Equivalently, if $j \in \mathcal{N} = \mathcal{S} \, \setminus \, \mathcal{G}$, then $t_{j} < t - \epsilon_{G}/2$. Next, one will wonder if it ever happens that $t_{j} \geq t - \epsilon_{G}/2$, and again the answer is affirmative. This follows from Proposition \ref{p:multiscaledecomp-kaufman}\eqref{cor:multiscaledecomp-kaufman:iii}, which stated that
\begin{displaymath} \prod_{j \in \mathcal{S}} \left(\tfrac{\Delta_{j - 1}}{\Delta_{j}} \right)^{t_{j}} \geq \delta^{-t + \epsilon}. \end{displaymath}
Since $t_{j} \leq 2$ uniformly, and $t_{j} \leq t - \epsilon_{G}/2$ for $j \in \mathcal{N}$, it follows that
\begin{displaymath} \delta^{-t + \epsilon} \leq \prod_{j \in \mathcal{G}} \left(\tfrac{\Delta_{j - 1}}{\Delta_{j}} \right)^{2} \cdot \prod_{j \in \mathcal{N}} \left(\tfrac{\Delta_{j - 1}}{\Delta_{j}} \right)^{t - \epsilon_{G}/2} \leq \left[ \prod_{j \in \mathcal{G}} \left(\tfrac{\Delta_{j - 1}}{\Delta_{j}} \right) \right]^{2} \cdot \delta^{-t + \epsilon_{G}/2}, \end{displaymath}
so indeed
\begin{equation}\label{form90} \prod_{j \in \mathcal{G}} \left(\tfrac{\Delta_{j - 1}}{\Delta_{j}} \right) \geq \delta^{\epsilon/2 - \epsilon_{G}/4} \geq \delta^{-\epsilon_{G}/8}, \end{equation}
assuming here that $\epsilon \le \epsilon_{G}/4$.

Now we are finally in a position to apply Proposition \ref{prop3}, or more precisely the conclusion \eqref{form44}. We remark that since $t_{j} \geq s$ for all $j \in \mathcal{S}$, in particular for $j \in \mathcal{N}$, we know from Proposition \ref{p:multiscaledecomp-kaufman}\eqref{cor:multiscaledecomp-kaufman:ii} that $\mathcal{P}$ is an $(s,(\Delta_{j - 1}/\Delta_{j})^{\epsilon_{N}})$-set between the scales $\Delta_{j}$ and $\Delta_{j - 1}$ for all $j \in \mathcal{N}$, with $\epsilon_{N} = \epsilon\le \epsilon_{G}$. Also, from Proposition \ref{p:multiscaledecomp-kaufman}\eqref{cor:multiscaledecomp-kaufman:i} we know that $\prod_{j \in \mathcal{B}} (\Delta_{j}/\Delta_{j - 1}) \geq \delta^{\epsilon}$. Inserting all this information into the estimate \eqref{form44} yields
\begin{align*} |\mathcal{T}| & \geq \left[ \log \left(\tfrac{1}{\delta} \right) \right]^{-C} \cdot M \cdot \delta^{C' \lambda}\cdot \delta^{-s + \epsilon_{N}} \cdot \prod_{j \in \mathcal{G}} \left(\tfrac{\Delta_{j - 1}}{\Delta_{j}} \right)^{\eta} \cdot \prod_{j \in \mathcal{B}} \tfrac{\Delta_{j}}{\Delta_{j - 1}}\\
& \stackrel{\eqref{form90}}{\geq} \left[ \log \left(\tfrac{1}{\delta} \right) \right]^{-C} \cdot M \cdot \delta^{C' \lambda} \cdot \delta^{-s + \epsilon_{N}} \cdot \delta^{-\epsilon_{G}\eta/8} \cdot \delta^{\epsilon}. \end{align*}
Here $C=C(n,s,\tau)$ and $C'=C'(n,\tau)$. We also know that $M = |\mathcal{T}(p)| \geq \delta^{-s + \lambda}$ for all $p \in \mathcal{P}$. Therefore, choosing $\epsilon_{N} =\epsilon \le \epsilon_{G}\eta/100$, then choosing $\lambda\le \epsilon_{G}\eta/(100 C')$, and then taking $\delta > 0$ sufficiently small in terms of all previous parameters, we find that $|\mathcal{T}| \geq \delta^{-2s - \epsilon_{G}\eta/16}$. This completes the proof of Theorem \ref{t:mainTechnical}. \end{proof}

\appendix

\section{A generalised incidence estimate}\label{appA}

In this section, we record the details of the incidence theorem needed as a black box to prove Theorem \ref{t:improvedIncidence}. Here is the precise statement:
\begin{thm}\label{mainAppendix} Given $s \in (0,1)$ and $t \in (s,2]$, there exists an $\epsilon = \epsilon(s,t) > 0$ such that the following holds for small enough $\delta \in 2^{-\N}$, depending only on $s$ and $t$. Assume that $\mathcal{P} \subset \mathcal{D}_{\delta}$ is a $(\delta,t,\delta^{-\epsilon})$-set, and that
\begin{equation}\label{deltaHalfAssumptionIntro} |\mathcal{P} \,|_{\delta^{1/2}} \leq \delta^{-t/2 - \epsilon}. \end{equation}
Assume that $\calT \subset \mathcal{T}^{\delta}$ is a collection of dyadic $\delta$-tubes such that for every $p \in \mathcal{P}$, there exists a $(\delta,s,\delta^{-\epsilon})$-subset $\calT(p) \subset \mathcal{T}$ with the property that $T \cap p \neq \emptyset$ for all $T \in \mathcal{T}(p)$. Then either
\begin{equation}\label{alternativeIntro} |\calT| \geq \delta^{-2s - \epsilon} \quad \text{or} \quad |\calT|_{\delta^{1/2}} \geq \delta^{-s - \epsilon}. \end{equation}
\end{thm}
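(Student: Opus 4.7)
The proof extends \cite[Theorem 3.12]{Orponen20}, which treats $t = 1$, to general $t \in (s, 2]$; the structure of the argument is essentially the same. We argue by contradiction, assuming both $|\mathcal{T}| < \delta^{-2s - \epsilon}$ and $|\mathcal{T}|_{\delta^{1/2}} < \delta^{-s - \epsilon}$. After standard pigeonholing and an application of Lemma \ref{lem:thin-delta-subset} to each family $\mathcal{T}(p)$, we may assume $|\mathcal{T}(p)| = M \sim \delta^{-s}$ for every $p \in \mathcal{P}$ and $|\mathcal{P}| \sim \delta^{-t}$. The regularity hypothesis \eqref{deltaHalfAssumptionIntro} then forces a balanced multiscale structure: after discarding light thick squares, we may further assume $|\mathcal{P}|_{\delta^{1/2}} \sim \delta^{-t/2}$ and $|\mathcal{P} \cap Q|_\delta \sim \delta^{-t/2}$ for every $Q \in \mathcal{P}_{\delta^{1/2}} := \mathcal{D}_{\delta^{1/2}}(\mathcal{P})$. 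Consequently, both the global ``coarse'' set $\mathcal{P}_{\delta^{1/2}}$ and each rescaled ``fine'' set $S_Q(\mathcal{P} \cap Q) \subset [0,1)^{2}$ are approximately $(\delta^{1/2}, t, \delta^{-O(\epsilon)})$-sets.

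Next, apply Proposition \ref{prop2} at scale $\Delta = \delta^{1/2}$ to build a $(\delta^{1/2}, s, \delta^{-O(\epsilon)})$-set $\mathcal{T}_{\delta^{1/2}}$ of thick tubes satisfying a uniform multiplicity bound; an entirely parallel rescaling construction produces, inside each $Q \in \mathcal{P}_{\delta^{1/2}}$, a family $\mathcal{T}_Q \subset \mathcal{T}^{\delta^{1/2}}$ of $\delta^{1/2}$-tubes such that through each fine square $q \in S_Q(\mathcal{P} \cap Q)$ there passes a $(\delta^{1/2}, s, \delta^{-O(\epsilon)})$-set of tubes in $\mathcal{T}_Q$. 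The key new input, going beyond the Kaufman-type bounds of Corollary \ref{prop5}, is Bourgain's discretized projection theorem: for a $(\delta^{1/2}, t, \delta^{-O(\epsilon)})$-set $X \subset [0,1)^{2}$ and a $(\delta^{1/2}, s, \delta^{-O(\epsilon)})$-set of directions $E$ with $t > s$, there exists $\epsilon_0 = \epsilon_0(s, t) > 0$ so that for $E$-typical $\sigma$ one has $|\pi_\sigma X|_{\delta^{1/2}} \ge \delta^{-(s + \epsilon_0)/2}$. Applying this with $X = S_Q(\mathcal{P} \cap Q)$ and $E$ equal to the slope set of the tubes in $\mathcal{T}_Q(q)$ (a $(\delta^{1/2}, s)$-set by Corollary \ref{cor1}) produces an $\epsilon_0$-gain over the Kaufman baseline for either $|\mathcal{T}_Q|$ or the global $|\mathcal{T}_{\delta^{1/2}}|$. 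Propagating this local gain across the squares $Q \in \mathcal{P}_{\delta^{1/2}}$, and combining with the coarse-scale incidence count for the pair $(\mathcal{P}_{\delta^{1/2}}, \mathcal{T}_{\delta^{1/2}})$, forces one of $|\mathcal{T}|_{\delta^{1/2}} \ge \delta^{-s - \epsilon}$ or $|\mathcal{T}| \ge \delta^{-2s - \epsilon}$ to hold, provided $\epsilon$ is taken small enough in terms of $\epsilon_0$. This contradicts our standing assumption.

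The main obstacle is packaging Bourgain's theorem uniformly across the thick squares and tracking the various logarithmic and polynomial losses so that they can be absorbed into the $\delta^{-O(\epsilon)}$ budget. In the $t = 1$ case, these bookkeeping matters are handled in \cite{Orponen20} via an averaging argument over ``typical'' squares and slope sets. For general $t \in (s, 2]$, the necessary modifications -- in the Frostman-measure normalisations, in the rescaling of the tube families under $S_Q$, and in the uniformisation of scales -- are substantial enough to warrant a self-contained exposition, which is set out in the remainder of this appendix and culminates in the proof of Theorem \ref{mainAppendix}.
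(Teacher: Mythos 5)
Your overall skeleton (argue by contradiction, normalise cardinalities, pass to heavy $\delta^{1/2}$-squares, use Proposition \ref{prop2} to extract structured families of thick tubes, and ultimately invoke Bourgain's discretized projection theorem) matches the paper's strategy, but the step where you actually extract the $\epsilon$-gain is not correct as stated, and in fact it is circular. You claim that for a $(\rho,t)$-set $X$ (with $\rho=\delta^{1/2}$) and a $(\rho,s)$-set of directions $E$ with $t>s$, typical $\sigma\in E$ satisfy $|\pi_\sigma X|_{\rho}\ge \rho^{-s-\epsilon_0}$, and you attribute this to Bourgain. Bourgain's theorem (in He's form) gives a gain over \emph{half the dimension of the projected set}: for a $(\rho,t)$-set it yields $|\pi_\sigma X|_\rho\ge\rho^{-t/2-\eta}$, which is weaker than $\rho^{-s-\epsilon_0}$ whenever $s\ge t/2$ --- precisely the interesting regime (e.g.\ $t=1$, $s=3/4$). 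What you are actually invoking --- ``for an $s$-dimensional set of exceptional directions, projections of a $t$-dimensional set must exceed dimension $s$ by $\epsilon_0$'' --- is the $\delta$-discretised content of Theorem \ref{mainKaufman}, one of the main results this appendix is needed to prove. The paper's own potential-theoretic Lemma \ref{projections} only delivers the Kaufman baseline (projections \emph{contain} a $(\Delta,s)$-set, with no gain), and that is all that can be assumed at this stage.

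The missing idea is how Bourgain's theorem is legitimately brought to bear: one must apply it to the roughly $2s$-dimensional set of \emph{tubes} (viewed as points in the dual plane), projected along a $(t-s)$-dimensional set of directions, so that the ``gain over half the dimension'' becomes a gain over $(2s)/2=s$. Setting this up is the bulk of the paper's argument and is absent from your proposal: one locates a single thick tube $\mathbf{T}_0$ incident to a $(\Delta,t-s)$-set of heavy squares, uses Lemma \ref{projections} to extract from each such square an $s$-dimensional set of positions along $\mathbf{T}_0$, rescales to obtain a quasi-product set $\mathbf{Z}=\bigcup_{\mathbf{y}}\mathbf{X}_{\mathbf{y}}\times\{\mathbf{y}\}$ with $\mathbf{Y}$ a $(\Delta,t-s)$-set and each $\mathbf{X}_{\mathbf{y}}$ a $(\Delta,s)$-set, and converts the thin tubes inside $\mathbf{T}_0$ into a family of $\Delta$-tubes through $\mathbf{Z}$ of total size $\lessapprox\Delta^{-2s}$; Proposition \ref{productProp} (where the projections $\pi_{\mathbf{y}}$ act on the $2s$-dimensional tube set) then yields the contradiction. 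Without this product-structure construction, or some substitute for it, your argument has no valid source for the $\epsilon_0$-gain when $s>t/2$.
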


\subsection{Preliminaries}\label{s:prelimA} In the appendix, it will be slightly more convenient to use the following variant of point-line duality:

\begin{definition}[Point-line duality revisited]\label{pointLineDuality} In the appendix, let
\begin{equation} \label{duality-appendix} \mathbf{D} : (a,b) \mapsto \{x = ay + b : y \in \R\}. \end{equation}
\end{definition}

Earlier in the paper, $\mathbf{D}$ mapped $(a,b) \mapsto \{y = ax + b : x \in \R\}$. Dyadic tubes are defined, formally as before, $\mathcal{T}^{\delta} = \{\mathbf{D}(p) : p \in \mathcal{D}_{\delta}([-1,1) \times \R)\}$, and the slope of a dyadic $\delta$-tube $T = \mathbf{D}([a,a + \delta) \times [b,b + \delta))$ remains defined as $\sigma(T) := a$. Earlier in the paper $\sigma(T) = 0$ meant that $T$ is a roughly horizontal tube, whereas under the new notation $T$ is roughly vertical. The choice of duality plays no role in the validity of Theorem \ref{mainAppendix}: the map $\mathbf{J}(x,y) := (y,x)$ sends "horizontal" dyadic tubes to "vertical" ones, and vice versa, and also preserves the properties of $\mathcal{P}$.

We repeat here the contents of Corollary \ref{cor1}:

\begin{lemma}\label{tubesAndSlopes} Let $p \in \mathcal{D}_{\delta}$, and let $\mathcal{T}$ be a collection of dyadic $\delta$-tubes, all of which intersect $p$. If the slope set $\sigma(\mathcal{T})$ is a $(\delta,s,C)$-set for some $s \geq 0$ and $C > 0$, then also $\mathcal{T}$ is a $(\delta,s,10C)$-set. Conversely, if $\mathcal{T}$ is a $(\delta,s,C)$-set, then $\sigma(\mathcal{T})$ is a $(\delta,s,C')$-set for some $C' \sim C$. Moreover, the map $T \mapsto \sigma(T)$ is at most $10$-to-$1$, and $|\sigma(\mathcal{T})| \leq |\mathcal{T}| \leq 10|\sigma(\mathcal{T})|$.
\end{lemma}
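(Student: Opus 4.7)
The plan is to mirror, line by line, the proofs of Lemma \ref{tubesSlopes} and Corollary \ref{cor1} from the main text, with only cosmetic changes forced by the revised duality $\mathbf{D}(a,b) = \{x = ay + b\}$ used in the appendix. Conceptually nothing new happens here: the coordinate swap $\mathbf{J}(x,y) = (y,x)$ conjugates the appendix duality to the one used earlier and preserves all the relevant geometric quantities (slopes, $(\delta,s,C)$-set structure, and intersections with $\delta$-squares). The only care required is to rerun the scalar estimates with the roles of $x$ and $y$ exchanged.

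First I would prove that $T \mapsto \sigma(T)$ is at most $10$-to-$1$ on $\mathcal{T}$. Fix $a \in \sigma(\mathcal{T})$ and suppose $T_j = \mathbf{D}([a,a+\delta)\times[b_j,b_j+\delta))$ meets $p$ at $(x_j,y_j)$ for $j=1,2$. Then $x_j = a_j' y_j + b_j'$ with $|a_j'-a|,|b_j'-b_j|<\delta$, and using $|y_j|\le 1$ (here we use $p\subset[0,1)^2$, with $y$ now in the role played by $x$ before) together with $|x_1-x_2|,|y_1-y_2|\le\delta$, the triangle inequality yields $|b_1-b_2|\le 5\delta$, bounding the fiber cardinality by $10$. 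The inequalities $|\sigma(\mathcal{T})|\le|\mathcal{T}|\le 10|\sigma(\mathcal{T})|$ follow immediately.

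Next, for the forward implication, assume $\sigma(\mathcal{T})$ is $(\delta,s,C)$ and fix $Q_r\in\mathcal{D}_r(\R^2)$ with $\delta\le r\le 1$. The tubes $T=\mathbf{D}(p_T)\in\mathcal{T}$ with $p_T\subset Q_r$ have $\sigma(T)\in I:=\pi_1(Q_r)\in\mathcal{D}_r(\R)$, so by the $10$-to-$1$ property the count is at most $10|\sigma(\mathcal{T})\cap I|\le 10C|\sigma(\mathcal{T})|r^s\le 10C|\mathcal{T}|r^s$, which is the $(\delta,s,10C)$-set property for $\mathcal{T}$. For the reverse implication, fix $I\in\mathcal{D}_r(\R)$ and for each $a_j\in\sigma(\mathcal{T})\cap I$ select some $b_j$ with $[a_j,a_j+\delta)\times[b_j,b_j+\delta)\in\mathcal{P}:=\mathbf{D}^{-1}(\mathcal{T})$. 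Following the argument in the proof of Corollary \ref{cor1}, the key observation is that since all these tubes pass through the single square $p\in\mathcal{D}_\delta$, the $b_j$ are confined to an interval $J\subset\R$ of length $|J|\sim r$ determined by $I$ and $p$: writing $x_j = a_j'y_j + b_j'$ with $(x_j,y_j)\in p$ confines $b_j$ to a $C\delta$-neighbourhood of $x_0 - I\cdot y_0$, where $(x_0,y_0)$ is the corner of $p$. Covering $J$ by $O(1)$ intervals in $\mathcal{D}_r(\R)$, all pairs $(a_j,b_j)$ lie in a single dyadic square of side $\sim r$, and the $(\delta,s,C)$-set property of $\mathcal{P}$ gives $|\sigma(\mathcal{T})\cap I|\lesssim C|\mathcal{T}|r^s$.

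The main obstacle is essentially nonexistent: this lemma is a verbatim rephrasing of earlier results, labelled separately only because the duality convention has changed. The only nontrivial point is verifying that the triangle-inequality estimate in the $10$-to-$1$ step goes through with $|y|\le 1$ in place of $|x|\le 1$, which it plainly does since both coordinates of $p$ are bounded by $1$.
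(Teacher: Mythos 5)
Your proposal is correct and matches the paper's treatment: the paper simply restates Corollary \ref{cor1} (and Lemma \ref{tubesSlopes}), observing that the coordinate swap $\mathbf{J}(x,y)=(y,x)$ identifies the appendix duality with the earlier one, which is exactly the $x\leftrightarrow y$ exchange you carry out explicitly. Rerunning the triangle-inequality and interval-confinement estimates with $|y|\le 1$ in place of $|x|\le 1$ is all that is needed, and you do it correctly.
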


Lemma \ref{tubesAndSlopes} has the following corollary:
\begin{cor}\label{auxLemma} Assume that $0 < \delta \leq \Delta \leq 1$ are dyadic numbers, $p \in \mathcal{D}_{\delta}$, and $\mathbf{T} = \mathbf{D}(I \times J) \in \mathcal{T}^{\Delta}$. Further, assume that $\calT(p)$ is a $(\delta,s,C)$-set of dyadic $\delta$-tubes $T$, all of which intersect $p$. Then $|\calT(p) \cap \mathbf{T}| \lesssim C \cdot |\mathcal{T}(p)| \cdot \Delta^{s}$.
\end{cor}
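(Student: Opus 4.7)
The plan is to reduce the claim about tubes in $\mathbf{T}$ to a claim about their slopes, and then use the $(\delta,s,C)$-set property of the slope set via Lemma \ref{tubesAndSlopes}.

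First I would observe that if $T \in \calT(p) \cap \mathbf{T}$, then in particular $T \subset \mathbf{T}$, so writing $T = \mathbf{D}([a,a+\delta)\times[b,b+\delta))$ and $\mathbf{T} = \mathbf{D}(I \times J)$ with $|I| = |J| = \Delta$, we have $[a,a+\delta) \subset I$ and hence $\sigma(T) = a \in I$. Consequently,
\begin{displaymath}
 |\calT(p) \cap \mathbf{T}| \leq |\{T \in \calT(p) : \sigma(T) \in I\}|.
\end{displaymath}
Since all tubes in $\calT(p)$ pass through the common $\delta$-square $p$, Lemma \ref{tubesAndSlopes} applies: the map $T \mapsto \sigma(T)$ is at most $10$-to-$1$ on $\calT(p)$, and so
\begin{displaymath}
 |\{T \in \calT(p) : \sigma(T) \in I\}| \leq 10 \cdot |\sigma(\calT(p)) \cap I|.
\end{displaymath}

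Next, by the second part of Lemma \ref{tubesAndSlopes}, the hypothesis that $\calT(p)$ is a $(\delta,s,C)$-set implies that $\sigma(\calT(p)) \subset \R$ is a $(\delta,s,C')$-set for some $C' \sim C$. Since $I \subset \R$ is a dyadic interval of length $\Delta \geq \delta$, the defining inequality of a $(\delta,s,C')$-set yields
\begin{displaymath}
 |\sigma(\calT(p)) \cap I| \lesssim C \cdot |\sigma(\calT(p))| \cdot \Delta^{s} \leq C \cdot |\calT(p)| \cdot \Delta^{s},
\end{displaymath}
using $|\sigma(\calT(p))| \leq |\calT(p)|$ (also from Lemma \ref{tubesAndSlopes}). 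Combining the displays gives the desired bound $|\calT(p) \cap \mathbf{T}| \lesssim C \cdot |\calT(p)| \cdot \Delta^{s}$. There is no serious obstacle here: the whole point is just that passage to slopes converts a containment-in-a-$\Delta$-tube condition into a membership-in-a-$\Delta$-interval condition, after which the $(\delta,s,C)$-property of the slope set does all the work.
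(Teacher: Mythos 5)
Your proof is correct and follows essentially the same route as the paper's: both pass to the slope set via Lemma \ref{tubesAndSlopes}, note that $T \subset \mathbf{T}$ forces $\sigma(T) \in I$, and then apply the $(\delta,s,C')$-set property of $\sigma(\mathcal{T}(p))$ to the dyadic $\Delta$-interval $I$. The paper merely compresses these steps into a single chain of inequalities.
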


\begin{proof} All the tubes $T \in \mathcal{T}(p) \cap \mathbf{T}$ satisfy $\sigma(T) \in I \in \mathcal{D}_{\Delta}([-1,1))$. By Lemma \ref{tubesAndSlopes},
\begin{displaymath} |\mathcal{T}(p) \cap \mathbf{T}| \leq 10|\sigma(\mathcal{T}(p)) \cap I| \lesssim C \cdot |\sigma(\mathcal{T}(p))| \cdot \Delta^{s} \leq C \cdot |\mathcal{T}(p)| \cdot \Delta^{s}, \end{displaymath}
as desired. \end{proof}

Other results from the main text, which will also be used below, are Proposition \ref{prop2}, which is too long to restate here, and Corollary \ref{prop5}. Proposition \ref{prop2} was not needed in \cite{Orponen20} to prove the case $t = 1$ of Theorem \ref{mainAppendix}, and incorporating this additional information to the argument from \cite{Orponen20} is one of the technical novelties in the appendix.

With respect to Corollary \ref{prop5}, we will only need the following slightly weaker version. Below,  $A\lessapprox_\delta B$ stands for $A \leq C \cdot \log \left(\tfrac{1}{\delta} \right)^{C} B$, where $C\ge 1$ is universal, and likewise for $A\gtrapprox_\delta B$, $A\approx B$.
 \begin{lemma}\label{2sBound} Let $0 < s \leq t \leq 1$, and $M,C_{P},C_{T} \geq 1$. Assume that $\mathcal{P} \subset \mathcal{D}_{\delta}$ is a $(\delta,t,C_{P})$-set, and $\calT \subset \mathcal{T}^{\delta}$. Assume that for all $p \in \mathcal{P}$, there exists a $(\delta,s,C_{T})$-set $\calT(p) \subset \mathcal{T}$ such that $T \cap p \neq \emptyset$ for all $T \in \mathcal{T}$, and such that $\tfrac{M}{2} < |\mathcal{T}(p)| \leq M$. Then
 \begin{displaymath} |\calT| \gtrapprox_{\delta} (C_{P}C_{T})^{-1} \cdot M\delta^{-s}. \end{displaymath}
 \end{lemma}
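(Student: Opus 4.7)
The plan is to show that Lemma \ref{2sBound} is essentially a special case of Corollary \ref{prop5}, already proved in the main text. The key observation is that we only need to invoke Corollary \ref{prop5} with the parameter "$t$" replaced by "$s$" (i.e., we will not use the full strength of the $(\delta,t,C_{P})$-assumption on $\mathcal{P}$ here — it is enough that $\mathcal{P}$ is a $(\delta,s,C_{P})$-set).

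First I would verify the trivial monotonicity: a $(\delta,t,C_{P})$-set $\mathcal{P}$ is automatically a $(\delta,s,C_{P})$-set whenever $0 \leq s \leq t$. Indeed, for any $r \in [\delta,1]$ and $Q \in \mathcal{D}_{r}(\R^{d})$ we have $r^{t} \leq r^{s}$, so the defining inequality \eqref{form53} for parameter "$t$" implies the one for parameter "$s$" with the same constant $C_{P}$.

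Next I would apply Corollary \ref{prop5} to the pair $(\mathcal{P},\{\mathcal{T}(p)\}_{p \in \mathcal{P}})$ with the exponents "$s$" and "$s$" (both equal), and with the constants $C_{P}$ and $C_{T}$. The hypotheses are met: $\mathcal{P}$ is a $(\delta,s,C_{P})$-set by the previous paragraph, each $\mathcal{T}(p)$ is a $(\delta,s,C_{T})$-set of dyadic $\delta$-tubes whose elements all meet $p$, and $|\mathcal{T}(p)| \sim M$ in view of the assumption $\tfrac{M}{2} < |\mathcal{T}(p)| \leq M$. With "$t = s$", the exponent $(t-s)/(1-s)$ appearing in \eqref{form61} equals $0$ (or is declared equal to $1$ when $s = 1$, in which case $M\delta^{s} \lesssim 1$ anyway so the extra factor is harmless), and hence Corollary \ref{prop5} yields exactly
\begin{displaymath}
|\mathcal{T}| \gtrapprox_{\delta} (C_{P}C_{T})^{-1} \cdot M \delta^{-s},
\end{displaymath}
which is the conclusion of Lemma \ref{2sBound}.

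There is no main obstacle: the lemma is strictly weaker than the corollary it quotes, and the only content is the observation that the $t$-hypothesis degrades cleanly to an $s$-hypothesis. This is why the result is stated separately as a ``simpler'' version — it is the form one wants to cite when the extra gain $(M\delta^{s})^{(t-s)/(1-s)}$ plays no role in a later argument.
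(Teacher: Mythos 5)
Your derivation is correct and is exactly what the paper intends: Lemma \ref{2sBound} is introduced there only as a ``slightly weaker version'' of Corollary \ref{prop5}, and degrading the $(\delta,t,C_{P})$-hypothesis to a $(\delta,s,C_{P})$-hypothesis so that the exponent $(t-s)/(1-s)$ vanishes is the cleanest way to make that precise. One small caveat: in the edge case $s=1$ your parenthetical justification runs backwards --- a factor $M\delta \lesssim 1$ \emph{weakens} a lower bound rather than being harmless; the correct observation is $M\delta \geq C_{T}^{-1}$ (because $\mathcal{T}(p)$ is a $(\delta,1,C_{T})$-set), which costs at most one extra factor of $C_{T}$ and is irrelevant in the paper, where the lemma is only invoked with $s<1$.
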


\subsection{Proof of Theorem \ref{mainAppendix}}\label{s:mainProof}

Recall the objects $\mathcal{P}$, $(\mathcal{T}(p))_{p \in \mathcal{P}}$, and $\epsilon > 0$, from the statement of Theorem \ref{mainAppendix}. It will be useful to make the cardinalities of the families $\mathcal{T}(p)$ almost constant, say $|\mathcal{T}(p)| \sim M$ for all $p \in \mathcal{P}$. This can be achieved by pigeonholing, replacing $\mathcal{P}$ by a subset $\overline{\mathcal{P}}$ of cardinality $|\overline{\mathcal{P}}| \approx_{\delta} |\mathcal{P}|$. Several "refinements" of similar nature will be seen below. Note that $M \sim |\mathcal{T}(p)| \geq \delta^{-s + \epsilon}$, since $\mathcal{T}(p)$ was assumed to be a $(\delta,s,\delta^{-\epsilon})$-set of dyadic $\delta$-tubes.

Another harmless assumption is that
\begin{equation}\label{cardP} |\mathcal{P}| \leq \delta^{-t}. \end{equation}
Indeed, by Lemma \ref{lem:thin-delta-subset}, every $(\delta,t,C)$-set contained in $[-2,2]^2$ contains a $(\delta,t,O(C))$-set of cardinality $\leq \delta^{-t}$. After these initial reductions, it is time to make a counter assumption:
\begin{equation}\label{counterAss} \delta^{-2s + 3\epsilon} \lesssim \delta^{-s + 2\epsilon} \cdot M \lessapprox_{\delta} |\calT| \leq \delta^{-2s - \epsilon} \quad \text{and} \quad |\calT|_{\delta^{1/2}} \leq \delta^{-s - \epsilon}. \end{equation}
The lower bound for $|\calT|$ is not part of the counter assumption, but simply a consequence of Lemma \ref{2sBound}. Together with the (counter) assumed upper bound, it implies that
\begin{equation}\label{MBound} M \lessapprox_{\delta} \delta^{-s - 3\epsilon}. \end{equation}
In the sequel, we will use the notation $A \lessapprox B$ to signify that $A \leq C_{\epsilon}\delta^{-C\epsilon}B$, where $\epsilon > 0$ is the counter assumption parameter from \eqref{counterAss}, and $C \geq 1$ is an absolute constant. The value of the constant $C_{\epsilon}$ may depend on $\epsilon$. The notation $A \lessapprox_{\delta} B$ will no longer be used. It is also convenient to define that a set $\mathcal{P} \subset \mathcal{D}_{\delta}$, is a $(\delta,u)$-set, if it is a $(\delta,u,C_{\epsilon}\delta^{-C\epsilon})$-set for constants $C,C_{\epsilon} \geq 1$ as above, and likewise with $\delta^{1/2}$ in place of $\delta$.

A sketch of the upcoming proof is the following: if the counter assumption \eqref{counterAss} is true, then it is possible to construct a set $\mathbf{Z}\subset\R^2$ with ``product structure'' in the following sense: $\mathbf{Z}= \bigcup_{y\in\mathbf{Y}} \mathbf{X}_{y} \times \{y\}$, and an associated family $\mathcal{T}(\mathbf{Z}) \subset \mathcal{T}^{\delta}$, with the following properties:
\begin{enumerate}
\item $\mathbf{Y}$ is a $(\delta,t - s)$-set, and $\mathbf{X}_y$ is a $(\delta,s)$-set for each $y\in\mathbf{Y}$.
\item $\mathcal{T}(\mathbf{Z})$ contains a $(\delta,s)$-set $\mathcal{T}_{\mathbf{z}}$ for all $\mathbf{z} \in \mathbf{Z}$.
\item $|\mathcal{T}(\mathbf{Z})| \lessapprox \delta^{-2s}$.
\end{enumerate}
Thus, $\mathbf{Z},\mathcal{T}(\mathbf{Z})$ satisfy hypotheses similar to those of $\mathcal{P},\mathcal{T}$, but $\mathbf{Z}$ has the additional ``product structure''. Under this extra information, Proposition \ref{productProp} below (quoted from \cite{Orponen20}) implies that $|\mathcal{T}(\mathbf{Z})| \geq \delta^{-2s - \eta(s,t)}$ for some $\eta(s,t) > 0$ depending only on $s,t$. Consequently (3), hence \eqref{counterAss}, cannot hold for arbitrarily small $\epsilon > 0$, the threshold depending only on $s,t$. Theorem \ref{mainAppendix} follows.

\subsection{Considerations at scale $\delta^{1/2}$}\label{scaleDeltaHalf} To make notation prettier, write $\Delta := \delta^{1/2}$. Let $\calQ := \mathcal{D}_{\Delta}(\mathcal{P})$ be the collection of dyadic $\Delta$-squares containing at least one element of $\mathcal{P}$. Then $|\calQ| \lessapprox \Delta^{-t}$ by the assumption \eqref{deltaHalfAssumptionIntro}, but also $|\calQ| \gtrapprox \Delta^{-t}$, since $\mathcal{P}$ is a $(\delta,t)$-set. Hence
\begin{equation}\label{form39Appendix} |\mathcal{Q}| \approx \Delta^{-t}. \end{equation}
Write $\mathcal{P} \cap Q := \{p \in \mathcal{P} : p \subset Q\}$, for $Q \in \mathcal{Q}$. Since $|\mathcal{P} \cap Q| \lessapprox |\mathcal{P}| \cdot \Delta^{t} \leq \Delta^{-t}$ for every $Q \in \mathcal{Q}$ by the $(\delta,t)$-set assumption, combined with \eqref{cardP}, one sees that a subset $\mathcal{P}' \subset \mathcal{P}$ of the form $\mathcal{P}' = \bigcup_{Q \in \mathcal{Q'}} \mathcal{P} \cap Q$ with $|\mathcal{P}'| \approx \delta^{-t}$ is covered by "heavy" squares $Q \in \calQ'$ with
\begin{equation}\label{form2Appendix} |\mathcal{P} \cap Q| \approx \Delta^{-t}. \end{equation}
Since $\mathcal{P}'$ satisfies all the same assumptions as $\mathcal{P}$, with slightly worse constants, we may assume that $\mathcal{P}' = \mathcal{P}$; thus, we assume that \eqref{form2Appendix} holds for all squares $Q \in \calQ$. Now \eqref{form2Appendix} implies that $\calQ$ is a $(\Delta,t)$-set: if $Q_{r} \in \mathcal{D}_{r}$ with $\Delta \leq r \leq 1$, then
\begin{displaymath} |\mathcal{Q} \cap Q_{r}|_{\Delta} \stackrel{\eqref{form2Appendix}}{\approx} \Delta^{t} \cdot |\mathcal{P} \cap Q_{r}| \lessapprox \Delta^{t} \cdot |\mathcal{P}| \cdot r^{t} \approx |\mathcal{Q}| \cdot r^{t}. \end{displaymath}
Now, let $\calT_{\Delta} := \mathcal{T}^{\Delta}(\mathcal{T})$ be a minimal cover of $\calT$ by dyadic $\Delta$-tubes. By \eqref{counterAss},
\begin{equation}\label{form3} |\calT_{\Delta}| = |\mathcal{T}|_{\Delta} \lessapprox \delta^{-s}. \end{equation}
We next claim that for each $Q\in\mathcal{Q}$, the family $\mathcal{T}_{\Delta}$ contains a $(\Delta,s)$-set all of whose tubes intersect $Q$. This is virtually the same argument we saw during the proof of Proposition \ref{p:induction-on-scales}, but we repeat the details. Fix $Q \in \mathcal{Q}$.  By applying Proposition \ref{prop2} at scale $\Delta = \delta^{1/2}$, and to the family $\mathcal{P} \cap Q$ in place of $\mathcal{P}$, we find a subset $\mathcal{P}_{Q} \subset \mathcal{P} \cap Q$ of cardinality $|\mathcal{P}_{Q}| \approx |\mathcal{P} \cap Q| \approx \Delta^{-t}$, and a family of dyadic $\Delta$-tubes $\mathcal{T}_{\Delta}(Q) \subset \mathcal{T}_{\Delta}$ intersecting $Q$ such that the following properties hold:
\begin{itemize}
\item[(H1) \phantomsection \label{H1}] $\mathcal{T}_{\Delta}(Q)$ is a $(\Delta,s)$-set.
\item[(H2) \phantomsection \label{H2}] There exists a constant $H_{Q} \approx M \cdot |\mathcal{P}_{Q}|/|\mathcal{T}_{\Delta}(Q)| \approx \delta^{-s - t/2}/|\mathcal{T}_{\Delta}(Q)|$, such that
\begin{displaymath} |\{(p,T) \in \mathcal{P}_{Q} \times \mathcal{T} : T \in \mathcal{T}(p) \cap \mathbf{T}\}| \gtrsim H_{Q}, \qquad \mathbf{T} \in \mathcal{T}_{\Delta}(Q). \end{displaymath}
\end{itemize}
All the tubes in $\mathcal{T}_{\Delta}(Q)$ intersect $Q$, so $|\mathcal{T}_{\Delta}(Q)| \in \{1,\ldots,100\Delta^{-1}\}$. By the pigeonhole principle, we may find $\mathbf{M} \geq 1$, and a subset $\overline{\mathcal{Q}} \subset \mathcal{Q}$ with the properties
\begin{equation}\label{formA38} |\overline{\mathcal{Q}}| \approx |\mathcal{Q}| \quad \text{and} \quad |\mathcal{T}_{\Delta}(Q)| \sim \mathbf{M} \text{ for } Q \in \overline{\mathcal{Q}}. \end{equation}
In analogy with the bound  $M \lessapprox \delta^{-s}$ for $M \sim |\mathcal{T}(p)|$ (recall \eqref{MBound}), we now record that
\begin{equation}\label{secondRed} |\mathcal{T}_{\Delta}(Q)| \sim \mathbf{M} \lessapprox \Delta^{-s}, \qquad Q \in \overline{\mathcal{Q}}. \end{equation}
Indeed, since $\mathcal{T}_{\Delta}(Q)$ is a $(\Delta,s)$-set for all $Q \in \overline{\mathcal{Q}}$, with cardinality $\sim \mathbf{M}$, and $\overline{\mathcal{Q}}$ is a $(\Delta,t)$-set, a combination of Lemma \ref{2sBound} (applied to $\overline{\mathcal{Q}}$ and the families $\mathcal{T}_{\Delta}(Q)$) and the counter assumption \eqref{counterAss} imply that
\begin{displaymath} \mathbf{M}\cdot \Delta^{-s} \lessapprox |\mathcal{T}_{\Delta}| = |\mathcal{T}|_{\Delta} \lessapprox \delta^{-s}. \end{displaymath}
This implies \eqref{secondRed}. Of course the fact that $\mathcal{T}_{\Delta}(Q)$ is a $(\Delta,s)$-set, by \nref{H1}, alone implies that $\mathbf{M} \gtrapprox \Delta^{-s}$, and hence $|\mathcal{T}_{\Delta}| \gtrapprox \delta^{-s}$ by the previous inequality. We record these observations for future use:
\begin{equation}\label{form3+} |\mathcal{T}_{\Delta}(Q)| \sim \mathbf{M} \approx \Delta^{-s} \text{ for } Q \in \overline{\mathcal{Q}} \quad \text{and} \quad |\calT_{\Delta}| \approx \delta^{-s}. \end{equation}
In the sequel, there will be no difference between the collections $\overline{\mathcal{Q}}$ and $\mathcal{Q}$ (since already the squares in $\overline{\mathcal{Q}}$ cover $\approx |\mathcal{P}|$ squares in $\mathcal{P}$), so we rename $\overline{\mathcal{Q}}$ to $\mathcal{Q}$  for now; the notation will be recycled soon enough. Further, in order to avoid having to remember the difference of $\mathcal{P}_{Q}$ and $\mathcal{P} \cap Q$, we redefine $\mathcal{P} := \bigcup_{Q \in \overline{\mathcal{Q}}} \mathcal{P}_{Q}$. Thus
\begin{equation}\label{form1} \mathcal{P}_{Q} = \mathcal{P} \cap Q = \{p \in \mathcal{P} : p \subset Q\}, \qquad Q \in \mathcal{Q}. \end{equation}
Then $\mathcal{P}$ remains a $(\delta,t)$-set, the squares $Q \in \mathcal{Q}$ remain "heavy" in the sense \eqref{form2Appendix}, and the property \nref{H2} above holds with $\mathcal{P} \cap Q$ in place of $\mathcal{P}_{Q}$.

Different tubes $\mathbf{T} \in \mathcal{T}_{\Delta}$ may \emph{a priori} contain different numbers of tubes from $\mathcal{T}$. It will be advantageous to "freeze" the cardinality of $\mathcal{T} \cap \mathbf{T} = \{T \in \mathcal{T} : T \subset \mathbf{T}\}$, $\mathbf{T} \in \mathcal{T}_{\Delta}$, by further piegonholing. This can be carried out by refining the families $\mathcal{Q}$ and $\mathcal{T}_{\Delta}(Q)$ slightly -- in such a way that \nref{H1}-\nref{H2} and \eqref{form3+} persist. First define
\begin{displaymath} \mathcal{I}(\mathcal{Q},\mathcal{T}_{\Delta}) := \{(Q,\mathbf{T}) \in \mathcal{Q} \times \mathcal{T}_{\Delta} : \mathbf{T} \in \mathcal{T}_{\Delta}(Q)\}.\end{displaymath}
Then (recall \eqref{formA38}), one has
\begin{displaymath} |\mathcal{I}(\mathcal{Q},\mathcal{T}_{\Delta})| \approx |\mathcal{Q}| \cdot \mathbf{M} \stackrel{\eqref{form39Appendix}+\eqref{form3+}}{\approx} \Delta^{-s - t}. \end{displaymath}
Let
\begin{displaymath} \mathcal{T}_{\Delta,j} := \{\mathbf{T} \in \mathcal{T}_{\Delta} : 2^{j - 1} < |\mathcal{T} \cap \mathbf{T}| \leq 2^{j}\}. \end{displaymath}
Since $|\mathcal{T} \cap \mathbf{T}| \leq \delta^{-1}$ for all $\mathbf{T} \in \mathcal{T}_{\Delta}$, one has
\begin{displaymath} \Delta^{-s - t} \approx |\mathcal{I}(\mathcal{Q},\mathcal{T}_{\Delta})| = \sum_{2^{j} \leq \delta^{-1}} |\{(Q,\mathbf{T}) \in \mathcal{Q} \times \mathcal{T}_{\Delta,j} : \mathbf{T} \in \mathcal{T}_{\Delta}(Q)\}|. \end{displaymath}
Therefore, one may pick $j \leq \log \delta^{-1}$ such that, writing
\[
\overline{\mathcal{T}}_{\Delta} := \mathcal{T}_{\Delta,j},\quad \overline{\mathcal{T}}_{\Delta}(Q) := \overline{\mathcal{T}}_{\Delta} \cap \mathcal{T}_{\Delta}(Q), \qquad Q \in \mathcal{Q},
\]
one has
\begin{displaymath} \sum_{Q \in \mathcal{Q}} |\overline{\mathcal{T}}_{\Delta}(Q)| = |\{(Q,\mathbf{T}) \in \mathcal{Q} \times \overline{\mathcal{T}}_{\Delta} : \mathbf{T} \in \overline{\mathcal{T}}_{\Delta}(Q)\}| \approx \Delta^{-s - t}. \end{displaymath}
Write $\mathbf{N} := 2^{j}$ for this index "$j$", so $|\mathcal{T} \cap \mathbf{T}| \sim \mathbf{N}$ for all $\mathbf{T} \in \overline{\mathcal{T}}_{\Delta}$. Since one has $|\overline{\mathcal{T}}_{\Delta}(Q)| \leq |\mathcal{T}_{\Delta}(Q)| \approx \Delta^{-s}$ uniformly in $Q \in \mathcal{Q}$ by \eqref{form3+}, and $|\mathcal{Q}| \approx \Delta^{-t}$, one may pick a subset $\overline{\mathcal{Q}} \subset \mathcal{Q}$ of cardinality $|\overline{\mathcal{Q}}| \approx |\mathcal{Q}|$ such that $|\overline{\mathcal{T}}_{\Delta}(Q)| \approx \Delta^{-s}$ for all $Q \in \overline{\mathcal{Q}}$. Since $\mathcal{T}_{\Delta}(Q)$ was a $(\Delta,s)$-set of dyadic $\Delta$-tubes intersecting $Q$, the same remains true for $\overline{\mathcal{T}}_{\Delta}(Q)$. Thus, the family $\overline{\mathcal{T}}_{\Delta}$ contains a $(\Delta,s)$-set of $\Delta$-tubes incident to every square in the $(\Delta,t)$-set $\overline{\mathcal{Q}}$. Lemma \ref{2sBound} therefore implies
\begin{equation}\label{formA47} \Delta^{-2s} \lessapprox |\overline{\mathcal{T}}_{\Delta}| \leq |\mathcal{T}_{\Delta}| \stackrel{\eqref{counterAss}}{\lessapprox} \Delta^{-2s}. \end{equation}
Based on this, we claim that
\begin{equation}\label{formA37} |\mathcal{T} \cap \mathbf{T}| \sim \mathbf{N} \lessapprox \delta^{-s}, \qquad \mathbf{T} \in \overline{\mathcal{T}}_{\Delta}. \end{equation}
Indeed, by the disjointness of the families $\mathcal{T} \cap \mathbf{T} \subset \mathcal{T}$, for $\mathbf{T} \in \overline{\mathcal{T}}_{\Delta}$, one has
\begin{displaymath} \delta^{-2s} \stackrel{\eqref{counterAss}}{\gtrapprox} |\mathcal{T}| \geq \sum_{\mathbf{T} \in \overline{\mathcal{T}}_{\Delta}} |\mathcal{T} \cap \mathbf{T}| \sim |\overline{\mathcal{T}}_{\Delta}| \cdot \mathbf{N} \stackrel{\eqref{formA47}}{\approx} \Delta^{-2s} \cdot \mathbf{N}, \end{displaymath}
and \eqref{formA37} follows. We pause for a moment to record the achievements so far:
\begin{itemize}
\item[(G1) \phantomsection \label{G1}] $\overline{\mathcal{Q}} \subset \mathcal{Q}$ satisfies $|\overline{\mathcal{Q}}| \approx \Delta^{-t}$, and $|\mathcal{P} \cap Q| \approx \Delta^{-t}$ for all $Q \in \overline{\mathcal{Q}}$.
\item[(G2) \phantomsection \label{G2}] Every tube $\mathbf{T} \in \overline{\mathcal{T}}_{\Delta}$ satisfies $|\mathcal{T} \cap \mathbf{T}| \sim \mathbf{N} \lessapprox \delta^{-s}$.
\item[(G3) \phantomsection \label{G3}] For every square $Q \in \overline{\mathcal{Q}}$, there corresponds a $(\Delta,s)$-set $\overline{\mathcal{T}}_{\Delta}(Q) \subset \overline{\mathcal{T}}_{\Delta}$ of cardinality $\approx \mathbf{M} \approx \Delta^{-s}$ such that $\mathbf{T} \cap Q \neq \emptyset$ for all $\mathbf{T} \in \overline{\mathcal{T}}_{\Delta}(Q)$.
\item[(G4) \phantomsection \label{G4}] $|\mathcal{T}| \approx \delta^{-2s}$ and $|\overline{\mathcal{T}}_{\Delta}| \approx \Delta^{-2s}$.
\end{itemize}
At this point, we simplify writing by dropping all the "overlines" from the notation for the families $\overline{\mathcal{Q}},\overline{\mathcal{T}}_{\Delta}(Q)$, and $\overline{\mathcal{T}}_{\Delta}$. These families satisfy the same properties as $\mathcal{Q},\mathcal{T}_{\Delta}(Q)$, and $\mathcal{T}_{\Delta}$, up to worse constants, and additionally \nref{G2} holds for all $\mathbf{T} \in \overline{\mathcal{T}}_{\Delta}$. (One thing which ceases to hold after these notational changes is that every tube in $\mathcal{T}$, or $\mathcal{T}(p)$, is contained in one of the tubes from $\mathcal{T}_{\Delta} = \overline{\mathcal{T}}_{\Delta}$. This information will not be used. It will follow from \nref{H2} that the tubes in $\overline{\mathcal{T}}_{\Delta}$ contain "sufficiently" many tubes from $\mathcal{T}$.)

\subsection{Refining the families $\mathcal{T}_{\Delta}(Q)$ further} Fix $Q \in \mathcal{Q}$, and write
\begin{displaymath} \sigma(Q) := \{\sigma(\mathbf{T}) : \mathbf{T} \in \mathcal{T}_{\Delta}(Q)\} \subset (\Delta \cdot \Z) \cap [-1,1) \end{displaymath}
for the slope set of the family $\mathcal{T}_{\Delta}(Q)$. Then $\sigma(\mathcal{T})$ is a $(\Delta,s)$-set by Lemma \ref{tubesAndSlopes}. For $\sigma = \sigma(\mathbf{T}) \in \sigma(Q)$, write $\pi_{\sigma}(x,y) := x - \sigma y$ for the orthogonal projection (up to rescaling) to the direction "perpendicular" to $\mathbf{T}$. Informally, the next lemma says that "in at least half of the directions $\sigma \in \sigma(Q)$, the family $\pi_{\sigma}(\cup(\mathcal{P} \cap Q))$, and all its reasonably large subsets, contain a $(\delta,s)$-set". This is not literally true, since $\diam(\pi_{\sigma}(\cup (\mathcal{P} \cap Q))) \lesssim \Delta$, and with our definition of $(\delta,s)$-sets, no short interval can contain a $(\delta,s)$-set. The more precise conclusion, therefore, is that a $\Delta^{-1}$-rescaled version $\pi_{\sigma}(\cup (\mathcal{P} \cap Q))$ contains a $(\Delta,s)$-set.

To make the statement precise, it will be useful to write out that $\sigma(Q)$ is a $(\Delta,s,\Delta^{-\mathbf{A}\epsilon})$-set, and the $\Delta^{-1}$-rescaled copy of $\mathcal{P} \cap Q$ is a $(\Delta,t,\delta^{-\mathbf{A}\epsilon})$-set, where $\mathbf{A} \geq 1$ is the absolute constant lurking behind the "$\approx$"-notation. As before, we write $S_{Q}$ for the homothety $Q\to[0,1)^{2}$.

\begin{lemma}\label{projections} There exists a subset $\Sigma=\Sigma(Q) \subset \sigma(Q)$ with $|\Sigma| \geq \tfrac{1}{2}|\sigma(Q)|$ such that the following holds for all $\sigma \in \Sigma$: if $\mathcal{P}_Q\subset \mathcal{P} \cap Q$ is an arbitrary subset of cardinality $|\mathcal{P}_Q| \geq \delta^{\mathbf{B}\epsilon}|\mathcal{P} \cap Q|$, for some $\mathbf{B} \geq 1$, then $\pi_{\sigma}(\cup S_Q(\mathcal{P}_Q))$ contains a $(\Delta,s,\delta^{-\mathbf{C}(\mathbf{A} + \mathbf{B})\epsilon})$-set, where $\mathbf{C} \geq 1$ is absolute. \end{lemma}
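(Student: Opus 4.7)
The plan is to prove this by a discretised Kaufman-type exceptional set estimate, exploiting that $S_{Q}(\mathcal{P} \cap Q)$ is a $(\Delta,t)$-set with $t>s$ while $\sigma(Q)$ is only a $(\Delta,s)$-set; the robustness under arbitrary large subsets $\mathcal{P}_{Q}$ will come from an elementary incidence lower bound via Corollary \ref{prop5}.

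I will argue by contradiction. Call $\sigma \in \sigma(Q)$ \emph{exceptional} if there exists a subset $\mathcal{P}_{Q}(\sigma) \subset \mathcal{P} \cap Q$ with $|\mathcal{P}_{Q}(\sigma)| \geq \delta^{\mathbf{B}\epsilon}|\mathcal{P} \cap Q|$ whose rescaled image $\pi_{\sigma}(\cup S_{Q}(\mathcal{P}_{Q}(\sigma)))$ contains no $(\Delta, s, \delta^{-\mathbf{C}(\mathbf{A}+\mathbf{B})\epsilon})$-set, where $\mathbf{C}$ is a large absolute constant to be chosen. By the contrapositive of Proposition \ref{deltasSet}, this image then has $\mathcal{H}^{s}_{\infty}$-content $\lesssim \delta^{\mathbf{C}(\mathbf{A}+\mathbf{B})\epsilon}$, so it is covered by a family $\mathcal{T}_{\sigma}$ of dyadic $\Delta$-tubes of slope $\sigma$ with $|\mathcal{T}_{\sigma}| \lesssim \delta^{\mathbf{C}(\mathbf{A}+\mathbf{B})\epsilon} \Delta^{-s}$. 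If more than half of $\sigma(Q)$ is exceptional, the total family $\mathcal{T}^{\ast} := \bigcup_{\sigma} \mathcal{T}_{\sigma}$ satisfies the a priori upper bound $|\mathcal{T}^{\ast}| \lesssim \delta^{-\mathbf{A}\epsilon} \delta^{\mathbf{C}(\mathbf{A}+\mathbf{B})\epsilon}\Delta^{-2s}$, using $|\sigma(Q)| \lesssim \delta^{-\mathbf{A}\epsilon}\Delta^{-s}$.

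The core step is a double-counting on $\mathcal{I}:=\{(p,\sigma): \sigma \text{ exceptional}, \, p \in \mathcal{P}_{Q}(\sigma)\}$, which satisfies $|\mathcal{I}| \geq \tfrac{1}{2}\delta^{\mathbf{B}\epsilon}|\mathcal{P} \cap Q| \cdot |\sigma(Q)|$. Dyadic pigeonholing in $p$ produces a subset $\mathcal{P}^{\ast} \subset \mathcal{P} \cap Q$ with $|\mathcal{P}^{\ast}| \gtrsim \delta^{\mathbf{B}\epsilon}|\mathcal{P}\cap Q|$, each point of which lies in at least $\approx \delta^{\mathbf{B}\epsilon}|\sigma(Q)|$ tubes from $\mathcal{T}^{\ast}$, one per exceptional $\sigma$ whose covering family contains it. For each such $p$, the slope set of these tubes is a proportional subset of $\sigma(Q)$ and hence a $(\Delta,s)$-set with constant $\delta^{-O((\mathbf{A}+\mathbf{B})\epsilon)}$; Lemma \ref{tubesAndSlopes} transfers this to the tubes themselves. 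Moreover, $\mathcal{P}^{\ast}$ inherits the $(\Delta,t)$-set property of $\mathcal{P} \cap Q$ with a loss of $\delta^{-O((\mathbf{A}+\mathbf{B})\epsilon)}$. Applying Corollary \ref{prop5} at scale $\Delta$ to the rescaled family $S_{Q}(\mathcal{P}^{\ast})$ and these tube families (using the reduced dimension $\min(t,1) > s$ when $t>1$) yields a matching lower bound $|\mathcal{T}^{\ast}| \gtrsim \delta^{O((\mathbf{A}+\mathbf{B})\epsilon)}\Delta^{-2s}$. Comparing with the earlier upper bound produces a contradiction provided $\mathbf{C}$ exceeds the implicit absolute constant.

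The main obstacle will be tracking Frostman constants cleanly through the successive refinements (isolating the exceptional subset of $\sigma(Q)$, producing $\mathcal{P}^{\ast}$ by pigeonhole in $p$, and identifying the $(\Delta,s)$-set of exceptional tube-slopes through each $p \in \mathcal{P}^{\ast}$): the accumulated loss must stay of the form $\delta^{-O((\mathbf{A}+\mathbf{B})\epsilon)}$ on the lower-bound side, while the upper bound carries the opposite sign $\delta^{+\mathbf{C}(\mathbf{A}+\mathbf{B})\epsilon}$. This sign asymmetry is precisely what allows one to close the contradiction by choosing $\mathbf{C}$ as a sufficiently large absolute constant, depending only on the implicit constants in Corollary \ref{prop5}, Lemma \ref{tubesAndSlopes}, and Proposition \ref{deltasSet}.
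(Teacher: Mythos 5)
There is a genuine gap at the very first reduction. From "$\pi_{\sigma}(\cup S_Q(\mathcal{P}_Q(\sigma)))$ contains no $(\Delta,s,\delta^{-\mathbf{C}(\mathbf{A}+\mathbf{B})\epsilon})$-set" you correctly deduce, via the contrapositive of Proposition \ref{deltasSet}, that this projection has $\mathcal{H}^{s}_{\infty}$-content $\lesssim \delta^{\mathbf{C}(\mathbf{A}+\mathbf{B})\epsilon}$. But you then conclude that the projection is covered by $\lesssim \delta^{\mathbf{C}(\mathbf{A}+\mathbf{B})\epsilon}\Delta^{-s}$ dyadic $\Delta$-intervals, and small $s$-content does \emph{not} control the $\Delta$-covering number: an interval of length $\Delta^{1/2}$ has content $\Delta^{s/2}$, which is as small as you like, yet requires $\Delta^{-1/2}$ intervals of length $\Delta$, and $\Delta^{-1/2} \gg \Delta^{s/2}\cdot\Delta^{-s}$ for every $s<1$. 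Content controls the existence of Frostman subsets (Proposition \ref{deltasSet} and the remark following Definition \ref{def1}), not covering numbers. Without the covering bound, the only available upper estimate is the trivial $|\mathcal{T}_{\sigma}|\leq\Delta^{-1}$, giving $|\mathcal{T}^{\ast}|\lesssim\delta^{-\mathbf{A}\epsilon}\Delta^{-1-s}$, which exceeds the incidence lower bound $\Delta^{-2s}$ for all $s<1$; the contradiction never closes. (A secondary, more repairable issue: your exceptional set depends on $\mathbf{B}$, whereas the lemma demands one set $\Sigma$ working for all $\mathbf{B}\geq 1$ simultaneously; you would need an extra pigeonholing over the effective range $\mathbf{B}\lesssim\epsilon^{-1}$, or a definition of $\Sigma$ that is $\mathbf{B}$-free.)

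For comparison, the paper proves the lemma by Kaufman's potential-theoretic argument: put the natural measures $\mu$ on $S_Q(\mathcal{P}\cap Q)$ and $\nu$ on $\sigma(Q)$, show $\int I_{s}(\pi_{\sigma}\mu)\,d\nu(\sigma)\lessapprox 1$ using that $\nu$ is $s$-Frostman and $\mu$ is $t$-Frostman with $t>s$, and take $\Sigma$ to be the slopes where $I_{s}(\pi_{\sigma}\mu)$ is bounded (Chebyshev). This definition of $\Sigma$ is independent of $\mathbf{B}$, and the robustness under passing to an arbitrary subset $\mathcal{P}_Q$ of proportion $\delta^{\mathbf{B}\epsilon}$ is automatic, because restricting and renormalising $\mu$ multiplies the energy by at most $\delta^{-2\mathbf{B}\epsilon}$; bounded energy then gives a lower bound on $\mathcal{H}^{s}_{\infty}$ of the projected support, and Proposition \ref{deltasSet} produces the required $(\Delta,s)$-set. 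If you want to salvage an incidence-counting route, you would need to replace "no $(\Delta,s,C)$-subset" by a genuinely countable notion of smallness (e.g.\ a bound on $|\pi_{\sigma}(\cdot)|_{\Delta}$), but that is a strictly weaker hypothesis to contradict and is not what the lemma's negation provides.
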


\begin{proof} The proof is a variation of the standard "potential theoretic" argument, invented by Kaufman \cite{Ka}. Let $\overline{\mathcal{P}}^{Q} := \{S_{Q}(p) : p \in \mathcal{P}\cap Q\} \subset \mathcal{D}_{\Delta}$. Since $\mathcal{P}$ is a $(\delta,t)$-set and all the squares $Q$ are "heavy" in the sense of \eqref{form2Appendix} (which continues to hold after all refinements), we see that $\overline{\mathcal{P}}^{Q}$ is a $(\Delta,t)$-set with $|\overline{\mathcal{P}}^{Q}| \approx \Delta^{-t}$. We denote elements of $\overline{\mathcal{P}}^{Q}$ by "$q$". Let $\mathcal{L}^d$ denote Lebesgue measure on $\R^d$, and consider the probability measures
\begin{displaymath} \mu := \frac{1}{|\overline{\mathcal{P}}^{Q}|} \sum_{q \in \overline{\mathcal{P}}^{Q}} \frac{\mathcal{L}^{2}|_{q}}{\Delta^{2}} \quad \text{and} \quad \nu := \frac{1}{|\sigma(Q)|} \sum_{\sigma \in \sigma(Q)} \frac{\mathcal{L}^{1}|_{[\sigma,\sigma + \Delta)}}{\Delta}. \end{displaymath}
It is not hard to check (cf. Remark \ref{r:delta-set-via-measure}) that $\mu(B(x,r)) \lessapprox r^{t}$ and $\nu(B(x,r)) \lessapprox r^{s}$ for all $r \in (0,1]$. Let $I_{s}$ denote the $s$-dimensional Riesz energy, that is, for a finite Borel measure $\rho$ on $\R^d$,
\[
I_{s}(\rho) = \int |x-y|^{-s}\,d\rho(x)\,d\rho(y).
\]
By a standard argument going back to \cite{Ka},
\begin{displaymath} \int_{-1}^{1} I_{s}(\pi_{\sigma}\mu) \, d\nu(\sigma) = \iint \left[ \int_{-1}^{1} \frac{d\nu(\sigma)}{|\pi_{\sigma}(x) - \pi_{\sigma}(y)|^{s}} \right] \, d\mu(x) \, d\mu(y) \lessapprox 1. \end{displaymath}
Indeed, the inner integral can be estimated by $\lessapprox 1/|x - y|^{s}$ by the $s$-dimensional Frostman property of $\nu$. Then
\begin{displaymath} \int_{-1}^{1} I_{s}(\pi_{\sigma}\mu) \, d\nu(\sigma) \lessapprox \int \left[\int \frac{d\mu(x)}{|x - y|^{s}} \right] \, d\mu(y) \lessapprox 1, \end{displaymath}
since the inner integral is again bounded by $\lessapprox 1$ for every $y \in \R^{2}$, recalling that $\mu$ satisfies a $t$-dimensional Frostman condition, and $t > s$. The implicit constants here are of the form $\delta^{-C_{1}\mathbf{A}\epsilon}$ for some absolute $C_{1} \geq 1$. Consequently, by Chebyshev's inequality,
\begin{displaymath} \nu(\{\sigma \in [-1,1] : I_{s}(\pi_{\sigma}\mu) \geq \delta^{-2C_{1}\mathbf{A}\epsilon}\}) \leq \tfrac{1}{2}, \end{displaymath}
provided $\delta > 0$ is small enough. Let $\Sigma' := \{\sigma \in [-1,1] : I_{s}(\pi_{\sigma}\mu) \leq \delta^{-2C_{1}\mathbf{A}\epsilon}\}$. Then one needs $\geq \tfrac{1}{2}|\sigma(Q)|$ intervals of the form $[\sigma,\sigma + \Delta)$, $\sigma \in \sigma(Q)$, to cover $\Sigma'$. The left end-points of these intervals form a set $\Sigma \subset \sigma(Q)$ with $|\Sigma| \geq \tfrac{1}{2}|\sigma(Q)|$. We claim $\Sigma$ satisfies the statement of Lemma \ref{projections}.

For every $\sigma \in \Sigma$, by definition, there exists a point $\sigma' \in [\sigma,\sigma + \Delta)$ with
\begin{equation}\label{formA20} \iint \frac{d\mu(x) \, d\mu(y)}{|\pi_{\sigma'}(x) - \pi_{\sigma'}(y)|^{s}} = I_{s}(\pi_{\sigma'}\mu) \leq \delta^{-2C_{1}\mathbf{A}\epsilon}. \end{equation}
Now, if $\mathcal{P}_Q\subset \mathcal{P} \cap Q$ is a subset of cardinality $|\mathcal{P}_Q| \geq \delta^{\mathbf{B}\epsilon}|\mathcal{P} \cap Q|$, as in the statement, then the probability measure $\bar{\mu}$, defined in the obvious way by restricting and re-normalising $\mu$ to the subset $S_Q(\mathcal{P}_Q) \subset \overline{\mathcal{P}}^{Q}$, still satisfies \eqref{formA20}, up to multiplying the right hand side by $\delta^{-2\mathbf{B}\epsilon}$. It follows that $\calH^{s}_{\infty}(\pi_{\sigma'}(\spt \bar{\mu})) \gtrsim \delta^{2(C_{1}\mathbf{A} + \mathbf{B})\epsilon}$ (see e.g. \cite[pp. 109--112]{zbMATH01249699} for the standard argument), and hence $\pi_{\sigma'}(\spt \bar{\mu}) = \pi_{\sigma'}(\cup S_Q(\mathcal{P}_Q))$ contains a $(\Delta,s,\delta^{-2(C_{1}\mathbf{A} + \mathbf{B})\epsilon})$-set by Proposition \ref{deltasSet}. Finally, using $|\sigma - \sigma'| \leq \Delta$, the conclusion remains valid for $\pi_{\sigma}(\cup S_Q(\mathcal{P}_Q))$. This proves the lemma. \end{proof}

Define
\begin{equation}\label{formA44} \calT_{\Delta}^{\pi}(Q) := \{\mathbf{T} \in \mathcal{T}_{\Delta}(Q) : \sigma(\mathbf{T}) \in \Sigma(Q)\}, \qquad Q \in \mathcal{Q}, \end{equation}
where $\Sigma(Q) \subset \sigma(Q)$ is the set of good directions of cardinality $|\Sigma(Q)| \geq \tfrac{1}{2}|\sigma(Q)| \sim |\mathcal{T}_{\Delta}(Q)|$ indicated by Lemma \ref{projections} (the relation $|\sigma(Q)| \sim |\mathcal{T}_{\Delta}(Q)|$ follows from Lemma \ref{tubesAndSlopes}). Then the families $\mathcal{T}_{\Delta}^{\pi}(Q)$, $Q \in \mathcal{Q}$, remain $(\Delta,s)$-sets of cardinality $\approx \Delta^{-s}$, and the properties \nref{G1}-\nref{G4} remain valid if $\mathcal{T}_{\Delta}(Q)$ gets replaced by $\mathcal{T}_{\Delta}^{\pi}(Q)$ (only \nref{G3} is affected). This completes the first refinement of $\calT_{\Delta}(Q)$. The conclusion is that the sets $\mathcal{P} \cap Q$, $Q \in \calQ$, and their large subsets, have "$s$-dimensional" projections in every direction perpendicular to the tubes $\mathbf{T} \in \mathcal{T}_{\Delta}^{\pi}(Q)$. The symbol "$\pi$" will stay as a reminder of this fact.

Next: a further refinement of $\mathcal{T}^{\pi}_{\Delta}(Q)$. This refinement is concerned with the distribution of the squares in the family $\{Q \in \calQ : \mathbf{T} \in \mathcal{T}_{\Delta}^{\pi}(Q)\}$, for a fixed tube $\mathbf{T} \in \mathcal{T}_{\Delta}$. Roughly speaking, we claim that "without loss of generality", these sets are $(\Delta,t - s)$-sets.

To formalise such thoughts, write $d(Q,Q')$ for the distance between the midpoints of $Q,Q'$, and consider the following inequality (see explanations below):
\begin{equation}\label{formA6}
\begin{split}
\sum_{\mathbf{T} \in \calT_{\Delta}} \mathop{\sum_{Q,Q' \in \calQ}}_{Q \neq Q'}  \frac{\mathbf{1}_{\calT_{\Delta}^{\pi}(Q) \cap \calT_{\Delta}^{\pi}(Q')}(\mathbf{T})}{d(Q,Q')^{t - s}} &=\mathop{\sum_{Q,Q' \in \calQ}}_{Q \neq Q'} \frac{|\calT_{\Delta}^{\pi}(Q) \cap \calT_{\Delta}^{\pi}(Q')|}{d(Q,Q')^{t - s}}\\
& \lessapprox \mathop{\sum_{Q,Q' \in \calQ}}_{Q \neq Q'} \frac{1}{d(Q,Q')^{t}} \lessapprox \Delta^{-2t}.
\end{split}
\end{equation}
The first "$\lessapprox$"-inequality uses the fact that $\calT_{\Delta}^{\pi}(Q)$  is a $(\Delta,s)$-set of tubes with $|\calT_{\Delta}^{\pi}(Q)| \approx \Delta^{-s}$: since $\mathbf{T} \in \calT_{\Delta}^{\pi}(Q) \cap \calT_{\Delta}^{\pi}(Q')$ implies that $Q \cap \mathbf{T} \neq \emptyset \neq Q' \cap \mathbf{T}$, this can only hold for $\lessapprox 1/d(Q,Q')^{s}$ choices of $\mathbf{T} \in \calT_{\Delta}(Q)$. The second "$\lessapprox$"-inequality in \eqref{formA6} follows from the fact that $\mathcal{Q}$ is a $(\Delta,t)$-set with $|\mathcal{Q}| \approx \Delta^{-t}$, by considering for each $Q$ and dyadic number $\overline{\Delta}\in [\Delta,1]$ those $Q'$ with $d(Q,Q')\sim \overline{\Delta}$.

Fix a large but absolute constant $C \geq 1$. It follows from \eqref{formA6} and Chebyshev's inequality that
\begin{equation}\label{formA21} \mathop{\sum_{Q,Q' \in \calQ}}_{Q \neq Q'}  \frac{\mathbf{1}_{\calT_{\Delta}^{\pi}(Q) \cap \calT_{\Delta}^{\pi}(Q')}(\mathbf{T})}{d(Q,Q')^{t - s}} \geq \Delta^{-C\epsilon + 2(s - t)} \end{equation}
can only hold for $\lessapprox \Delta^{C\epsilon - 2s}$ tubes $\mathbf{T} \in \calT_{\Delta}$. Recalling that $|\calT_{\Delta}| \approx \Delta^{-2s}$ by \nref{G4}, this roughly says that the tubes satisfying \eqref{formA21} are exceptional. We need a more accurate statement: we claim that  there exists a subset $\overline{\mathcal{Q}} \subset \mathcal{Q}$ with $|\overline{\mathcal{Q}}| \geq \tfrac{1}{2}|\mathcal{Q}|$ such that for all $Q_{0} \in \overline{\mathcal{Q}}$, at most half of the tubes $\mathbf{T} \in \calT^{\pi}_{\Delta}(Q_{0})$ satisfy \eqref{formA21}, if $C \geq 1$ is chosen large enough.

Assume the converse: for at least half of the squares $Q_{0} \in \mathcal{Q}$, say those in $\mathcal{Q}_{\mathrm{bad}}$, at least $\tfrac{1}{2}|\mathcal{T}^{\pi}_{\Delta}(Q_{0})|$ tubes  $\mathbf{T} \in \mathcal{T}_{\Delta}^{\pi}(Q_{0})$ satisfy \eqref{formA21}. Then Lemma \ref{2sBound} can be applied at scale $\Delta$, with the $(\Delta,t)$-set $\mathcal{Q}_{\mathrm{bad}}$, and the bad parts of $\mathcal{T}^{\pi}_{\Delta}(Q_{0})$, which are evidently still $(\Delta,s)$-sets. The conclusion is that the total number of tubes in $\calT_{\Delta}$ satisfying \eqref{formA21} is $\gtrapprox \Delta^{-2s}$. On the other hand, it was noted above that the number of tubes in $\mathcal{T}_{\Delta}$ satisfying \eqref{formA21} is $\lessapprox \Delta^{C\epsilon - 2s}$. If $C \geq 1$ is sufficiently large, depending on the constants in the "$\approx$" notation, and $\delta > 0$ is small enough, a contradiction ensues. The size of $C \geq 1$ in this argument is an absolute constant, so we may abbreviate $A \leq \delta^{-C\epsilon}B$ to $A \lessapprox B$ in the sequel.

Summarizing, assuming that $C \geq 1$ is sufficiently large, and $\delta > 0$ is sufficiently small, the converse of \eqref{formA21} holds for all $Q_{0} \in \overline{\mathcal{Q}}$, where $|\overline{\mathcal{Q}}| \geq \tfrac{1}{2}|\mathcal{Q}|$, and for $\geq \tfrac{1}{2}|\mathcal{T}^{\pi}_{\Delta}(Q_{0})|$ tubes in $\calT_{\Delta}^{\pi}(Q_{0})$. We replace $\mathcal{Q}$ by $\overline{\mathcal{Q}}$, and the families $\mathcal{T}^{\pi}_{\Delta}(Q)$ by their good subsets, without altering notation. All the properties \nref{G1}-\nref{G4} remain valid. With the new notation,
\begin{equation}\label{formA31} \mathop{\sum_{Q,Q' \in \calQ}}_{Q \neq Q'}  \frac{\mathbf{1}_{\calT^{\pi}_{\Delta}(Q) \cap \calT^{\pi}_{\Delta}(Q')}(\mathbf{T})}{d(Q,Q')^{t - s}} \lessapprox \Delta^{2(s - t)}, \qquad \mathbf{T} \in \mathcal{T}^{\pi}_{\Delta}(Q_{0}), \, Q_{0} \in \mathcal{Q}. \end{equation}
(The careful reader will notice that the converse of \eqref{formA21} would be a stronger statement than \eqref{formA31}: the families $\mathcal{Q}$ and $\mathcal{T}^{\pi}_{\Delta}(Q)$ in \eqref{formA31} have already been reduced to subsets.)

The next aim is to find a fixed tube $\mathbf{T}_{0} \in \mathcal{T}_{\Delta}$ with the property
\begin{equation}\label{formA41} |\mathbf{T}_{0}(\mathcal{Q})| := |\{Q \in \mathcal{Q} : \mathbf{T}_{0} \in \mathcal{T}^{\pi}_{\Delta}(Q)\}| \gtrapprox \Delta^{s - t}. \end{equation}
This is easy, using $|\mathcal{T}_{\Delta}| \approx \Delta^{-2s}$, $|\mathcal{Q}| \approx \Delta^{-t}$, and $|\mathcal{T}^{\pi}_{\Delta}(Q)| \approx \Delta^{-s}$ for $Q \in \mathcal{Q}$:
\begin{displaymath} \frac{1}{|\mathcal{T}_{\Delta}|} \sum_{\mathbf{T} \in \mathcal{T}_{\Delta}} |\{Q \in \mathcal{Q} : \mathbf{T} \in \mathcal{T}^{\pi}_{\Delta}(Q)\}| = \frac{1}{|\mathcal{T}_{\Delta}|} \sum_{Q \in \mathcal{Q}} |\mathcal{T}_{\Delta}^{\pi}(Q)| \approx \frac{|\mathcal{Q}| \cdot \Delta^{-s}}{\Delta^{-2s}} \approx \Delta^{s - t}.  \end{displaymath}
Therefore, the average tube $\mathbf{T}_{0} \in \mathcal{T}_{\Delta}$ satisfies \eqref{formA41}.

We now claim that, as a corollary of \eqref{formA31} and \eqref{formA41}, the family $\mathbf{T}_{0}(\mathcal{Q}) \subset \{Q \in \mathcal{Q} : Q \cap \mathbf{T}_{0} \neq \emptyset\}$ contains a $(\Delta,t - s)$-set. Indeed, \eqref{formA31} yields
\begin{displaymath} \mathop{\sum_{Q,Q' \in \mathbf{T}_{0}(\mathcal{Q})}}_{Q \neq Q'} \frac{1}{d(Q,Q')^{t - s}} \lessapprox \Delta^{2(s - t)}. \end{displaymath}
Let
\[
\mathbf{T}'_{0}(\mathcal{Q}) = \Big\{ Q \in \mathbf{T}_{0}(\mathcal{Q}): \sum_{Q'\in\mathbf{T}_0(\mathcal{Q})\setminus\{ Q\}} d(Q,Q')^{s - t} \le \Delta^{s - t- C\epsilon}\Big\}.
\]
By Chebyshev's inequality and \eqref{formA41}, if the constant $C$ is chosen large enough, then $|\mathbf{T}'_{0}(\mathcal{Q})|\ge \tfrac{1}{2}|\mathbf{T}_{0}(\mathcal{Q})|\gtrapprox \Delta^{s-t}$. Considering, for each $Q\in\mathbf{T}'_0(\mathcal{Q})$ and each dyadic number $\overline{\Delta}\in [\Delta,1]$, those $Q'\in\mathbf{T}'_0(\mathcal{Q})$ with $d(Q,Q')\sim \overline{\Delta}$, we see that $\mathbf{T}'_{0}(\mathcal{Q})$ is a $(\Delta,t-s)$-set, as desired.
%Now the desired $(\Delta,t - s)$-set is given by those $Q \in \mathbf{T}_{0}(\mathcal{Q})$ such that $\sum_{Q' \neq Q} d(Q,Q')^{s - t} \lessapprox \Delta^{s - t}$. By Chebyshev's inequality and \eqref{formA41}, there are $\approx \Delta^{s - t}$ such squares $Q \in \mathbf{T}_{0}(\mathcal{Q})$, if the implicit constants are chosen appropriately.
From now on, we simplify notation by denoting the $(\Delta,t - s)$-set $\mathbf{T}'_{0}(\mathcal{Q}) \subset \mathbf{T}_{0}(\mathcal{Q})$ again by $\mathbf{T}_{0}(\mathcal{Q})$.

Note that since $\mathbf{T}_{0} \in \mathcal{T}_{\Delta}$, which at the time of writing \nref{G2} was known as $\overline{\mathcal{T}}_{\Delta}$, one has
\begin{equation}\label{formA42} |\mathcal{T} \cap \mathbf{T}_{0}| \lessapprox \delta^{-s} = \Delta^{-2s}. \end{equation}
Second, every square $Q \in \mathbf{T}_{0}(\mathcal{Q})$ satisfies $\mathbf{T}_{0} \in \mathcal{T}^{\pi}_{\Delta}(Q)$ by definition. The family $\mathcal{T}^{\pi}_{\Delta}(Q)$ is a subset of the family $\mathcal{T}_{\Delta}(Q)$ originally defined in \nref{H2}, so the conclusion of \nref{H2} holds for $\mathbf{T}_{0}$:
\begin{equation}\label{formA43}  |\{(p,T) \in (\mathcal{P} \cap Q) \times \mathcal{T} : T \in \mathcal{T}(p) \cap \mathbf{T}_{0}\}| \gtrapprox \Delta^{-s - t}, \qquad Q \in \mathbf{T}_{0}(\mathcal{Q}). \end{equation}
We used \nref{G3} to estimate the constant $H_Q$ from \nref{H2}. Here one should recall that $\mathcal{P}$ was redefined above \eqref{form1} in such a way that $\mathcal{P} \cap Q = \mathcal{P}_{Q}$, where $\mathcal{P}_{Q}$ is the subset from \nref{H2}. It is also important to note that the families $\mathcal{P},\mathcal{T},\mathcal{T}(p)$ have not undergone further refinements which might influence the validity of \eqref{formA43}.

Fix $Q \in \mathbf{T}_{0}(\mathcal{Q})$. Since $|\mathcal{P} \cap Q| \approx \Delta^{-t}$ by \nref{G1}, and $|\mathcal{T}(p) \cap \mathbf{T}_{0}| \lessapprox \Delta^{-s}$ for every $p \in \mathcal{P}$ by Corollary \ref{auxLemma}, we may infer from \eqref{formA43} the existence of a subset $\mathcal{P}_{Q} \subset \mathcal{P}\cap Q$ with the properties
\begin{equation}\label{formA46} |\mathcal{P}_{Q}| \approx |\mathcal{P} \cap Q| \approx \Delta^{-t} \quad \text{and} \quad |\mathcal{T}(p) \cap \mathbf{T}_{0}| \approx \Delta^{-s} \text{ for all } p \in \mathcal{P}_{Q}. \end{equation}

\subsection{Reduction to $\mathbf{T}_{0} = \mathbf{D}([0,\Delta)^{2})$}\label{s:verticalTube} The purpose of this short section is to show that, without loss of generality, $\mathbf{T}_{0} = \mathbf{D}([0,\Delta)^{2})$. (Recall that in the appendix we are using the modified duality map given by \eqref{duality-appendix}.) This means that $\mathbf{T}_{0}$ is a "vertical" tube containing the origin. (If our results were formulated in terms of "ordinary" tubes in place of dyadic tubes, such reductions could be trivially performed by rotations and translations. The technicalities for dyadic tubes are more painful. A reader who is willing to believe that the difference between dyadic and "ordinary" tubes is not too critical is encouraged to skip this section.) Write $\sigma_{0} := \sigma(\mathbf{T}_{0}) \in [-1,1)$, so $\mathbf{T}_{0} = \mathbf{D}([\sigma_{0},\sigma_{0} + \Delta) \times [h_{0},h_{0} + \Delta))$ for some $h_{0} \in \R$. Then it is straightforward to compute that the affine map $F(x,y) := F_{\sigma_{0},h_{0}}(x,y) := (x - \sigma_{0}y - h_{0},y)$ satisfies $F(\mathbf{D}(a,b))=\mathbf{D}(a-\sigma_0,b-h_0)$, and hence transforms dyadic tubes into other dyadic tubes in the following manner:
\begin{displaymath} F\left( \mathbf{D}([\sigma_{1},\sigma_{1} + r) \times [h_{1},h_{1} + r)) \right) = \mathbf{D}([\sigma_{1} - \sigma_{0},\sigma_{1} - \sigma_{0} + r) \times [h_{1} - h_{0},h_{1} - h_{0} + r)), \end{displaymath}
for all $\sigma_{1},h_{1} \in \R$ and $r \in 2^{-\N}$. %Indeed, if $(x,y) \in \mathbf{D}([\sigma_{1},\sigma_{1} + r) \times [h_{1},h_{1} + r))$, then $x = \sigma y + h$ for some $(\sigma,h) \in [\sigma_{1},\sigma_{1} + r) \times [h_{1},h_{1} + r)$ by definition. Then $(x',y') := F(x,y) = (x - \sigma_{0}y - h_{0},y)$ satisfies
%\begin{displaymath} x' = x - \sigma_{0}y - h_{0} = (\sigma - \sigma_{0})y + (h - h_{0}) = (\sigma - \sigma_{0})y' + (h - h_{0}), \end{displaymath}
%where $(\sigma - \sigma_{0},h - h_{0}) \in [\sigma_{1} - \sigma_{0},\sigma_{1} - \sigma_{0} + r) \times [h_{1} - h_{0},h_{1} - h_{0} + r)$. This means precisely that $(x',y') \in \mathbf{D}([\sigma_{1} - \sigma_{0},\sigma_{1} - \sigma_{0} + r) \times [h_{1} - h_{0},h_{1} - h_{0} + r))$, as claimed.
Applying the formula with $(\sigma_{1},h_{1}) = (\sigma_{0},h_{0})$ and $r = \Delta$ first shows that
\begin{displaymath} F (\mathbf{T}_{0}) = \mathbf{D}([0,\Delta)^{2}). \end{displaymath}
Second, the formula shows that $F$ maps dyadic $\delta$-tubes contained in $\mathbf{T}_{0}$ to dyadic $\delta$-tubes contained in $\mathbf{D}([0,\Delta)^{2})$. The map $F$ does not quite preserve dyadic squares: if $Q \in \mathbf{T}_{0}(\mathcal{Q})$, then $F(Q)$ is a quadrilateral of diameter $\sim \Delta$. The same is true for $p \in \mathcal{P}(Q)$: the images $F(p)$ are quadrilaterals of diameter $\sim \delta$, contained in $F(Q)$. It is also easy to check that the $(\Delta,t - s)$-set property of $\mathbf{T}_{0}(\mathcal{Q})$ is preserved under the map $F$. We also record that for $\pi(x,y) := \pi_{0}(x,y) = x$, we have
\begin{equation}\label{rev8} \pi \circ F = \pi_{\sigma_{0}} = \pi_{\sigma(\mathbf{T}_{0})}. \end{equation}

Thus, if $\mathbf{T}_{0} \neq \mathbf{D}([0,\Delta)^{2})$ to begin with, then $F(\mathbf{T}_{0}) = \mathbf{D}([0,\Delta)^{2})$, and we roughly speaking redefine the collections $\mathbf{T}_{0}(\mathcal{Q})$, $\mathcal{P}_Q$ for $Q \in \mathbf{T}_{0}(\mathcal{Q})$, $\mathcal{T}$, and $\mathcal{T}(p) \cap \mathbf{T}_{0}$ as images under $F$. We write "roughly speaking" here, because it will be convenient in the sequel to know that the collections $\mathbf{T}_{0}(\mathcal{Q})$ and $\mathcal{P}_{Q}$ still consist of dyadic squares, and this property is not quite preserved under $F$. Here is the more precise construction. Write $\overline{\mathbf{T}}_{0} := F(\mathbf{T}_{0}) = \mathbf{D}([0,\Delta)^{2})$ and $\overline{\mathcal{T}} := F(\mathcal{T})$. Then the property \eqref{formA42} remains valid in the form $|\overline{\mathcal{T}} \cap \overline{\mathbf{T}}_{0}| \lessapprox \delta^{-s}$. For $Q \in \mathbf{T}_{0}(\mathcal{Q})$ and $p \in \mathcal{P}_{Q}$ fixed, we use \eqref{formA46} to deduce that
\begin{displaymath}
|F(\mathcal{T}(p)) \cap \overline{\mathbf{T}}_{0}| = |\mathcal{T}(p) \cap \mathbf{T}_{0}| \approx \Delta^{-s}.
\end{displaymath}
Now, each dyadic tube $T \in F(\mathcal{T}(p)) \subset \overline{\mathcal{T}}$ intersects $F(p)$, a quadrilateral of diameter $\sim \delta$ which can be covered by $\sim 1$ dyadic $\delta$-squares. Since each tube $T \in F(\mathcal{T}(p)) \cap \overline{\mathbf{T}}_{0}$ meets at least one of these squares, we can fix one of them, say $\bar{p}$, such that the collection
\begin{displaymath} \overline{\mathcal{T}}(\bar{p}) := \{T \in F(\mathcal{T}(p)) : T \cap \bar{p} \neq \emptyset\} \text{ satisfies } |\overline{\mathcal{T}}(\bar{p}) \cap \overline{\mathbf{T}}_{0}| \approx \Delta^{-s}. \end{displaymath}
There is an additional technicality: while $p \in \mathcal{P}_{Q}$ implies $p \subset Q$, hence $F(p) \subset F(Q)$, it is not necessarily true that $\bar{p} \subset F(Q)$. Also, $F(Q)$ is not a dyadic $\Delta$-square. To fix these issues, we associate to $F(Q)$ a dyadic $\Delta$-square $\overline{Q}$ which contains the largest possible number of the $\delta$-squares $\bar{p}$, $p \in \mathcal{P}_{Q}$: in particular, $\overline{Q}$ contains $\bar{p}$ for all $p \in \mathcal{P}_{Q}' \subset \mathcal{P}_{Q}$, where $|\mathcal{P}_{Q}'| \sim |\mathcal{P}_{Q}|$. For this choice $\overline{Q}$, we set $\overline{\mathcal{P}}_{\overline{Q}} := \{\bar{p} : \bar{p} \subset \overline{Q}\}$. We record that
\begin{equation}\label{rev7} |\overline{\mathcal{P}}_{\overline{Q}}| \sim |\mathcal{P}_{Q}| \approx \Delta^{-t} \quad \text{and} \quad |\overline{\mathcal{T}}(\bar{p}) \cap \overline{\mathbf{T}}_{0}| \approx \Delta^{-s} \text{ for all } \bar{p} \in \overline{\mathcal{P}}_{\overline{Q}}, \end{equation}
and, for a suitable absolute constant $C > 0$,
\begin{equation}\label{rev9} \left(\cup \overline{\mathcal{P}}_{\overline{Q}} \right)_{C\delta} \supset F\left(\cup \mathcal{P}_{Q}' \right). \end{equation}
The properties \eqref{rev7} are the precise analogues of \eqref{formA46}, whereas \eqref{rev9} ensures that
\begin{equation}\label{rev10} \pi\left( \left(\cup \overline{\mathcal{P}}_{\bar{Q}} \right)_{C\delta} \right) \stackrel{\eqref{rev8}}{\supset} \pi_{\sigma_{0}(\mathbf{T}_{0})}(\cup \mathcal{P}'_{Q}). \end{equation}
This inclusion is useful, because when $Q \in \mathbf{T}_{0}(\calQ)$, as above, then $\mathbf{T}_{0} \in \mathcal{T}^{\pi}_{\Delta}(Q)$, which meant that $\sigma(\mathbf{T}_{0}) \in \Sigma(Q)$, and therefore $\pi_{\sigma_{0}(\mathbf{T}_{0})}(\cup S_{Q}(\mathcal{P}_{Q}'))$ contains a $(\Delta,s)$-set, according to Lemma \ref{projections}. From the inclusion \eqref{rev10}, we may infer that also
\begin{equation}\label{rev6} \pi(\cup S_{\overline{Q}}(\overline{\mathcal{P}}_{\overline{Q}})) \text{ contains a } (\Delta,s)\text{-set}. \end{equation}
The square "$Q$" in this discussion was chosen from $\mathbf{T}_{0}(\mathcal{Q})$, a collection known for its properties of being a $(\Delta,t - s)$-set of cardinality $|\mathbf{T}_{0}(\mathcal{Q})| \gtrapprox \Delta^{s - t}$, see \eqref{formA41} and below. We define $\overline{\mathbf{T}}_{0}(\mathcal{Q})$ as the collection of dyadic $\Delta$-squares $\overline{Q}$, derived from $Q \in \mathbf{T}_{0}(\mathcal{Q})$. Then $\overline{\mathbf{T}}_{0}(\mathcal{Q})$ retains the separation and cardinality properties stated above, and moreover $\overline{Q} \cap \overline{\mathbf{T}}_{0} \neq \emptyset$ for all $\overline{Q} \in \overline{\mathbf{T}}_{0}(\mathcal{Q})$: this is because there exist (many) squares $\bar{p} \subset \overline{Q}$ with $\overline{\mathcal{T}}(\bar{p}) \cap \overline{\mathbf{T}}_{0} \neq \emptyset$, so in particular $\overline{Q} \cap \overline{\mathbf{T}}_{0} \supset \bar{p} \cap \overline{\mathbf{T}}_{0} \neq \emptyset$.

From this point on, we "drop the bars", and rename the collections $\overline{\mathbf{T}}_{0}(\mathcal{Q})$, $\overline{\mathcal{P}}_{\bar{Q}}$ for $\overline{Q} \in \overline{\mathbf{T}}_{0}(\mathcal{Q})$, $\overline{\mathcal{T}}$, and $\overline{\mathcal{T}}(\bar{p})$ for $\bar{p} \in \overline{\mathcal{P}}_{\overline{Q}}$, as $\mathbf{T}_{0}(\mathcal{Q}),\mathcal{\mathcal{P}}_{Q},\mathcal{T},\mathcal{T}(p)$. For all practical purposes we are back to the notation above the present section, with the sole difference that $\mathbf{T}_{0}$ is now the vertical tube $\mathbf{T}_{0} = \mathbf{D}([0,\Delta)^{2})$. One thing that \emph{has} changed is that the new squares $Q$ and $p$ do not need to be entirely contained in $[0,1)^2$; this can be easily dealt with by slightly enlarging the reference square $[0,1)^2$, but to avoid overloading the notation we will continue to work inside $[0,1)^2$ in the sequel. Also, Lemma \ref{projections} is not literally true in the new setting, but all we will use from now on is that \eqref{rev6} holds.

\subsection{Finding a product-like structure}\label{quasiProduct} Assume then that $\mathbf{T}_{0} = \mathbf{D}([0,\Delta)^{2})$. Since $\sigma(\mathbf{T}_{0}) = 0$, the projection $\pi_{\sigma(\mathbf{T}_{0})}(x,y) = x =: \pi(x,y)$ is simply the projection to the first coordinate.

Recall, now, the sets $\mathcal{P}_{Q} \subset \mathcal{P} \cap Q$, defined above \eqref{formA46} for $Q \in \mathbf{T}_{0}(\mathcal{Q})$. They had cardinality $|\mathcal{P}_{Q}| \geq \Delta^{\mathbf{B}\epsilon}|\mathcal{P} \cap Q|$ for some constant $\mathbf{B} \geq 1$, which makes visible the constant behind the "$\approx$"-notation in \eqref{formA46}. The plan is to apply Lemma \ref{projections}, with this $\mathbf{B}$, to the sets $\mathcal{P}_{Q} \subset \mathcal{P} \cap Q$, for each $Q \in \mathbf{T}_{0}(\mathcal{Q})$.

By definition, $Q \in \mathbf{T}_{0}(\mathcal{Q})$ implies $\mathbf{T}_{0} \in \mathcal{T}^{\pi}_{\Delta}(Q)$, hence $0 = \sigma(\mathbf{T}_{0}) \in \Sigma(Q)$, recall \eqref{formA44}. Here $\Sigma(Q)$ was the family of good directions from Lemma \ref{projections}. For the set $\mathcal{P}_{Q}$ defined above, let $\mathcal{P}^{Q} = \{S_{Q}(p) : p \in \mathcal{P}_{Q}\}$. With this notation, it follows from Lemma \ref{projections} (or to be precise \eqref{rev6}, in case $\mathbf{T}_{0} \neq \mathbf{D}([0,\Delta)^{2})$ to begin with) that $\pi(\cup \mathcal{P}^{Q})$ contains a $(\Delta,s,\delta^{-C\epsilon})$-set $\Pi^{Q}$, where $C = \mathbf{C}(\mathbf{A} + \mathbf{B})$ is the constant from Lemma \ref{projections}. Thus, there exists a set $\overline{\mathcal{P}}^{Q}\subset \mathcal{P}^{Q}$ such that
\[
\Pi^{Q} = \pi(\cup \overline{\mathcal{P}}^{Q}).
\]
Since $\pi(\cup \overline{\mathcal{P}}^{Q}) \subset [0,1)$ is a union of dyadic $\Delta$-intervals, we may identify $\Pi^{Q}$ with the left end-points of these intervals. In particular,
\begin{displaymath} \Pi^{Q} \subset (\Delta \cdot \Z) \cap [0,1). \end{displaymath}
The set $\Pi^{Q}$ is not a very natural object, and we will be more interested in $\Pi_{Q}$, which is the "version of $\Pi^{Q}$ before the rescaling". Let us define this set more carefully. %Assume that $\Pi^{Q}$ consists of the left end-points of certain intervals $\pi(q) \in \mathcal{D}_{\Delta}([0,1))$, where $q \in \overline{\mathcal{P}}^{Q} \subset \mathcal{P}^{Q}$. Then, define $\overline{\mathcal{P}}_{Q} = \{S_{Q}^{-1}(q) : q \in \overline{\mathcal{P}}^{Q}\} \subset \mathcal{P}_{Q}$, and let $\Pi_{Q} \subset \delta \cdot \Z$ consist of the left end-points of the intervals $\pi(p)$, with $p \in \overline{\mathcal{P}}_{Q}$. The points from $\Pi_{Q}$ will typically be denoted $\mathbf{x}_{p}$, with $p \in \overline{\mathcal{P}}_{Q}$. There is an alternative way to define $\Pi_{Q}$:
if
\begin{displaymath} Q = [\mathbf{x}_{Q},\mathbf{x}_{Q} + \Delta) \times [\mathbf{y}_{Q},\mathbf{y}_{Q} + \Delta), \end{displaymath}
then $S_{Q}^{-1}(x,y) = (\Delta x + \mathbf{x}_{Q},\Delta y + \mathbf{y}_{Q})$, and consequently
\begin{equation}\label{formA48} \Pi_{Q} = \Delta\cdot \Pi^{Q} + \mathbf{x}_{Q}:= \{ \Delta x + \mathbf{x}_{Q}: x\in \Pi^{Q}\}. \end{equation}
We also write $\overline{\mathcal{P}}_{Q}= \{ S_Q^{-1} q : q\in\overline{\mathcal{P}}^{Q}\}\subset\mathcal{P}_{Q}$ for the rescaled version of $\overline{\mathcal{P}}^{Q}$. With this notation, $\Pi_Q=\{ \mathbf{x}_p: p\in \overline{\mathcal{P}}_{Q} \}$, where $\pi p=[\mathbf{x}_p,\mathbf{x}_p+\delta)$.

\begin{figure}[h!]
\begin{center}
\begin{overpic}[scale = 1.4]{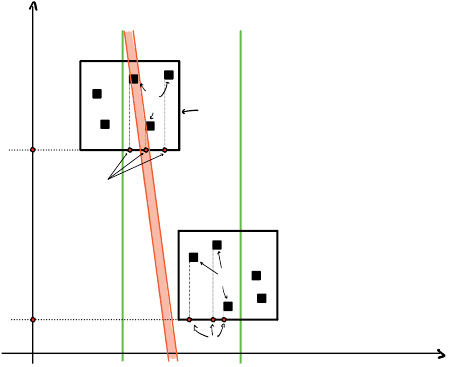}
\put(7.5,38){$(\mathbf{x}_{p},\mathbf{y}_{Q}) \in Z$}
\put(44,55){$Q$}
\put(32,56){$\overline{\mathcal{P}}_{Q}$}
\put(45,3.5){$Z$}
\put(8,49){$\mathbf{y}_{Q}$}
\put(55,25){$Q'$}
\put(35,35){$T$}
\put(8,12){$\mathbf{y}_{Q'}$}
\put(47,65){$\mathbf{T}_{0}$}
\put(47.5,18){$\overline{\mathcal{P}}_{Q'}$}
\end{overpic}
\caption{The tube $\mathbf{T}_{0}$ drawn with a green outline, and two squares $Q,Q' \in \mathbf{T}_{0}(\mathcal{Q})$. Part of the set $Z$ is also visible as the union of the small red circles on the lower boundaries of the squares $Q,Q' \in \mathbf{T}_{0}(\mathcal{Q})$.}\label{fig1}
\end{center}
\end{figure}

Note that since $Q \cap \mathbf{T}_{0} \neq \emptyset$, $Q \subset [0,1)^{2}$, and $\mathbf{T}_{0} = \mathbf{D}([0,\Delta)^{2})$, one has $\mathbf{x}_{Q} \in [0,2\Delta]$ for all $Q \in \mathbf{T}_{0}(\mathcal{Q})$. Moreover, the $(\Delta,t - s)$-set property of $\mathbf{T}_{0}(\mathcal{Q})$ implies that
\begin{displaymath} \mathbf{Y} := \{\mathbf{y}_{Q} : Q \in \mathbf{T}_{0}(\mathcal{Q})\} \subset (\Delta \cdot \Z) \cap [0,1) \text{ is a $(\Delta,t - s)$-set}. \end{displaymath}
Every point $\mathbf{y} \in \mathbf{Y}$ can correspond to only $\sim 1$ squares $Q \in \mathbf{T}_{0}(Q)$, and by removing a few excess squares from $\mathbf{T}_{0}(Q)$, we assume that the map $Q \mapsto \mathbf{y}_{Q}$ is injective. Then for $\mathbf{y}\in \mathbf{Y}$ we can unambiguously define $\Pi_{\mathbf{y}}= \Pi_Q$, $\Pi^{\mathbf{y}}= \Pi^Q$ if $\mathbf{y}=\mathbf{y}_Q$. With these definitions in hand, consider the sets
\begin{equation}\label{ZZ} Z := \bigcup_{\mathbf{y} \in \mathbf{Y}} \Pi_{\mathbf{y}} \times \{\mathbf{y}\} \quad \text{and} \quad \mathbf{Z} := \bigcup_{\mathbf{y} \in \mathbf{Y}} (\Delta^{-1}\Pi_{\mathbf{y}}) \times \{\mathbf{y}\}. \end{equation}
For a depiction of the set $Z$, and related concepts, see Figure \ref{fig1}. Thus $Z,\mathbf{Z}$ are finite, discrete, sets, whose projections to the $y$-axis are the $(\Delta,t - s)$-set $\mathbf{Y}$. Their projections to the $x$-axis are the unions of the sets $\Pi_{\mathbf{y}}$ or $\Delta^{-1}\Pi_{\mathbf{y}}$, respectively, for $\mathbf{y} \in \mathbf{Y}$. Let $\mathbf{x}_{\mathbf{y}}=\mathbf{x}_{Q}$ for $\mathbf{y} = \mathbf{y}_{Q}$. The sets
\begin{equation}\label{formA53} \Delta^{-1}\Pi_{\mathbf{y}} \stackrel{\eqref{formA48}}{=} \Pi^{\mathbf{y}} + \tfrac{\mathbf{x}_{\mathbf{y}}}{\Delta} \subset [0,3] \end{equation}
are $(\Delta,s)$-sets. The second inclusion follows from $\mathbf{x}_{\mathbf{y}} \in [0,2\Delta]$. Therefore, the set $\mathbf{Z}$ looks somewhat like the product of a $(\Delta,s)$-set and a $(\Delta,t - s)$-set; this would be precisely true if the various sets $\Delta^{-1}\Pi_{\mathbf{y}}$ were all the same.

For every $\mathbf{z} \in \mathbf{Z}$, we will next construct a $(\Delta,s)$-set of dyadic $\Delta$-tubes $\mathbf{T}(\mathbf{z}) \subset \mathcal{T}^{\Delta}$ with the following properties:
\begin{enumerate}
\item $\mathbf{z} \in \mathbf{T}$ for all $\mathbf{T} \in \mathcal{T}(\mathbf{z})$,
\item the union $\mathcal{T}(\mathbf{Z}) = \bigcup_{\mathbf{z} \in \mathbf{Z}} \mathcal{T}(\mathbf{z})$ satisfies $|\mathcal{T}(\mathbf{Z})| \lessapprox \Delta^{-2s}$.
\end{enumerate}
To accomplish this, fix
\begin{displaymath} \mathbf{z} = (\Delta^{-1}\mathbf{x}_{p},\mathbf{y}_{Q}) \in \mathbf{Z}, \end{displaymath}
where $Q \in \mathbf{T}_{0}(\mathcal{Q})$, and $\mathbf{x}_{p}$ is the left end-point of some interval $\pi(p)=[\mathbf{x}_p,\mathbf{x}_p+\delta) \in \mathcal{D}_{\delta}([0,1))$, with $p \in \overline{\mathcal{P}}_{Q} \subset \mathcal{P}_{Q}$. Thus $(\mathbf{x}_{p},\mathbf{y}_{Q}) \in Z$. %With this notation, we may write
%\begin{equation}\label{formA49} p = [\mathbf{x}_{p},\mathbf{x}_{p} + \delta) \times [\mathbf{y}_{p},\mathbf{y}_{p} + \delta) \subset [\mathbf{x}_{Q},\mathbf{x}_{Q} + \Delta) \times [\mathbf{y}_{Q},\mathbf{y}_{Q} + \Delta) = Q. \end{equation}
Then, since $p \in \mathcal{P}_{Q}$, we recall from \eqref{formA46} that $|\mathcal{T}(p) \cap \mathbf{T}_{0}| \approx \Delta^{-s}$. The idea is to modify (essentially dilate horizontally) the tubes from $\mathcal{T}(p) \cap \mathbf{T}_{0}$ to generate the family $\mathcal{T}(\mathbf{z})$. To this end, fix $T = \mathbf{D}([\sigma_{T},\sigma_{T} + \delta) \times [h_{T},h_{T} + \delta)) \in \mathcal{T}(p) \cap \mathbf{T}_{0}$ (this would be the red tube in Figure \ref{fig1}). Then
\begin{displaymath} T \cap p \neq \emptyset \quad \text{and} \quad [\sigma_{T},\sigma_{T} + \delta) \times [h_{T},h_{T} + \delta) \subset [0,\Delta)^{2}. \end{displaymath}
Fix also an arbitrary point $(x,y) \in T \cap p\subset T\cap Q$. Then $x \in [\mathbf{x}_{p},\mathbf{x}_{p} + \delta)$ and $y \in [\mathbf{y}_{Q},\mathbf{y}_{Q} + \Delta)$, and since $(x,y) \in T$, the coordinates $x,y$ satisfy the relation
\begin{equation}\label{form2} x = \sigma'y + h' \quad \text{for some} \quad (\sigma',h') \in [\sigma_{T},\sigma_{T} + \delta) \times [h_{T},h_{T} + \delta) \subset [0,\Delta)^{2}. \end{equation}
By manipulating the equation \eqref{form2}, one finds that
\begin{equation}\label{formA50} \mathbf{x}_{p} = \sigma'\mathbf{y}_{Q} + h' + [(\mathbf{x}_{p} - x) + \sigma'(y - \mathbf{y}_{Q})]. \end{equation}
Here $0\le (\mathbf{x}_{p} - x) + \sigma'(y - \mathbf{y}_{Q}) \leq \delta + \Delta^{2} = 2\delta$, so \eqref{form2}-\eqref{formA50} imply that
\begin{equation}\label{formA51} (\mathbf{x}_{p},\mathbf{y}_{Q}) \in \mathbf{D}([\sigma_{T},\sigma_{T} + \delta) \times [h_{T},h_{T} + 3\delta)). \end{equation}
Before proceeding, we record that the affine map $A(x,y) := (\Delta^{-1}x,y)$ acts on dual sets of lines in the following way: $A\mathbf{D}(I \times J) = \mathbf{D}((\Delta^{-1}I) \times (\Delta^{-1}J))$.
%Indeed, if $(x,y) \in \mathbf{D}(I \times J)$, then $x = \sigma y + h$ for some $(\sigma,h) \in I \times J$. Consequently, $\Delta^{-1}x = (\Delta^{-1}\sigma)y + (\Delta^{-1}h)$, which implies that $A(x,y) = (\Delta^{-1}x,y) \in \mathbf{D}((\Delta^{-1}I) \times (\Delta^{-1}J))$.
Combining this straightforward fact with \eqref{formA51}, we find that
\begin{equation}\label{formA52} \mathbf{z} = (\Delta^{-1}\mathbf{x}_{p},\mathbf{y}_{Q}) \in \mathbf{D}([\Delta^{-1}\sigma_{T},\Delta^{-1}\sigma_{T} + \Delta) \times [\Delta^{-1}h_{T},\Delta^{-1}h_{T} + 3\Delta)). \end{equation}
The set on the right can be covered by $3$ dyadic $\Delta$-tubes with slope $\Delta^{-1}\sigma_{T} \in \Delta \cdot \Z \cap [0,1)$ (note also that $h_{T} \in [0,\Delta)$, so $\Delta^{-1}h_{T} \in [0,1)$). One of these tubes contains the point $\mathbf{z}$ by \eqref{formA52}, and one adds this tube to $\mathcal{T}_{\mathbf{z}}$.

The tube $T \in \mathcal{T}(p) \cap \mathbf{T}_{0}$ was a "free parameter" in the argument above. Therefore, we may add to $\mathcal{T}(\mathbf{z})$ one tube for each choice of $T \in \mathcal{T}(p) \cap \mathbf{T}_{0}$. Then, by \eqref{formA52}, the slope set $\sigma(\mathcal{T}(\mathbf{z}))$ satisfies
\begin{displaymath} \sigma(\mathcal{T}(\mathbf{z})) = \Delta^{-1} \cdot \sigma(\mathcal{T}(p) \cap \mathbf{T}_{0}). \end{displaymath}
It follows that $|\mathcal{T}(\mathbf{z})| \sim |\sigma(\mathcal{T}(\mathbf{z}))| \sim |\mathcal{T}(p) \cap \mathbf{T}_{0}| \approx \Delta^{-s}$ by \eqref{formA46} and Lemma \ref{tubesAndSlopes}. The latter lemma also implies that $\sigma(\mathcal{T}(p))$ is a $(\delta,s)$-set, and therefore if $I \in \mathcal{D}_{r}$, $r \geq \Delta$, then
\begin{displaymath} |\sigma(\mathcal{T}(\mathbf{z})) \cap I|_{\Delta} \leq |\sigma(\mathcal{T}(p)) \cap \Delta I|_{\delta} \lessapprox |\mathcal{T}(p)| \cdot (\Delta r)^{s} \stackrel{\eqref{MBound}}{\approx} \Delta^{-s} \cdot r^{s} \approx |\sigma(\mathcal{T}(\mathbf{z}))| \cdot r^{s}.  \end{displaymath}
Another application of Lemma \ref{tubesAndSlopes} concludes the proof that $\mathcal{T}(\mathbf{z})$ is a $(\Delta,s)$-set.

It remains to show that $|\mathcal{T}(\mathbf{Z})| \lessapprox \Delta^{-2s}$. This follows from \eqref{formA42}, which stated that $|\mathcal{T} \cap \mathbf{T}_{0}| \lessapprox \Delta^{-2s}$. In particular, every tube $T \in \mathcal{T}(p) \cap \mathbf{T}_{0} \subset \mathcal{T} \cap \mathbf{T}_{0}$ mentioned above lies in this universal collection $\mathcal{T} \cap \mathbf{T}_{0}$. More precisely, every tube $\mathbf{T} \in \mathcal{T}(\mathbf{Z})$ has the form
\begin{displaymath} \mathbf{D}([\Delta^{-1}\sigma_{T},\Delta^{-1}\sigma_{T} + \Delta) \times [\Delta^{-1}h_{T} + \rho,\Delta^{-1}h_{T} + \rho + \Delta)), \end{displaymath}
where $\mathbf{D}([\sigma_{T},\sigma_{T} + \delta) \times [h_{T},h_{T} + \delta)) \in \mathcal{T} \cap \mathbf{T}_{0}$, and $\rho \in \{0,\Delta,2\Delta\}$. Therefore $|\mathcal{T}(\mathbf{Z})| \leq 3|\mathcal{T} \cap \mathbf{T}_{0}| \approx \Delta^{-2s}$, as desired.

\subsection{Concluding the proof of Theorem \ref{mainAppendix}}

\begin{proposition}\label{productProp} Given $0 < s < 1$ and $\tau > 0$, there exists a number $\eta = \eta(s,\tau) > 0$ such that the following holds for all $\delta > 0$ small enough, depending on $s,\tau$.  Let $\mathbf{Y} \subset (\delta\cdot \mathbb{Z}) \cap [0,1)$ be a $(\delta,\tau,\delta^{-\eta})$-set, and for each $\mathbf{y} \in \mathbf{Y}$, assume that $\mathbf{X}_{\mathbf{y}} \subset (\delta\cdot \mathbb{Z})\cap [0,1)$ is a $(\delta,s,\delta^{-\eta})$-set. Let
\begin{displaymath} \mathbf{Z} := \bigcup_{\mathbf{y} \in \mathbf{Y}} \mathbf{X}_{\mathbf{y}} \times \{\mathbf{y}\}. \end{displaymath}
For every $\mathbf{z} \in \mathbf{Z}$, assume that $\mathcal{T}(\mathbf{z}) \subset \mathcal{T}^{\delta}$ is a $(\delta,s,\delta^{-\eta})$-set of dyadic $\delta$-tubes such that $\mathbf{z} \in T$ for all $T \in \mathcal{T}(\mathbf{z})$. Then $|\calT| \geq \delta^{-2s - \eta}$, where $\mathcal{T} = \bigcup_{\mathbf{z} \in \mathbf{Z}} \mathcal{T}(\mathbf{z})$.
\end{proposition}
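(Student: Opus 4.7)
My plan is to follow the strategy of \cite[\S 7]{Orponen20}, which reduces a statement of this flavour to Bourgain's discretised projection theorem. The intuition is that the product structure of $\mathbf{Z}$ forces it to carry a Frostman-type measure $\mu$ of ``dimension'' $s+\tau > s$, so that by Bourgain's theorem, for any $(\delta,s)$-set $\Sigma$ of directions there exists some $\sigma \in \Sigma$ along which the projection $\pi_\sigma(\spt\mu)$ has covering number $\ge \delta^{-s-\eta_0}$ for some $\eta_0 = \eta_0(s,\tau) > 0$. If $|\mathcal{T}|$ were as small as $\delta^{-2s-\eta}$ for $\eta\ll\eta_0$, then combining the assumption that each $\mathbf{z}\in\mathbf{Z}$ is incident to a $(\delta,s)$-set of tubes with averaging would force these projections to have covering number $\lesssim \delta^{-s-O(\eta)}$ for all $\sigma$ in a suitable $(\delta,s)$-subset of directions, contradicting Bourgain.

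Concretely, first I will construct the measure $\mu$ on $\mathbf{Z}$ by spreading mass uniformly along each fibre,
\[
\mu := \frac{1}{|\mathbf{Y}|} \sum_{\mathbf{y}\in\mathbf{Y}} \frac{1}{|\mathbf{X}_{\mathbf{y}}|} \sum_{x\in\mathbf{X}_{\mathbf{y}}} \delta^{-2}\mathcal{L}^{2}|_{[x,x+\delta)\times[\mathbf{y},\mathbf{y}+\delta)},
\]
and verify that $\mu(B(z,r)) \lesssim \delta^{-O(\eta)} r^{s+\tau}$ for $r \in [\delta,1]$, which is routine from the Frostman-type properties of $\mathbf{Y}$ and of each $\mathbf{X}_{\mathbf{y}}$. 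Crucially, I will also establish the non-concentration estimate $\mu(\ell_{\delta}) \le \delta^{\kappa}$ for every line $\ell \subset \R^2$ and some $\kappa = \kappa(s,\tau) > 0$. This splits naturally by the slope of $\ell$: lines with slope bounded away from $0$ meet each horizontal fibre in $O(1)$ atoms of $\mu$ of mass $\lesssim \delta^{s+\tau-O(\eta)}$, and the total number of relevant fibres is $\lesssim \delta^{-\tau+O(\eta)}$; near-horizontal lines intersect $O(1)$ fibres of $\mathbf{Y}$, each contributing at most $|\mathbf{Y}|^{-1}\lesssim \delta^{\tau-O(\eta)}$. A uniform bound $\mu(\ell_\delta) \lesssim \delta^{\min(s,\tau)-O(\eta)}$ follows.

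Next, assuming for contradiction that $|\mathcal{T}| \le \delta^{-2s-\eta}$, I will perform a two-step pigeonholing: using that $|\sigma(\mathcal{T}(\mathbf{z}))|\gtrsim \delta^{-s-O(\eta)}$ for every $\mathbf{z}$ (which follows from Lemma \ref{tubesAndSlopes} applied to the common-point set $\mathcal{T}(\mathbf{z})$) and comparing this with $\sum_\sigma N_\sigma = |\mathcal{T}|$, one extracts a subset $\mathbf{Z}' \subset \mathbf{Z}$ with $\mu(\mathbf{Z}') \ge \delta^{O(\eta)}$ and a $(\delta,s,\delta^{-O(\eta)})$-set $\Sigma$ of slopes such that (a) for every $\mathbf{z} \in \mathbf{Z}'$, a positive proportion of $\sigma(\mathcal{T}(\mathbf{z}))$ lies in $\Sigma$; and (b) for every $\sigma \in \Sigma$, the number of tubes in $\mathcal{T}$ with slope $\sigma$ is $\lesssim \delta^{-s-O(\eta)}$. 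Since $\pi_\sigma(\mathbf{Z}')$ is covered by the $\delta$-intervals associated to these tubes, property (b) yields $|\pi_\sigma(\mathbf{Z}')|_\delta \le \delta^{-s-O(\eta)}$ for every $\sigma \in \Sigma$. The normalised restriction $\mu'$ of $\mu$ to $\mathbf{Z}'$ inherits the Frostman decay and line non-concentration properties of $\mu$ up to a further $\delta^{-O(\eta)}$ loss.

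Finally, an application of Bourgain's discretised projection theorem (in the form of \cite{Bourgain10} or the quantified versions used in \cite{Shmerkin20b, Orponen20}) to $\mu'$ and $\Sigma$ yields some $\sigma\in\Sigma$ with $|\pi_\sigma(\mathbf{Z}')|_\delta \ge \delta^{-s-\eta_0}$, contradicting the upper bound from the previous paragraph provided $\eta$ has been chosen sufficiently small in terms of $\eta_0$. The main technical obstacle I expect is the verification of the line non-concentration bound for $\mu$ (and its persistence under the pigeonholing producing $\mu'$) uniformly across \emph{all} slopes of lines; the fibrewise $(\delta,s)$-set hypothesis on each $\mathbf{X}_{\mathbf{y}}$ must be upgraded to an ambient Frostman-type control on $\delta$-neighbourhoods of arbitrary lines, exploiting that the sets $\mathbf{X}_{\mathbf{y}}$ may vary arbitrarily with $\mathbf{y}\in\mathbf{Y}$. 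A secondary subtlety is ensuring that the pigeonholed slope set $\Sigma$ satisfies the genuine $(\delta,s,\delta^{-O(\eta)})$-set Frostman condition required as input to Bourgain's theorem, rather than merely a cardinality lower bound.
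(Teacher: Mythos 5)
There is a genuine gap at the point where you invoke Bourgain's projection theorem. Your measure $\mu$ on $\mathbf{Z}$ has Frostman exponent $s+\tau$, and Bourgain's discretised theorem (in any of the forms in \cite{Bourgain10, MR4148151}) only guarantees, for some direction $\sigma$ in a $\kappa$-dimensional direction set, a projection of covering number $\geq \delta^{-(s+\tau)/2-\eta_0}$ — i.e.\ an $\eta_0$-improvement over \emph{half} the dimension of the projected set. Your contradiction requires covering number $\geq \delta^{-s-\eta_0}$, and $(s+\tau)/2 > s$ only when $\tau > s$. Since $\tau>0$ is arbitrary (and in the application to Theorem \ref{mainAppendix} one takes $\tau = t-s$, which is small precisely in the interesting regime), the step fails. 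Worse, the statement you actually need — that an $(s+\tau)$-dimensional planar set has $\delta$-covering number $>\delta^{-s-\eta}$ in its projections outside an exceptional direction set of dimension $<s$ — is exactly the discretised Kaufman improvement (Theorem \ref{t:mainTechnical}/Theorem \ref{mainKaufman}) that the paper is in the business of proving, and Proposition \ref{productProp} is an ingredient of that proof; so in the regime $\tau<s$ your argument is circular. (Kaufman's classical bound gives exceptional dimension $\leq s$, which exactly fails to exclude your $(\delta,s)$-set $\Sigma$; Falconer--Peres--Schlag gives $\leq 1+s-(s+\tau)=1-\tau$, which is $\geq s$ unless $s+\tau>1$.)

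The paper escapes this by dualising: instead of projecting the point set $\mathbf{Z}$, it projects the \emph{tube} set. For fixed $\mathbf{y}$, the family $\mathcal{T}(\mathbf{y})=\bigcup_{\mathbf{x}\in\mathbf{X}_{\mathbf{y}}}\mathcal{T}(\mathbf{x},\mathbf{y})$, viewed as a set of points $(a,b)$ in the dual plane, is a $(\delta,2s)$-set (bounded overlap of the fibres plus the two $(\delta,s)$-conditions), while the incidence $\mathbf{z}\in T$ forces $\pi_{\mathbf{y}}(a,b)=a\mathbf{y}+b$ to land within $O(\delta)$ of $\mathbf{X}_{\mathbf{y}}$, so $|\pi_{\mathbf{y}}(\mathcal{T}(\mathbf{y}))|_{\delta}\lesssim\delta^{-s}$ — exactly half of $2s$. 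After pigeonholing a common $(\delta,2s)$-set $\overline{\mathcal{T}}$, the directions are parametrised by the $(\delta,\tau)$-set $\mathbf{Y}$, and Bourgain/He applies with the correct numerology for \emph{every} $\tau>0$. Separately, even granting your projection input, your deduction that $|\pi_\sigma(\mathbf{Z}')|_\delta\le\delta^{-s-O(\eta)}$ for \emph{every} $\sigma\in\Sigma$ does not follow from property (a): a fixed $\sigma\in\Sigma$ need not belong to $\sigma(\mathcal{T}(\mathbf{z}))$ for every $\mathbf{z}\in\mathbf{Z}'$, so the slope-$\sigma$ tubes need not cover $\pi_\sigma(\mathbf{Z}')$; this would require additional bookkeeping, but it is secondary to the dimensional mismatch above.
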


This proposition is essentially \cite[Proposition 4.36]{Orponen20}. For the sake of self-containedness, we provide a proof, which is much shorter than that of \cite[Proposition 4.36]{Orponen20}. The decrease in length is partially due to the fact that whereas \cite{Orponen20} derived Proposition \ref{productProp} from a sum-product theorem of Bourgain \cite{Bourgain10}, we now reduce matters to a slightly stronger, and newer, version of Bourgain's theorem, due to He \cite{MR4148151}.

\begin{proof}[Proof of Proposition \ref{productProp}]
In this proof only, we use the notation $A\lessapprox B$ to denote $A\le C \delta^{-C\eta} B$ for some absolute constant $C\ge 1$, and similarly for $A\gtrapprox B$, $A\approx B$. Likewise, a $(\delta,u)$-set stands for a $(\delta,u, C\delta^{-C\eta})$-set.

We argue by contradiction: suppose $|\mathcal{T}|\le \delta^{-2s-\eta}$. By Lemma \ref{lem:thin-delta-subset}, without loss of generality we may assume that $|\mathbf{X}_{\mathbf{y}}|\le \delta^{-s}$ for $\mathbf{y}\in\mathbf{Y}$ and $|\mathcal{T}(\mathbf{z})| \le \delta^{-s}$ for $\mathbf{z}\in\mathbf{Z}$.

Fix $\mathbf{y}\in\mathbf{Y}$. We claim that
\[
\mathcal{T}(\mathbf{y}) := \bigcup_{\mathbf{x}\in\mathbf{X}_{\mathbf{y}} } \mathcal{T}(\mathbf{x},\mathbf{y})
\]
is a $(\delta,2s)$-set. First, since $\mathbf{z}\in T$ for $T\in\mathcal{T}(\mathbf{z})$ and the tubes in $\mathcal{T}$ are far from horizontal, the union in the definition of $\mathcal{T}(\mathbf{y})$ has bounded overlap, and therefore  $|\mathcal{T}(\mathbf{y})| \gtrapprox \delta^{-2s}$, using that $\mathbf{X}_{\mathbf{y}}$, $\mathcal{T}(\mathbf{x},\mathbf{y})$ are $(\delta,s)$-sets.

We write $\pi_{\mathbf{y}}(a,b):=a\mathbf{y} + b$ (this differs from the previous convention in the appendix, but these projections are most useful now). We identify $T=\mathbf{D}([a,a+\delta)\times[b,b+\delta))$ with $(a,b)$ in the sequel. Under this identification, $\mathcal{T}\subset (\delta\cdot \mathbb{Z}^2)\cap [-1,2)^2$. Using that $(\mathbf{x},\mathbf{y})\in T$ for $T\in \mathcal{T}(\mathbf{x},\mathbf{y})$, we will next see that the $(\delta,s)$-set $\mathcal{T}(\mathbf{x},\mathbf{y})$ (under the identification above) satisfies
\begin{equation}\label{rev11} \pi_{\mathbf{y}}(\mathcal{T}(\mathbf{x},\mathbf{y})) \subset B(\mathbf{x},2\delta), \qquad \mathbf{y} \in \mathbf{Y}, \, \mathbf{x} \in \mathbf{X}_{\mathbf{y}}. \end{equation}
Indeed, if $(\mathbf{x},\mathbf{y}) \in \mathbf{D}(a',b')$ with $(a',b') \in [a,a + \delta) \times [b,b + \delta)$, then $\mathbf{x} = a'\mathbf{y} + b'$, recalling \eqref{duality-appendix}, and consequently $|\pi_{\mathbf{y}}(a,b) - \mathbf{x}| = |\pi_{\mathbf{y}}(a,b) - \pi_{\mathbf{y}}(a',b')| \leq 2\delta$. This proves \eqref{rev11}. As an immediate corollary of \eqref{rev11},
\begin{equation}\label{rev12} \pi_{\mathbf{y}}(\mathcal{T}(\mathbf{y})) \subset \mathbf{X}_{\mathbf{y}}(2\delta), \qquad \mathbf{y} \in \mathbf{Y}. \end{equation}
Finally, to show the $(\delta,2s)$-set property of $\mathcal{T}(\mathbf{y})$, fix an $r$-square $R\subset [0,1)^2$, $r\in [\delta,1]$. Then $\pi_{\mathbf{y}}(R)$ is an interval of length $\lesssim r$, and we let $I$ be the $(2\delta)$-neighbourhood of $\pi_{\mathbf{y}}(R)$. From \eqref{rev11} we see that if $\mathbf{x} \in \mathbf{X}_{\mathbf{y}}$ is such that $\mathcal{T}(\mathbf{x},\mathbf{y}) \cap R \neq \emptyset$, then $B(\mathbf{x},2\delta) \cap \pi_{\mathbf{y}}(R) \neq \emptyset$, and consequently $\mathbf{x} \in \mathbf{X}_{\mathbf{y}} \cap I$. Thus, using that $\mathbf{X}_{\mathbf{y}}$ and $\mathcal{T}(\mathbf{x},\mathbf{y})$ are both $(\delta,s)$-sets of cardinality $\le \delta^{-s}$,
\begin{align*}
|\mathcal{T}(\mathbf{y})\cap R| & = \bigg| \bigcup_{\mathbf{x} \in \mathbf{X}_{\mathbf{y}} \cap I} \mathcal{T}(\mathbf{x},\mathbf{y}) \cap R \bigg|\\
& \leq |\mathbf{X}_{\mathbf{y}}\cap I|\cdot \max_{ \mathbf{x}\in \mathbf{X}_{\mathbf{y}}}  |\mathcal{T}(\mathbf{x},\mathbf{y})\cap R|\\
& \lessapprox r^{2s}\delta^{-2s} \lessapprox r^{2s}|\mathcal{T}(\mathbf{y})|. \end{align*}

The idea in the remainder of the argument is the following. The $(\delta,2s)$-set $\mathcal{T}(\mathbf{y})$ has the property \eqref{rev12}, which in particular implies $|\pi_{\mathbf{y}}(\mathcal{T}(\mathbf{y}))|_{\delta} \lesssim \delta^{-s}$. Since we (counter-)assumed that $|\mathcal{T}| \lessapprox \delta^{-2s}$, this means that for each $\mathbf{y} \in \mathbf{Y}$, the set $\mathcal{T}$ has a "substantial" subset $\mathcal{T}(\mathbf{y}) \subset \mathcal{T}$ whose $\pi_{\mathbf{y}}$-projection has only $\tfrac{1}{2}$ the dimension of $\mathcal{T}$. This would immediately contradict a projection theorem of Bourgain \cite{Bourgain10} if we knew that $\mathcal{T}$ is a $(\delta,2s)$-set. This is true after to passing to a suitable refinement $\overline{\mathcal{T}}$, defined next.

Since $|\mathcal{T}(\mathbf{y})|\approx |\mathcal{T}|$, $\mathbf{y}\in\mathbf{Y}$, there is a set $\overline{\mathcal{T}}\subset\mathcal{T}$ such that $|\overline{\mathcal{T}}|\approx |\mathcal{T}|$ and each $T\in\overline{\mathcal{T}}$ belongs to $\approx |\mathbf{Y}|$ of the sets $\mathcal{T}(\mathbf{y})$. This implies that $\overline{\mathcal{T}}$ is a $(\delta,2s)$-set: indeed, if $R$ is an $r$-square, $r\in [\delta,1]$, then
\[
|\overline{\mathcal{T}}\cap R| \approx \sum_{T \in \overline{\mathcal{T}} \cap R} \frac{1}{|\mathbf{Y}|} \sum_{\mathbf{y} \in \mathbf{Y}} \mathbf{1}_{\mathcal{T}(\mathbf{y})}(T)  \leq \frac{1}{|\mathbf{Y}|} \sum_{y \in \mathbf{Y}} |\mathcal{T}(\mathbf{y}) \cap R| \lessapprox r^{2s} \delta^{-2s}\lessapprox r^{2s}|\overline{\mathcal{T}}|,
\]
using that $\mathcal{T}(\mathbf{y})$ is a $(\delta,2s)$-set for each $\mathbf{y} \in \mathbf{Y}$.

Now, since $\sum_{\mathbf{y} \in \mathbf{Y}} |\overline{\mathcal{T}} \cap \mathcal{T}(\mathbf{y})| = \sum_{T \in \overline{\mathcal{T}}} |\{\mathbf{y} \in \mathbf{Y} : T \in \mathcal{T}(\mathbf{y})\}| \approx |\overline{\mathcal{T}}||\mathbf{Y}|$, there is a subset $\overline{\mathbf{Y}}\subset\mathbf{Y}$ such that $|\overline{\mathbf{Y}}|\approx |\mathbf{Y}|$ and
\[
|\overline{\mathcal{T}}(\mathbf{y}) | := |\overline{\mathcal{T}}\cap \mathcal{T}(\mathbf{y})| \approx \delta^{-2s}\approx |\overline{\mathcal{T}}|, \quad \mathbf{y}\in \overline{\mathbf{Y}}.
\]
Then $\overline{\mathbf{Y}}$ is a $(\delta,\tau)$-set, with the following property: for all $\mathbf{y}\in \overline{\mathbf{Y}}$, there is a subset $\overline{\mathcal{T}}(\mathbf{y})$ of the $(\delta,2s)$-set $\overline{\mathcal{T}}$, with comparable cardinality (in the $\approx$ sense), and such that
\[
|\pi_{\mathbf{y}}(\overline{\mathcal{T}}(\mathbf{y}))|_\delta \le |\pi_{\mathbf{y}}(\mathcal{T}(\mathbf{y}))|_\delta \stackrel{\eqref{rev12}}{\lesssim} |\mathbf{X}_{\mathbf{y}}| \le \delta^{-s}.
\]
However, these facts contradict Bourgain's discretized projection theorem \cite{Bourgain10}, in the refined form presented by W. He \cite[Theorem 1]{MR4148151}, if $\eta$ is sufficiently small in terms of $\tau, s$ only. More precisely, in our setting $n=2$, $m=1$, $A$ in \cite[Theorem 1]{MR4148151} corresponds to our $\overline{\mathcal{T}}$ while the measure $\mu$ is $|\overline{\mathbf{Y}}|^{-1}\sum_{\mathbf{y}\in \overline{\mathbf{Y}}} \delta^{-1}\mathcal{L}^{1}|_{[\mathbf{y},\mathbf{y}+\delta)}$ (and we identify $\pi_{\mathbf{y}}$ with an orthogonal projection in the standard way). We conclude that the counter-assumption $|\mathcal{T}| \le \delta^{-2s-\eta}$ cannot hold. \end{proof}

To conclude the proof of Theorem \ref{mainAppendix}, we apply Proposition \ref{productProp} to the set $\mathbf{Z}$ defined in \eqref{ZZ}, with $\tau := t  - s > 0$, and at scale "$\Delta$" in place of "$\delta$". As noted in \eqref{formA53}, the sets $\mathbf{X}_{\mathbf{y}} = \Delta^{-1}\Pi_{\mathbf{y}}$ are $(\Delta,s)$-subsets of $[0,3]$, and the difference between $[0,1]$ and $[0,3]$ is irrelevant. The hypotheses of Proposition \ref{productProp} will be valid if the initial parameter $\epsilon=\epsilon(s,t) > 0$ from the counter assumption \eqref{counterAss} was chosen so small that $\mathbf{Y}$ is a $(\Delta,t - s,\Delta^{-\eta})$-set, each $\mathbf{X}_{\mathbf{y}}$ is a $(\Delta,s,\Delta^{-\eta})$-set, and the families $\mathcal{T}(\mathbf{z})$ constructed in the previous section are $(\Delta,s,\Delta^{-\eta})$-sets, with $\eta = \eta(s,t - s) > 0$. Then, the conclusion of Proposition \ref{productProp} will contradict the upper bound $|\mathcal{T}(\mathbf{Z})| \lessapprox \Delta^{-2s}$, established at the end of the previous section, if $\epsilon > 0$ is sufficiently small. This contradiction shows that our initial counter assumption \eqref{counterAss} has to fail for some $\epsilon = \epsilon(s,t) > 0$ sufficiently small, and the proof of Theorem \ref{mainAppendix} is complete.

\def\cprime{$'$}

%\bibliographystyle{plain}
%\bibliography{references}

\end{document}